\documentclass[11pt,twoside]{amsart}

\usepackage[english]{babel}
\usepackage[T1]{fontenc}
\usepackage{lmodern}
\usepackage{amsmath,amsthm,amssymb,amsfonts}
\usepackage{hyperref}
\usepackage[capitalise]{cleveref}
\usepackage[left=3cm,right=3cm,top=3cm,bottom=3cm,includeheadfoot]{geometry}
\usepackage{bbm}
\usepackage{xcolor}
\usepackage{enumerate,enumitem}
\usepackage{fancyhdr}
\usepackage{tikz-cd}
\usepackage[abbrev,nobysame]{amsrefs}

\hyphenpenalty=5000

\hypersetup{colorlinks=true,
	linkcolor=teal,
	citecolor=orange}

\theoremstyle{plain}
\newtheorem{lem}{Lemma}[section]
\newtheorem{prop}[lem]{Proposition}
\newtheorem{thm}[lem]{Theorem}
\newtheorem{cor}[lem]{Corollary}

\theoremstyle{definition}
\newtheorem{defi}[lem]{Definition}
\newtheorem{exl}[lem]{Example}
\newtheorem{rem}[lem]{Remark}

\theoremstyle{plain}
\newtheorem{thmintro}{Theorem}

\newtheorem*{probintro*}{Problem}
\newtheorem{corintro}{Corollary}
\newtheorem*{corintro*}{Corollary}

\numberwithin{equation}{section}

\crefname{enumi}{}{}
\crefname{equation}{}{}
\crefname{figure}{Figure}{}
\crefname{lem}{Lemma}{Lemmas}
\crefname{thmintro}{Theorem}{Theorems}

\newcommand{\N}{\mathbb{N}}			
\newcommand{\R}{\mathbb{R}}			
\renewcommand{\S}{\mathbb{S}}		
\renewcommand{\H}{\mathbb{H}}		
\newcommand{\Gr}{\mathrm{Gr}}		
\newcommand{\D}{\mathcal{D}}		
\newcommand{\DD}{\mathbb{D}}		
\newcommand{\CC}{\mathcal{C}}		

\newcommand{\BV}{\mathrm{BV}}		

\newcommand{\indf}{\mathbbm{1}}		

\newcommand{\K}{\mathcal{K}}		
\newcommand{\Val}{\mathbf{Val}}		


\DeclareMathOperator{\SO}{SO}		

\newcommand{\abs}[1]{\lvert #1 \rvert}						
\newcommand{\norm}[1]{\lVert #1 \rVert}						
\newcommand{\restr}[2]{\left. #1 \right|_{#2}}				
\newcommand{\pair}[2]{\langle #1,#2 \rangle}				
\DeclareMathOperator{\sign}{sgn}
\DeclareMathOperator{\spt}{spt}
\DeclareMathOperator{\cls}{cls}

\BibSpec{article}{%
	+{}{\PrintAuthors}  		{author}
	+{,}{ \textit}     		{title}
	+{,}{ }             		{journal}
	+{}{ \textbf}       		{volume}
	+{}{ \parenthesize} 		{date}
	+{}{, no. } 		{number}
	+{,}{ }      	      		{conference}
	+{,}{ }      	      		{book}
	+{,}{ }            		{pages}
	+{,}{ }            	 	{note}
	+{,}{ }            	 	{status}
	+{,}{  \texttt } {eprint}
	+{.}{}              {transition}
}

\BibSpec{book}{%
	+{}{\PrintAuthors}  {author}
	+{,}{ \textit}      {title}
	+{,}{ }             {publisher}
	+{,}{ }             {place}
	+{,}{ }             {date}
	+{.}{}              {transition}
}

\BibSpec{incollection}{%
	+{}  {\PrintAuthors}                {author}
	+{,} { \textit}                     {title}
	+{.} { }                            {part}
	+{:} { \textit}                     {subtitle}
	+{,} { \PrintContributions}         {contribution}
	+{,} { \PrintConference}            {conference}
	+{,} { }                            {booktitle}
	+{,} { pp.~}                        {pages}
	+{,}{ }       		{series}
	+{}{ \textbf}       		{volume}
	+{,} { }                            {publisher}
	+{,} { }                 {date}
	+{,} { }                            {status}
	+{,} { \PrintDOI}                   {doi}
	+{,} { available at \eprint}        {eprint}
	+{}  { \parenthesize}               {language}
	+{}  { \PrintTranslation}           {translation}
	+{;} { \PrintReprint}               {reprint}
	+{.} { }                            {note}
	+{.} {}                             {transition}
}

\usepackage{xcolor}
\setlength{\marginparwidth}{2cm}
\usepackage[textsize=tiny]{todonotes}

\begin{document}
	\title[Mixed Christoffel--Minkowski problems for bodies of revolution]{Mixed Christoffel--Minkowski problems\\ for bodies of revolution}
	\author{Leo Brauner}
	\address{Institut f\"ur Mathematik \newline%
		\indent Goethe-Universit\"at Frankfurt \newline%
		\indent Robert-Mayer-Str. 10, 60325 Frankfurt am Main, Germany}
	\email{brauner@math.uni-frankfurt.de}
	
	\author{Georg C.\ Hofst\"atter}
	\address{Institut f\"ur diskrete Mathematik und Geometrie \newline%
		\indent Technische Universit\"at Wien \newline%
		\indent Wiedner Hauptstrasse 8-10/1046, 1040 Wien, Austria}
	\email{georg.hofstaetter@tuwien.ac.at}
	
	\author{Oscar Ortega-Moreno}
	\address{Institut f\"ur diskrete Mathematik und Geometrie \newline%
		\indent Technische Universit\"at Wien \newline%
		\indent Wiedner Hauptstrasse 8-10/1047, 1040 Wien, Austria}
	\email{oscar.moreno@tuwien.ac.at}
	
	\begin{abstract}
		The mixed Christoffel--Minkowski problem
		asks for necessary and sufficient conditions for a Borel measure on the Euclidean unit sphere to be the mixed area measure of some convex bodies, one of which, appearing multiple times, is free and the rest are fixed. In the case where all bodies involved are symmetric around a common axis, we provide a complete solution to this problem, without assuming any regularity. In particular, we refine Firey's~\cite{Firey1970} classification of area measures of figures of revolution.
		
		In our argument, we introduce an easy way to transform mixed area measures and mixed volumes involving axially symmetric bodies, and we significantly improve Firey's~\cite{Firey1970a} estimate on the local behavior of area measures. As a secondary result, we obtain a family of Hadwiger type theorems for convex valuations that are invariant under rotations around an axis.
	\end{abstract}
	
	\maketitle
	
	\thispagestyle{empty}

	\section{Introduction}
	\label{sec:intro}
	
	A classic notion in convex and integral geometry is that of \emph{area measures} associated to a convex body, that is, a compact, convex compact set $K\subset \R^n$. These are geometric measures defined on the unit sphere $\S^{n-1}$ encoding local information about the boundary of $K$. Most prominently, the \emph{surface area measure} associated to a body $K$, denoted as $S_{n-1}(K,{}\cdot{})$, assigns to a Borel set $\beta\subseteq \S^{n-1}$ the Hausdorff measure of the boundary points of $K$ facing in the directions of $\beta$. The area measure of order $i\in\{1,\ldots,n-1\}$ can be obtained as a variation from the surface area measure:
	\begin{equation}\label{eq:area_meas_def}
		S_i(K,\beta)
		= \frac{i!}{(n-1)!}\left.\left(\frac{d}{dt}\right)^{\! n-i-1}\right|_{t=0^+} S_{n-1}(K + tB^n,\beta),
	\end{equation}
	where $B^n \subset \R^n$ denotes the Euclidean unit ball. If $K$ is sufficiently smooth, then $S_i(K,{}\cdot{})$ can be expressed in terms of the $i$-th elementary symmetric polynomial of the principal radii of curvature. In particular, $S_1(K,{}\cdot{})$ corresponds to the mean radius of curvature and $S_{n-1}(K,{}\cdot{})$, to the reciprocal Gauss curvature.
	
	A fundamental open problem, dating back to the foundational works of Christoffel (1865) and Minkowski (1897), is to classify the (Borel) measures on the unit sphere arising as $i$-th order area measures of convex bodies. For degree $i=1$, this problem was first investigated by Christoffel and ultimately resolved independently by Berg~\cite{Berg1969} and Firey~\cite{Firey1968}. For degree $i=n-1$, Minkowski~\cites{Minkowski1897,Minkowski1903} and Alexandrov~\cites{Alexandrov1938,Alexandrov1996} showed that a measure on $\S^{n-1}$ arises as the surface area measure of a convex body with non-empty interior if and only if it is not concentrated on any great sphere and its centroid lies at the origin. 

	Let us note that there are many other important geometric measures associated to convex sets, for which similar classification problems have been considered. Emerging from the seminal work by Lutwak~\cites{Lutwak1993,Lutwak1995}, a rich theory has developed around the $L^p$ and dual area measures, most notably cone-volume measures, which have been classified by Böröczky--Lutwak--Yang--Zhang and are closely related to the conjecture logarithmic Brunn--Minkowski inequality (see~\cites{Boeroeczky2012,Boeroeczky2013}). For an extensive exposition, we recommend the upcoming book by Böröczky--Figalli--Ramos~\cite{Boeroeczky2025a}, the monograph by Schneider~\cite{Schneider2014}, as well as the recent survey by Huang--Yang--Zhang~\cite{Huang2025}.

	The so-called \emph{intermediate} Christoffel--Minkowski problem, concerning the cases where $1<i<n-1$, has attracted significant attention, nevertheless it remains widely open. A remarkable breakthrough was achieved by Guan--Ma~\cite{Guan2003} (see also \cite{Bryan2023}), who determined a structurally rich sufficient condition: a centered function $\phi\in C(\S^{n-1})$ is the density of the $i$-th order area measure of some convex body whenever $\phi^{-1/i}$ is the support function of a convex body of class $C^2_+$ (that is, its boundary is a $C^2$ manifold with positive Gauss curvature).
	
	In order to gain some intuition on the general solution to the Christoffel--Minkowski problem, 
	Firey~\cite{Firey1970} considered the case of smooth bodies of revolution. In this case, the corresponding area measure is invariant under rotations about the axis of revolution, also called \emph{zonal}. This assumption reduces the complexity of the problem to one parameter, while still leading to a meaningful geometric insight. Here and in the following, we always take the axis of symmetry to be parallel to the $n$-th coordinate vector $e_n$, and for a zonal function $q$ on $\S^{n-1}$, we denote by $\bar{q}$ the unique function on $[-1,1]$ such that $q(u)=\bar{q}(\pair{e_n}{u})$, $u \in \S^{n-1}$, where $\pair{\cdot}{\cdot}$ denotes the standard inner product. Moreover, we write $\K(\R^n)$ for the set of convex bodies in $\R^n$.
	
	\begin{thm}[{\cite{Firey1970}}]\label{thm:Firey_Christoffel_minkowski}
		Let $1\leq i < n-1$ and $\mu$ be a centered, zonal measure on $\S^{n-1}$ with a strictly positive density $q \in C(\S^{n-1})$.
		Then there exists a body of revolution $K\in\K(\R^n)$ with $\mu=S_i(K,{}\cdot{})$ if and only if for all $t\in (-1,1)$,
		\begin{equation}\label{eq:Firey_Christoffel-Minkowski}
			(1-t^2)^{\frac{n-1}{2}}\bar{q}(t) - (n-i-1)\int_{t}^1 \bar{q}(s)s(1-s^2)^{\frac{n-3}{2}}\, ds > 0.
		\end{equation}
	\end{thm}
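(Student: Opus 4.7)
The plan is to reduce the problem to a first-order linear ODE by expressing $\bar{q}$ in terms of the principal radii of curvature of $K$. Assuming first that $K \in \K(\R^n)$ is a body of revolution of class $C^2_+$, I would write $h_K(u) = f(\langle e_n, u\rangle)$. By the rotational symmetry, the principal radii of curvature at a boundary point with outer normal $u$, $t = \langle e_n, u\rangle$, split into a single meridian radius and $(n-2)$ equal parallel radii, which a direct computation yields in the form
\begin{equation*}
r_p(t) = f(t) - tf'(t), \qquad r_m(t) = f(t) - tf'(t) + (1-t^2)f''(t) = r_p(t) - \frac{1-t^2}{t}\, r_p'(t).
\end{equation*}
Expanding the $i$-th elementary symmetric polynomial of the principal radii then gives the density of $S_i(K, \cdot)$ with respect to spherical Lebesgue measure as
\begin{equation*}
\bar{q}(t) = \tfrac{n-i-1}{n-1}\, r_p(t)^i + \tfrac{i}{n-1}\, r_m(t)\, r_p(t)^{i-1}.
\end{equation*}

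Next, I would substitute the expression for $r_m$ into the density formula, producing a first-order linear ODE for $y(t) := r_p(t)^i$, namely
\begin{equation*}
\bar{q}(t) = y(t) - \frac{1-t^2}{(n-1)\,t}\, y'(t).
\end{equation*}
With integrating factor $(1-t^2)^{(n-1)/2}$ and the boundary condition $\lim_{t \to 1^-}(1-t^2)^{(n-1)/2} y(t) = 0$ (valid because $r_p$ is bounded), integration from $t$ to $1$ yields the closed form
\begin{equation*}
(1-t^2)^{(n-1)/2}\, r_p(t)^i = (n-1)\int_t^1 \bar{q}(s)\, s(1-s^2)^{(n-3)/2}\, ds.
\end{equation*}
In particular $\bar{q} > 0$ automatically forces $r_p > 0$ on $(-1,1)$. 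Substituting this back into the density formula, the remaining convexity condition $r_m > 0$ becomes $(n-1)\bar{q}(t) > (n-i-1)r_p(t)^i$, which after multiplying by $(1-t^2)^{(n-1)/2}/(n-1)$ is exactly (\ref{eq:Firey_Christoffel-Minkowski}).

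For sufficiency without a priori smoothness, given $\bar{q}$ satisfying the inequality I would define $r_p$ by the integral formula above, recover $f$ by solving $f(t) - tf'(t) = r_p(t)$ separately on $(0, 1)$ and $(-1, 0)$ with the free integration constants fixed by continuity at $t = 0$ and by the centering of $\mu$, and verify that the resulting $f$ is the support function of the desired convex body of revolution. Necessity for a general $K$ follows by approximating $\bar{q}$ by smooth strictly positive zonal densities and passing to the limit. I expect the main obstacle to lie in this reconstruction step: one must match the two integration constants consistently, check that the recovered $f$ is a genuine support function on all of $[-1,1]$ with no issues at the axial directions $t = \pm 1$, and verify that the positivity of $r_m$ together with $r_p > 0$ (each only indirectly encoded by the inequality) really produces a convex body whose $i$-th area measure is the prescribed one.
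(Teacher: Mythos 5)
The statement you are proving is cited by the paper from Firey (1970); the paper itself does not give a proof, but recovers and extends it as Corollary~B, derived from \cref{mthm:Christoffel_Minkowski_zonal} with $C=B^n$. The paper's route is therefore quite different from yours: it first solves the Christoffel--Minkowski problem with the disk $\DD$ as reference body (\cref{mthm:ZonalMinkChristDisk}), establishes a transformation rule between $S_i(K,\CC;{}\cdot{})$ and $S_i(K,\DD;{}\cdot{})$ via an integral transform $T_R$ and mixed spherical projections (\cref{mthm:dictionary}), and then pushes the disk solution forward through the adjoint $T_R^\ast$. Your approach is the direct ODE argument for the principal radii of curvature, which is essentially Firey's original route. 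Your smooth-case computation is correct: the eigenvalues $r_p$ and $r_m$ agree with $\mathcal{A}_1^C$ and $\mathcal{A}_2^C$ in the paper's \cref{lem:SurfAreaMeasRevSmooth}, the reduction to the linear ODE $\bar{q} = y - \tfrac{1-t^2}{(n-1)t}y'$ with $y=r_p^i$ is sound, the integrating factor $(1-t^2)^{(n-1)/2}$ is right, and the equivalence of $r_m>0$ with \cref{eq:Firey_Christoffel-Minkowski} follows as you say.

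That said, the two steps you flag are genuine gaps, and one of them is more serious than you suggest. For necessity with general $K$, approximating $K$ by $C^2_+$ bodies gives only $S_i(K_j,\cdot)\rightharpoonup\mu$ weakly; the densities $\bar{q}_j$ need not converge pointwise to $\bar{q}$, so the pointwise inequality \cref{eq:Firey_Christoffel-Minkowski} for $\bar{q}_j$ does not pass to the limit. To make this work one should recast the inequality as non-negativity of a measure (precisely the form in Corollary~B), which is stable under weak convergence. For sufficiency, the reconstruction of $f$ from $r_p$ via $f-tf'=r_p$ introduces two integration constants on $(0,1)$ and $(-1,0)$; matching them into a $C^1$ function at $t=0$ requires $r_p'(0)=0$, which does follow from your integral formula but needs saying, and the remaining constant is the translation along $e_n$. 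You also need $r_p>0$ from $r_p^i>0$ when $i$ is even (one must take the positive root, which is fine but should be stated), and you must check that $f$ remains bounded as $t\to\pm1$ and that the resulting function is genuinely a support function — this is where the endpoint behavior captured by condition~\ref*{mthm:Christoffel_Minkowski_zonal:LimEx} of \cref{mthm:Christoffel_Minkowski_zonal} (automatic for strictly positive continuous densities, but not for free) enters. Finally, after constructing $K$, one still has to verify $S_i(K,\cdot)=\mu$, which for a non-$C^2_+$ body requires an approximation or weak-identity argument of its own.
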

	
	Note that here we state the theorem in a form equivalent to that in~\cite{Firey1970}, but using a different parametrization and replacing some of Firey's original conditions with assumptions of positivity and centeredness.
	
	A natural generalization of the Christoffel--Minkowski problem is obtained if the role of the unit ball~$B^n$ in the variational formula~\cref{eq:area_meas_def} is taken by some other convex body~$C$. That is, define the mixed area measure $S_i(K,C;{}\cdot{})$ of two convex bodies $K$ and $C$ as
	\begin{equation}\label{eq:area_meas_anisotropic_def}
		S_i(K,C;\beta)
		= \frac{i!}{(n-1)!}\left.\left(\frac{d}{dt}\right)^{\! n-i-1}\right|_{t=0^+} S_{n-1}(K + tC,\beta),
	\end{equation}
	where $\beta\subseteq \S^{n-1}$ is a Borel set.
	The measure $S_i(K,C;{}\cdot{})$ can be regarded as a somewhat anisotropic version of the $i$-th order area measure associated to $K$. Anisotropic geometric measures are being investigated in various contexts (see, e.g., \cites{Ludwig2014,Ludwig2014a,Boeroeczky2025,Hug2000,Schneider2025,Reilly1976,Scheuer2025,Andrews2021}). This gives rise to the \emph{anisotropic Christoffel--Minkowski problem}:
	
	\begin{probintro*}
		Given $1 \leq i < n-1$ and $C\in \K(\R^n)$, find necessary and sufficient conditions for a Borel measure $\mu$ on $\S^{n-1}$ such that $\mu = S_i(K,C; {}\cdot{})$ for some convex body $K \in \K(\R^n)$.
	\end{probintro*}
	
	This problem has only started to gain attention very recently~\cite{Colesanti2025a}, and little is known about it, even when compared to the isotropic setting. In the same spirit as Firey's work~\cite{Firey1970}, we consider the anisotropic Christoffel--Minkowski problem under the assumption of an axial symmetry.

	\subsection{Main result} In order to state our main result, we introduce some notation to describe a given convex body of revolution $C\in\K(\R^n)$. We define a function $R_C$ on $(-1,1)$ by
	\begin{align*}
		R_C(t) = \sqrt{1-t^2}(\overline{h_C}(t) - t \partial_+ \overline{h_C}(t)), \qquad t \in (-1,1),
	\end{align*}
	where $h_C(u) = \overline{h_C}(\pair{e_n}{u})$ is the support function of $C$ and $\partial_+$ is the right-derivative. Moreover, we denote by $\ell_C$ the maximal length of a vertical segment (that is, parallel to the axis $e_n$) contained in the boundary of $C$. Note that for a.e.\ $u\in\S^{n-1}$, the horizontal cross-section (that is, by a translate of $e_n^\perp$) of $C$ through a boundary point with outer normal $u$ is a disk of radius $R_C(\pair{e_n}{u})$ (see~\cref{lem:geomIntuitRc}). Together with the quantity $\ell_C$, the function $R_C$ captures the profile of $C$, parametrized via the Gauss map. In particular, $R_C$ is a non-negative function that increases on $(-1,0]$ and decreases on $[0,1)$; it is strictly positive on some (maximal) subinterval $(a_-,a_+)\subseteq (-1,1)$, and $a_-<0<a_+$ unless $C$ is a mere vertical segment. The upward and downward facing normal cones of $C$ are given by  $\S^{n-1} \cap N(C, F(C, \pm e_n)) = \mathrm{Cap}(\pm e_n, \pm a_{\pm})$, where throughout
	\begin{equation*}
		\mathrm{Cap}(v,t) 
		= \{ u\in\S^{n-1} : \pair{u}{v} \geq t \}, \quad t \in [0,1],
	\end{equation*}
	denotes the closed spherical cap of radius $\arccos t$ around $v\in\S^{n-1}$. Moreover, to a positive function $R$ on $(a_-,a_+)$ that is continuous at $t=0$, increases on $(a_-,0]$ and decreases on $[0,a_+)$ (or vice versa), we assign a signed Borel measure $\nu_{R}$ on $(a_-,a_+)$. When $R$ is smooth, it is given by $\nu_{R}(dt)=-R'(t)/t\, dt$, and through distributional derivatives, it extends naturally also to non-smooth $R$.
	
	We now state our main result, providing necessary and sufficient conditions for the an\-iso\-tropic Christoffel--Minkowski problem when both bodies involved are bodies of revolution. Here and throughout we write $\bar \mu = (\pair{e_n}{{}\cdot{}})_\ast \mu$ for the pushforward of a measure $\mu$ on $\S^{n-1}$ to the interval $[-1,1]$ under the map $u \mapsto \pair{e_n}{u}$. Moreover, we denote $R(t^\times)=\lim_{\eta\searrow 1}R(\eta t)$.
	
	\begin{thmintro}\label{mthm:Christoffel_Minkowski_zonal}
		Let $1\leq i < n-1$, let $C$ be a convex body of revolution that is not a segment, and let $\mu$ be a centered, zonal Borel measure on $\S^{n-1}$. Then there exists a body of revolution $K\in\K(\R^n)$ that is not a segment such that $\mu=S_i(K,C;{}\cdot{})$ if and only if
		\begin{enumerate}[label=\upshape(\roman*)]
			\item\label{mthm:Christoffel_Minkowski_zonal:supp}
			$\mu$ is supported in $\{u\in\S^{n-1}: a_{-} \leq \pair{e_n}{u} \leq a_{+} \}$,
			\item\label{mthm:Christoffel_Minkowski_zonal:concentr}
			$\mu$ is not concentrated on $\S^{n-1}\cap e_n^\perp$,
			\item\label{mthm:Christoffel_Minkowski_zonal:measPos}
			the signed Radon measure on	$(a_-,a_{+})$ given by
			\begin{equation*}\label{eq:mthmChrMinkMeasPos}
				R_C^{-(n-i-1)}(t^\times) \, \bar{\mu}(dt) + \left(\int_{[t,\sign t]} \!s\, \bar{\mu}(ds)\right) \nu_{R_C^{-(n-i-1)}}(dt),
			\end{equation*}
			is non-negative and finite, 
			\item\label{mthm:Christoffel_Minkowski_zonal:LimEx}
			the limits 
			\begin{align*}\label{eq:mthmChrMinkLimEx}
				\lim_{t \nearrow a_{+}} \frac{\mu(\mathrm{Cap}( e_n,t))}{R_C^{n-i-1}(t)}
				\qquad\text{and}\qquad
				\lim_{t \searrow a_{-}} \frac{\mu(\mathrm{Cap}(- e_n,-t))}{R_C^{n-i-1}(t)}
			\end{align*}
			exist and are finite, and
			\item \label{mthm:Christoffel_Minkowski_zonal:AtEquat}
			$\displaystyle R_C(0)\mu(\S^{n-1}\cap e_n^\perp) - (n-i-1)\ell_C \int_{\S^{n-1}} \max\{0,\pair{e_n}{u}\} \, \mu(du) \geq 0$.
		\end{enumerate}
	\end{thmintro}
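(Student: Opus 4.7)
The strategy is to reduce the problem on $\S^{n-1}$ to a one-dimensional problem on $[-1,1]$ via the pushforward $\mu \mapsto \bar\mu$, and to derive a \emph{reduction identity} expressing $\overline{S_i(K,C;{}\cdot{})}$ explicitly in terms of the profile functions $R_K$ and $R_C$ of two bodies of revolution $K$ and $C$.

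The first step is to establish such an identity. For $C^2_+$ bodies of revolution, the principal radii of curvature at a boundary point with outer normal of height $t = \pair{e_n}{u}$ split into an azimuthal radius proportional to $R_K(t)/\sqrt{1-t^2}$ and a meridional radius that is second-order in $\overline{h_K}$. Evaluating the $i$-th mixed elementary symmetric polynomial of the principal radii of $K$ and $C$ and integrating over the rotation orbits around $e_n$ yields a formula of the schematic shape
\begin{equation*}
	\overline{S_i(K,C;{}\cdot{})}(dt)
	\;=\; c_{n,i}\, R_C^{n-i-1}(t)\,\bigl[R_K(t)\,dt + (\text{distributional term in } R_K)\bigr],
\end{equation*}
plus point masses at $t = a_\pm$ from the polar caps of $C$ and a possible atom at $t = 0$ produced by a vertical segment of length $\ell_C$ in $\partial C$. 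The identity is then extended to arbitrary bodies of revolution by approximation; it plays the role of the promised \emph{easy transform} for mixed area measures under axial symmetry and can be inverted to express $R_K$ in terms of $\bar\mu$ and $R_C$.

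For \emph{necessity}, one reads off (i)--(v) from this reduction identity: (i) and (ii) are direct; (iii) is the non-negativity of $R_K$, rewritten as a Radon--Nikodym density so that the $\nu_{R_C^{-(n-i-1)}}$-term absorbs the distributional derivative appearing in the identity (an integration by parts check recovers Firey's inequality~\eqref{eq:Firey_Christoffel-Minkowski} when $C = B^n$); (iv) is the finiteness of the polar atoms and is where the sharpened version of Firey's local estimate announced in the abstract enters; (v) pairs the equatorial atom of $\bar\mu$ with $\ell_C$. For \emph{sufficiency}, given $\mu$ satisfying (i)--(v), define a non-negative profile function $R_K$ on $(a_-, a_+)$ so that $c_{n,i}\, R_C^{n-i-1}(t)\, R_K(t)\, dt$ equals the absolutely continuous part of the measure in (iii), reconstruct the support function $\overline{h_K}$ by the standard integral formula for an axially symmetric body in terms of its profile, and verify $S_i(K,C;{}\cdot{}) = \mu$ by invoking the reduction identity in reverse.

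The main obstacle is precisely the convexity of the reconstructed body: condition (iii) gives non-negativity of $R_K$, but convexity requires the correct monotonicity of $R_K$ (increasing on $(a_-, 0]$, decreasing on $[0, a_+)$) and compatibility with the second derivatives of $\overline{h_K}$. This has to be extracted from the centeredness of $\mu$ together with the global structure of the identity, and is what forces all of (iii)--(v) to be needed simultaneously rather than just (iii). A second, pervasive technical difficulty is the non-smooth regime: $R_C$ may jump at $a_\pm$ so that $R_C^{-(n-i-1)}$ blows up there, and $\bar\mu$ may have atoms on the interior. Handling these uniformly is exactly where the distributional calculus of $\nu_{R_C^{\pm(n-i-1)}}$ and the sharpened Firey estimate become indispensable.
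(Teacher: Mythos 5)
Your proposal takes a genuinely different route from the paper, and there are two gaps that I think are fatal as written.

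The paper's argument does \emph{not} try to derive a direct formula for $\overline{S_i(K,C;{}\cdot{})}$ in terms of $R_K$ and $R_C$. Instead it sets up an integral transform $\widehat{T}_{\CC}$ that converts test functions so that $\int f\, dS_i(K,\CC;{}\cdot{}) = \int \widehat{T}_{\CC}\bar f\, dS_i(K,\DD;{}\cdot{})$ for \emph{all} $K$ (\cref{mthm:dictionary}), proved via the zonal Klain determination theorem and the mixed spherical projection. This converts the anisotropic problem to the disk problem, which is then solved (\cref{mthm:ZonalMinkChristDisk}) not by reconstructing a profile but by pulling $\mu$ back to each $(i+1)$-dimensional subspace containing $e_n$, invoking the \emph{classical} Minkowski existence theorem there, and reassembling via the Kubota-type identity \cref{eq:zonal_Kubota_polarized}. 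Conditions (i)--(v) are then read off from the invertibility criteria for the adjoint $\widehat{T}_{\CC}^\ast$ on measures, not from a pointwise profile formula.

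The first gap in your approach is that the proposed reduction identity is not of the shape you write. For $i$ copies of $K$, the density of $S(K^{[i]}, C^{[n-i-1]};{}\cdot{})$ (even in the $C^2_+$ case) is a polynomial of \emph{degree $i$} in the principal curvature data $(\mathcal{A}_1^K, \mathcal{A}_2^K)$, mixed with degree $n-i-1$ in that of $C$; it is certainly not linear in $R_K$. Consequently the reconstruction step -- ``define $R_K$ so that $c_{n,i} R_C^{n-i-1} R_K\, dt$ equals the absolutely continuous part of the measure in (iii)'' -- does not make sense: you are not inverting a linear map but trying to solve a nonlinear (Monge--Amp\`ere type, even in one radial variable) relation, and there is no clean formula relating the profile $R_K$ to that measure. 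The paper's crucial observation is that the transform $\widehat{T}_{\CC}$ acting on \emph{test functions} is linear and $K$-independent, so all the nonlinearity is hidden inside $S_i(K,\DD;{}\cdot{})$, which one never needs to compute explicitly.

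The second gap is the one you flag yourself: even granting a profile $R_K$, you must show that it integrates to a \emph{convex} support function $\overline{h_K}$. You say this ``has to be extracted from the centeredness of $\mu$ together with the global structure of the identity,'' but you do not produce a mechanism, and for $1 < i < n-1$ this is essentially the (open) intermediate Christoffel--Minkowski problem. The paper avoids this entirely: convexity of $K$ is inherited from the classical Minkowski problem applied in each $(i+1)$-plane $E \ni e_n$ (surface area measure in $E$ is $i$-th order there), and the $K_E$'s glue into one body of revolution because the pullback measures $\mu_E$ are $\SO(n-1)$-compatible. Without this reduction to full-codimension Minkowski, your sufficiency direction cannot close.
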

	
	Note that condition~\ref*{mthm:Christoffel_Minkowski_zonal:measPos} corresponds to the pointwise inequality~\cref{eq:Firey_Christoffel-Minkowski} in \cref{thm:Firey_Christoffel_minkowski}; if the reference body $C$ is smooth, then $R_C$ and $\nu_{R_C}$ can be expressed in terms of principal radii of curvature of $C$.
	Condition~\ref*{mthm:Christoffel_Minkowski_zonal:concentr} is similar to that in Minkowski's existence theorem, ensuring that $K$ does not degenerate. 
	Moreover, if the face of $K$ in the direction of the north or south pole is a disk (and $a_\pm=\pm 1$), the limits in~\ref*{mthm:Christoffel_Minkowski_zonal:LimEx} describe its size. In general, these limits prescribe the behavior of $\mu$ at the poles, similar to an estimate by Firey~\cite{Firey1970a}, see \cref{mthm:mixedFirey} below.
	The remaining conditions arise from the potential lack of regularity of the boundary of~$C$. If $C$ has an apex, then~\ref*{mthm:Christoffel_Minkowski_zonal:supp} expresses that the support of $S_i(K,C;{}\cdot{})$ is disjoint from the interior of the corresponding normal cone. In that case, the limits in~\ref*{mthm:Christoffel_Minkowski_zonal:LimEx} describe  $S_i(K,C;{}\cdot{})$ at the rim of its support. Finally, if the boundary of $C$ contains some non-trivial vertical segment, condition~\ref*{mthm:Christoffel_Minkowski_zonal:AtEquat} ensures that $\mu$ has sufficient mass at the equator $\S^{n-1}\cap e_n^\perp$ so that indeed $\ell_K\geq 0$.
	
	In the isotropic case, that is, where $C$ is the Euclidean unit ball, we obtain the following corollary that recovers and extends the classification of \cref{thm:Firey_Christoffel_minkowski} by Firey.
	
	\begin{corintro}
		Let $1\leq i <n-1$ and let $\mu$ be a centered, zonal Borel measure on $\S^{n-1}$. Then there exists a body of revolution $K\in\K(\R^n)$ that is not a segment such that $\mu=S_i(K,{}\cdot{})$ if and only if
		\begin{itemize}
			\item
			$\mu$ is not concentrated on $\S^{n-1}\cap e_n^\perp$,
			\item
			the signed Radon measure on	$(-1,1)$ given by
			\begin{equation*}
				(1-t^2)^{-\frac{n-i-1}{2}} \, \bar{\mu}(dt) - (n-i-1) \left(\int_{[t,\sign t]} \!s\, \bar{\mu}(ds)\right) (1-t^2)^{-\frac{n-i+1}{2}}dt,
			\end{equation*}
			is non-negative and finite, and
			\item
			the limits $\lim_{t \to 1} (1-t^2)^{-\frac{n-i-1}{2}}\mu(\mathrm{Cap}( \pm e_n,t)) $ exist and are finite.
		\end{itemize}
	\end{corintro}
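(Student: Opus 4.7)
The plan is to derive the corollary as a direct specialization of \cref{mthm:Christoffel_Minkowski_zonal} to the Euclidean unit ball $C = B^n$. Almost the whole argument is a computation of the geometric data $R_C$, $a_\pm$, $\ell_C$, and $\nu_{R_C^{-(n-i-1)}}$ attached to $B^n$, followed by a verification that the five conditions of the main theorem collapse onto the three bullets of the corollary.

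First I would observe that $h_{B^n}(u) = 1$ for all $u\in\S^{n-1}$, so $\overline{h_{B^n}}(t) \equiv 1$ and $\partial_+ \overline{h_{B^n}} \equiv 0$. This yields
\begin{equation*}
    R_{B^n}(t) = \sqrt{1-t^2},\qquad t \in (-1,1),
\end{equation*}
which is strictly positive on the entire open interval and vanishes only at the endpoints, so $a_\pm = \pm 1$. Moreover $\partial B^n$ contains no segments, hence $\ell_{B^n} = 0$. The function $R_{B^n}^{-(n-i-1)}(t) = (1-t^2)^{-(n-i-1)/2}$ equals $1$ at $t=0$ and blows up as $t \to \pm 1$, so it satisfies the reversed-monotonicity option in the definition of $\nu_R$. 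Since this function is smooth on $(-1,1)$, a direct differentiation gives
\begin{equation*}
    \nu_{R_{B^n}^{-(n-i-1)}}(dt) \;=\; -(n-i-1)\,(1-t^2)^{-(n-i+1)/2}\,dt.
\end{equation*}

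With these inputs I would match each of the five conditions of \cref{mthm:Christoffel_Minkowski_zonal} against the corollary. Condition~(i) is vacuous because $\{u\in\S^{n-1} : -1 \leq \pair{e_n}{u} \leq 1\} = \S^{n-1}$; condition~(ii) is identical to the first bullet; condition~(iii) becomes the second bullet after substituting the formulas above; condition~(iv) becomes the third bullet, once one writes $R_{B^n}^{n-i-1}(t) = (1-t^2)^{(n-i-1)/2}$ and uses that $\mathrm{Cap}(-e_n,-t) = \{u : \pair{e_n}{u} \leq t\}$ combined with the change of variables $t \mapsto -t$; and condition~(v) reduces to $\mu(\S^{n-1}\cap e_n^\perp)\geq 0$, which is automatic since $\mu \geq 0$.

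There is no genuine obstacle here. The only point needing a moment of care is keeping track of the sign conventions and of the monotonicity-reversal clause in the definition of $\nu_R$, so that the minus sign appearing in $\nu_{R_{B^n}^{-(n-i-1)}}$ correctly combines with the plus sign in condition~(iii) of the main theorem to produce the subtraction in the corollary's second bullet, and so that the strict-positivity and continuity hypotheses imposed by Firey in \cref{thm:Firey_Christoffel_minkowski} are seen to be unnecessary.
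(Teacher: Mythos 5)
Your proposal is correct and takes the same route the paper does: the corollary is stated in the text as the specialization of \cref{mthm:Christoffel_Minkowski_zonal} to $C=B^n$, and your explicit computations of $R_{B^n}(t)=\sqrt{1-t^2}$, $a_\pm=\pm1$, $\ell_{B^n}=0$, and $\nu_{R_{B^n}^{-(n-i-1)}}(dt)=-(n-i-1)(1-t^2)^{-(n-i+1)/2}\,dt$ (together with the substitution $t\mapsto -t$ in the southern cap) are exactly what makes conditions (i) and (v) vacuous and turns (ii)–(iv) into the three bullets.
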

	
	We want to emphasize that our approach extends to much more general geometric measures, namely for \emph{mixed area measures}. Indeed, the mixed area measure of a family of convex bodies $K_1,\ldots,K_{n-1}\in\K(\R^n)$, denoted as $S(K_1,\ldots,K_{n-1};{}\cdot{})$, is a Borel measure on the unit sphere, which can be obtained by a variational formula such as \cref{eq:area_meas_anisotropic_def}, involving a multivariate derivative. Perhaps more instructive, they can be obtained from polarization of the surface area measure: For any $K_1,\ldots,K_m\in\K(\R^n)$ and $\lambda_1,\ldots,\lambda_m\geq 0$,
	\begin{equation*}
		S_{n-1}(\lambda_1K_1 + \cdots + \lambda_mK_m,{}\cdot{})
		= \sum_{1\leq i_1,\ldots,i_{n-1}\leq m} \lambda_{i_1}\cdots\lambda_{i_{n-1}}S(K_{i_1},\ldots,K_{i_{n-1}};{}\cdot{}).
	\end{equation*}
	Mixed area measure play a central role in the study of several well-known geometric inequalities, e.g. the Alexandrov--Fenchel inequality, especially with regard to the problem of determining equality cases (see, e.g., \cites{Shenfeld2020,Shenfeld2022,VanHandel2025, Schneider1975c, Schneider1985, Hug2025, Hug2024c}).
	
	Essentially polarizing the conditions, we obtain a version of \cref{mthm:Christoffel_Minkowski_zonal} where the reference body $C$ is replaced by a tuple $\CC = (C_1, \dots, C_{n-i-1})$ of reference bodies of revolution, solving a \emph{mixed Christoffel--Minkowski problem} for the measures $S_i(K, \CC;{}\cdot{}) = S(K^{[i]}, C_1, \dots, C_{n-i-1}; {}\cdot{})$, where $K^{[i]}$ denotes the tuple consisting of $i$ copies of $K$. As the conditions do not change qualitatively, yet become more complex, we omit the statement of the general theorem here and refer the reader to \cref{thm:Christoffel_Minkowski_zonal} in \cref{sec:ChristMinkProb}.
	
	Finally, we would also like to address the question of uniqueness of the body $K\in\K(\R^n)$. It is well known that if the reference bodies are smooth, then the mixed area measure $S_i(K,\CC;{}\cdot{})$ determines the body $K$ up to a translation (regardless of any symmetry assumptions, see \cite{Schneider2014}*{Theorem~8.1.3}). We find that this is still true in the setting of bodies of revolutions as long as none of the reference bodies has any apex; otherwise, an entire patch of the boundary of any solution $K$ may be perturbed, making it highly non-unique (see \cref{rem:ChristoffelMinkowski_unique}).

	\subsection{Ideas of the proof}
	
	The core idea of this article is to relate Christoffel--Minkowski problems with different reference bodies to each other. In this way, we will reduce the problem involving an arbitrary reference body $C$ to the instance where $C$ is the $(n-1)$-dimensional unit disk in $e_n^\perp$, denoted as $\DD=B^n \cap e_n^\perp$. Here, we obtain the following classification, which is somewhat reminiscent of the resolution of the classical Minkowski problem.
	
	\begin{thmintro}\label{mthm:ZonalMinkChristDisk}
		Let $1 \leq i < n-1$ and let $\mu$ be a non-negative, zonal Borel measure on $\S^{n-1}$. Then there exists a body of revolution $K \in \K(\R^n)$ that is not a segment with $\mu = S_i(K, \DD; {{}\cdot{}})$ if and only if $\mu$ is centered and not concentrated on $\S^{n-1}\cap e_n^\perp$.
	\end{thmintro}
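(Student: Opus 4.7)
Suppose $\mu = S_i(K,\DD;\cdot)$ for a body of revolution $K \in \K(\R^n)$ that is not a segment. Centeredness of $\mu$ follows from translation invariance of mixed volumes: applied to $L = [0,v]$ and $L = [-v,0]$, the integral representation $\int h_L\,d\mu = n V(L, K^{[i]}, \DD^{[n-i-1]})$ gives $\int \pair{u}{v}\,d\mu(u) = 0$ for every $v \in \R^n$. Zonality is immediate from the $\SO(n-1)$-equivariance of the mixed area measure operator together with the shared symmetry of $K$ and $\DD$. If $\mu$ were concentrated on the equator, then $\int h_L\,d\mu = 0$ for every vertical segment $L$, forcing $V(L, K^{[i]}, \DD^{[n-i-1]}) = 0$; since $\DD^{[n-i-1]}$ already spans $e_n^\perp$, this requires $K \subseteq e_n^\perp$, so $K$ is a horizontal disk or a point, for which a direct computation shows $S_i(K,\DD;\cdot)$ is supported on $\{\pm e_n\}$, contradicting nontrivial equator support.

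\textbf{Sufficiency via variational method.} For the converse, I would apply an Alexandrov-type variational argument. On the class $\K_{\mathrm{zon}}$ of zonal convex bodies, consider
\[
\Psi(K) = \int_{\S^{n-1}} h_K\,d\mu, \qquad \Phi(K) = V(K^{[i+1]}, \DD^{[n-i-1]}),
\]
and maximize $\Phi(K)^{1/(i+1)}$ subject to $\Psi(K) \le 1$. Centeredness of $\mu$ makes $\Psi$ translation invariant, and the shared zonality of $\mu$ and $\DD$ permits restriction to zonal $K$ via $\SO(n-1)$-averaging combined with the Alexandrov--Fenchel inequality. At a non-degenerate maximizer $K^*$ (with $\Phi(K^*) > 0$), perturbing $K^* \mapsto K^* + \varepsilon L$ for arbitrary $L \in \K_{\mathrm{zon}}$ and using
\[
\frac{d}{d\varepsilon}\bigg|_{\varepsilon = 0^+} V\bigl((K^*+\varepsilon L)^{[i+1]}, \DD^{[n-i-1]}\bigr) = \frac{i+1}{n} \int_{\S^{n-1}} h_L\,dS_i(K^*,\DD;\cdot)
\]
yields, via Lagrange multipliers, the relation $\mu = c \cdot S_i(K^*,\DD;\cdot)$ for some $c > 0$. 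Rescaling $K := c^{1/i} K^*$ then gives $S_i(K,\DD;\cdot) = \mu$ by homogeneity of the mixed area measure.

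\textbf{Compactness and the main obstacle.} The crucial step is establishing the existence of a non-degenerate maximizer, which is where the hypotheses on $\mu$ enter. Because $\mu$ is not concentrated on the equator, a short vertical capped cylinder provides a trial body with $\Phi > 0$ and $\Psi \le 1$, so $\sup\Phi > 0$. For a maximizing sequence $(K_j)$, the constraint $\Psi(K_j) \le 1$ combined with mass of $\mu$ at any interior latitude $t \in (-1,1) \setminus \{0\}$ bounds both the vertical and horizontal extents of $K_j$ simultaneously; in the sparser situation where $\mu$ has mass only at the poles and/or on the equator, the equator mass bounds horizontal extent while the pole mass bounds vertical extent separately. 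Blaschke selection then produces a Hausdorff limit $K^*$, which cannot be a vertical segment since $\Phi$ vanishes on segments. The one borderline case to treat separately is $\mu$ supported entirely on the poles $\{\pm e_n\}$: centeredness forces $\mu = a(\delta_{e_n} + \delta_{-e_n})$ for some $a > 0$, the variational ratio becomes unbounded, and the solution is instead the explicit horizontal disk $K = r\DD$ for an appropriate $r > 0$, verified directly via $S_i(r\DD,\DD;\cdot) = r^i S_{n-1}(\DD,\cdot)$ together with the fact that $S_{n-1}(\DD,\cdot)$ is a positive multiple of $\delta_{e_n} + \delta_{-e_n}$. The main technical difficulty I anticipate is making this compactness argument tight, particularly for zonal measures whose mass concentrates near but not entirely at the poles.
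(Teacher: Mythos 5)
Your proposal takes a genuinely different route from the paper. You set up a direct Alexandrov-type variational argument in $\R^n$, maximizing $V(K^{[i+1]},\DD^{[n-i-1]})^{1/(i+1)}$ under a linear constraint. The paper instead uses a dimensional-reduction argument: it applies the zonal Kubota-type formula \cref{eq:zonal_Kubota_polarized} to pass to the classical Minkowski problem on each $(i+1)$-dimensional subspace $E$ containing $e_n$. For each such $E$, the zonal pullback $\mu_E$ of $\bar\mu$ is centered and (once the pole-only case is split off) not concentrated on any great subsphere, so Minkowski's existence theorem in $E$ produces a body of revolution $K_E$; these patch into a single body of revolution $K$ by $\SO(n-1)$-equivariance, and \cref{eq:zonal_Kubota_polarized} then gives $\mu=S_i(K,\DD;\cdot)$ after scaling. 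This approach buys the paper all of the heavy analysis (compactness, Euler--Lagrange identity, handling of degenerate maximizers) for free, because it is already encoded in the classical Minkowski theorem; it also delivers uniqueness of $K$ up to a vertical translation essentially immediately, by the uniqueness in the lower-dimensional Minkowski problem. Your approach would in effect reprove that existence theory from scratch in the zonal setting.

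The genuine gap is exactly where you flag it: compactness of a maximizing sequence. As written, the argument that $\Psi(K_j)\leq 1$ bounds the bodies $K_j$ is only sketched, and the one-sided first-variation formula at a possibly lower-dimensional maximizer $K^*$ (for instance a disk) is delicate: $K^*+\varepsilon L$ only makes sense for $\varepsilon\geq 0$, so the Lagrange-multiplier step yields an inequality that must be upgraded to an equality. Both points are standard in the literature on the Minkowski problem, but they are not automatic here and would need to be carried out in detail; the paper avoids them entirely. There is also a small slip in your necessity argument: $V([0,e_n],K^{[i]},\DD^{[n-i-1]})=0$ forces $K$ to be a vertical segment (contained in $\R e_n$), not $K\subseteq e_n^\perp$. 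By \cref{eq:MixedVol_Proj} this mixed volume equals a constant times $V_i(K|e_n^\perp)$, and since $K|e_n^\perp$ is a disk, $V_i(K|e_n^\perp)=0$ exactly when $K|e_n^\perp$ is a point. The correct conclusion is then an immediate contradiction with the hypothesis that $K$ is not a segment, and the detour through horizontal disks is not needed. The paper's necessity proof instead uses \cref{eq:zonal_Kubota_polarized} to pass the concentration condition down to $S_i^E(K|E,\cdot)$ and invokes Minkowski's theorem there.
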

	Let us point out that the case $i=1$ of \cref{mthm:ZonalMinkChristDisk} is treated by the authors without any symmetry restrictions on the convex body $K$ in \cite{Brauner2025b}. There, necessary and sufficient conditions are given for $\mu = S_1(K,\DD;{{}\cdot{}})$ whenever $\mu$ has no mass at $\pm e_n$.
	
	Next, we want to transfer the solution for $C = \DD$ to general bodies of revolution. To this end, we construct an integral operator $\widehat{T}_{\CC} : C(-1,1)\to C(-1,1)$ (see \cref{def:TChat}) which we employ to establish the following transformation rule.
	
	\begin{thmintro}\label{mthm:dictionary}
		Let $1\leq i<n-1$ and $\CC=(C_1,\ldots,C_{n-i-1})$ be a family of convex bodies of revolution in $\R^n$, none of which is a vertical segment. Whenever $f,g \in C(\S^{n-1})$ are zonal and $\widehat{T}_{\CC}\bar{f}=\bar{g}$, then
		\begin{equation}\label{eq:mthmdictZonalValContDisk_statement}
			\int_{\S^{n-1}} f \, dS_i(K,\CC;{}\cdot{})
			= \int_{\S^{n-1}} g \, dS_i(K,\DD;{}\cdot{}) \quad \text{ for all $K\in\K(\R^n)$}.
		\end{equation}
	\end{thmintro}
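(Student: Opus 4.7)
The plan is to exploit multilinearity of mixed area measures in the reference slots, reducing Theorem~D to a single-body transformation that we establish first for smooth bodies by integration by parts and then extend by Hausdorff approximation. Specifically, I would first verify from the definition of $\widehat{T}_\CC$ that it factors as $\widehat{T}_{C_1}\circ\cdots\circ \widehat{T}_{C_{n-i-1}}$, each factor being the single-body operator associated with one reference body. By multilinearity of the mixed area measure in each of the $n-i-1$ reference slots, it then suffices to prove: for every body of revolution $C\in\K(\R^n)$ that is not a vertical segment and every pair $f,g\in C(\S^{n-1})$ of zonal functions with $\widehat{T}_C\bar{f}=\bar{g}$,
\begin{equation*}
\int_{\S^{n-1}}\! f\,dS(K^{[i]},C,L_1,\ldots,L_{n-i-2};\cdot)=\int_{\S^{n-1}}\! g\,dS(K^{[i]},\DD,L_1,\ldots,L_{n-i-2};\cdot)
\end{equation*}
for every $K\in\K(\R^n)$ and arbitrary auxiliary bodies $L_1,\ldots,L_{n-i-2}\in\K(\R^n)$. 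Iterating by replacing the $C_j$ with $\DD$ one index at a time then yields the full theorem.

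Next, I would establish this single-body identity when $C$ has $C^2_+$ boundary. In that case $\overline{h_C}$ is $C^2$ on $(-1,1)$, $R_C$ is smooth and positive, and $\nu_{R_C}$ is absolutely continuous. Writing $h_C(u)=\overline{h_C}(\pair{e_n}{u})$ as an explicit expression in $R_C$, $t=\pair{e_n}{u}$, and $h_\DD(u)=\sqrt{1-t^2}$, and using multilinearity of the mixed area measure in its first reference slot, one rewrites the left-hand side as an integral against $S(K^{[i]},\DD,L_1,\ldots,L_{n-i-2};\cdot)$ with a modified zonal integrand. A single integration by parts in the zonal variable $t$ moves derivatives off $R_C$ onto $f$ and produces the modified function $\widehat{T}_C\bar{f}=\bar{g}$, with the boundary contributions at $t=\pm 1$ vanishing because $h_\DD$ does. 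The general case of an arbitrary body of revolution $C$ is then handled by Hausdorff approximation from outside by $C^2_+$ bodies of revolution $C_\varepsilon$: weak continuity of mixed area measures $S(K^{[i]},C_\varepsilon,L_1,\ldots;\cdot)\to S(K^{[i]},C,L_1,\ldots;\cdot)$ handles the left-hand side, while a separate convergence of $\widehat{T}_{C_\varepsilon}\bar{f}$ to $\widehat{T}_C\bar{f}$, in a sense sufficient for integration against $S(K^{[i]},\DD,L_1,\ldots;\cdot)$, handles the right-hand side.

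The hardest step, I expect, is controlling both sides at the singular features of $C$. When $C$ has an apex at $\pm e_n$ or a vertical segment on its boundary, $R_C$ vanishes at $t=a_\pm$ or jumps at $t=0$, and $\nu_{R_C}$ acquires point masses. These singular contributions appear both as boundary terms in the integration by parts and as concentrations of $S(K^{[i]},C_\varepsilon,L_1,\ldots;\cdot)$ in narrowing spherical caps and equatorial bands, and matching them rigorously in the limit should be the technical core of the argument; this is precisely the mechanism by which $\ell_C$, $R_C(t^\times)$, and the cap-limit conditions are forced to appear in the statement of Theorem~A.
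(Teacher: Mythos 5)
Your proposal differs fundamentally from the paper's argument, and the central step has a genuine gap. The paper's proof does not do any integration by parts against the $(n-1)$-dimensional mixed area measure. Instead it observes that both sides of \cref{eq:mthmdictZonalValContDisk_statement} are continuous, translation-invariant, zonal valuations of $K$, and invokes the zonal Klain theorem (\cref{thm:zonalKlainDetRestr}, from \cite{Brauner2024a}) to reduce the verification to bodies $K$ contained in a single $(i+1)$-dimensional subspace $E\ni e_n$. For such low-dimensional $K$, the mixed area measure $S(K^{[i]},\CC;\cdot)$ is the mixed spherical lifting $\pi^*_{E,\CC}S_i^E(K,\cdot)$ (\cref{cor:mixed_sph_lift_gnrl_Zonal}), and the commuting relation $\pi_{E,\CC}=\pi_{E,\DD^{[n-i-1]}}\circ\widehat T_\CC$ (\cref{prop:commDiag}) finishes the argument. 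The vertical-segment parts $\ell_{C_j}[0,e_n]$ are stripped off beforehand by multilinearity and \cref{eq:MixedVol_Proj}, which accounts for the $W_\CC|\cdot|$ term in $\widehat T_\CC$.

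The gap in your plan is the step ``a single integration by parts in the zonal variable $t$ \dots produces $\widehat T_C\bar f$.'' For a \emph{general} $K$ (and general auxiliary $L_j$), the measure $S(K^{[i]},C,L_1,\dots;\cdot)$ is not zonal, so there is no ``zonal variable'' to integrate by parts in directly; one has to first push the densities forward to $[-1,1]$. Carrying out your multilinearity idea pointwise -- writing $Q_C=\tfrac{\mathcal A_1^C}{\mathcal A_1^\DD}\,Q_\DD+\mathcal A_2^C\,P_2$ for the curvature operators and expanding the mixed discriminant -- yields, after integrating $f$ and using cylindrical coordinates, two terms: one against $S(K^{[i]},\DD,L_1,\dots;\cdot)$ with weight $R_C(t)$, and a second term involving the latitude integral of the mixed discriminant $D(Q_{K},\dots,P_2,\dots)$ with the rank-one longitudinal projection $P_2$. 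Integration by parts on the second term does not automatically produce the convolution-type term $t\int_{(0,t]}f\,d\nu_{R_C}$ appearing in $T_{R_C}$: what is needed is a nontrivial identity relating the latitude integral of $D(\dots,P_2,\dots)$ to the latitude integral of $D(\dots,Q_\DD,\dots)$, uniformly in $K$ and the $L_j$. That identity is precisely what the mixed spherical projection formula (\cref{lem:mixedSphProjZonalGeneral}) encodes, and neither stating nor proving it is a routine matter; your proposal omits it entirely. Without it, your ``integration by parts'' ends with the wrong integrand. Your first reduction -- the factorization $\widehat T_\CC=\widehat T_{C_1}\circ\cdots\circ\widehat T_{C_{n-i-1}}$ and the one-slot-at-a-time iteration -- is correct and is a nice observation (it follows from $T_{RQ}=T_RT_Q$ and $T_R|\cdot|=R(0)|\cdot|$), and the approximation-from-$C^2_+$ part is also reasonable in spirit, but it cannot repair the missing identity at the core.
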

	
	In the proof of \cref{mthm:dictionary}, we interpret each side of \cref{eq:mthmdictZonalValContDisk_statement} as an $\SO(n-1)$-invariant valuation of the body $K \in \K(\R^n)$ (see below for the definition). As such, they were shown in \cite{Brauner2024a} to be already determined by their values on convex bodies in an $(i+1)$-dimensional subspace containing the axis of revolution~$e_n$. Consequently, we need to verify \cref{eq:mthmdictZonalValContDisk_statement} only for such lower-dimensional bodies, in which case we can transform mixed area measures using the \emph{mixed spherical projections} introduced in \cite{Brauner2024} inspired by and generalizing results from \cite{Goodey2011}.
	
	Let us point out that the map $\widehat{T}_{\CC}$, defined on $C(-1,1)$, is injective but in general not surjective. Writing $\D_{\CC}$ for the preimage of $C[-1,1]$ inside $C(-1,1)$, see \cref{def:fctClassSubint}, we analyze $\D_{\CC}$ in \cref{sec:TRinvertability} to extend \cref{mthm:dictionary} to $\bar{f} \in \D_{\CC}$. In this case, the integral over $f$ has to be replaced by a principal value integral, the existence of which is asserted by the following Firey-type estimates. Here, we denote by $\kappa_{j} = V_j(B^j)$ the volume of the unit ball in $\R^j$ and by $K|E$ the orthogonal projection of a body $K \in \K(\R^n)$ onto a linear subspace $E$.
	
	\begin{thmintro}\label{mthm:mixedFirey}
		Let $1\leq i< n-1$, let $C$ be a convex body of revolution which is not a vertical segment, and let $K\in\K(\R^n)$. Then for all $t \in (0,1]$,
		\begin{equation*}
			S_i(K,C; \mathrm{Cap}(\pm e_n,t) )
			\leq \frac{\kappa_{n-i-1}}{\binom{n-1}{i}} V_i(K|e_n^\perp) \frac{R_{C}^{n-i-1}(\pm t) }{t}.
		\end{equation*}
	\end{thmintro}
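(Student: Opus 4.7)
The plan is to localize the estimate to a truncation $\tilde C_\epsilon$ of $C$ whose projection onto $e_n^\perp$ is a disk of radius essentially $R_C(t)$, reducing the bound to a mixed volume computation in $\R^{n-1}$. Fix $t\in(0,a_+)$ and $\epsilon\in(0,t)$, set $z_\epsilon=\partial_+\overline{h_C}(t-\epsilon)$, and consider the truncation
\begin{equation*}
	\tilde C_\epsilon = C\cap\{y\in\R^n:\langle e_n,y\rangle\ge z_\epsilon\}.
\end{equation*}
Since $C$ is a body of revolution and $t-\epsilon>0$ lies in the upper half, the geometric interpretation of $R_C$ as the horizontal radius at the Gauss-parametrized height (\cref{lem:geomIntuitRc}) implies that every horizontal cross-section of $\tilde C_\epsilon$ is a disk of radius at most $R_C(t-\epsilon)$, with equality at its bottom face; in particular, $\tilde C_\epsilon|e_n^\perp=R_C(t-\epsilon)\,\DD$. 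Moreover, a direct comparison of support functions yields $h_{\tilde C_\epsilon}=h_C$ on the set $\{u:\langle e_n,u\rangle\ge t-\epsilon\}$, whose interior is an open neighborhood of $\mathrm{Cap}(e_n,t)$. The standard locality of surface area measures, applied to the families $K+sC$ and $K+s\tilde C_\epsilon$ and differentiated $n-i-1$ times at $s=0^+$, then gives $S_i(K,C;\mathrm{Cap}(e_n,t))=S_i(K,\tilde C_\epsilon;\mathrm{Cap}(e_n,t))$.

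To estimate the right-hand side, the pointwise bound $\mathbbm{1}_{\mathrm{Cap}(e_n,t)}(u)\le h_{[0,e_n/t]}(u)=\langle e_n,u\rangle_+/t$ combined with the identity $\int h_L\,dS_i(K,\tilde C_\epsilon;\cdot)=nV(L,K^{[i]},\tilde C_\epsilon^{[n-i-1]})$ gives
\begin{equation*}
	S_i(K,\tilde C_\epsilon;\mathrm{Cap}(e_n,t))\le n V\bigl([0,e_n/t],K^{[i]},\tilde C_\epsilon^{[n-i-1]}\bigr).
\end{equation*}
The projection formula for mixed volumes with a segment collapses the right-hand side to $\tfrac{1}{t}V^{(n-1)}((K|e_n^\perp)^{[i]},(\tilde C_\epsilon|e_n^\perp)^{[n-i-1]})$. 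Substituting $\tilde C_\epsilon|e_n^\perp=R_C(t-\epsilon)\DD$, using multilinearity of mixed volumes in the $\DD$-slots, and invoking the standard formula for mixed volumes with the Euclidean ball in $\R^{n-1}$ simplifies this to $\tfrac{R_C(t-\epsilon)^{n-i-1}}{t}\cdot\tfrac{\kappa_{n-i-1}}{\binom{n-1}{i}}V_i(K|e_n^\perp)$. Letting $\epsilon\to 0^+$ completes the estimate for $\mathrm{Cap}(e_n,t)$, and the case $\mathrm{Cap}(-e_n,t)$ is analogous after reflection.

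The main technical difficulty is constructing $\tilde C_\epsilon$ so that simultaneously (a) its projection onto $e_n^\perp$ equals the disk $R_C(t-\epsilon)\DD$, and (b) $h_{\tilde C_\epsilon}$ agrees with $h_C$ on an open neighborhood of $\mathrm{Cap}(e_n,t)$ so that locality of mixed area measures applies. Both properties depend on the axial symmetry of $C$ together with the monotonicity of its horizontal cross-sections above the equator. The endpoint cases $t\ge a_+$, in which $\mathrm{Cap}(e_n,t)$ lies inside the normal cone of the apex of $C$, follow from the same construction by taking the appropriate limit as $\epsilon\to 0^+$.
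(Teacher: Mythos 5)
Your approach is genuinely different from the paper's and conceptually quite elegant: the paper proves this estimate by first establishing the exact transformation identity of \cref{prop:dict_spherical_caps} (which in turn rests on the zonal Klain theorem, the mixed spherical lifting machinery, and the adjoint lemma \cref{lem:T_R_adjoint:I:caps}), and then applying the elementary bounds $t\,\indf_{\mathrm{Cap}(e_n,t)}\le|\langle e_n,\cdot\rangle|\le\langle e_n,\cdot\rangle_+$; your argument instead truncates $C$ directly and uses only the locality of mixed area measures plus standard mixed-volume identities, giving a self-contained, more elementary route. The trade-off is that the paper's route yields an exact identity as an intermediate step (useful elsewhere, e.g.\ for the proof of \cref{thm:MixedAreaMeas_LowerDimDensity}), whereas yours produces the inequality directly; both deliver essentially the bound $R_C(t^-)^{n-i-1}/t$, which is what the stated $R_C(\pm t)$ amounts to when the closed cap is used.

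However, there is a concrete computational error in your construction. You set $z_\epsilon=\partial_+\overline{h_C}(t-\epsilon)$ and then claim via \cref{lem:geomIntuitRc} that the bottom face of $\tilde C_\epsilon=C\cap\{\langle e_n,y\rangle\ge z_\epsilon\}$ is $R_C(t-\epsilon)\DD$. But \cref{lem:geomIntuitRc} places that disk at height $\langle e_n,x\rangle$ for $x\in F(C,u_{t-\epsilon})$, and unravelling the proof of that lemma together with \cref{lem:oneSidDerHc} gives $\langle e_n,x\rangle=(1-s^2)\,\partial_+\overline{h_C}(s)+s\,\overline{h_C}(s)$ with $s=t-\epsilon$, not $\partial_+\overline{h_C}(s)$. (To see the formula is wrong, take $C=B^n$: your $z_\epsilon$ would be $0$, giving projection $\DD$ rather than $\sqrt{1-(t-\epsilon)^2}\,\DD=R_{B^n}(t-\epsilon)\DD$.) With the corrected height the rest of your argument goes through: the cross-sections of $\tilde C_\epsilon$ are then disks of radius $\le R_C(t-\epsilon)$ by monotonicity of $R_C$ on $(0,a_+)$, the support functions of $C$ and $\tilde C_\epsilon$ agree on $\{u:\langle e_n,u\rangle\ge t-\epsilon\}$ because all faces $F(C,u_s)$, $s\ge t-\epsilon$, lie above the truncation plane, and hence \cref{lem:MixedAreaMeas_locally_det} applies on $\mathrm{Cap}(e_n,t)$. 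It would be cleaner (and immune to such slips) to define $z_\epsilon$ geometrically as the height of the ``upper'' point $x$ of $F(C,u_{t-\epsilon})$ furnished by \cref{lem:geomIntuitRc}, rather than via an explicit formula.
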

	Note that \cref{mthm:mixedFirey} extends and improves a result of Firey~\cite{Firey1970a} for intermediate area measures (that is, $C = B^n$). In the proof of \cref{mthm:mixedFirey} we will make use of \cref{mthm:dictionary} (for continuous functions) to estimate the left-hand side by an area measure with $C = \DD$ and give an estimate for this area measure using that $\DD$ is lower-dimensional. As a byproduct, we recover a result of Hug~\cite{Hug1998} about the density of area measures with respect to a suitable Hausdorff measure.
	
	\medskip
	
	The proof of \cref{mthm:Christoffel_Minkowski_zonal} now works as follows: Writing $\mu$ and $\sigma$ for the pushforwards of the measures $S_i(K, \CC; {}\cdot{})$ and $S_i(K, \DD; {}\cdot{})$, respectively, onto $[-1,1]$ by the map $u \mapsto \pair{e_n}{u}$, \cref{mthm:dictionary} implies that
	\begin{align*}
		\int_{[-1,1]} \bar{f} d\mu = \int_{[-1,1]} \widehat{T}_\CC \bar{f} d\sigma, \qquad \text{ for every $\bar{f} \in C[-1,1]$.}
	\end{align*}

	Consequently, $\mu = \widehat{T}_\CC^\ast \sigma$, and in order to solve the mixed Christoffel--Minkowski problem for a given measure $\mu$, we need to answer two questions: First, what are the conditions on $\mu$ to be in the image of $\widehat{T}_\CC^\ast$; and, second, under which conditions does there exist a preimage $\sigma$ satisfying the conditions of \cref{mthm:ZonalMinkChristDisk}? We answer these questions in \cref{sec:ChristMinkProb} using the (partially defined) inverse of $\widehat{T}_\CC$ and the estimates from \cref{mthm:mixedFirey} at the boundaries of the interval.

	\subsection{A family of Hadwiger type theorems}
	
	As a further application of the transformation rule \cref{eq:mthmdictZonalValContDisk_statement}, we obtain a multitude of classification theorems of $\SO(n-1)$ invariant valuations. A \emph{valuation} is a functional $\varphi:\K(\R^n)\to\R$ with the property that
	\begin{equation*}
		\varphi(K)+\varphi(L)
		= \varphi(K\cup L)+\varphi(K\cap L)
		\qquad \text{whenever }K,L,K\cup L\in\K(\R^n).
	\end{equation*}
	Hadwiger's~\cite{Hadwiger1957} famous characterization of the intrinsic volumes as a basis of all continuous and rigid motion invariant valuations laid the foundation of the modern theory of valuations, which has since been central to convex and integral geometry (see, e.g., \cites{Bernig2011,Alesker2001,Faifman2023,Bernig2014,Ludwig2010a,Bernig2024,Wannerer2014,Faifman2025}).
	We denote by $\Val(\R^n)$ the space of continuous and translation invariant valuations, and by $\Val_i(\R^n)$ the subspace of valuations that are \emph{homogeneous} of degree $i\in\{0,\ldots,n\}$ (that is, $\varphi(\lambda K)=\lambda^{i}\varphi(K)$ for all $K\in\K(\R^n)$ and $\lambda\geq 0$). 
	
	In the spirit of Hadwiger's theorem, descriptions of $\SO(n-1)$ invariant valuations in $\Val(\R^n)$, in analogy to zonal measures called \emph{zonal}, have been studied intensively in recent years. The first such result, an integral representation with classical area measures, was obtained by Schuster--Wannerer~\cite{Schuster2018} under additional regularity assumptions, and later extended by Knoerr~\cite{Knoerr2024c}. Recently, the authors of this article~\cite{Brauner2024a} established another integral representation, involving mixed area measures with the disk. Functional versions of Hadwiger's theorem, that is, for valuations on convex functions, were considered, e.g., in \cite{Colesanti2024}.
	
	Via \cref{mthm:dictionary}, we can transfer the description of zonal valuations in \cite{Brauner2024a} to any family $\CC$ of reference bodies of revolution. Below, we denote for a family of bodies of revolution $\CC=(C_1,\ldots,C_{n-i-1})$
	\begin{equation*}
		\S_{\CC} := \bigcap_{j=1}^{k} \big( \S^{n-1} \setminus   N(C_j,F(C_j, \pm e_n)) \big)
		\qquad\text{and}\qquad
		(a_{-},a_{+}):=\{\pair{e_n}{u}:u\in \S_{\CC}\},
	\end{equation*}
	see also \cref{defi:S_C_and_I_C}, as well as,
	\begin{align*}
		U_{\CC, \varepsilon}^\pm
		= \begin{cases}
			\emptyset, & \text{ if $a_{\pm} = \pm 1$ and $\lim_{t \to \pm 1} R_\CC(t) > 0$,}\\
			\mathrm{Cap}(\pm e_n, |a_{\pm}| - \varepsilon), & \text{ else,}
		\end{cases}
	\end{align*}
	and $U_{\CC, \varepsilon} = U_{\CC, \varepsilon}^- \cup U_{\CC, \varepsilon}^+$. Moreover, $\spt \varphi$ denotes the \emph{support} of a valuation $\varphi\in\Val(\R^n)$ (see \cref{sec:bgVal}). 
	We are now ready to state the resulting family of zonal Hadwiger type results.
	\begin{thmintro}\label{mthm:mixedZonalHadwiger}
		Let $1\leq i< n-1$ and $\CC=(C_1,\ldots,C_{n-i-1})$ be a family of convex bodies of revolution none of which is a vertical segment. Then a valuation $\varphi\in\Val_i(\R^n)$ with $\spt \varphi\subseteq \cls \S_{\CC}$ is zonal if and only if there exists a function $f=\bar{f}(\pair{e_n}{{}\cdot{}})\in C(\S_{\CC})$ with $\bar{f}\in \D_{\CC}$ such that
		\begin{equation}\label{eq:mthmMixedHadGen}
			\varphi(K)
			= \lim_{\varepsilon\to 0^+} \int_{\S^{n-1}\setminus U_{\CC,\varepsilon}} f(u) \, dS_i(K,\CC;u),
			\qquad K\in\K(\R^n).
		\end{equation}
		Moreover, $f$ is unique up to the addition of a zonal linear function restricted to $\S_{\CC}$.
	\end{thmintro}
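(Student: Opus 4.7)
The plan is to deduce Theorem~\ref{mthm:mixedZonalHadwiger} from two ingredients: the transformation rule of Theorem~\ref{mthm:dictionary} and the special case $\CC = (\DD, \ldots, \DD)$, which is the zonal Hadwiger type theorem for the disk established by the authors in \cite{Brauner2024a}. That earlier result produces, for every zonal $\varphi \in \Val_i(\R^n)$, an integral representation $\varphi(K) = \int_{\S^{n-1}} g\, dS_i(K, \DD; \cdot)$ with a zonal continuous density whose companion $\bar g$ lies in $C[-1,1]$, unique modulo zonal linear functions. Theorem~\ref{mthm:mixedZonalHadwiger} will then be obtained by transporting this representation via the partially defined inverse $\widehat T_\CC^{-1} : \widehat T_\CC(\D_\CC) \to \D_\CC$.

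For the sufficiency direction, fix $\bar f \in \D_\CC$, set $\bar g = \widehat T_\CC \bar f \in C[-1,1]$, and observe that $\Phi(K) := \int_{\S^{n-1}} g\, dS_i(K, \DD; \cdot)$ is a continuous zonal element of $\Val_i(\R^n)$. I would first verify $\Phi(K) = \int f\, dS_i(K, \CC; \cdot)$ for those $\bar f \in \D_\CC$ that extend continuously to $[-1,1]$, in which case Theorem~\ref{mthm:dictionary} applies verbatim and the excised caps $U_{\CC,\varepsilon}$ may be taken empty. For general $\bar f \in \D_\CC$, I would approximate $f$ away from the poles by continuous functions on $\S^{n-1}$ and control the remaining contributions over $U_{\CC,\varepsilon}^\pm$ via the natural tuple analogue of Theorem~\ref{mthm:mixedFirey}, which yields decay of $S_i(K, \CC; \mathrm{Cap}(\pm e_n, t))$ on the order of $R_\CC^{n-i-1}(\pm t)/t$. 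Combined with the boundary behavior of $\bar f$ imposed by $\bar g \in C[-1,1]$, this gives convergence of the principal-value integral in \eqref{eq:mthmMixedHadGen} and identifies its limit with $\Phi(K)$. The support inclusion $\spt \Phi \subseteq \cls \S_\CC$ is automatic, since $f$ is only evaluated on $\S_\CC$.

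For the necessity direction, let $\varphi \in \Val_i(\R^n)$ be zonal with $\spt \varphi \subseteq \cls \S_\CC$, and apply the disk theorem of \cite{Brauner2024a} to obtain $\bar g \in C[-1,1]$ with $\varphi(K) = \int g\, dS_i(K, \DD; \cdot)$. The decisive step is to show that $\bar g$ lies in the image $\widehat T_\CC(\D_\CC)$, so that the injectivity of $\widehat T_\CC$ (stated after Theorem~\ref{mthm:dictionary}) yields a unique $\bar f \in \D_\CC$ with $\widehat T_\CC \bar f = \bar g$. This is where the support hypothesis on $\varphi$ becomes essential: only representatives $\bar g$ arising from zonal valuations actually supported in $\cls \S_\CC$ should satisfy the structural constraints on $\widehat T_\CC(\D_\CC)$ near the endpoints $\pm 1$, and this match is to be effected via the detailed analysis of $\widehat T_\CC$ and $\D_\CC$ in Section~\ref{sec:TRinvertability}. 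With $\bar f$ constructed, the sufficiency direction produces \eqref{eq:mthmMixedHadGen}.

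Uniqueness follows by combining injectivity of $\widehat T_\CC$ with the modulo-linear uniqueness in the disk case: if $\bar f_1, \bar f_2 \in \D_\CC$ produce the same valuation, then $\widehat T_\CC(\bar f_1 - \bar f_2)$ represents the zero valuation in the disk formulation and so equals the $\bar\cdot$-image of a zonal linear function; injectivity of $\widehat T_\CC$, together with the fact (to be verified directly from the definition of $\widehat T_\CC$) that zonal linear functions lie in $\D_\CC$ and are mapped by $\widehat T_\CC$ to zonal linear functions, then forces $\bar f_1 - \bar f_2$ itself to be such a function. The principal obstacle I foresee is the necessity step above, namely identifying $\widehat T_\CC(\D_\CC)$ with the subspace of $\bar g$'s arising from zonal valuations supported in $\cls \S_\CC$; this requires a careful understanding of $\widehat T_\CC$ and of the behavior of $R_\CC$ near $a_\pm$, and should rest on the structural analysis of Section~\ref{sec:TRinvertability} rather than on any argument internal to the proof of Theorem~\ref{mthm:mixedZonalHadwiger}.
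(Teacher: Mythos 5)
Your overall architecture matches the paper's: reduce to the disk case of \cite{Brauner2024a} (Theorem~\ref{thm:zonalDiskHadwiger}) and transfer through $\widehat{T}_\CC$, with \cref{lem:T_RInclSegm_bijective:I} furnishing the (partial) inverse and \cref{thm:dictionaryGeneral} providing the principal-value representation. The sufficiency step and the uniqueness argument are essentially identical to the paper's.

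However, there is a genuine gap in the necessity direction, and you correctly flag it but do not close it, and you point to the wrong place for its resolution. The ``decisive step'' you identify — that a zonal $\varphi$ with $\spt\varphi\subseteq\cls\S_\CC$ must have its disk density $\bar g$ lying in $\widehat{T}_\CC(\D_\CC)$ — is precisely \cref{lem:support_psi} in the paper: $\spt\psi_{i,g}\subseteq\cls\{u:\pair{e_n}{u}\in I\}$ if and only if $\bar g=\widehat{T}_{\indf_I}\bar g$, i.e.\ $\bar g$ is linear outside $[a_{\CC,-},a_{\CC,+}]$. Combined with \cref{lem:T_RInclSegm_bijective:I}, which identifies $\widehat{T}_\CC(\D_\CC)$ exactly with the fixed-point set $T_{\indf_{I_\CC}}(C[-1,1])$, this is what produces $\bar f$. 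You suggest this ``should rest on the structural analysis of Section~\ref{sec:TRinvertability} rather than on any argument internal to the proof.'' In fact \cref{lem:support_psi} is \emph{not} a consequence of the transform theory: its ``only if'' direction is a geometric argument, evaluating $\psi_{i,g}$ on the cone bodies $C_s=\mathrm{conv}(\DD\cup\{\frac{\sqrt{1-s^2}}{s}e_n\})$ via \cite{Brauner2024a}*{Lemma~2.2} to force $\bar g$ to be linear on the intervals $[a_{\CC,+},1]$ and $[-1,a_{\CC,-}]$. No amount of analysis of $\widehat{T}_\CC$ alone can yield this, because it is a statement about which densities $g$ produce valuations whose support avoids the polar caps, not about the image of $\widehat{T}_\CC$. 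A secondary, more minor point: in the sufficiency direction you assert that $\spt\Phi\subseteq\cls\S_\CC$ is ``automatic since $f$ is only evaluated on $\S_\CC$''; this requires the local-determinacy of mixed area measures (\cref{lem:MixedAreaMeas_locally_det}) to turn a statement about integration domain into one about dependence on $h_K$ near $\cls\S_\CC$, which is again exactly the ``if'' direction of \cref{lem:support_psi}, and the paper invokes it explicitly rather than treating it as free.
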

	
	Note that in the cases where $C_j=B^n$ and $C_j=\DD$, we recover \cite{Knoerr2024c}*{Thm.~A} and \cite{Brauner2024a}*{Thm.~A}, respectively. Moreover, the case $i=n-1$ follows from a more general result by McMullen~\cite{McMullen1980}. Let us further note that the authors are optimistic that the methods of this article can also be adapted to obtain (mixed) functional Hadwiger-type theorems as in \cite{Colesanti2024}.
	
	\subsection{Plan of the article}
	
	\cref{sec:preliminaries} contains some background material on convex geometry, valuations, and functions of bounded variation of one variable. In \cref{sec:bodies_of_revol}, we collect some useful facts about bodies of revolution. \cref{sec:mixed_projections} deals with the aforementioned mixed spherical projections. In \cref{sec:transforming}, we discuss the transformation of mixed volumes; in there, we establish \cref{mthm:dictionary}, along with its consequences, \cref{mthm:mixedFirey,mthm:mixedZonalHadwiger}. \cref{sec:ChristMinkProb} is devoted to mixed Christoffel--Minkowski problems; in there, we prove \cref{mthm:ZonalMinkChristDisk}, and deduce from that the main result, \cref{mthm:Christoffel_Minkowski_zonal}, in a more general version.
	
	
	\section{Preliminaries}\label{sec:preliminaries}
	
	In the following, we recall some classical facts from convex geometry, in particular on mixed volumes and mixed area measures, valuation theory and functions of bounded variation that will be needed later on.
	
	\subsection{Convex geometry}
	As a general reference on this section, we refer to the monograph by Schneider~\cite{Schneider2014}. 
	
	First, recall that the coefficients of the homogeneous polynomial
	\begin{align*}
		V(\lambda_1 K_1 + \dots + \lambda_n K_n) = \sum_{j_1,\dots,j_{n}=1}^n \lambda_{j_1}\cdots\lambda_{j_n} V(K_1, \dots, K_n), \qquad \lambda_1, \dots, \lambda_n \geq 0
	\end{align*}
	are called mixed volumes. Using the Riesz representation theorem, the mixed volume functionals $V$ can be localized to give the so-called mixed area measures, determined by
	\begin{equation}\label{eq:MixedVol_Integral}
		V(K_0,K_1,\ldots,K_{n-1})
		= \frac{1}{n} \int_{\S^{n-1}} h_{K_0}(u)\, S(K_1,\ldots,K_{n-1};du),
	\end{equation}
	where $K_0, \dots, K_{n-1} \in \K(\R^n)$. Note that the mixed area measure $S(K_1, \dots, K_{n-1}; {{}\cdot{}})$ is always a centered, non-negative Borel measure. Clearly, it is $1$-homogeneous and translation-invariant in every component and symmetric under permuting the entries. By letting $K_1 = \dots = K_i = K$ and $K_{i+1} = \dots = K_{n-1} = B^n$ for $0\leq i \leq n$ and re-normalizing, we obtain the $i$th intrinsic volume of $K \in \K(\R^n)$,
	\begin{equation}\label{eq:MixedVol_IntrinsicVol}
		V_i(K)
		= \frac{\binom{n}{i}}{\kappa_{n-i}}V(K^{[i]},(B^n)^{[n-i]}).
	\end{equation}
	Here, we denote by $K^{[i]}$ the $i$-tuple $(K, \dots, K)$ with $K$ $i$-times repeated.
	
	For smooth bodies, the mixed area measures are absolutely continuous with respect to the spherical Lebesgue measure. Indeed, if $K$ is of class $C^2_+$, then its support function $h_K$ is a $C^2(\S^{n-1})$ function, and $S_{n-1}(K,du)=\det D^2h_K(u)du$. If all bodies $K_1, \dots, K_{n-1}$ are of class $C^2_+$, then polarizing this formula using the mixed discriminant $D$, the density of $S(K_1, \dots, K_{n-1}; {{}\cdot{}})$ is given by $D(D^2 h_{K_1}, \dots, D^2 h_{K_{n-1}})$.
	
	If one of the bodies is an interval $[-u,u]$ with $u \in \S^{n-1}$, then the mixed volume reduces to a mixed volume in the hyperplane $u^\perp$ (see \cite{Schneider2014}*{Thm.~5.3.1}),
	\begin{equation}\label{eq:MixedVol_Proj}
		V(K_1,\ldots,K_{n-1},[-u,u])
		= \frac{2}{n} V^{u^\perp}\!(K_1|u^\perp,\ldots,K_{n-1}|u^\perp), \quad K_1, \dots, K_{n-1} \in \K(\R^n),
	\end{equation}
	where $K|E$ denotes the orthogonal projection of $K$ onto the subspace $E$. Note that \cref{eq:MixedVol_Proj} and the homogeneity of mixed volumes imply that a mixed volume vanishes if the same interval is repeated more than once.
	
	Moreover, if the bodies $K_1\ldots,K_{n-1}$ are contained in the hyperplane $u^\perp$, then
	\begin{equation}\label{eq:MixedAreaMeas_hyperplane}
		S(K_1,\ldots,K_{n-1};{}\cdot{})
		= V^{u^\perp}\!(K_1,\ldots,K_{n-1})(\delta_u + \delta_{-u}).
	\end{equation}
	
	Next, an important property of mixed area measures is that they are \emph{locally determined}. Indeed, recall that for a body $K \in \K(\R^n)$, its \emph{face} $F(K, u)$ in direction $u \in \S^{n-1}$ is defined as $F(K,u)=\{x\in\R^n: h_K(u)=\pair{x}{u}\}$ and the \emph{reverse spherical image} of $K$ at a Borel set $\beta \subseteq \S^{n-1}$ is defined as
	\begin{equation*}
		\tau(K,\beta)
		= \bigcup_{u\in\beta} F(K,u).
	\end{equation*}
	Then $S(K_1, \dots, K_{n-1}; \beta)$ depends only on $\tau(K_1, \beta), \dots, \tau(K_{n-1}, \beta)$.
	\begin{lem}[{\cite{Schneider2014}*{p.~215}}]\label{lem:MixedAreaMeas_locally_det}
		Let $K_1,K_1',\ldots,K_{n-1},K_{n-1}'\in\K(\R^n)$ and $\beta\subseteq\S^{n-1}$ be a Borel set such that $\tau(K_j,\beta)=\tau(K_j',\beta)$ for all $j\in\{1,\ldots,n-1\}$. Then
		\begin{equation*}
			S(K_1,\ldots,K_{n-1};\beta)
			= S(K_1',\ldots,K_{n-1}';\beta).
		\end{equation*}
	\end{lem}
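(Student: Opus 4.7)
The plan is to reduce the statement for arbitrary mixed area measures to the analogous statement for the surface area measure, and to obtain the latter by recovering each individual face from the reverse spherical image.

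The key observation is that, for every $u\in\beta$, both the support value $h_K(u)$ and the face $F(K,u)$ are already determined by $\tau(K,\beta)$ alone. Indeed, since $F(K,u)\subseteq\tau(K,\beta)\subseteq K$ whenever $u\in\beta$, one has
\begin{align*}
h_K(u)=\sup_{p\in\tau(K,\beta)}\pair{p}{u},\qquad F(K,u)=\tau(K,\beta)\cap\{x\in\R^n:\pair{x}{u}=h_K(u)\}.
\end{align*}
Hence the hypothesis $\tau(K_j,\beta)=\tau(K_j',\beta)$ forces $F(K_j,u)=F(K_j',u)$ for every $j$ and every $u\in\beta$.

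Given arbitrary $\lambda_1,\ldots,\lambda_{n-1}\geq 0$, I would then form the Minkowski combinations $L=\sum_j\lambda_j K_j$ and $L'=\sum_j\lambda_j K_j'$. The identity $F(L,u)=\sum_j\lambda_j F(K_j,u)$ yields $F(L,u)=F(L',u)$ for all $u\in\beta$, so $\tau(L,\beta)=\tau(L',\beta)$. Since the surface area measure is the Hausdorff measure of the reverse spherical image, $S_{n-1}(L,\beta)=\mathcal{H}^{n-1}(\tau(L,\beta))=\mathcal{H}^{n-1}(\tau(L',\beta))=S_{n-1}(L',\beta)$. Expanding both sides as polynomials in $\lambda_1,\ldots,\lambda_{n-1}$ via
\begin{align*}
S_{n-1}\Bigl(\textstyle\sum_j\lambda_j K_j,\beta\Bigr)=\sum_{j_1,\ldots,j_{n-1}}\lambda_{j_1}\cdots\lambda_{j_{n-1}}\, S(K_{j_1},\ldots,K_{j_{n-1}};\beta)
\end{align*}
and comparing the coefficient of $\lambda_1\cdots\lambda_{n-1}$ (using the symmetry of $S$) then gives $S(K_1,\ldots,K_{n-1};\beta)=S(K_1',\ldots,K_{n-1}';\beta)$.

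The only conceptual ingredient is the initial recoverability of $h_K(u)$ and $F(K,u)$ from $\tau(K,\beta)$; the main obstacle, modest as it is, is the identification $S_{n-1}(K,\beta)=\mathcal{H}^{n-1}(\tau(K,\beta))$, which requires that the boundary points of $K$ with non-unique outer normal be $\mathcal{H}^{n-1}$-negligible. This is a classical fact that I would cite (e.g.\ from Schneider's monograph) rather than reprove.
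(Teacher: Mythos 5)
Your proof is correct and is essentially the standard argument that the paper is implicitly citing from Schneider's monograph: recover $h_{K_j}(u)$ and $F(K_j,u)$ from $\tau(K_j,\beta)$ for $u\in\beta$, pass to Minkowski combinations, use the identity $S_{n-1}(K,\beta)=\mathcal{H}^{n-1}(\tau(K,\beta))$, and polarize. One minor remark: the identity $S_{n-1}(K,\beta)=\mathcal{H}^{n-1}(\tau(K,\beta))$ is in Schneider actually taken as the \emph{definition} of the surface area measure (Thm.~4.2.3); the $\mathcal{H}^{n-1}$-negligibility of boundary points with several normals is what one needs to verify $\sigma$-additivity of this set function, not to establish the identity itself -- but this does not affect the validity of your argument, only how one phrases the citation.
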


	\subsection{Valuations}\label{sec:bgVal}
	In this section, we cite important notions and results from valuation theory that we will need in the following. For a more thorough introduction to the topic, we refer to \cite{Schneider2014}*{Sec.~6} as well as to the references given therein and below.
	
	First, recall that the \emph{support} of a continuous, translation-invariant valuation on $\R^n$ was introduced by Knoerr \cite{Knoerr2021}*{Sec.~6} as follows.
	
	\begin{defi}[{\cite{Knoerr2021}}]
		The \emph{support} of a valuation $\varphi\in\Val(\R^n)$, denoted as $\spt\varphi$, is the smallest compact set $S\subseteq \S^{n-1}$ with the following property: Whenever $K,K'\in\K(\R^n)$ and $h_K=h_{K'}$ on an open neighborhood of $S$, then $\varphi(K)=\varphi(K')$.
	\end{defi}
	
	Next, we turn to zonal valuations. A valuation $\varphi: \K(\R^n) \to \R$ is called zonal or $\SO(n-1)$ invariant, if $\varphi(\eta K) = \varphi(K)$ for all $\eta \in \SO(n-1)$ and $K \in \K(\R^n)$. In this article, the main property of a zonal valuation is that it is already completely determined by its values on some subspace containing the axis, as was shown in \cite{Brauner2024a}. In the following, this theorem will be the key tool to prove \cref{mthm:dictionary}. To state it, denote by $\Gr_{k}(\R^n)$ the Grassmanian manifold of $k$-dimensional linear subspaces.
	\begin{thm}[\cite{Brauner2024a}*{Cor.~C}]\label{thm:zonalKlainDetRestr}
		Let $0\leq i\leq n-1$ and $\varphi\in\Val_i(\R^n)$ be zonal. If $\varphi$ vanishes on some subspace $E\in\Gr_{i+1}(\R^n)$ containing $e_n$, then $\varphi= 0$.
	\end{thm}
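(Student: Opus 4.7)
The plan is to split $\varphi$ into even and odd parts and apply Klain--Schneider injectivity theorems after amplifying the hypothesis via zonality. Since $\SO(n-1)$ acts transitively on $\{F\in\Gr_{i+1}(\R^n): e_n\in F\}$, the vanishing of $\varphi$ on one such $E$ combined with zonality yields vanishing on \emph{every} $(i+1)$-dimensional subspace through $e_n$. As each $F'\in\Gr_i(\R^n)$ sits inside such a subspace---$F'+\R e_n$ if $e_n\notin F'$, or any $(i+1)$-dimensional extension of $F'$ otherwise---the restriction $\varphi|_{F'}$ vanishes for every $F'\in\Gr_i(\R^n)$.

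For the even part $\varphi^+$, this immediately suffices: the Klain function $\mathrm{Kl}_{\varphi^+}$ on $\Gr_i(\R^n)$ vanishes identically, and Klain's theorem yields $\varphi^+=0$.

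The odd part $\varphi^-$ is more delicate because its Klain function is automatically zero. My plan here is to invoke a Schneider-type embedding of $\Val_i^-(\R^n)$ into continuous functions on the partial flag manifold of pairs $(F,F')$ with $F'\in\Gr_i(F)$ and $F\in\Gr_{i+1}(\R^n)$. The restriction of $\varphi^-$ to a fixed $(i+1)$-dim $F$ encodes this function along $\{F\}\times\Gr_i(F)$, and zonality renders it $\SO(n-1)$-equivariant. The vanishing of $\varphi^-$ on all $F\ni e_n$ then pins the Schneider function down to zero on the sub-flag $\{(F,F'):e_n\in F\}$, and the task reduces to propagating this vanishing to the full flag manifold.

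The main obstacle is exactly this final propagation: a generic $\SO(n-1)$-orbit of flags $(F,F')$ need not intersect the locus $\{e_n\in F\}$, so the conclusion cannot come from symmetry alone. I would attack this either (a) by exploiting the smoothness/analyticity of the Schneider function (using Alesker--Bernig theory) together with a density argument for the accessible flags, or (b) by circumventing Schneider's embedding and using an integral representation of zonal valuations in the spirit of \cite{Knoerr2024c,Brauner2024a}, translating the vanishing hypothesis via \cref{eq:MixedVol_Proj} and \cref{eq:MixedAreaMeas_hyperplane} into a support constraint on the representing kernel that forces it to vanish.
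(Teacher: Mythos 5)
The paper does not prove this statement; it is imported verbatim from \cite{Brauner2024a}*{Cor.~C}, so there is no in-paper argument to compare against. Assessing your proposal on its own merits:

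Your even-part argument is correct and complete. Zonality together with the $\SO(n-1)$-transitivity on $\{F\in\Gr_{i+1}(\R^n):e_n\in F\}$ gives vanishing on every $(i+1)$-plane through $e_n$; every $F'\in\Gr_i(\R^n)$ embeds in such a plane (take $F'+\R e_n$ if $e_n\notin F'$, or any $(i+1)$-dimensional extension otherwise); hence $\varphi|_{F'}=0$ for every $F'$, the Klain function of $\varphi^+$ is identically zero, and Klain injectivity gives $\varphi^+=0$.

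The odd part, however, is not a proof but a plan with an acknowledged hole, and the hole you identify is genuine. For $\varphi^-$, the Klain function is trivially zero and yields nothing. The Schneider-type data live on flags $(F,F')$ with $F'\in\Gr_i(F)$, $F\in\Gr_{i+1}(\R^n)$, and your hypothesis plus zonality only kills this data on the $\SO(n-1)$-invariant sublocus $\{(F,F'):e_n\in F\}$. Since $\SO(n-1)$ fixes $e_n$, its orbits never leave the locus $\{e_n\in F\}$, so a generic flag is not reached and no amount of averaging or transitivity fixes this. Route~(a) (analyticity of the Schneider function plus density of the accessible flags) is optimistic: the accessible flags form a proper closed sub-flag-manifold, not a dense set, so the needed propagation is not a density/continuation argument but a genuine rigidity statement that you would still have to prove. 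Route~(b) (translate to a support constraint on a kernel via an integral representation for zonal valuations) is closer to what one would expect the actual argument in \cite{Brauner2024a} to be — indeed the present paper repeatedly converts such vanishing hypotheses into statements about $T_R$-transforms of a representing density — but invoking \cref{thm:zonalDiskHadwiger} to produce the kernel courts circularity unless one verifies that, in \cite{Brauner2024a}, Theorem~A does not itself rest on Corollary~C; as written you have not verified this, nor carried out the reduction. As it stands, the odd case is missing, so the proposal does not establish the theorem.
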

	
	Using \cref{thm:zonalKlainDetRestr}, the following description of zonal valuations was given in \cite{Brauner2024a}.	
	\begin{thm}[\cite{Brauner2024a}*{Thm.~A}]\label{thm:zonalDiskHadwiger}
		For $1\leq i\leq n-1$, a valuation $\varphi\in\Val_i(\R^n)$ is zonal if and only if there exists a zonal function $g\in C(\S^{n-1})$ such that
		\begin{equation}\label{eq:zonalDiskHadwiger}
			\varphi(K)
			= \int_{\S^{n-1}} g(u)\, dS_i(K,\DD;u),
			\qquad K\in\K(\R^n).
		\end{equation}
		Moreover, $g$ is unique up to the addition of a zonal linear function.
	\end{thm}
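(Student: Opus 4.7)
The ``if'' direction is immediate: since $\DD$ is fixed by $\SO(n-1)$, one has $S_i(\eta K,\DD;{}\cdot{})=\eta_{\ast}S_i(K,\DD;{}\cdot{})$ for every $\eta\in\SO(n-1)$, and zonality of $g$ transfers to $\varphi$. Continuity, translation invariance, $i$-homogeneity, and the valuation property of $\varphi$ follow from the corresponding properties of the mixed area measure. The substance lies in the ``only if'' direction, and the plan is to reduce to McMullen's classification of top-degree valuations on an $(i+1)$-dimensional subspace and then invoke the restriction principle \cref{thm:zonalKlainDetRestr}.

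Fix a subspace $E\in\Gr_{i+1}(\R^n)$ with $e_n\in E$. Because $\dim E=i+1$, the restriction $\varphi_E:=\varphi|_{\K(E)}$ is a continuous, translation-invariant valuation of top degree on $E$. By McMullen's theorem~\cite{McMullen1980}, there exists $g_E\in C(\S^{n-1}\cap E)$, unique up to the restriction of a linear function, such that $\varphi_E(K)=\int_{\S^{n-1}\cap E}g_E\,dS^{E}_{i}(K,{}\cdot{})$ for every $K\in\K(E)$. The stabilizer of $e_n$ inside $\SO(E)$, which is $\SO(E\cap e_n^\perp)\cong\SO(i)$, acts on $\S^{n-1}\cap E$ fixing $\pm e_n$, and $\varphi_E$ is invariant under it. Averaging $g_E$ over this group and invoking McMullen's uniqueness produces a zonal representative, still denoted $g_E$, with profile $\bar g_E\in C[-1,1]$. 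Extending zonally gives $g\in C(\S^{n-1})$ via $g(u)=\bar g_E(\pair{e_n}{u})$, and I define
\begin{equation*}
\psi(K):=\int_{\S^{n-1}}g\,dS_i(K,\DD;{}\cdot{}),\qquad K\in\K(\R^n),
\end{equation*}
which is a zonal element of $\Val_i(\R^n)$ by the ``if'' direction.

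It then suffices to verify $\varphi_E=\psi|_{\K(E)}$, since \cref{thm:zonalKlainDetRestr} applied to the zonal valuation $\varphi-\psi$ forces $\varphi=\psi$ globally. The strategy is to establish a pushforward identity of the form $\pi_\ast S_i(K,\DD;{}\cdot{})=c\cdot S^{E}_i(K,{}\cdot{})$ for every $K\in\K(E)$, where $\pi$ is the natural spherical projection onto $\S^{n-1}\cap E$ and $c>0$ is a universal constant depending only on $n$, $i$, and $E$. Granted this, $\psi(K)$ reduces for $K\in\K(E)$ to $c\int_{\S^{n-1}\cap E}g_E\,dS^{E}_i(K,{}\cdot{})$, matching $\varphi_E$ after absorbing the factor $c$ into the normalization of $g_E$. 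The identity itself I would verify by pairing with arbitrary support functions $h_{K_0}$ via \cref{eq:MixedVol_Integral} and expanding the resulting mixed volume $V(K_0,K^{[i]},\DD^{[n-i-1]})$ through the orthogonal splitting $\R^n=E\oplus E^\perp$, in which each copy of $\DD$ is ``absorbed'' into its cross-section by $E^\perp$ since $E^\perp\subseteq e_n^\perp$. Uniqueness of $g$ follows by the same restriction argument: if $\int g\,dS_i(K,\DD;{}\cdot{})\equiv 0$, then $g|_{\S^{n-1}\cap E}$ represents the zero top-degree valuation and must be the restriction of a linear function by McMullen's uniqueness; being also zonal, it is a multiple of $\pair{e_n}{{}\cdot{}}|_{\S^{n-1}\cap E}$, so $g$ itself is a zonal linear function.

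The main obstacle is the pushforward identity in the previous paragraph. Unlike the case of a reference ball, the disk $\DD$ lies in $e_n^\perp$ but \emph{not} in $E^\perp$, so the clean factorization $V(K_1,\dots,K_{i+1},L_{i+2},\dots,L_n)=\binom{n}{i+1}^{-1}V^E(K_1,\dots,K_{i+1})V^{E^\perp}(L_{i+2},\dots,L_n)$ for mixed volumes across orthogonal subspaces does not apply directly and must be replaced by a more delicate integral-geometric computation that exploits $E^\perp\subseteq e_n^\perp$ together with the $\SO(E^\perp)$-symmetry of $\DD\cap(E^\perp+\mathrm{lin})$ slices; I expect this to reduce, via \cref{eq:MixedVol_Proj} applied iteratively in the $E^\perp$-directions, to a mixed volume computation inside $E$ against the $i$-dimensional sub-disk $\DD\cap E$ of $\DD$.
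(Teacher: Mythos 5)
The statement you are proving is not proved in the present paper; it is imported from the authors' earlier work \cite{Brauner2024a}. So there is no in-paper proof to compare against, but your plan contains a concrete error.

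Your overall strategy---restrict $\varphi$ to a subspace $E\in\Gr_{i+1}(\R^n)$ containing $e_n$, apply McMullen's classification there, build a candidate zonal valuation $\psi$, and conclude $\varphi=\psi$ via \cref{thm:zonalKlainDetRestr}---is sound in outline and close in spirit to what the cited paper does. The gap is the pushforward identity you want in the middle: $\pi_\ast S_i(K,\DD;{}\cdot{})=c\,S^E_i(K,{}\cdot{})$ for every $K\in\K(E)$ with a universal constant $c$. This is false. Take $n=3$, $i=1$, $E=\operatorname{span}(e_1,e_3)$. For $K=[-e_3,e_3]$ one computes via \cref{eq:MixedVol_Proj} that $S(K,\DD;{}\cdot{})$ is arclength on the equator $\S^1(e_3^\perp)$, whose pushforward under $\pi$ is $\pi(\delta_{e_1}+\delta_{-e_1})$, while $S_1^E(K,{}\cdot{})=2(\delta_{e_1}+\delta_{-e_1})$, so $c=\pi/2$; but for $K=[-e_1,e_1]$ one gets $S(K,\DD;{}\cdot{})=2(\delta_{e_3}+\delta_{-e_3})$, already supported on $\S^1(E)$, matching $S_1^E(K,{}\cdot{})$ exactly, so $c=1$. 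The constant depends on $K$, hence no such identity exists, and the zonal extension $g(u)=\bar g_E(\pair{e_n}{u})$ does not represent $\varphi$.

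What is actually true is the \emph{lifting} identity $S_i(K,\DD;{}\cdot{})=\binom{n-1}{i}^{-1}\pi_{E,\DD}^\ast S_i^E(K,{}\cdot{})$ for $K\in\K(E)$ (\cref{thm:mixed_sph_lift_gnrl}, specialized via \cref{cor:mixed_sph_lift_gnrl_Zonal}). Pairing with $g$ gives $\int g\,dS_i(K,\DD;{}\cdot{})=\binom{n-1}{i}^{-1}\int_{\S^i(E)}\pi_{E,\DD}g\,dS_i^E(K,{}\cdot{})$, so the function you must produce is a preimage of $g_E$ under the mixed spherical projection $\pi_{E,\DD}$, not a restriction. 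As \cref{lem:mixedSphProjZonalGeneral} with $R_\DD\equiv 1$ makes explicit, $\pi_{E,\DD}$ applied to a zonal function is a genuine (nonlocal) integral transform $\bar g\mapsto\int\bar g(s\,{\cdot}\,)\,d\nu_{R_{\DD,s}}$, and the actual work of the theorem is to show this transform is surjective onto the relevant class of zonal profiles. That inversion is the content of \cite{Brauner2024a}, and it is precisely the step your sketch glosses over.
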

	From \cref{thm:zonalDiskHadwiger} together with (an extended version of) \cref{mthm:dictionary} we will later deduce \cref{mthm:mixedZonalHadwiger} in \cref{sec:HadwigerTheorems}.

	\subsection{Functions of locally bounded variation}\label{sec:BV0}
	
	Next, we recall some facts about a special class of functions of bounded variation on $(-1,1)$. For a general reference on functions of bounded variation of a singe variable, we recommend \cite{Kannan1996}*{Chapter~6}. 
	
	\medskip
	
	First, note that if a function $R: (-1,1) \to \R$ is of locally bounded variation, then there exists a signed Radon measure $\mu_R$ on $(-1,1)$ such that
	\begin{equation}\label{eq:BV_defRadon}
		\int_{(-1,1)} \phi'(s)R(s)\, ds
		= - \int_{(-1,1)} \phi(s)\, d\mu_R(s)
		\qquad \text{for all }\phi\in C^1_c(-1,1),
	\end{equation}
	or, equivalently, the distributional derivative of $R$ is a signed Radon measure. On the contrary, if there exists $\mu_R$ such that \eqref{eq:BV_defRadon} holds, then the function $s \mapsto \mu_R((-1,s])$ is a right-continuous function of locally bounded variation which coincides with $R$ up to some additive constant (Lebesgue-)almost everywhere on $(-1,1)$. We will therefore tacitly assume in the following that every function of locally bounded variation possesses some kind of semi-continuity so that the value of the function at every point coincides with one of its one-sided limits.
	
	We will concentrate on those functions $R$ for which $\frac{1}{s}\mu_R(ds)$ is still a signed Radon measure.
	
	\begin{defi}
		We denote by $\BV_0(-1,1)$ the space of all functions $R:(-1,1)\to \R$ of locally bounded variation for which there exists a signed Radon measure $\nu_R$ on $(-1,1)$ such that
		\begin{equation}\label{eq:BV_0_iff}
			\int_{(-1,1)} \phi'(s)R(s)\, ds
			= \int_{(-1,1)} \phi(s)s\, d\nu_R(s)
			\qquad \text{for all }\phi\in C^1_c(-1,1).
		\end{equation}
		In this case, $\mu_R(ds) = -s\, \nu_R(ds)$, and we further assume that $\abs{\nu_R}(\{0\})=0$ so $\nu_R$ is uniquely determined by $R$.
	\end{defi}
	
	\begin{exl}
		Let $R\in C^1(-1,1)$ and suppose that the function $\frac{R'(s)}{s}$ is integrable on a neighborhood around $s=0$. Then $R\in \BV_0(-1,1)$ and integration by parts shows that $d\nu_R(s)=-\frac{R'(s)}{s} ds$.
	\end{exl}
	
	Similar to functions of locally bounded variation, every $R \in \BV_0(-1,1)$ satisfies the following analogues of the fundamental theorem of calculus, $-1 < t_1 < t_2 < 1$,
	\begin{align}\label{eq:fundthmCalcBV1}
		R(t_2^+) - R(t_1^+)
		= -\int_{(t_1,t_2]} x~d\nu_R(x) \quad \text{ and } \quad R(t_2^-) - R(t_1^-)
		= -\int_{[t_1,t_2)} x~d\nu_R(x),
	\end{align}
	where $R(t^\pm) = \lim_{s \to t^\pm} R(s)$ denotes the right- resp. left-sided limit. Let us point out that (semi-continuous) functions of locally bounded variation are continuous up to countably many jumps (of finite height) and one-sided limits exist everywhere.
	
	Applying \eqref{eq:fundthmCalcBV1}, one directly obtains an integration by parts formula for $R \in \BV_0(-1,1)$ and $\phi \in C^1_c(-1,1)$, $-1 < t_1 < t_2 < 1$,
	\begin{equation}\label{eq:BV_0_int_by_parts}
		\int_{(t_1,t_2]} \phi'(t)R(t)\, dt
		= R(t_2^+)\phi(t_2) - R(t_1^+)\phi(t_1) + \int_{(t_1,t_2]} \phi(x)x\, d\nu_R(x),
	\end{equation}
	and similarly for integrals over $[t_1,t_2)$, where $R(t_1^+)$ and $R(t_2^+)$ are replaced by the left-sided limits.
	
	We note some further properties of functions in $\BV_0(-1,1)$ for later reference. The proof is elementary and given in \cref{app:TechnLemmas}.
	\begin{lem}\label{lem:BV0diffCont}
		If $R\in\BV_0(-1,1)$, then $R$ is differentiable at $t=0$ and $R'(0)=0$. In particular, $R$ is continuous at $t=0$, that is, $R(0^+) = R(0^-)$.
	\end{lem}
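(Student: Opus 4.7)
The plan is to exploit the factor of $s$ in the distributional derivative $\mu_R(ds)=-s\,\nu_R(ds)$ twice: once to rule out a jump at the origin, and once to absorb the $1/t$ in the difference quotient.

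First, for continuity at $0$, I would simply note that the jump of $R$ at any point $t\in(-1,1)$ equals the atomic mass $\mu_R(\{t\})=-t\,\nu_R(\{t\})$. Plugging in $t=0$ gives $R(0^+)-R(0^-)=0$ regardless of the value of $\nu_R(\{0\})$, so $R$ has matching one-sided limits at the origin. Under the semi-continuity convention, $R(0)$ agrees with this common value.

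Next, for differentiability, I would apply the fundamental theorem identities \eqref{eq:fundthmCalcBV1} with one endpoint at $0$. For $t>0$ and the right-continuous representative,
\begin{equation*}
\frac{R(t^+)-R(0^+)}{t}=-\frac{1}{t}\int_{(0,t]}x\,d\nu_R(x),
\end{equation*}
whose absolute value is bounded by $|\nu_R|((0,t])$ because $|x|\leq t$ on the domain of integration. Since $|\nu_R|$ is Radon (hence finite on some fixed neighborhood of $0$) and $(0,t]\searrow\emptyset$, continuity of measure from above forces $|\nu_R|((0,t])\to 0$ as $t\searrow 0^+$. The analogous computation using the left-continuous version of \eqref{eq:fundthmCalcBV1} handles $t\nearrow 0^-$ and yields the symmetric bound $|\nu_R|([t,0))\to 0$.

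Finally, to upgrade this to differentiability in the sense allowed by the paper's semi-continuity convention (which permits $R(t)$ to agree with either $R(t^+)$ or $R(t^-)$), I would note that the discrepancy $|R(t^+)-R(t^-)|=|t|\,|\nu_R(\{t\})|$ already carries a factor of $|t|$, so it contributes at most $|\nu_R(\{t\})|$ to the difference quotient. Since $|\nu_R|$ has finite total mass on a neighborhood of $0$, the atomic masses at points accumulating to $0$ must themselves tend to $0$, so this residual term vanishes in the limit. Combining the three pieces gives $R'(0)=0$. The main (quite mild) obstacle is keeping the one-sided conventions straight between \eqref{eq:fundthmCalcBV1} and the semi-continuous representative; everything else is a direct consequence of the vanishing of the weight $s$ at the origin.
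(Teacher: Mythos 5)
Your proof is correct and takes essentially the same route as the paper: both bound the difference quotient via \eqref{eq:fundthmCalcBV1} together with the pointwise estimate $|s|\leq|t|$ on the domain of integration, reducing the claim to $|\nu_R|$ of a shrinking interval tending to zero. You are a touch more explicit than the paper in reconciling the semi-continuous representative (via the jump bound $|R(t^+)-R(t^-)|=|t|\,|\nu_R(\{t\})|$) and in deriving continuity directly from $\mu_R(\{0\})=0$, but the core mechanism — the weight $s$ vanishing at the origin absorbing the $1/t$ — is the same.
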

		
	\begin{lem}\label{BV_0_algebraic}
		Let $R,Q\in\BV_0(-1,1)$ and $\alpha,\beta\in\R$. Then
		\begin{enumerate}[label=\upshape(\roman*)]
			\item \label{BV_0_algebraic:linear_comb}
			$\alpha R + \beta Q \in \BV_0(-1,1)$ and $\nu_{\alpha R+\beta Q}=\alpha\nu_R + \beta \nu_Q$;
			\item \label{BV_0_algebraic:unit}
			$1\in\BV_0(-1,1)$ and $\nu_1=0$;
			\item \label{BV_0_algebraic:product}
			$RQ\in\BV_0(-1,1)$ and
			\begin{align*}
				d\nu_{RQ}(t)
				= Q(t^-) d\nu_R(t) + R(t^+)d\nu_Q(t)
				= Q(t^+) d\nu_R(t) + R(t^-)d\nu_Q(t);
			\end{align*}
			
			\item \label{BV_0_algebraic:inverse}
			if $\inf_I \abs{R} > 0$ for every compact interval $I\subset (-1,1)$, then $\frac{1}{R}\in \BV_0(-1,1)$ and
			\begin{equation*}
				d\nu_{\frac{1}{R} }(t) = - \frac{1}{R(t^-)R(t^+)}\, d\nu_R(t).
			\end{equation*}
		\end{enumerate}
		
	\end{lem}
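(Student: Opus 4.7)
Parts~\ref{BV_0_algebraic:linear_comb} and~\ref{BV_0_algebraic:unit} should follow directly from the defining relation~\eqref{eq:BV_0_iff}. For~\ref{BV_0_algebraic:linear_comb}, linearity of the integrals in~\eqref{eq:BV_0_iff} shows that $\nu_{\alpha R + \beta Q} := \alpha \nu_R + \beta \nu_Q$ satisfies the required identity, and the vanishing of $|\nu_R|,|\nu_Q|$ on $\{0\}$ is inherited. For~\ref{BV_0_algebraic:unit}, one has $\int_{(-1,1)} \phi'(s)\, ds = 0$ for every $\phi \in C_c^1(-1,1)$, so $\nu_1 = 0$ works.

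Part~\ref{BV_0_algebraic:product} is the technical core. First, $RQ$ is locally bounded (BV functions are) and is of locally bounded variation, with one-sided limits equal to the products of the corresponding one-sided limits of $R$ and $Q$. The plan is then to appeal to the classical BV integration by parts formula, which for semi-continuous functions of locally bounded variation $R,Q$ and any $-1 < t_1 < t_2 < 1$ reads
\begin{equation*}
	R(t_2^+)Q(t_2^+) - R(t_1^+)Q(t_1^+) = \int_{(t_1,t_2]} Q(s^-)\, d\mu_R(s) + \int_{(t_1,t_2]} R(s^+)\, d\mu_Q(s),
\end{equation*}
where $\mu_R(ds) = -s\, d\nu_R(s)$ and analogously for $\mu_Q$. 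Comparing with the analogue of~\eqref{eq:fundthmCalcBV1} for $RQ$ identifies the distributional derivative of $RQ$ as $Q(s^-)\, \mu_R(ds) + R(s^+)\, \mu_Q(ds)$, which equals $-s\,[Q(s^-)\nu_R(ds) + R(s^+)\nu_Q(ds)]$. Thus $RQ \in \BV_0(-1,1)$ with $\nu_{RQ}$ as claimed; the second, symmetric expression is obtained by interchanging left- and right-sided limits in the integration by parts formula, and both expressions agree at $s = 0$ since $R$ and $Q$ are continuous there by~\cref{lem:BV0diffCont}, which also ensures $|\nu_{RQ}|(\{0\}) = 0$.

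For part~\ref{BV_0_algebraic:inverse}, since $R$ is bounded away from $0$ on every compact subinterval, $1/R$ is locally bounded, and its one-sided limits equal the reciprocals of those of $R$, hence $1/R$ is of locally bounded variation. Applying~\ref{BV_0_algebraic:product} to $R$ and $1/R$ and using $R\cdot(1/R) = 1$ together with~\ref{BV_0_algebraic:unit} yields
\begin{equation*}
	0 = \frac{1}{R(s^-)}\, d\nu_R(s) + R(s^+)\, d\nu_{1/R}(s),
\end{equation*}
which rearranges to the desired expression. The main obstacle is the BV product formula in part~\ref{BV_0_algebraic:product}; while classical, it warrants care when $R$ and $Q$ have common atoms. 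The cleanest route is to approximate $R$ and $Q$ by finite step functions, for which the identity reduces to a telescoping algebraic check, and then to pass to the limit via dominated convergence using the local total variation bounds of $R$ and $Q$.
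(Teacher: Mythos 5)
Your handling of parts~\cref{BV_0_algebraic:linear_comb} and~\cref{BV_0_algebraic:unit} matches the paper, which also dismisses them as immediate from the definition. For part~\cref{BV_0_algebraic:product} you take a genuinely different route: you invoke the classical one-variable BV product/integration-by-parts formula for the distributional derivative $\mu_{RQ}$ as a known fact (with a proposed step-function approximation to justify it), whereas the paper stays self-contained -- it first reduces to the case $R(0)=Q(0)=0$ by expanding $(R(t)-R(0))(Q(t)-Q(0))$, then works on $(0,1)$ directly from~\eqref{eq:fundthmCalcBV1}, applies Fubini, and uses~\eqref{eq:BV_0_int_by_parts} on the inner integral, repeating on $(-1,0]$. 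Your remark that the asymmetric pairing $Q(s^-)\,d\mu_R + R(s^+)\,d\mu_Q$ is exactly what handles common atoms is correct and is precisely the subtlety the paper's Fubini computation resolves implicitly; both derivations are valid, the paper's buys self-containment while yours is shorter if one accepts the standard BV calculus rule.

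In part~\cref{BV_0_algebraic:inverse} there is a small but real circularity: you ``apply~\cref{BV_0_algebraic:product} to $R$ and $1/R$,'' but~\cref{BV_0_algebraic:product} presupposes that both factors lie in $\BV_0(-1,1)$, which for $1/R$ is exactly what is being proved. At that point you only know $1/R$ is of locally bounded variation. The paper avoids this by running the product-rule argument at the level of the measures $\mu$ (which makes sense for any locally BV functions), obtaining $0 = d\mu_{R\cdot(1/R)}(t) = \frac{1}{R(t^-)}\,d\mu_R(t) + R(t^+)\,d\mu_{1/R}(t)$, solving for $\mu_{1/R}$, and only then observing that $d\nu_{1/R}(t) = -\frac{1}{t}\,d\mu_{1/R}(t) = -\frac{1}{R(t^-)R(t^+)}\,d\nu_R(t)$ is a signed Radon measure because $\nu_R$ is and $R$ is locally bounded away from zero -- which is what certifies $1/R\in\BV_0(-1,1)$. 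Since your product formula is also stated in terms of $\mu_R,\mu_Q$, the fix is to phrase the step exactly this way rather than as an application of the already-proved~\cref{BV_0_algebraic:product}, and then add the one-line verification that the resulting candidate $\nu_{1/R}$ is Radon.
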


	\section{Preliminaries on bodies of revolution}\label{sec:bodies_of_revol}
	
	In this section we recall some basic properties of bodies of revolution and properly introduce the notions needed in the formulation of \cref{mthm:Christoffel_Minkowski_zonal} and in the proofs of the other main theorems. To this end, throughout, let $C \in \K(\R^n)$ be a convex body of revolution, which is not a vertical segment, that is, a segment parallel to the axis $e_n$. We write $h_C$ for the support function of $C$ on $\S^{n-1}$ and note that there exists a function $\overline{h_C} \in C[-1,1]$, such that $h_C = \overline{h_C}(\pair{e_n}{\cdot})$.
	
	First, observe that $\overline{h_C}$ is left- and right-differentiable at every point $t \in (-1,1)$. To this end, set $u_t = t e_n + \sqrt{1-t^2}\bar{u}$, where $\bar{u} \in \S^{n-2}(e_n^\perp)$ is chosen arbitrarily. Clearly, $\overline{h_C}(t) = h_C(u_t)$, $t \in [-1,1]$.
	\begin{lem}\label{lem:oneSidDerHc}
		Let $C \in \K(\R^n)$ be a convex body of revolution. Then the following limits exist for all $t_0 \in (-1,1)$ and satisfy
		\begin{align*}
			\partial_\pm \overline{h_C}(t) := \lim\limits_{t' \to t^\pm} \frac{\overline{h_C}(t') - \overline{h_C}(t)}{t'-t} = \pm h_{F(C, u_t)}\left(\pm \frac{d}{dt}u_t\right),
		\end{align*}
		where $\frac{d}{dt}u_t = e_n - \frac{t}{\sqrt{1-t^2}}\bar{u}$.
	\end{lem}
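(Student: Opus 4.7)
The plan is to reduce the one-sided derivatives of $\overline{h_C}$ to one-sided directional derivatives of $h_C$ along the smooth curve $t \mapsto u_t$. By zonality of $C$, the identity $\overline{h_C}(t') = h_C(u_{t'})$ holds for any choice of $\bar{u} \in \S^{n-2}(e_n^\perp)$, so the difference quotient
\[
\frac{\overline{h_C}(t') - \overline{h_C}(t)}{t'-t} = \frac{h_C(u_{t'}) - h_C(u_t)}{t'-t}
\]
is well defined and independent of $\bar{u}$. A direct computation shows that $t \mapsto u_t$ is $C^1$ on $(-1,1)$ with derivative $\frac{d}{dt}u_t = e_n - \frac{t}{\sqrt{1-t^2}}\bar{u}$, so that Taylor's theorem gives $u_{t'} = u_t + (t'-t)\frac{d}{dt}u_t + O((t'-t)^2)$.

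Next, I would exploit that $h_C$ is convex and hence locally Lipschitz on $\R^n$. On a neighborhood of $u_t$ with Lipschitz constant $L$, the second-order error in the expansion of $u_{t'}$ contributes at most $L \cdot O((t'-t)^2)$ to the numerator, and dividing by $t'-t$ leaves an $O(t'-t)$ remainder that vanishes in the limit. Consequently,
\[
\lim_{t' \to t^+} \frac{h_C(u_{t'}) - h_C(u_t)}{t'-t}
= \lim_{s \to 0^+} \frac{h_C\bigl(u_t + s \tfrac{d}{dt}u_t\bigr) - h_C(u_t)}{s},
\]
and similarly, after the substitution $s = t-t' > 0$, the left-sided limit equals $-\lim_{s \to 0^+} \frac{h_C(u_t - s \frac{d}{dt}u_t) - h_C(u_t)}{s}$.

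To conclude, I would invoke the classical identity (see, e.g., \cite{Schneider2014}*{Thm.~1.7.2}) that for any convex body $K \in \K(\R^n)$ and any $u,v \in \R^n$, the one-sided directional derivative of the support function exists and is given by $\lim_{s \to 0^+} s^{-1}(h_K(u+sv) - h_K(u)) = h_{F(K,u)}(v)$. Applying this identity to $h_C$ at $u = u_t$ in the directions $v = \pm \frac{d}{dt}u_t$ produces the stated formulas for $\partial_\pm \overline{h_C}(t)$, with the sign in front of $h_{F(C,u_t)}$ arising from the sign swap in the $\partial_-$ case.

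The only step requiring any care is justifying the linearization of $u_{t'}$, but this is immediate from local Lipschitz continuity of $h_C$; everything else is either the chain rule for $u_t$ or a standard fact about support functions.
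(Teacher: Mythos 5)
Your proof is correct and follows essentially the same route as the paper: both reduce to the one-sided directional derivative formula $h'_C(u_t; v) = h_{F(C,u_t)}(v)$ from \cite{Schneider2014}*{Thm.~1.7.2} and compose with the $C^1$ curve $t \mapsto u_t$. The paper compresses the linearization step into the phrase ``chain rule for directional derivatives,'' whereas you spell out the same justification explicitly via local Lipschitz continuity of $h_C$.
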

	\begin{proof}
		Since $\overline{h_C}(t) = h_C(u_t)$, and by \cite{Schneider2014}*{Thm.~1.7.2}, the directional derivatives of $h_C$ exist at $u_t \in \S^{n-1}$ and are equal to $h_{F(C,u_t)}$, the claim follows from the chain rule for directional derivatives.
	\end{proof}
	
	\begin{defi}\label{def:defRC}
		Let $C \in \K(\R^n)$ be a convex body of revolution. We define
		\begin{align}
			R_C(t) = \sqrt{1-t^2}\left(\overline{h_C}(t) - t \partial_+ \overline{h_C}(t)\right), \qquad t \in (-1,1).
		\end{align}
	\end{defi}
	
	Note that, by \cref{lem:oneSidDerHc}, $R_C$ is well-defined. The definition of $R_C$ is motivated by the following geometric intuition.
	
	\begin{lem}\label{lem:geomIntuitRc}
		Suppose that $C \in \K(\R^n)$ is a body of revolution. Then for $u \in \S^{n-1}\setminus\{\pm e_n\}$ and $t = \pair{e_n}{u}$, we have 
		\begin{align*}
			C \cap (x_0 + e_n^\perp) = R_C(t) \DD + \pair{e_n}{x}e_n,
		\end{align*}
		for some $x \in F(C, u)$. Moreover, the faces $F(C, \pm e_n)$ are (possibly degenerate) disks of radius
		$\lim_{t \to \pm 1} R_C(t)$, respectively.
	\end{lem}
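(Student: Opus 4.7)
My plan is to exploit the $\SO(n-1)$-invariance of $C$ and combine it with Lemma 3.1 to reduce the problem to a small linear system. By rotational symmetry, every non-empty horizontal slice of $C$ is an $(n-1)$-dimensional disk $C\cap(se_n+e_n^\perp)=\rho(s)\DD+se_n$ for some concave, non-negative radial function $\rho$ on $[s_{\min},s_{\max}]:=[\overline{h_C}(-1),\overline{h_C}(1)]$; concavity forces continuity on the closed interval, and the extreme slices are precisely $F(C,\pm e_n)=\rho(s_{\max/\min})\DD+s_{\max/\min}e_n$.

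For the first part of the claim, I would fix $t\in(-1,1)$, pick $\bar u\in\S^{n-2}(e_n^\perp)$, and select $x\in F(C,u_t)$ realizing $\pair{x}{\tfrac{d}{dt}u_t}=\partial_+\overline{h_C}(t)$ as supplied by Lemma 3.1. Setting $x_1=\pair{e_n}{x}$ and noting that, at fixed height $x_1$, the maximum of $\pair{y}{u_t}$ over $y\in C$ is attained at $y=x_1e_n+\rho(x_1)\bar u$, the two identities $\pair{x}{u_t}=\overline{h_C}(t)$ and $\pair{x}{\tfrac{d}{dt}u_t}=\partial_+\overline{h_C}(t)$ become the linear system
\begin{align*}
tx_1+\sqrt{1-t^2}\,\rho(x_1) &= \overline{h_C}(t),\\
x_1-\tfrac{t}{\sqrt{1-t^2}}\,\rho(x_1) &= \partial_+\overline{h_C}(t).
\end{align*}
Eliminating $x_1$ directly yields $\rho(x_1)=\sqrt{1-t^2}(\overline{h_C}(t)-t\partial_+\overline{h_C}(t))=R_C(t)$, so $C\cap(x_1e_n+e_n^\perp)=R_C(t)\DD+x_1e_n$, which is the first claim.

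For the limits at $t=\pm 1$, the same elimination gives the explicit formula $x_1(t)=t\overline{h_C}(t)+(1-t^2)\partial_+\overline{h_C}(t)$. I would then show that $x_1(t)\to s_{\max}$ as $t\to 1^-$ by an elementary estimate: if $x_1(t)\leq s_{\max}-\varepsilon$ along a sequence $t\to 1^-$, then the first equation of the system forces $\overline{h_C}(t)\leq t(s_{\max}-\varepsilon)+\sqrt{1-t^2}\max\rho$, contradicting $\overline{h_C}(t)\geq ts_{\max}$ (which holds since $s_{\max}e_n\in C$) once $t$ is sufficiently close to $1$. Combined with continuity of $\rho$, this gives $\lim_{t\to 1^-}R_C(t)=\rho(s_{\max})$, the radius of $F(C,e_n)$; the argument at $-e_n$ is symmetric. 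The main subtlety I anticipate is that $\partial_+\overline{h_C}(t)$ can blow up as $t\to\pm 1$ and $F(C,u_t)$ need not reduce to a single point, so one has to commit to the specific $x$ from Lemma 3.1; fortunately the resulting closed-form expression for $x_1(t)$ keeps the boundary analysis clean.
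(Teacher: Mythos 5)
Your argument is correct and follows essentially the same route as the paper: both proofs identify a maximizer $x\in F(C,u_t)$ via \cref{lem:oneSidDerHc} and recover $\pair{x}{\bar u}=\rho(\pair{e_n}{x})=R_C(t)$ by the same algebra (you phrase it as eliminating $x_1$ from a $2\times 2$ linear system, the paper substitutes directly), and you then supply the endpoint-limit argument for $F(C,\pm e_n)$ that the paper leaves to the reader. One small point to tighten: ``concavity forces continuity on the closed interval'' is not true as stated --- a concave function on $[s_{\min},s_{\max}]$ may jump \emph{down} at an endpoint --- so you should additionally invoke that $C$ is closed, hence $\rho$ is upper semi-continuous; together with the lower semi-continuity at the endpoints that concavity \emph{does} provide, this yields continuity of $\rho$ at $s_{\min}$ and $s_{\max}$, which is what your limit step actually uses.
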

	\begin{proof}
		First, let $t \in (-1,1)$. By \cref{lem:oneSidDerHc}, and choosing $x\in F(C,u_t)$ appropriately, we obtain
		\begin{align*}
			\partial_+\overline{h_C}(t) = h_{F(C, u_t)}\left(\frac{d}{dt}u_t\right) = \left\langle x, e_n - \frac{t}{\sqrt{1-t^2}}\bar{u}\right\rangle.
		\end{align*}
		Consequently, since $h_C(u_t) = \pair{x}{u_t}$ as $x \in F(C, u_t)$,
		\begin{align*}
			R_C(t) &= \sqrt{1-t^2}(\overline{h_C}(t) - t\partial_- \overline{h_C}(t))\\
			&=\sqrt{1-t^2}h_C(u_t)-t\sqrt{1-t^2}\pair{x}{e_n} + t^2 \pair{x}{\bar u}\\
			&=\sqrt{1-t^2}\pair{x}{u_t}-t\sqrt{1-t^2}\pair{x}{e_n} + t^2 \pair{x}{\bar u}\\
			&=(1-t^2)\pair{x}{\bar u} + t^2\pair{x}{\bar u} = \pair{x}{\bar u},
		\end{align*}
		which yields the first claim. The second claim follows either from the first claim or directly by a short computation from \cref{lem:oneSidDerHc}.
	\end{proof}
	
	\begin{exl}
		We have $R_{B^n}(t) = \sqrt{1-t^2}$ and $R_{\DD}(t) = 1$.
	\end{exl}
	In the following, we will extensively use that $R_C$ is an element of the subspace $\BV_0(-1,1)$ of the space of functions of locally bounded variation on $(-1,1)$, see \cref{sec:BV0} for more details.%
	
	To show that $R_C \in \BV_0(-1,1)$, we first consider the case where $C$ is of class $C^\infty_+$, and, hence, $h_C \in C^\infty(\S^{n-1})$. In this case, we obtain the following expression for the surface area measure of $C$ in terms of $\nu_{R_C^{n-1}}$. Here and throughout, we denote by $\omega_i$ the area of the sphere $\S^{i-1}$ in $\R^i$, $\omega_{i} = i \kappa_i$.
	\begin{lem}\label{lem:SurfAreaMeasRevSmooth}
		Let $C \in \K(\R^n)$ be a body of revolution of class $C^\infty_+$. Then $R_C \in \BV_0(-1,1)$ and for all $f \in C[-1,1]$,
		\begin{align}\label{eq:SurfAreaMeasRevSmooth}
			\int_{\S^{n-1}} \!\! f(\pair{u}{e_n}) dS_{n-1}(C, u) = \frac{\omega_{n-1}}{n-1}\int_{(-1,1)} \!\! f(t) d \nu_{R_C^{n-1}}(t).
		\end{align}
	\end{lem}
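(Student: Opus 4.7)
Since $C$ is of class $C^\infty_+$, the support function $h_C$ is smooth on $\S^{n-1}$, so $\overline{h_C}\in C^\infty[-1,1]$ and $R_C\in C^\infty(-1,1)$. Differentiating the defining formula of $R_C$ directly, the $\overline{h_C}'$-terms cancel and we obtain
\begin{equation*}
R_C'(t) = -\frac{t}{\sqrt{1-t^2}}\bigl(\overline{h_C}(t)-t\overline{h_C}'(t)\bigr) - t\sqrt{1-t^2}\,\overline{h_C}''(t).
\end{equation*}
In particular $R_C'(t)/t$ extends continuously across $t=0$, so $d\nu_{R_C}(t) := -R_C'(t)/t\,dt$ defines a finite signed Radon measure on $(-1,1)$ with no mass at $0$, which shows $R_C\in\BV_0(-1,1)$. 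Applying the product rule from \cref{BV_0_algebraic} inductively then yields $R_C^{n-1}\in\BV_0(-1,1)$ with $d\nu_{R_C^{n-1}}(t)=-(R_C^{n-1})'(t)/t\,dt$.

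For the integral identity \cref{eq:SurfAreaMeasRevSmooth}, my plan is to identify the principal radii of curvature of $C$ in terms of $R_C$. The surface area measure of a $C^\infty_+$ body has density $s_{n-1}(u)=\det(\nabla^2 h_C(u)+h_C(u)\Id_{u^\perp})$ with respect to spherical Lebesgue measure, where $\nabla^2$ is the spherical covariant Hessian. Decomposing $u^\perp$ at $u=te_n+\sqrt{1-t^2}\,\bar u$ into the meridional direction (tangent to the longitude through $u$) and its $(n-2)$-dimensional orthogonal complement (tangent to the latitude), a direct computation shows that $\nabla^2 h_C + h_C\Id_{u^\perp}$ is diagonal in such a frame with eigenvalues
\begin{equation*}
r_m(t)=\overline{h_C}(t)-t\overline{h_C}'(t)+(1-t^2)\overline{h_C}''(t)\quad\text{and}\quad r_l(t)=\overline{h_C}(t)-t\overline{h_C}'(t),
\end{equation*}
the latter of multiplicity $n-2$. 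Comparing with \cref{def:defRC} and the expression for $R_C'$ above yields the two key identities $R_C(t)=\sqrt{1-t^2}\,r_l(t)$ and $R_C'(t)=-\tfrac{t}{\sqrt{1-t^2}}\,r_m(t)$.

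To conclude, I parametrize $\S^{n-1}$ by $u=te_n+\sqrt{1-t^2}\,\bar u$ with $t\in[-1,1]$ and $\bar u\in \S^{n-2}\subset e_n^\perp$, under which $du=(1-t^2)^{(n-3)/2}\,dt\,d\bar u$. Since $f(\pair{e_n}{u})\,s_{n-1}(u)$ depends only on $t$, integration over $\bar u$ produces a factor $\omega_{n-1}$, so that
\begin{align*}
\int_{\S^{n-1}} f(\pair{e_n}{u})\,dS_{n-1}(C,u)
&= \omega_{n-1}\int_{-1}^{1} f(t)\,r_m(t)\,r_l(t)^{n-2}\,(1-t^2)^{(n-3)/2}\,dt\\
&= -\omega_{n-1}\int_{-1}^{1} f(t)\,\frac{R_C'(t)\,R_C(t)^{n-2}}{t}\,dt\\
&= \frac{\omega_{n-1}}{n-1}\int_{(-1,1)} f(t)\,d\nu_{R_C^{n-1}}(t),
\end{align*}
using $(R_C^{n-1})'=(n-1)R_C^{n-2}R_C'$ in the last step. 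The only non-routine step is the identification of $r_m$ and $r_l$, which is standard but requires some care with the spherical covariant Hessian applied to the zonal function $h_C=\overline{h_C}(\pair{e_n}{\cdot})$.
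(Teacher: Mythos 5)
Your proof is correct and follows essentially the same route as the paper's: compute $R_C'$ explicitly, identify the eigenvalues of the Hessian operator of $h_C$ (the paper calls them $\mathcal{A}_1^C$, $\mathcal{A}_2^C$ and cites \cite{OrtegaMoreno2021} for this, whereas you sketch the computation yourself), rewrite the density of $S_{n-1}(C,\cdot)$ in cylindrical coordinates in terms of $R_C$ and $R_C'$, and invoke the $\BV_0$ product rule to pass to $R_C^{n-1}$. One small imprecision: $d\nu_{R_C}(t)=-R_C'(t)/t\,dt$ need not define a \emph{finite} measure near $t=\pm1$ (only a Radon measure), but since $\BV_0(-1,1)$ only requires a signed Radon measure this does not affect the argument.
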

	\begin{proof}
		It was shown in \cite{OrtegaMoreno2021}*{Proof of Lem.~5.3} that the Hessian of $h_C$ has eigenvalues $\mathcal{A}_1^C(t) = \overline{h_C}(t) - t\overline{h_C}'(t)$ (with multiplicity $n-2$) and $\mathcal{A}_2^C(t) = (1-t^2)\overline{h_C}''(t) + \overline{h_C}(t) - t\overline{h_C}'(t)$ (with multiplicity $1$) at every $u \in \S^{n-1}\setminus\{\pm e_n\}$ such that $t = \pair{u}{e_n}$. As the density of $S_{n-1}(C, \cdot)$ is the product of the eigenvalues, we obtain after applying cylindrical coordinates
		\begin{align}\label{eq:prfSurfAreaMeasSmooth}
			\int_{\S^{n-1}} f(\pair{e_n}{u}) dS_{n-1}(C, u) = \omega_{n-1}\int_{-1}^{1} f(t) (1-t^2)^{\frac{n-3}{2}} (\mathcal{A}_1^C(t))^{n-2}\mathcal{A}_2^C(t) dt 
		\end{align}
		for every $f \in C[-1,1]$. Noting that
		\begin{align*}
			R_C(t) = \sqrt{1-t^2}\mathcal{A}_1^C(t),
		\end{align*}
		and since
		\begin{align}\label{eq:RCprimeSm}
			-\frac{R_C'(t)}{t} = (1-t^2)^{-\frac{1}{2}}\mathcal{A}_1^C(t) +\sqrt{1-t^2}\overline{h_C}''(t) = (1-t^2)^{-\frac{1}{2}}\mathcal{A}_2^C(t),
		\end{align}
		we conclude, via integration by parts, that $R_C$ satisfies \eqref{eq:BV_0_iff} for $d \nu_{R_C}(t) = (1-t^2)^{-\frac{1}{2}}\mathcal{A}_2^C(t) dt$, that is, $R_C \in \BV_0(-1,1)$. Moreover, by \eqref{eq:prfSurfAreaMeasSmooth} and since $R_C^{n-1}$ is again in $\BV_0(-1,1)$ by \cref{BV_0_algebraic}\cref{BV_0_algebraic:product}, we deduce the claim
		\begin{align*}
			\int_{\S^{n-1}} f(\pair{e_n}{u}) dS_{n-1}(C, u) = \frac{\omega_{n-1}}{n-1}\int_{(-1,1)} f(t) d\nu_{R_C^{n-1}}(t).
		\end{align*}
	\end{proof}
	For a general body of revolution $C$, we need the following obvious observation, defining $\ell_C$.
	
	\begin{lem}\label{lem:vertical_segment_iff}
		For every body of revolution $C\in\K(\R^n)$, there exists a unique body of revolution $\tilde{C}\in\K(\R^n)$ that does not contain any vertical segment in its boundary and a unique $\ell_C\geq 0$ such that
		\begin{equation*}
			C = \tilde{C} + \ell_C [0,e_n].
		\end{equation*}
	\end{lem}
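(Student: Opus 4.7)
My plan is to describe a body of revolution via its horizontal radius function. For such a $C$, define $\rho_C : [z_-, z_+] \to [0,\infty)$ by
\begin{equation*}
C \cap (z e_n + e_n^\perp) = \rho_C(z)\, \DD + z e_n,
\end{equation*}
which is concave by a standard convexity argument. Since a concave function can be constant only on intervals containing its maximum, non-trivial vertical segments in $\partial C$ correspond exactly to non-degenerate subintervals on which $\rho_C$ is constant, all lying in the interval $[z_m^-, z_m^+] \subseteq [z_-, z_+]$ where $\rho_C$ attains its maximum $\rho_{\max}$. The longest vertical segment in $\partial C$ thus has length $\ell_C := z_m^+ - z_m^- \geq 0$.

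I then take $\tilde C$ to be the body of revolution with radius function
\begin{equation*}
  \tilde \rho(z) = \begin{cases} \rho_C(z), & z \in [z_-, z_m^-], \\ \rho_C(z + \ell_C), & z \in [z_m^-, z_+ - \ell_C], \end{cases}
\end{equation*}
obtained by collapsing the flat top of $\rho_C$ to the single point $z_m^-$ (where both branches equal $\rho_{\max}$). On each subinterval, $\tilde \rho$ is concave as a restriction (respectively translation) of $\rho_C$. The key technical step is concavity at the gluing point $z_m^-$: for $a \leq z_m^- \leq b$ and $\lambda \in [0,1]$, I split the chord $[a,b]$ at the parameter where it crosses $z_m^-$ and apply concavity of $\tilde \rho$ on each half, using that $\tilde \rho(z_m^-) = \rho_{\max}$ dominates the values at $a$ and $b$. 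Since $\tilde \rho$ attains its maximum only at $z_m^-$, the body $\tilde C$ contains no vertical segment in its boundary.

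To see $C = \tilde C + \ell_C [0, e_n]$, I compute the horizontal radius function of the Minkowski sum at height $z$ as $\max_{t \in [0, \ell_C]} \tilde \rho(z - t)$ (using that a union of concentric disks is the largest one). A case distinction on whether the window $[z - \ell_C, z]$ lies left of, contains, or lies right of the maximum point $z_m^-$ of $\tilde \rho$ recovers $\rho_C$ on each of the pieces $[z_-, z_m^-]$, $[z_m^-, z_m^+]$, $[z_m^+, z_+]$.

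For uniqueness, if $C = \tilde C' + \ell' [0, e_n]$ with $\tilde C'$ having no vertical segment in its boundary, then $\rho_{\tilde C'}$ attains its maximum at a single point, and the same sliding-window formula forces $\rho_C$ to be constant equal to its maximum on an interval of length exactly $\ell'$; hence $\ell' = \ell_C$, and $\rho_{\tilde C'}$, and thereby $\tilde C'$, is uniquely determined. I expect the concavity check at the gluing point to be the only non-routine step.
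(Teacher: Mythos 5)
The paper introduces this lemma as ``the following obvious observation'' and gives no proof, so there is no argument to compare against; you are supplying the missing details. Your proof via the horizontal radius function $\rho_C$ is correct and complete. Concavity of $\rho_C$ (from Brunn, or directly from the convexity of $C$) makes its maximum set a closed interval $[z_m^-,z_m^+]$, which defines $\ell_C$; the continuity of $\rho_C$ up to $z_\pm$ needed for closedness follows from $C$ being closed together with concavity of $\rho_C$, a point worth stating since concave functions may in general drop at endpoints. The glued function $\tilde\rho$ is concave because at the junction $z_m^-$ the one-sided derivatives jump from $\partial_-\rho_C(z_m^-)\geq 0$ to $\partial_+\rho_C(z_m^+)\leq 0$, preserving monotonicity of the derivative (your chord-splitting argument amounts to the same thing). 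The sliding-window identity $\rho_{\tilde C+\ell_C[0,e_n]}(z)=\max_{t\in[0,\ell_C]}\tilde\rho(z-t)$ and your three-case split do recover $\rho_C$, and the uniqueness argument is sound. This cross-sectional argument also handles the degenerate cases (flat disk, vertical segment, single point) uniformly. An alternative more in line with the rest of the paper's machinery would identify $\ell_C$ as the length of the face $F(C,u)$ for $u\in\S^{n-1}\cap e_n^\perp$ and pass to support functions, showing $h_C-\ell_C(\pair{e_n}{\cdot})_+$ is again a support function; but that requires a summand argument and is less elementary than your route.
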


	The version of \cref{lem:SurfAreaMeasRevSmooth} for general bodies of revolution can now be formulated as follows. 
	\begin{lem}\label{lem:SurfAreaMeasRev}
		Let $C \in \K(\R^n)$ be a body of revolution. Then $R_C \in \BV_0(-1,1)$ and for all $f \in C[-1,1]$,
		\begin{align}\label{eq:SurfAreaMeasRev}
			\int_{\S^{n-1}\setminus\{\pm e_n\}} \!\! f(\pair{u}{e_n}) dS_{n-1}(C, u) = \frac{\omega_{n-1}}{n-1}\int_{(-1,1)} \!\! f(t) d \nu_{R_C^{n-1}}(t) + \omega_{n-1}R_C^{n-2}(0)\ell_C f(0).
		\end{align}
	\end{lem}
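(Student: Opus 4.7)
The plan is to reduce the statement to the case $\ell_C = 0$ via \cref{lem:vertical_segment_iff}, then approximate such $C$ by bodies of revolution of class $C^\infty_+$ and transfer the identity from \cref{lem:SurfAreaMeasRevSmooth} via weak convergence of surface area measures.

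\textbf{Reduction.} I would write $C = \tilde C + \ell_C[0, e_n]$ as in \cref{lem:vertical_segment_iff}. From $\overline{h_C}(t) = \overline{h_{\tilde C}}(t) + \ell_C\max\{0,t\}$ together with \cref{def:defRC} and \cref{lem:oneSidDerHc}, a short case analysis on the sign of $t$ (noting that the jump of $\partial_+\max\{0,t\}$ at $t=0$ is absorbed because it multiplies $t = 0$) yields $R_C = R_{\tilde C}$ on $(-1, 1)$. Next, by multilinearity of mixed area measures under Minkowski sums and the vanishing of mixed area measures containing a repeated segment (via \cref{eq:MixedVol_Proj} and \cref{eq:MixedVol_Integral}),
\begin{equation*}
S_{n-1}(C, {}\cdot{}) = S_{n-1}(\tilde C, {}\cdot{}) + (n-1)\ell_C\, S(\tilde C^{[n-2]}, [0, e_n]; {}\cdot{}).
\end{equation*}
Using translation invariance and $1$-homogeneity together with \cref{eq:MixedVol_Proj}, the second summand is concentrated on $\S^{n-1} \cap e_n^\perp$ and there coincides with $\tfrac{1}{n-1}$ times the surface area measure in $e_n^\perp$ of the disk $\tilde C|e_n^\perp = R_C(0)\DD$, of total mass $\omega_{n-1} R_C(0)^{n-2}$. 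Integrating $f(\pair{e_n}{{}\cdot{}}) = f(0)$ against this contribution extracts exactly the equator term in \eqref{eq:SurfAreaMeasRev}, so it remains to prove the identity for $\tilde C$, which has $\ell_{\tilde C} = 0$.

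\textbf{Approximation and limit passage.} I would approximate $\tilde C$ in Hausdorff distance by rotationally symmetric bodies $C_k$ of class $C^\infty_+$ (e.g., by averaging a mollification of $h_{\tilde C} + \varepsilon_k$ under $\SO(n-1)$). Then $h_{C_k} \to h_{\tilde C}$ uniformly on $\S^{n-1}$, and, by convex analysis applied to the axial $2$-dimensional slice body (whose support function in angle parametrization is convex), $\partial_+\overline{h_{C_k}}(t) \to \partial_+\overline{h_{\tilde C}}(t)$ at every continuity point, hence at all but countably many $t$. This yields $R_{C_k} \to R_{\tilde C}$ pointwise a.e.\ on $(-1,1)$, and uniform local boundedness gives $R_{C_k}^{n-1} \to R_{\tilde C}^{n-1}$ in $L^1_{\mathrm{loc}}(-1,1)$. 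Writing $\tilde\lambda_k := (\pair{e_n}{{}\cdot{}})_\ast S_{n-1}(C_k, {}\cdot{})$ and $\tilde\lambda := (\pair{e_n}{{}\cdot{}})_\ast S_{n-1}(\tilde C, {}\cdot{})$ on $[-1, 1]$, \cref{lem:SurfAreaMeasRevSmooth} gives $\nu_{R_{C_k}^{n-1}} = \tfrac{n-1}{\omega_{n-1}} \tilde\lambda_k|_{(-1,1)}$, while weak continuity of surface area measures gives $\tilde\lambda_k \to \tilde\lambda$ weakly on $[-1, 1]$. For $\phi \in C^1_c(-1, 1)$, the defining property \eqref{eq:BV_0_iff} applied to $C_k$ reads $\int \phi'(t) R_{C_k}^{n-1}(t)\,dt = \tfrac{n-1}{\omega_{n-1}} \int \phi(t)\, t\, d\tilde\lambda_k(t)$, and taking $k\to\infty$ (LHS by $L^1_{\mathrm{loc}}$-convergence, RHS by weak convergence against $\phi(t)t \in C_c(-1,1)$) produces
\begin{equation*}
\int_{(-1,1)} \phi'(t) R_{\tilde C}^{n-1}(t)\, dt = \tfrac{n-1}{\omega_{n-1}} \int_{(-1,1)} \phi(t)\, t\, d\tilde\lambda(t).
\end{equation*}
This identifies $R_{\tilde C}^{n-1} \in \BV_0(-1, 1)$ with $\nu_{R_{\tilde C}^{n-1}} = \tfrac{n-1}{\omega_{n-1}} \tilde\lambda|_{(-1, 1)}$, once one verifies $\tilde\lambda(\{0\}) = 0$; but this atom equals $S_{n-1}(\tilde C, \S^{n-1} \cap e_n^\perp)$, which vanishes since $\tilde C$ contains no vertical segment. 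Integrating against arbitrary $f \in C[-1, 1]$ (passing from $\phi \in C_c$ by truncation, using finiteness of $\tilde\lambda$) yields \eqref{eq:SurfAreaMeasRev} for $\tilde C$. Finally, running the analogous limit argument with the first area measures $S_1^E(C_k \cap E, {}\cdot{})$ of the axial $2$-slices ($E = \mathrm{span}(e_n, \bar u)$) realizes $\nu_{R_{\tilde C}}$ itself as a signed Radon measure on $(-1,1)$ with no atom at $0$, so that $R_C = R_{\tilde C} \in \BV_0(-1, 1)$.

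\textbf{Main obstacle.} The principal technical subtlety is the potential presence of horizontal facets of $C$ at $\pm e_n$: the smooth approximants $C_k$ have none, so $\tilde\lambda_k$ is supported in $(-1, 1)$, whereas the limit $\tilde\lambda$ may carry atoms at $\pm 1$ corresponding to positive $(n-1)$-Hausdorff measure of horizontal facets of $C$. Restricting the distributional test to $\phi \in C^1_c(-1, 1)$ cleanly isolates $\tilde\lambda|_{(-1, 1)}$ and explains why the integration in \eqref{eq:SurfAreaMeasRev} is taken over $\S^{n-1} \setminus \{\pm e_n\}$ rather than the full sphere. A secondary technical point is the pointwise convergence $\partial_+\overline{h_{C_k}} \to \partial_+\overline{h_{\tilde C}}$ at continuity points, which follows from the classical fact that subdifferentials of convex functions converge under uniform approximation once one recognizes $\theta \mapsto \overline{h_C}(\cos\theta)$ as the support function of the axial $2$-slice of $C$.
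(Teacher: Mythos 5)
Your proposal is correct and follows essentially the same route as the paper's proof: both reduce to the vertical-segment-free case via \cref{lem:vertical_segment_iff}, approximate by $C^\infty_+$ bodies of revolution, pass to the limit in the identity of \cref{lem:SurfAreaMeasRevSmooth} via weak continuity of surface area measures, and use the two-dimensional slice to place $R_C$ itself (not just $R_C^{n-1}$) in $\BV_0(-1,1)$. The only differences are cosmetic: you do the reduction first and the slice argument last, whereas the paper does the opposite, and you obtain pointwise a.e.\ convergence $R_{C_k}\to R_{\tilde C}$ via convergence of subdifferentials of uniformly convergent support functions while the paper argues through distributional convergence of $\mathcal{A}_1^{C_j}$ combined with \cref{eq:fundthmCalcBV1}. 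One small factual slip: the parenthetical ``whose support function in angle parametrization is convex'' is false (e.g.\ $h_{\{(1,0)\}}(\theta)=\cos\theta$ is concave near $\theta=0$); what actually underlies your step is the standard fact that one-sided derivatives of a convex, $1$-homogeneous function in $\R^2$ converge at each differentiability point of the limit under uniform convergence, and differentiability fails at only countably many directions for a planar support function. This does not affect the validity of the argument.
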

	The proof will employ an approximation argument using \cref{lem:SurfAreaMeasRevSmooth}. Note here, that -- assuming $R_C^{n-1} \in \BV_0(-1,1)$, $\nu_{R_C^{n-1}}$ lives on $(-1,1)$ and therefore cannot capture the mass of $S_{n-1}(C,{}\cdot{})$ at the poles. Indeed, in the case of the disk $\DD$, its surface area measure is concentrated on $\{-e_n,+e_n\}$ and $\nu_{R_\DD^{n-1}}=0$. However, let us point out that, by \cref{lem:geomIntuitRc}, for $a \in \{\pm 1\}$,
	\begin{align*}
		S_{n-1}(C, \{a e_n\}) = \kappa_{n-1} \lim_{t \to a}R_C(t)^{n-1}.
	\end{align*}
	\begin{proof}[Proof of \cref{lem:SurfAreaMeasRev}]
		First, let $C$ be a body of revolution that does not contain a vertical segment in its boundary. Suppose that $(C_j)_{j \in \N}$ is a sequence of $C^\infty_+$ bodies of revolution converging to $C$ in the Hausdorff metric and fix a $2$-dimensional linear space $E \subset \R^n$, containing $e_n$. Then $C_j|E \to C|E$, and, thus, $S_1^{E}(C_j|E, \cdot) \rightharpoonup S_1^E(C|E, \cdot)$ by the continuity of the surface area measure. By \cref{lem:SurfAreaMeasRevSmooth} applied in $E$, the measures $\nu_{R_{C_j|E}}$ converge weakly to the pushforward of $\frac{1}{\omega_{1}}S_{1}^E(C|E, \cdot)$ onto $(-1,1)$, denoted $\nu$, as $j \to \infty$. However, as the definition of $R_C$ does not depend on the surrounding dimension and $\overline{h_C} = \overline{h_{C|E}}$, we conclude that $\nu_{R_{C_j}} \rightharpoonup \nu$.
		
		Next, note that, by the Hausdorff convergence of $C_j \to C$, $\overline{h_{C_j}} \to \overline{h_C}$ uniformly on $[-1,1]$. Consequently, $\mathcal{A}_1^{C_j} \to \mathcal{A}_1^C$ and $\mathcal{A}_2^{C_j} \to \mathcal{A}_2^C$ converge in the sense of distributions, where $\mathcal{A}_{1}^C$ and $\mathcal{A}_{2}^C$ are defined as in the proof of \cref{lem:SurfAreaMeasRevSmooth}, but in the distributional sense.
		
		As, by \eqref{eq:RCprimeSm}, $d \nu_{R_{C_j}} (t) = (1-t^2)^{-\frac{1}{2}}\mathcal{A}_2^{C_j}(t) dt$, we conclude that $\nu = (1-t^2)^{-\frac{1}{2}}\mathcal{A}_2^C$, as distributions. In particular, $\mathcal{A}_2^C$ is a non-negative measure on $(-1,1)$.
		
		Using \eqref{eq:fundthmCalcBV1}, the weak convergence thus implies
		\begin{align}\label{eq:prfSurfMeasRevContRj}
			R_{C_j}(t) - R_{C_j}(0) = -\int_{(0,t]} sd\nu_{R_{C_j}}(s) \to  -\int_{(0,t]} sd\nu(s), \qquad j \to \infty,
		\end{align}
		for all $t \in (-1,1)$, where $\nu(\{t\}) = 0$, that is, as $\nu$ is a Radon measure, for all except at most countably many $t \in (-1,1)$. Moreover, \cref{lem:geomIntuitRc} implies that $R_{C_j}(0) = \overline{h_{C_j}}(0)$, and, therefore, $R_{C_j}(0) \to R_C(0)$.
		
		On the other hand, as $\mathcal{A}_1^{C_j} \to \mathcal{A}_1^C$ in the sense of distributions, it follows that $R_{C_j} \to R_C$. Together with \eqref{eq:prfSurfMeasRevContRj}, the uniqueness of the limit, and the convergence of $R_{C_j}(0)$, this implies that
		\begin{align*}
			R_C(t) = R_C(0) - \int_{(0,t]} sd\nu(s),
		\end{align*}
		and $R_{C_j} \to R_C$ pointwise almost everywhere on $(-1,1)$.
		
		Suppose now that $f \in C^1_c(-1,1)$. Then by the weak convergence of $\nu_{R_{C_j}}$, and \eqref{eq:BV_0_iff} for $R_{C_j}$,
		\begin{align}\label{eq:prfSurfMeasRevContLimN2}
			\int_{(-1,1)} f(t) t d\nu(t) &= \lim_{j \to \infty} \int_{(-1,1)} f(t) t d\nu_{R_{C_j}}(t) \nonumber \\
			&= \lim_{j \to \infty} \int_{(-1,1)} f'(t) R_{C_j}(t) dt =  \int_{(-1,1)} f'(t) R_{C}(t) dt,
		\end{align}
		where the last step follows by dominated convergence, using that the limit is bounded and thus integrable by \cref{lem:geomIntuitRc}. We conclude that $R_C \in \BV_0(-1,1)$ since it satisfies property~\eqref{eq:BV_0_iff} for $\nu_{R_{C}} = \nu$. Here, we used that the condition that $C$ does not contain a vertical segment in its boundary implies that $|\nu|(\{0\})=0$.
		
		Returning to $\R^n$, note that, by the continuity of the surface area measure, $S_{n-1}(C_j, \cdot) \rightharpoonup S_{n-1}(C, \cdot)$, and by \cref{lem:SurfAreaMeasRevSmooth}, the measures $\nu_{R_{C_j}^{n-1}}$ converge weakly to the pushforward of $\frac{n-1}{\omega_{n-1}}S_{n-1}(C, \cdot)$ onto $(-1,1)$, denoted $\widehat{\nu}$, as $j \to \infty$. Then, as also $R_{C_j}^{n-1} \to R_C^{n-1}$ pointwise almost everywhere on $(-1,1)$, it follows by repeating the argument from \eqref{eq:prfSurfMeasRevContLimN2} and the uniqueness of the measure $\nu_{R_C^{n-1}}$ that $\widehat{\nu} = \nu_{R_{C}^{n-1}}$. This implies \eqref{eq:SurfAreaMeasRev} in the case where $C$ does not contain a vertical segment in its boundary.
		
		Finally, if $C$ contains a vertical segment in its boundary, then, by \cref{lem:vertical_segment_iff}, $C = \widetilde{C} + \ell_C[0,e_n]$ and the boundary of $\widetilde{C}$ does not contain a vertical segment. Note that $R_C = R_{\widetilde{C}}$ and thus $R_C \in \BV_0(-1,1)$ by the previous case. Moreover, by \eqref{eq:MixedVol_Proj},
		\begin{align*}
			\int_{\S^{n-1}\setminus\{\pm e_n\}} f(\pair{e_n}{u}) dS_{n-2}(\widetilde{C}, [0,e_n]; u) &= \int_{\S^{n-1}} f(\pair{e_n}{u}) dS_{n-2}(\widetilde{C}, [0,e_n]; u)\\
			=\frac{1}{n-1}\int_{\S^{n-2}(e_n^\perp)} f(\pair{e_n}{u}) dS_{n-2}^{e_n^\perp}(\widetilde{C}|e_n^\perp, u)
			&=\frac{\omega_{n-1}}{n-1}R_{C}(0)^{n-2}f(0),
		\end{align*}
		for all $f \in C[-1,1]$, using that $S_{n-2}(\widetilde{C}, [0,e_n]; \cdot)$ is concentrated on $\S^{n-1} \setminus \{\pm e_n\}$ and, by \cref{lem:geomIntuitRc}, $\widetilde{C}|e_n^\perp = \widetilde{C}|e_n^\perp = R_{C}(0) \DD$. We can therefore use the multilinearity of the mixed area measure,
		\begin{align*}
			\int_{\S^{n-1}\setminus\{\pm e_n\}} f(\pair{e_n}{u}) dS_{n-1}(C; u) =& \int_{\S^{n-1}\setminus\{\pm e_n\}} f(\pair{e_n}{u}) dS_{n-1}(\widetilde{C}; u) \\
			&+ (n-1)\int_{\S^{n-1}\setminus\{\pm e_n\}} f(\pair{e_n}{u}) dS_{n-2}(\widetilde{C}, [0,e_n]; u),
		\end{align*}
		to deduce \eqref{eq:SurfAreaMeasRev} in the general setting from the previous case.
	\end{proof}

	Using \cref{lem:SurfAreaMeasRev} we can directly deduce that $\nu_{R_C^{n-1}}$, and, by \cref{BV_0_algebraic}\cref{BV_0_algebraic:product}, $\nu_{R_C}$ is non-negative. By \eqref{eq:fundthmCalcBV1}, this implies that $R_C$ is monotone on the subintervals $(-1,0]$ and $[0,1)$. On the other hand, this can also be obtained from \cref{lem:geomIntuitRc} and Brunn's concavity principle.
	
	\begin{cor}\label{cor:RCinBVPlus}
		Suppose that $C \in \K(\R^n)$ is a body of revolution, which is not a vertical segment. Then $R_C$ is increasing on $(-1, 0]$ and decreasing on $[0,1)$. In particular, $\nu_{R_{C}}$ is a non-negative Borel measure on $(-1,1)$ and there exists an open subinterval $I \subset (-1,1)$, containing $0$, where $R_C$ is strictly positive.
	\end{cor}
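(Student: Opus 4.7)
My plan follows the geometric route suggested just before the statement, using \cref{lem:geomIntuitRc} together with Brunn's concavity principle to first establish the monotonicity of $R_C$; the non-negativity of $\nu_{R_C}$ and the local positivity of $R_C$ near $0$ will then follow without difficulty.

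Fix any $\bar u \in \S^{n-2}(e_n^\perp)$ and consider the two-dimensional profile $\hat C = C \cap \mathrm{span}(e_n, \bar u)$. By rotational symmetry, $\hat C$ is invariant under reflection in the $e_n$-axis, so there is a non-negative concave \emph{radius function} $r:[y_-,y_+]\to[0,\infty)$ such that
\[ \hat C = \{ye_n + z\bar u : y\in[y_-,y_+],\ |z|\le r(y)\}. \]
By \cref{lem:geomIntuitRc}, $R_C(t)=r(y(t))$, where $y(t)=\pair{e_n}{x}$ for a suitably chosen $x\in F(C,u_t)$, with $u_t=te_n+\sqrt{1-t^2}\bar u$. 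The key step is to show that $y(t)$ is non-decreasing in $t$. In the smooth case this is immediate from the Gauss-map equation $t=-r'(y)/\sqrt{1+r'(y)^2}$: the concavity of $r$ makes $r'$ non-increasing, so the right-hand side is non-decreasing in $y$, whence $y(t)$ is non-decreasing. The main obstacle is the non-smooth case, where $r$ may have corners and flat pieces and $F(C,u_t)$ may be a non-trivial face; this can be handled either by replacing $r'$ with the superdifferential of $r$ and noting the analogous monotone relation, or by approximating $C$ with smooth bodies $C+\varepsilon B^n$ and passing to the limit (here the choice of $x$ in \cref{lem:geomIntuitRc} is the maximizer of $\pair{x'}{e_n - \frac{t}{\sqrt{1-t^2}}\bar u}$ over $F(C,u_t)$, which for a flat piece of the boundary picks its upper endpoint, ensuring consistency with right-continuity). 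Granted the monotonicity of $y(t)$, concavity of $r$ places its unique maximum at $y^\ast = y(0)$ (where $0$ belongs to the superdifferential of $r$), and composition shows that $R_C = r \circ y$ is non-decreasing on $(-1,0]$ and non-increasing on $[0,1)$.

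With this monotonicity in hand, the non-negativity of $\nu_{R_C}$ is immediate: the identity $d\mu_{R_C}(t)=-t\,d\nu_{R_C}(t)$ from the definition of $\BV_0(-1,1)$ translates $\mu_{R_C}\ge 0$ on $(-1,0)$ and $\mu_{R_C}\le 0$ on $(0,1)$ into $d\nu_{R_C}\ge 0$ on $(-1,1)\setminus\{0\}$, and the convention $|\nu_{R_C}|(\{0\})=0$ completes the claim on all of $(-1,1)$. Finally, $R_C(0)=\overline{h_C}(0)=h_C(\bar u)$ is the half-width of the projection $C\,|\,\mathrm{span}(\bar u)$, which is symmetric about the origin by rotational symmetry; its vanishing would force $C\subseteq\R e_n$, contradicting the assumption that $C$ is not a vertical segment. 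Hence $R_C(0)>0$, and the continuity of $R_C$ at $0$ from \cref{lem:BV0diffCont} yields an open interval around $0$ on which $R_C$ remains strictly positive.
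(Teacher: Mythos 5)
Your proof is correct, and it deliberately takes the geometric route that the paper itself only mentions in passing (``On the other hand, this can also be obtained from \cref{lem:geomIntuitRc} and Brunn's concavity principle''), rather than the paper's primary argument. The paper's own proof runs in the opposite logical direction: it invokes \cref{lem:SurfAreaMeasRev} to read off that $\nu_{R_C^{n-1}}$ is non-negative (because the surface area measure is), extracts non-negativity of $\nu_{R_C}$ via the product rule of \cref{BV_0_algebraic}\ref{BV_0_algebraic:product}, and only then deduces monotonicity of $R_C$ from \eqref{eq:fundthmCalcBV1}. You instead establish monotonicity of $R_C$ first, directly from the geometry of the two-dimensional profile $r(y)$ (concave by Brunn or by elementary $2$D convexity), the monotone coupling $t\leftrightarrow y$ via the Gauss map, and the formula $R_C = r\circ y$ from \cref{lem:geomIntuitRc}; non-negativity of $\nu_{R_C}$ then falls out from the sign relation $d\mu_{R_C}(t)=-t\,d\nu_{R_C}(t)$. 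Your route is more elementary and self-contained (it does not lean on the heavier \cref{lem:SurfAreaMeasRev}), whereas the paper's route reuses machinery already built and sidesteps any discussion of the Gauss map in the non-smooth case. The one point you gloss over — monotonicity of $y(t)$ for non-smooth $C$ — is a genuine standard fact about two-dimensional convex curves (monotonicity of the normal parametrization), and the approximation $C+\varepsilon B^n$ you sketch works cleanly here since $R_{C+\varepsilon B^n}(t)=R_C(t)+\varepsilon\sqrt{1-t^2}$ by linearity, so $R_C$ is a pointwise limit of functions with the stated monotonicity; alternatively one can argue with superdifferentials as you suggest. Your positivity argument at $t=0$ is also tighter than what the paper writes out (the paper leaves the final sentence of the corollary implicit). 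Minor stylistic note: the reflection symmetry of $\hat{C}$ in the $e_n$-axis is automatic for $n\ge 3$ but needs the additional observation that $R_C(0)=h_C(\bar u')$ is independent of $\bar u'\in\S^{n-2}(e_n^\perp)$, so that $R_C(0)=0$ forces $C\subset\bigcap_{\bar u'}\{\langle \bar u',\cdot\rangle\le 0\}$; this is essentially what you say, and it is fine.
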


	\bigskip

	To extend \cref{lem:SurfAreaMeasRev} to mixed area measures, note that by polarizing $\omega_{j}R_C(0)^{j-1}\ell_C$, we obtain the following definition for $\CC= (C_1, \dots, C_{j})$
	\begin{align}\label{eq:defWC}
		W_{\CC} = W_{C_1, \dots, C_{j}} = \frac{\omega_{j}}{j}\sum_{k=1}^{j} \ell_{C_k} \prod_{i \neq k}R_{C_i}(0).
	\end{align}
	
	Write $R_\CC = \prod_{k=1}^{j} R_{C_j}$ for $\CC = (C_1, \dots, C_j)$. A direct polarization argument then implies
	
	\begin{cor}\label{cor:MixedAreaMeasRev}
		Let $\CC=(C_1, \dots, C_{n-1})$ be a family of bodies of revolution in $\R^n$. Then $R_\CC \in \BV_0(-1,1)$ and for all $f \in C[-1,1]$,
		\begin{align}\label{eq:MixedAreaMeasRev}
			\int_{\S^{n-1}\setminus\{\pm e_n\}} f(\pair{u}{e_n}) \, dS(C_1, \dots, C_{n-1}; u) = \frac{\omega_{n-1}}{n-1}\int_{(-1,1)} f(t)\, d \nu_{R_\CC}(t) + W_{\CC} f(0).
		\end{align} 
	\end{cor}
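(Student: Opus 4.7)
The result follows by a standard polarization argument from \cref{lem:SurfAreaMeasRev}. First, since each $R_{C_k}$ lies in $\BV_0(-1,1)$ by \cref{lem:SurfAreaMeasRev}, iterated application of \cref{BV_0_algebraic}\cref{BV_0_algebraic:product} yields $R_\CC = \prod_{k=1}^{n-1} R_{C_k} \in \BV_0(-1,1)$.

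For the integral identity, I would apply \cref{lem:SurfAreaMeasRev} to the Minkowski combination $C(\lambda) := \sum_{i=1}^{n-1} \lambda_i C_i$ with $\lambda_i \geq 0$, and then compare the coefficients of $\lambda_1 \cdots \lambda_{n-1}$ on the two sides. Since $\overline{h_{C(\lambda)}} = \sum_i \lambda_i \overline{h_{C_i}}$, one reads off from the definitions that $R_{C(\lambda)} = \sum_i \lambda_i R_{C_i}$ and $\ell_{C(\lambda)} = \sum_i \lambda_i \ell_{C_i}$. Expanding $R_{C(\lambda)}^{n-1}$ as a multinomial and invoking the linearity of $R \mapsto \nu_R$ from \cref{BV_0_algebraic}\cref{BV_0_algebraic:linear_comb}, the coefficient of $\lambda_1 \cdots \lambda_{n-1}$ in $\nu_{R_{C(\lambda)}^{n-1}}$ equals $(n-1)!\, \nu_{R_\CC}$ after symmetrization. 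Simultaneously, the polarization identity for surface area measures gives
\begin{equation*}
\int_{\S^{n-1}\setminus\{\pm e_n\}} f(\pair{u}{e_n}) \, dS_{n-1}(C(\lambda), u) = \sum_{j_1, \ldots, j_{n-1}} \lambda_{j_1} \cdots \lambda_{j_{n-1}} \int_{\S^{n-1}\setminus\{\pm e_n\}} f \, dS(C_{j_1}, \ldots, C_{j_{n-1}}; {{}\cdot{}}),
\end{equation*}
whose $\lambda_1 \cdots \lambda_{n-1}$-coefficient equals $(n-1)!$ times the left-hand side of \cref{eq:MixedAreaMeasRev}.

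It then remains to match the pole term. Since $R_{C(\lambda)}(0)^{n-2}\ell_{C(\lambda)} = \bigl(\sum_i \lambda_i R_{C_i}(0)\bigr)^{n-2}\bigl(\sum_i \lambda_i \ell_{C_i}\bigr)$ is a polynomial in $\lambda$ of degree $n-1$, a direct multinomial count shows that the coefficient of $\lambda_1 \cdots \lambda_{n-1}$ in $\omega_{n-1} R_{C(\lambda)}(0)^{n-2}\ell_{C(\lambda)}$ equals $\omega_{n-1}(n-2)! \sum_{k=1}^{n-1} \ell_{C_k} \prod_{i \neq k} R_{C_i}(0) = (n-1)!\, W_\CC$, exactly matching the definition \cref{eq:defWC}. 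Dividing the resulting polynomial identity by $(n-1)!$ produces \cref{eq:MixedAreaMeasRev}. The only real bookkeeping is the multinomial count for the $W_\CC$ term, which poses no genuine obstacle; in particular, the formula remains coherent when some $C_i$ is a vertical segment, since then $R_{C_i} \equiv 0$ forces $R_\CC \equiv 0$ so only the $W_\CC$ term at $t = 0$ contributes.
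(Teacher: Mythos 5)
Your proof is correct and fills in exactly the "direct polarization argument" that the paper invokes without detail. The key linearity facts $R_{C(\lambda)} = \sum_i \lambda_i R_{C_i}$ and $\ell_{C(\lambda)} = \sum_i \lambda_i \ell_{C_i}$ hold (the latter because $F(C(\lambda),u) = \sum_i \lambda_i F(C_i,u)$ for $u \in \S^{n-2}(e_n^\perp)$), the multinomial bookkeeping on both sides matches, and the use of \cref{BV_0_algebraic}\cref{BV_0_algebraic:linear_comb} and \cref{BV_0_algebraic:product} to establish $R_\CC \in \BV_0(-1,1)$ and to extract the $\lambda_1\cdots\lambda_{n-1}$-coefficient of $\nu_{R_{C(\lambda)}^{n-1}}$ is precisely what is needed.
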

	Note that, in particular, $S(C_1,\ldots,C_{n-1};\S^{n-2}(e_n^\perp)) = W_{C_1,\ldots,C_{n-1}}$.
	
	\section{Mixed spherical projections and liftings}\label{sec:mixed_projections}

We recall that we want to prove \cref{mthm:dictionary} by showing equality of the right and the left hand side for convex bodies inside a single subspace containing the axis of symmetry, and then invoke the zonal Klain \cref{thm:zonalKlainDetRestr} to deduce that equality must indeed hold for all convex bodies. This requires us to relate mixed area measures of lower-dimensional bodies (with reference bodies being bodies of revolution) to their respective surface area measure relative to the given subspace. This problem was already considered and solved in \cite{Goodey2011} (for balls as reference bodies) and in \cite{Brauner2024} (for reference bodies of class $C^2_+$, not necessarily of revolution) introducing the notion of (mixed) spherical liftings and projections. In this section, we refine the results from \cite{Brauner2024} and make them explicit for our case of reference bodies of revolution.

\medskip

Here and in the following, we let $\Gr_{k}(\R^n)$ denote the grassmanian of $k$-dimensional linear subspaces of $\R^n$, $1 \leq k \leq n-1$. For $E \in \Gr_{k}(\R^n)$, we use the abbreviation $\CC|E=(C_1|E,\ldots,C_{n-k}|E)$, where $C|E$ denotes the orthogonal projection of a body $C$ onto $E$. We denote by $E\vee u$ the linear hull of $E \cup \{u\}$, and by $\H^{n-k}(E,u)=\{v\in\S^{n-k}(E^\perp\vee u):\pair{u}{v}>0 \}$ the relatively open $(n-k)$-dimensional half-sphere, generated by $E^\perp$ and $u$. Note that $\H^{n-k}(E,u)$ is equal to the space of all $v \in \S^{n-1} \setminus E^\perp$ such that the spherical projection $\frac{v |E}{\|v | E\|}$ equals $u$.

\begin{defi}[{\cite{Brauner2024}*{Def.~2.3}}]
	Let $1\leq k < n$ and $E\in\Gr_{k}(\R^n)$. Also, let $\CC=(C_1,\ldots,C_{n-k})$ be a family on convex bodies in $\R^n$. The \emph{$\CC$-mixed spherical projection} is the bounded linear operator $\pi_{E,\CC}: C(\S^{n-1}) \to C(\S^{k-1}(E))$,
	\begin{equation*}
		(\pi_{E,\CC}f)(u)
		= \int_{\H^{n-k}(E,u)} f(v)\, dS^{E^\perp \vee u}(\CC| (E^\perp \vee u),v),
		\qquad u\in\S^{k-1}(E).
	\end{equation*}
	We call its adjoint operator $\pi_{E,\CC}^\ast: \mathcal{M}(\S^{k-1}(E))\to\mathcal{M}(\S^{n-1})$ the \emph{$\CC$-mixed spherical lifting}. That is, for $\mu\in\mathcal{M}(\S^{k-1}(E))$, $\pi_{E, \CC}^\ast \mu$ is the unique measure on $\S^{n-1}$ satisfying 
	\begin{equation*}
		\int_{\S^{n-1}} f \, d(\pi_{E,\CC}^\ast \mu)
		= \int_{\S^{k-1}(E)} \pi_{E,\CC}f \, d\mu,
	\end{equation*} 
	for all $f\in C(\S^{n-1})$.
\end{defi}

Let us point out that for $\CC = (B^n, \dots, B^n)$, the operators defined above coincide with the spherical liftings and projections introduced by Goodey, Kiderlen, and Weil in \cite{Goodey2011}*{Thm.~6.2}.

In \cite{Brauner2024}, the authors of this article express mixed area measures of lower dimensional bodies as the mixed spherical lifting of their surface area measure relative to a subspace.

\begin{thm}[{\cite{Brauner2024}}] \label{thm:mixed_sph_lift_gnrl}
	Let $1\leq i<n-1$ and $E\in \Gr_{i+1}(\R^n)$. Also, let $\CC=(C_1,\ldots,C_{n-i-1})$ be a family of convex bodies in $\R^n$ such that
	\begin{equation}\label{eq:condMixedSphLiftGeneral}
		S^{E^\perp \vee u}(\CC| (E^\perp \vee u); \S^{n-i-1}(E^\perp\vee u)\cap E^\perp ) =0
		\qquad\text{for all } u\in \S^{i}(E).
	\end{equation}
	Then for all $K\in\K(E)$,
	\begin{equation}\label{eq:mixed_sph_lift_gnrl}
		S(K^{[i]},\CC;{}\cdot{})
		= \frac{1}{\binom{n-1}{i}} \pi_{E,\CC}^\ast S_i^E(K,{}\cdot{}).
	\end{equation}
\end{thm}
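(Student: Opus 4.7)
The plan is to verify the identity by integrating against a dense subset of test functions on $\S^{n-1}$. Specifically, since linear combinations of support functions $h_L$, $L \in \K(\R^n)$, are dense in $C(\S^{n-1})$ and both sides of \eqref{eq:mixed_sph_lift_gnrl} are Radon measures, the Riesz representation theorem reduces the claim to the equality
\begin{equation*}
	\int_{\S^{n-1}} h_L\, dS(K^{[i]}, \CC; {}\cdot{}) = \frac{1}{\binom{n-1}{i}} \int_{\S^{i}(E)} \pi_{E,\CC} h_L \, dS_i^E(K, {}\cdot{})
\end{equation*}
for every $L \in \K(\R^n)$. By \eqref{eq:MixedVol_Integral}, the left-hand side equals $n V^{\R^n}(L, K^{[i]}, \CC)$; applying \eqref{eq:MixedVol_Integral} once more within each slice $E^\perp \vee u$ and interchanging integrations, the right-hand side becomes an iterated mixed-volume expression. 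Thus the whole statement amounts to a Fubini-type identity for the mixed volume of the tuple $(L, K^{[i]}, \CC)$ adapted to the decomposition $\R^n = E \oplus E^\perp$.

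By continuity of mixed volumes and of mixed area measures under Hausdorff convergence, I may assume that $K$ is of class $C^2_+$ inside $E$ and that each $C_j$ is of class $C^2_+$. Then all relevant measures possess continuous densities expressible as mixed discriminants of Hessians of support functions. The heart of the argument is the orthogonal splitting $T_v \S^{n-1} = T_u \S^i(E) \oplus T_v \S^{n-i-1}(E^\perp \vee u)$ for $v \in \S^{n-1}\setminus E^\perp$ and $u = \frac{v|E}{\|v|E\|}$. Because $K \in \K(E)$, the Hessian $D^2 h_K(v)$ is supported on $T_u \S^i(E)$, so multilinearity of the mixed discriminant yields a block factorization: the density of $S(K^{[i]}, \CC; v)$ equals the product of the density of $S_i^E(K, u)$ and that of $S^{E^\perp \vee u}(\CC|(E^\perp \vee u); v)$, up to the combinatorial factor $\binom{n-1}{i}^{-1}$ arising from the multinomial expansion. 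Integrating in spherical coordinates adapted to $E \oplus E^\perp$ and recognising the inner integral as $\pi_{E,\CC} h_L$ yields the identity in the smooth case.

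The main obstacle is the return to general $K$ and $\CC$, and it is here that hypothesis \eqref{eq:condMixedSphLiftGeneral} plays its role. This condition ensures that the measures $S^{E^\perp \vee u}(\CC|(E^\perp \vee u); {}\cdot{})$ do not charge the equator $\S^{n-i-1}(E^\perp\vee u) \cap E^\perp$, where the spherical projection $v \mapsto \frac{v|E}{\|v|E\|}$ degenerates; without it, weak limits of smooth approximants could concentrate mass in that set and spoil the identity. To transfer the smooth identity to the general case, I would combine weak continuity of mixed area measures with the locally determined property (\cref{lem:MixedAreaMeas_locally_det}) applied on Borel subsets $\beta \subset \S^{n-1}\setminus E^\perp$, using \eqref{eq:condMixedSphLiftGeneral} to verify that passage to the limit commutes with the operator $\pi_{E,\CC}^\ast$.
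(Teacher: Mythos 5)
The paper does not actually prove this theorem: the text immediately following the statement reads ``This statement is effectively contained in the proof of \cite{Brauner2024}*{Thm.~B}, which is obtained by approximation from \cite{Brauner2024}*{Thm.~2.5}.'' So the ``paper's own proof'' is a citation plus a one-line roadmap (smooth case, then approximation), and your proposal is a reconstruction of that cited argument.

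Your high-level plan matches the roadmap: reduce to testing against support functions, establish a factorization identity for smooth bodies via mixed discriminants, then approximate. The combinatorial constant $1/\binom{n-1}{i}$ indeed comes out of the block splitting of the mixed discriminant when $i$ of the Hessians are supported on an $i$-dimensional subspace of $T_v\S^{n-1}$, and your identification of the tangent-space decomposition $T_v\S^{n-1}=T_u\S^i(E)\oplus T_v\S^{n-i-1}(E^\perp\vee u)$ (for $v\notin E^\perp$, $u=\frac{v|E}{\|v|E\|}$) is the right geometric observation. So far, so good.

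The gap is in the passage to general $K$ and $\CC$, which you compress into two vague sentences. Three things need to be argued, none of which follows from ``weak continuity plus local determination'': (1) When $K\in\K(E)$, even for smooth $\CC$, the mixed area measure $S(K^{[i]},\CC;{}\cdot{})$ may a priori charge $\S^{n-1}\cap E^\perp$, while $\pi^\ast_{E,\CC}\,S_i^E(K,{}\cdot{})$ is by construction supported in $\S^{n-1}\setminus E^\perp$ (the half-spheres $\H^{n-i-1}(E,u)$ avoid $E^\perp$); so the identity already asserts that $S(K^{[i]},\CC;\S^{n-1}\cap E^\perp)=0$, and you must derive this from \eqref{eq:condMixedSphLiftGeneral}. (2) The operator $\pi_{E,\CC}$ depends on $\CC$, so as $\CC_m\to\CC$ one must show $\pi_{E,\CC_m}f\to\pi_{E,\CC}f$ uniformly (or at least that $\pi^\ast_{E,\CC_m}\sigma\rightharpoonup\pi^\ast_{E,\CC}\sigma$); the inner mixed area measures $S^{E^\perp\vee u}(\CC_m|(E^\perp\vee u);{}\cdot{})$ converge weakly, but uniformity in $u$ near the pole and control of mass escaping to $\S^{n-i-2}(E^\perp)$ is exactly where \eqref{eq:condMixedSphLiftGeneral} must enter, and this is not automatic. (3) The mixed-discriminant factorization you invoke presumes a precise meaning of $D^2 h_K(v)$ for the degenerate body $K\in\K(E)$ and a verification that this degenerate case is the weak limit of the full-dimensional Hessian calculation for $K+\varepsilon B^n$; writing ``the Hessian is supported on $T_u\S^i(E)$'' states the conclusion without proving it. In short, the skeleton is right, but the approximation step---where the hypothesis actually does its work---is left as a gesture rather than an argument, and it is precisely this step that the paper outsources to \cite{Brauner2024}*{Thm.~B}.
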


This statement is effectively contained in the proof of \cite{Brauner2024}*{Thm.~B}, which is obtained by approximation from \cite{Brauner2024}*{Thm.~2.5}.
We now turn to bodies of revolution, for which we derive the following corollary.

\begin{cor}\label{cor:mixed_sph_lift_gnrl_Zonal}
	Let $1\leq i < n-1$ and $E\in\Gr_{i+1}(\R^n)$ be such that $e_n\in E$. Also, let $\CC=(C_1,\ldots,C_{n-i-1})$ be a family of convex bodies of revolution in $\R^n$ that do not contain any vertical segment in their respective boundaries. Then condition \eqref{eq:condMixedSphLiftGeneral} is satisfied, and thus, identity \eqref{eq:mixed_sph_lift_gnrl} holds for all $K\in\K(E)$.
\end{cor}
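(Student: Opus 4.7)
The plan is to reduce the statement to \cref{thm:mixed_sph_lift_gnrl} by verifying its hypothesis \eqref{eq:condMixedSphLiftGeneral} for every $u \in \S^{i}(E)$. Fix such $u$ and set $F := E^\perp \vee u$, which has dimension $n-i$. Since $e_n \in E$ is orthogonal to $E^\perp$, any $v \in F$ decomposes uniquely as $v = v_1 + \alpha u$ with $v_1 \in E^\perp$ and $\alpha = \pair{v}{u}$. In particular $F \cap u^\perp = E^\perp$, so the subsphere appearing in \eqref{eq:condMixedSphLiftGeneral} is precisely the equator of $\S^{n-i-1}(F)$ with respect to the axis $u$.

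The heart of the argument is to show that each projection $C_j | F$ is a body of revolution in $F$ around $u$ whose profile is a linear rescaling of $\overline{h_{C_j}}$. Setting $\alpha := \pair{u}{e_n}$, for any $v = t u + \sqrt{1-t^2}\bar{v}$ with $\bar{v} \in \S^{n-i-2}(E^\perp)$, a short computation using $\bar v \perp e_n$ (since $e_n \in E$) yields $\pair{v}{e_n} = t\alpha$, so
\[ \overline{h_{C_j|F}}(t) := h_{C_j|F}(v) = h_{C_j}(v) = \overline{h_{C_j}}(\alpha t), \qquad t \in [-1,1]. \]
This confirms rotational symmetry of $C_j | F$ around $u$ in $F$, and taking one-sided derivatives at $t=0$ produces the compression identity $\ell_{C_j|F} = |\alpha|\,\ell_{C_j}$, which vanishes by hypothesis. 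The degenerate case $\alpha = 0$ (when $u \perp e_n$) is handled uniformly by the same formula: the profile is then constant and $C_j|F$ is a centered ball in $F$ of radius $\overline{h_{C_j}}(0) > 0$, which trivially has no segments in its boundary.

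It then suffices to apply \cref{cor:MixedAreaMeasRev} inside $F$ with respect to the axis $u$ to the tuple $\CC | F$ of bodies of revolution. The ``mass at the equator'' identity noted immediately after that corollary gives
\[ S^F(\CC | F;\, \S^{n-i-1}(F) \cap E^\perp) \;=\; W^F_{\CC|F} \;=\; \frac{\omega_{n-i-1}}{n-i-1} \sum_k \ell_{C_k|F} \prod_{j \neq k} R^F_{C_j|F}(0) \;=\; 0, \]
since each summand contains the vanishing factor $\ell_{C_k|F} = 0$. This is exactly condition \eqref{eq:condMixedSphLiftGeneral}, so \cref{thm:mixed_sph_lift_gnrl} delivers identity \eqref{eq:mixed_sph_lift_gnrl}. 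The only mildly delicate point in the whole argument is ensuring that the profile-function calculation in step two is carried out so as to cover the degenerate case $\alpha = 0$ without separate treatment; once that is set up cleanly, everything else is a direct application of results already established in the paper.
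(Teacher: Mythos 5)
Your proof is correct, but it takes a genuinely different route from the paper's. The paper verifies \eqref{eq:condMixedSphLiftGeneral} by a Hausdorff-measure argument: it first observes that $S^{E'}_{k-1}(C|E',\beta) = \mathcal{H}^{k-1}(\tau(C,\beta)|E') \leq \mathcal{H}^{k-1}(\tau(C,\beta))$ because orthogonal projection is $1$-Lipschitz, then shows that the no-vertical-segment hypothesis (combined with rotational symmetry) forces $F(C,w)$ to be a single point for every horizontal $w$, so $\tau(C,\S^{n-i-2}(E^\perp))$ is an isometric copy of a sphere of dimension $n-i-2$ and thus has vanishing $(n-i-1)$-dimensional Hausdorff measure; finally it passes from the single body $C = C_1 + \cdots + C_{n-i-1}$ to the tuple $\CC$ by multilinearity and non-negativity of mixed area measures. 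You instead recognize each $C_j|F$ (with $F = E^\perp \vee u$) as a body of revolution in $F$ about the axis $u$ via the profile-compression identity $\overline{h_{C_j|F}}(t) = \overline{h_{C_j}}(\alpha t)$ --- essentially the same computation the paper defers to \cref{lem:mixedSphProjZonalGeneral} --- derive the scaling $\ell_{C_j|F} = |\alpha|\,\ell_{C_j} = 0$, and then invoke the mass-at-equator identity $S^F(\CC|F;\S^{n-i-2}(E^\perp)) = W^F_{\CC|F}$ from \cref{cor:MixedAreaMeasRev}. This is a more computational argument that buys you a reusable structural fact (the profile of a projected body of revolution) at the cost of leaning on the $W_\CC$ machinery of Section 3; it is equally valid, and there is no circularity since \cref{cor:MixedAreaMeasRev} is independent of \cref{thm:mixed_sph_lift_gnrl}. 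One small inaccuracy: in the $\alpha = 0$ case you claim $C_j|F$ is a centered ball of radius $\overline{h_{C_j}}(0) > 0$, but that radius can vanish under the stated hypotheses (e.g.\ if $C_j$ is a point); the conclusion $\ell_{C_j|F} = 0$ is unaffected, since a possibly degenerate ball still has no boundary segments.
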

\begin{proof}
	First, recall that for a convex body $C\in\K(\R^n)$ and a Borel subset $\beta\subseteq\S^{n-1}$, $S_{n-1}(C,\beta)=\mathcal{H}^{n-1}(\tau(C,\beta))$ for the reverse spherical image $\tau(C,\beta)$. Now, take a subspace $E'\in\Gr_k(\R^n)$ and a Borel set $\beta\subseteq \S^{k-1}(E')$. It is easy to see that $F(C|E',v)=F(C,v)|E'$ for every $v\in \S^{k-1}(E')$, and thus, $\tau(C|E',\beta)=\tau(C,\beta)|E'$. Consequently,
	\begin{equation}\label{eq:mixed_sph_lift_gnrl:proof1}
		S^{E'}_{k-1}(C|E',\beta)
		= \mathcal{H}^{k-1}(\tau(C|E',\beta))
		= \mathcal{H}^{k-1}(\tau(C,\beta)|E')
		\leq \mathcal{H}^{k-1}(\tau(C,\beta)),
	\end{equation}
	where the final inequality is due to the fact that the Hausdorff measure decreases under a $1$-Lipschitz map.
	
	Next, we take $C\in\K(\R^n)$ to be a body of revolution that does not contain any vertical segment in its boundary. 
	Note that $C|e_n^{\perp}=r\DD$ for some $r\geq 0$, so for every $u\in \S^{n-2}(e_n^{\perp})$,
	\begin{equation*}
		F(C,u)|e_n^{\perp}
		=F(C|e_n^\perp,u)
		=F(r\DD,u)
		= \{ru\}.
	\end{equation*}
	which shows that the face $F(C,u)$ is contained in the vertical line $ru+\R e_n$. 
	By our assumption on $C$, we must have $F(C,u)=\{ru+t e_n\}$ for some $t\in\R$. Hence, the reverse spherical image	
	$\tau(C,\S^{n-i-2}(E^\perp))$ has the same Hausdorff dimension as $\S^{n-i-2}(E^\perp)$, and thus, its $(n-i-1)$-dimensional Hausdorff measure is zero. As an instance of \cref{eq:mixed_sph_lift_gnrl:proof1}, for all $u\in\S^{i}(E)$,
	\begin{equation*}
		S_{n-i-1}^{E^\perp \vee u}(C| (E^\perp \vee u), \S^{n-i-2}(E^\perp) )
		\leq \mathcal{H}^{n-i-1}(\tau(C,\S^{n-i-2}(E^\perp)))
		=  0,
	\end{equation*}
	so condition \cref{eq:condMixedSphLiftGeneral} is satisfied.
	
	To establish the general case, let $\CC=(C_1,\ldots,C_{n-i-1})$ be a family of convex bodies of revolution in $\R^n$ that do not contain any vertical segment in their respective boundaries. Then the body $C=C_1+\cdots+C_{n-i-1}$, due to \cref{lem:vertical_segment_iff}, also does not contain any vertical segment in its boundary. Note that for every Borel set $\beta\subseteq \S^{n-i-1}(E^\perp\vee u)$,
	\begin{equation*}
		S_{n-i-1}^{E^\perp\vee u}(C|(E^\perp\vee u),\beta)
		= \sum_{j_1,\ldots,j_{n-i-1}=1}^{n-i-1} S^{E^\perp\vee u}((C_{j-1},\ldots,C_{j_{n-i-1}})|(E^\perp\vee u);\beta).
	\end{equation*}
	For $\beta=\S^{n-i-2}(E^\perp)$, by our previous argument, the left hand side vanishes. Since each term on the right hand side is non-negative, condition \cref{eq:condMixedSphLiftGeneral} is satisfied. Invoking \cref{thm:mixed_sph_lift_gnrl} then concludes the proof.
\end{proof}

The mixed spherical projection with reference bodies of revolution, applied to a zonal function, can be expressed as a simple integral transform.

\begin{lem} \label{lem:mixedSphProjZonalGeneral}
	Let $1\leq k< n$ and $E\in\Gr_{n,k}$ be such that $e_n\in E$. Also, let $\CC = (C_1, \dots, C_{n-k}) $ be a family of convex bodies of revolution in $\R^n$ with no vertical segment contained in their respective boundaries. Then for all $f\in C[-1,1]$ and $s\in (-1,1)$,
	\begin{align}\label{eq:mixedSphProjZonalGeneral}
		(\bar\pi_{E,\CC}f)(s) = \frac{\omega_{n-k}}{n-k} \int_{(0,1)} f(st) d\nu_{R_{\CC,s}}(t),
	\end{align}
	where $R_{\CC,s}(t) = (1-t^2)^{\frac{n-k}{2}}(1-s^2t^2)^{-\frac{n-k}{2}}R_{\CC}(st)$.
\end{lem}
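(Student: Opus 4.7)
The plan is to interpret the integral defining $(\bar\pi_{E,\CC}f)(s)$ as a mixed area measure integral inside the $(n-k+1)$-dimensional subspace $E^\perp \vee u$, viewed with axis of revolution $u$, where $u \in \S^{k-1}(E)$ is any point with $\pair{e_n}{u} = s$, and then apply \cref{cor:MixedAreaMeasRev} there. Two observations enable this reduction. First, since $e_n \in E$ is orthogonal to $E^\perp$, its orthogonal projection onto $E^\perp \vee u = E^\perp \oplus \R u$ equals $su$; hence for every $v \in E^\perp \vee u$,
\begin{equation*}
\pair{e_n}{v} = s\pair{u}{v},
\end{equation*}
so the restriction of the zonal $f$ to $\S^{n-k}(E^\perp \vee u)$ is zonal with axis $u$ and profile $t \mapsto \bar f(st)$. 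Second, each projection $C_j|(E^\perp \vee u)$ is a body of revolution around $u$: any orthogonal transformation $\sigma$ of $E^\perp \vee u$ fixing $u$ extends, by acting as the identity on the orthogonal complement $E \cap u^\perp$, to an orthogonal transformation $\tilde\sigma$ of $\R^n$ that fixes $e_n$ (via the decomposition $e_n = su + (e_n - su)$ with $e_n - su \in E \cap u^\perp$) and commutes with the orthogonal projection onto $E^\perp \vee u$. Consequently $\tilde\sigma \in \OO(e_n^\perp)$ preserves $C_j$, so $\sigma$ preserves $C_j|(E^\perp \vee u)$.

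Next, I would compute the radius function of $C_j|(E^\perp \vee u)$ around $u$. Writing $v = tu + \sqrt{1-t^2}\,w$ with $w \in \S^{n-k-1}(E^\perp)$ gives $\pair{e_n}{v} = st$ and $h_{C_j}(v) = \overline{h_{C_j}}(st)$, so $\overline{h_{C_j|(E^\perp \vee u)}}(t) = \overline{h_{C_j}}(st)$. A chain rule computation in the definition of $R$ (the sign of $s$ interchanges left and right derivatives, which differ only on an at most countable set and therefore give the same element of $\BV_0$) yields
\begin{equation*}
R_{C_j|(E^\perp \vee u)}(t) = \sqrt{\tfrac{1-t^2}{1-s^2 t^2}}\, R_{C_j}(st),
\end{equation*}
and taking products over $j$ shows that $R_{\CC|(E^\perp \vee u)} = R_{\CC,s}$.

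Finally, I would apply \cref{cor:MixedAreaMeasRev} inside $E^\perp \vee u$ (dimension $n-k+1$) to the profile $g(t) = \bar f(st)$ and the $(n-k)$-tuple $\CC|(E^\perp \vee u)$ of bodies of revolution around $u$. The resulting identity over $\S^{n-k}(E^\perp \vee u) \setminus \{\pm u\}$ produces the integral $\tfrac{\omega_{n-k}}{n-k}\int_{(-1,1)} \bar f(st)\, d\nu_{R_{\CC,s}}(t)$ together with a $W_{\CC|(E^\perp \vee u)}\bar f(0)$ term supported on the equator $\S^{n-k-1}(E^\perp) = \{\pair{u}{v} = 0\}$. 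Restricting to the open hemisphere $\H^{n-k}(E,u) = \{\pair{u}{v} > 0\}$ — the domain of integration in the definition of $\pi_{E,\CC}$ — discards exactly the equator contribution and truncates the remaining integral to $(0,1)$; the pole $v = u$ itself carries no mass since $\lim_{t\to 1} R_{\CC,s}(t) = 0$. This gives the claimed identity. The main technical step is the chain-rule derivation of $R_{C_j|(E^\perp \vee u)}$; the symmetric denominator $\sqrt{1-s^2 t^2}$ absorbs the dependence on the sign of $s$ and makes the final formula uniform.
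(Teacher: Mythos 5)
Your proof is correct and follows essentially the same route as the paper: compute the profile $\overline{h_{C_j|(E^\perp\vee u)}}(t) = \overline{h_{C_j}}(st)$, derive $R_{C_j|(E^\perp\vee u)}(t) = \sqrt{(1-t^2)/(1-s^2t^2)}\,R_{C_j}(st)$, polarize over the tuple, and apply \cref{cor:MixedAreaMeasRev} inside $E^\perp\vee u$. Your added care in discarding the equator term, checking that the pole $u\in\H^{n-k}(E,u)$ carries no mass (since $\lim_{t\to 1}R_{\CC,s}(t)=0$), and handling the sign of $s$ via the a.e.\ coincidence of one-sided derivatives fills in details the paper states tersely but does not constitute a different argument.
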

\begin{proof}
	First, take some body of revolution $C\in\K(\R^n)$ with no vertical segment in its boundary. by definition, for all $u\in\S^{k-1}(E)$,
	\begin{equation*}
		(\pi_{E,C^{[n-k]}}f(\pair{e_n}{{}\cdot{}}))(u)
		= \int_{\H^{n-k}(E,u)} f(\pair{e_n}{v})\, dS^{E^\perp \vee u}_{n-k-1}(C| (E^\perp \vee u),v).
	\end{equation*}
	Next, note that whenever $u\in\S^{k-1}(E)$ and $v\in\S^{n-k}(E^\perp\vee u)$, then $\pair{e_n}{v}=\pair{e_n}{u}\pair{u}{v}$, and thus,
	\begin{equation*}
		h_{C|(E^\perp\vee u)}(v)
		= h_C(v)
		= \bar{h}_C(\pair{e_n}{v})
		= \bar{h}_C(\pair{e_n}{u}\pair{u}{v}).
	\end{equation*}
	Hence, $C|(E^\perp\vee u)$ is a body of revolution in $E^\perp\vee u$ with axis $u$, and denoting $s=\pair{e_n}{u}$, we have that $\bar{h}_{C|(E^\perp\vee u)}(t)=\bar{h}_C(st)$. Hence, for $t\in (-1,1)$,
	\begin{equation*}
		R_{C|(E^\perp\vee u)}(t)
		= (1-t^2)^{\frac 12}(\bar{h}_C(st)-st\partial_+\bar{h}_C(st))
		= (1-t^2)^{\frac 12}(1-s^2t^2)^{-\frac 12}R_C(st).
	\end{equation*}
	Suppose now that $\CC=(C_1,\ldots,C_{n-k})$ is a family of convex bodies with no vertical segment in their respective boundaries. By polarization, $R_{\CC|(E^\perp\vee u)}=R_{\CC,s}$, and thus, noting that $\H^{n-k}(E^\perp\vee u)\subseteq \S^{n-k}(E^\perp\vee u)\setminus\{\pm u\}$, we obtain \cref{eq:mixedSphProjZonalGeneral} from \cref{cor:MixedAreaMeasRev}.
\end{proof}

	\section{Transforming mixed volumes} \label{sec:transforming}
	
	In this section, we show the transformation rule for mixed volumes in \cref{mthm:dictionary} and apply it to deduce \cref{mthm:mixedZonalHadwiger} from \cref{thm:zonalDiskHadwiger}. The proof will be divided into three steps. First, we show \cref{mthm:dictionary} in the case where all functions are continuous. Then, we will use this to prove \cref{mthm:mixedFirey} by reducing it to the case where all reference bodies are disks. In the last step, finally, we will obtain \cref{mthm:dictionary} by approximation from its continuous version, using \cref{mthm:mixedFirey} to show convergence of the (principal value) integrals.
	
	A main ingredient for these steps will be an integral transform on $C(-1,1)$, denoted $T_R$, which we introduce in the following. Most of the properties of the $T_R$ transform will be proved in a general setting, that is, for general functions $R$ of bounded variation. Later, we will only use them in the instance where $R = R_\CC = R_{C_1} \cdots R_{C_{n-i-1}}$. Some of these proofs are very technical. They will mostly be postponed until \cref{app:TechnLemmas}.

	\subsection[Definition of the T R transform and the semigroup property]{Definition of the \texorpdfstring{$T_R$}{T R} transform and the semigroup property}\label{sec:defTRtrans}
	
	Using functions in $\BV_0(-1,1)$, we define the following family of operators on $C(-1,1)$.
	
	\begin{defi}
		For $R\in\BV_0(-1,1)$ and $f\in C(-1,1)$, we define
		\begin{equation}\label{eq:equivFormTR}
			T_Rf(t)
			= R(0)f(t) - \int_{(0,t]} ( sf(t)-t f(s)) \, d\nu_R(s),
			\qquad t\in (-1,1).
		\end{equation}
	\end{defi}
	Note that for $t<0$, we interpret $\int_{(0,t]}$ in \eqref{eq:equivFormTR} as $-\int_{[t,0)}$. Let us further point out that \eqref{eq:equivFormTR} is not changed by altering $R$ on sets of Lebesgue measure zero (not containing $t=0$). Moreover, the domain of integration can be replaced by $(0,t)$, $[0,t)$ or $[0,t]$ as the integrand vanishes on $s=t$ and $\nu_R(\{0\}) = 0$ for $R \in \BV_0(-1,1)$.
	
	By \eqref{eq:fundthmCalcBV1}, definition \eqref{eq:equivFormTR} is equivalent to
	\begin{equation}\label{eq:defTR}
		T_Rf(t)
		= \begin{cases}
			R(t^+)f(t) + t \int_{(0,t]} f(s) \, d\nu_R(s), & t > 0,\\
			R(t^-)f(t) - t \int_{[t,0)} f(s) \, d\nu_R(s), & t\leq 0,
		\end{cases}
	\end{equation}
	where the latter expressions a priori \emph{do} depend on the values on $R$ on zero sets outside zero. Clearly, $T_1 = \mathrm{id}$, where $1$ denotes the constant one function.

	We collect some properties of the $T_R$ transforms, which are of essential use for us.
	\begin{prop}\label{prop:TRContContGroupProp}
		Let $R \in \BV_0(-1,1)$. Then the following holds
		\begin{enumerate}[label=\upshape(\roman*)]
			\item\label{it:T_R_cont_to_cont} The transform $T_R$ maps the space $C(-1,1)$ into itself.
			\item\label{it:T_R_group_property} For $Q \in \BV_0(-1,1)$,
			 \begin{equation}\label{eq:T_R_group_property}
				T_{RQ}=T_RT_Q =T_QT_R. 
			\end{equation}
			In particular, if $\inf_I \abs{R} > 0$ for every compact interval $I\subset (-1,1)$, then $T_R$ is a bijective operator on $C(-1,1)$ and $T_R^{-1}=T_{\frac{1}{R}}$.
			\item\label{it:continuous_in_D_R} Suppose additionally that the limits $\lim_{t\to\pm 1}R(t)$ exist. Then for every $f \in C[-1,1]$, the limits
			\begin{align*}
				\lim_{t\to\pm 1} R(t)f(t)
				\qquad\text{and}\qquad
				\lim_{t\to \pm 1} \int_{(0,t]} f(t)\, d\nu_R(t)
				\qquad\text{exist},
			\end{align*}
			that is, $T_R f \in C[-1,1]$.
		\end{enumerate}
	\end{prop}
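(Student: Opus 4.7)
The plan is to handle the three items in order, exploiting the two equivalent forms of $T_R f$ in \eqref{eq:equivFormTR} and \eqref{eq:defTR} together with the product rule from \cref{BV_0_algebraic}\cref{BV_0_algebraic:product}.

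For (i), I would work from the form $T_R f(t) = R(t^+) f(t) + t F_R(t)$ with $F_R(t) = \int_{(0,t]} f(s)\, d\nu_R(s)$ for $t > 0$, and symmetrically for $t < 0$. Both $t \mapsto R(t^+)$ and $t \mapsto F_R(t)$ are right-continuous and of locally bounded variation, with jumps only on the (countable) atoms of $\nu_R$. At such an atom $t_0 > 0$, \eqref{eq:fundthmCalcBV1} forces $R(t_0^+) - R(t_0^-) = -t_0 \nu_R(\{t_0\})$, while $F_R(t_0^+) - F_R(t_0^-) = f(t_0) \nu_R(\{t_0\})$; plugging these into $T_R f(t_0^+) - T_R f(t_0^-) = f(t_0)[R(t_0^+) - R(t_0^-)] + t_0 [F_R(t_0^+) - F_R(t_0^-)]$, the two jump contributions cancel exactly. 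Continuity at $t_0 = 0$ follows from $\nu_R(\{0\}) = 0$ and local finiteness of $\nu_R$ near $0$ (which force $|\nu_R|((0,t]) \to 0$ as $t \to 0^+$), together with continuity of $f$.

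For (ii), I would fix $t > 0$ and expand $T_R(T_Q f)(t)$ via the definition of $T_Q f$. Beyond three explicit product terms, a double integral
\begin{equation*}
\int_{(0,t]} \int_{(0,s]} s\, f(u)\, d\nu_Q(u)\, d\nu_R(s)
\end{equation*}
appears. Fubini's theorem (justified by local finiteness of $|\nu_R|$ and $|\nu_Q|$ on $(0,t]$) swaps the order of integration, and the inner integral is then evaluated via \eqref{eq:fundthmCalcBV1} together with the atom identity $u \nu_R(\{u\}) = R(u^-) - R(u^+)$, yielding $\int_{[u,t]} s\, d\nu_R(s) = R(u^-) - R(t^+)$. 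The resulting $R(t^+)$ piece cancels a matching cross term, and what remains is precisely $T_{RQ} f(t)$ once $d\nu_{RQ}$ is expanded via the second form of the product rule ($Q(s^+)\, d\nu_R + R(s^-)\, d\nu_Q$). The cases $t < 0$ are symmetric, and $t = 0$ is trivial since both sides equal $R(0) Q(0) f(0)$. Commutativity $T_R T_Q = T_Q T_R$ then follows from $RQ = QR$, and the invertibility statement follows by combining \cref{BV_0_algebraic}\cref{BV_0_algebraic:inverse} with $T_R T_{1/R} = T_1 = \mathrm{id}$.

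For (iii), existence of $\lim_{t \to \pm 1} R(t) f(t)$ is immediate from the hypothesis on $R$ and continuity of $f$ on $[-1,1]$. For the integral term, \eqref{eq:fundthmCalcBV1} gives $\int_{(0,t]} s\, d\nu_R(s) = R(0) - R(t^+) \to R(0) - R(1^-)$, so the measure $s\, d\nu_R(s)$ has finite total mass on $(0,1)$; in the intended applications $\nu_R$ is non-negative by \cref{cor:RCinBVPlus}, whence $|\nu_R|((\varepsilon,1)) \leq \varepsilon^{-1}\! \int_{(\varepsilon,1)} s\, d\nu_R(s) < \infty$ for every $\varepsilon > 0$, which together with local finiteness of $\nu_R$ at $0$ gives $|\nu_R|((0,1)) < \infty$, and dominated convergence yields $\int_{(0,t]} f(s)\, d\nu_R(s) \to \int_{(0,1)} f(s)\, d\nu_R(s)$. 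The case $t \to -1^+$ is analogous. The main obstacle I foresee is the bookkeeping in (ii): tracking which one-sided value ($s^+$ versus $s^-$) emerges at each step of the Fubini swap and pairing it with the matching version of the product rule is the one place where a sign or one-sided-limit error could easily slip in.
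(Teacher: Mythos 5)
Your arguments for parts (i) and (ii) are sound but routed somewhat differently from the paper. For (i), the paper works from the polarized form \eqref{eq:equivFormTR} and directly estimates $\abs{T_Rf(t) - T_Rf(t')}$, whereas you decompose $T_Rf$ via \eqref{eq:defTR} as $R(t^+)f(t) + t\int_{(0,t]}f\,d\nu_R$ (for $t>0$), observe that both pieces are regulated, and check that the jumps at atoms of $\nu_R$ cancel; both arguments are valid, and yours is arguably more structural. For (ii), your Fubini computation starting from \eqref{eq:defTR}, evaluating $\int_{[u,t]}s\,d\nu_R(s) = R(u^-) - R(t^+)$ via \eqref{eq:fundthmCalcBV1} plus the atom identity, and matching the second form of the product rule \cref{BV_0_algebraic}\cref{BV_0_algebraic:product}, does close correctly. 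This is essentially the paper's argument, which instead expands $sT_Qf(t)-tT_Qf(s)$ from \eqref{eq:equivFormTR} before applying Fubini.

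Part (iii) has a genuine gap. Your route to $\abs{\nu_R}\big((0,1)\big) < \infty$ via $\abs{\nu_R}\big((\varepsilon,1)\big) \leq \varepsilon^{-1}\int_{(\varepsilon,1)}s\,d\nu_R(s)$ requires $\nu_R \geq 0$, which the proposition does not assume, and you signal this by invoking ``the intended applications'' and \cref{cor:RCinBVPlus}. But the statement is for general $R\in\BV_0(-1,1)$ (with existing endpoint limits), so $\nu_R$ may be a genuinely signed measure. Moreover, even the preceding step is off in the signed case: convergence of $\int_{(0,t]}s\,d\nu_R(s)$ as $t\to 1^-$ does not mean $s\,\nu_R(ds)$ has \emph{finite total mass} on $(0,1)$; it only gives a finite net integral, and cancellation can hide infinite variation. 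The paper avoids the sign restriction by an approximation argument: it reads $\abs{\nu_R}(-1,1)<\infty$ off the (global) bounded variation of $R$, then writes $f(t) = f(1)t + (f(t)-f(1)t)$, uses $\int_{(t_0,t]}t\,d\nu_R(t) = R(t_0^+)-R(t^+)$ for the linear piece (this converges by hypothesis), and bounds the remainder's contribution by $\varepsilon\,\abs{\nu_R}(-1,1)$ near $t=1$, producing a Cauchy criterion with no positivity assumption on $\nu_R$. Your dominated-convergence endgame would also be fine once $\abs{\nu_R}\big((0,1)\big)<\infty$ is established; what is missing is a sign-free derivation of that finiteness, or equivalently the paper's decomposition into linear part plus small remainder.
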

	The proof of \cref{prop:TRContContGroupProp} is given in \cref{app:TechnLemmas}. Next, let us give an example of the $T_R$ function that is needed especially in approximation arguments later on.

	\begin{exl}\label{ex:Tind}
		For $r\in (0,1)$, we consider the indicator function $\indf_{(-1,r)}$. Integration by parts shows that this is a $\BV_0(-1,1)$ function and $\nu_{\indf_{(-1,r)}}=\frac{1}{r}\delta_r$. Similarly, $\indf_{(-r,1)}$ is a $\BV_0(-1,1)$ function and $\nu_{\indf_{(-r,1)}}=\frac{1}{r}\delta_{-r}$. The corresponding integral transforms are given by
		\begin{equation*}
			T_{\indf_{(-1,r)}}f(t)
			= \begin{cases}
				f(t),				&t<r,	\\
				\frac{f(r)}{r}t,	&t\geq r,
			\end{cases}
			\qquad\text{and}\qquad
			T_{\indf_{(-r,1)}}f(t)
			= \begin{cases}
				\frac{f(-r)}{-r}t,	&t\leq -r,\\
				f(t),				&t>-r,
			\end{cases}
		\end{equation*}
		where $f\in C(-1,1)$. That is, these transforms truncate the function $f$ and extend it by a linear function to the respective end point of the interval $(-1,1)$. In particular, the transform $T_{\indf_{(-1,r)}}$ maps $C(-1,1)$ into $C(-1,1]$ and $T_{\indf_{(-r,1)}}$ maps $C(-1,1)$ into $C[-1,1)$.
		
		Noting that $\indf_{(-r,r)} = \indf_{(-1,r)}\indf_{(-r,1)}$, by \cref{prop:TRContContGroupProp}\cref{it:T_R_group_property}, observe that $T_{\indf_{(-r,r)}}$ is given by
		\begin{equation*}
			T_{\indf_{(-r,r)}}f(t)
			= \begin{cases}
				\frac{f(-r)}{-r}t,	&t\leq -r,\\
				f(t),				&-r<t<r, \\
				\frac{f(r)}{r}t,	&t\geq r,
			\end{cases}
		\end{equation*}
		and, thus, maps into $C[-1,1]$.
	\end{exl}

	\subsection{The continuous transformation rule}
	First, we prove a version of \cref{mthm:dictionary} for continuous functions. To this end, we define the transform $\widehat{T}_{\CC}$ as follows.
	\begin{defi}\label{def:TChat}
		Let $1\leq k< n-1$ and $\CC=(C_1,\ldots,C_{k})$ be a family of convex bodies of revolution in $\R^n$, none of which is a vertical segment. We define $R_{\CC} = \prod_{j=1}^{k}R_{C_j} \in \BV_0(-1,1)$ and
		\begin{equation*}
			\widehat{T}_{\CC} : C(-1,1)\to C(-1,1),\qquad
			\widehat{T}_{\CC}\bar{f}
			= T_{R_{\CC}}\bar{f} + \frac{n-i-1}{2\omega_{n-i-1}} W_{\CC} \bar{f}(0)\abs{{}\cdot{}},			
		\end{equation*}
		where $T_{R_{\CC}}$ is defined in \cref{eq:equivFormTR} and $W_\CC$ is defined in \cref{eq:defWC}.
	\end{defi}
	\noindent The main result of this subsection is \cref{mthm:dictionary}, which is a corollary of the following slightly more general statement.
	\begin{thm}\label{thm:dictZonalVal_cont_disk}
		Let $1\leq i< n-1$ and $\CC=(C_1,\ldots,C_{n-i-1})$ be a family of convex bodies of revolution in $\R^n$, none of which is a vertical segment. If $f\in C(\S^{n-1})$ is zonal, then $g = \bar{g}(\pair{e_n}{\cdot})$ with $\widehat{T}_{\CC}\bar{f}=\bar{g}$ is continuous, and for all $K\in\K(\R^n)$
		\begin{equation}\label{eq:dictZonalValContDisk_statement}
			\int_{\S^{n-1}} f \, dS_i(K,\CC;{}\cdot{})
			= \int_{\S^{n-1}} g \, dS_i(K,\DD;{}\cdot{}).
		\end{equation}
	\end{thm}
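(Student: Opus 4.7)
The plan is to reduce the claimed equality of integrals over $\S^{n-1}$ to a one-dimensional functional identity on $(-1,1)$, via the zonal Klain \cref{thm:zonalKlainDetRestr} and the mixed spherical projection machinery of \cref{sec:mixed_projections}. For the continuity claim on $g$, observe that each $R_{C_j}$ admits limits at $\pm 1$ (namely the radii of the top and bottom faces of $C_j$, by \cref{lem:geomIntuitRc}), hence so does $R_\CC$, whence \cref{prop:TRContContGroupProp}\cref{it:continuous_in_D_R} gives $T_{R_\CC}\bar f \in C[-1,1]$ and thus also $\bar g = \widehat{T}_\CC\bar f \in C[-1,1]$. Next, both sides of \cref{eq:dictZonalValContDisk_statement} define continuous, translation invariant, $i$-homogeneous, zonal valuations on $\K(\R^n)$, since the mixed area measures $S_i(K,\CC;{}\cdot{})$ and $S_i(K,\DD;{}\cdot{})$ inherit these properties from $\CC$ and $\DD$, and $f$, $g$ are zonal. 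Hence, by \cref{thm:zonalKlainDetRestr}, it suffices to verify the identity for $K \in \K(E)$, where $E \in \Gr_{i+1}(\R^n)$ is any fixed subspace containing $e_n$.

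Assume first that none of the $C_j$ contains a vertical segment in its boundary, so that $W_\CC = 0$ and $\widehat{T}_\CC\bar f = T_{R_\CC}\bar f$. Then \cref{cor:mixed_sph_lift_gnrl_Zonal} applies both to $\CC$ and to $\DD^{[n-i-1]}$, expressing each of $S_i(K,\CC;{}\cdot{})$ and $S_i(K,\DD;{}\cdot{})$ as $\frac{1}{\binom{n-1}{i}}$ times the mixed spherical lifting of $S_i^E(K,{}\cdot{})$. By adjointness of projection and lifting, the desired identity reduces to the pointwise equality
\begin{equation*}
	\pi_{E,\CC}\, f = \pi_{E,\DD^{[n-i-1]}}\, g \qquad \text{on } \S^i(E).
\end{equation*}
Since both sides are zonal continuous functions on $\S^i(E)$, it suffices to compare their values along the meridian $u_s = se_n + \sqrt{1-s^2}\bar u$. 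By \cref{lem:mixedSphProjZonalGeneral}, each side becomes an explicit integral transform of $\bar f$ resp.\ $\bar g$ on $(0,1)$, evaluated against $\nu_{R_{\CC,s}}$ and $\nu_{R_{\DD^{[n-i-1]},s}}$ respectively. Expanding the product defining $R_{\CC,s}$ via \cref{BV_0_algebraic}\cref{BV_0_algebraic:product}, performing the change of variable $u = st$, and invoking the semigroup property \cref{prop:TRContContGroupProp}\cref{it:T_R_group_property} of the $T_R$ transforms then shows that the two expressions agree precisely when $\bar g = T_{R_\CC}\bar f$.

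For the general case, use \cref{lem:vertical_segment_iff} to decompose $C_j = \tilde C_j + \ell_{C_j}[0,e_n]$, with each $\tilde C_j$ free of vertical segments in its boundary and $R_{\tilde C_j} = R_{C_j}$. Multilinearity of mixed area measures expands $S_i(K,\CC;{}\cdot{})$ into the leading term $S_i(K,\tilde\CC;{}\cdot{})$ (where $\tilde\CC = (\tilde C_1,\ldots,\tilde C_{n-i-1})$), plus correction terms in which one or more slots are replaced by $[0,e_n]$. Terms with two or more such slots vanish, since the analogue of \cref{eq:MixedVol_Proj} reduces them to a lower-dimensional mixed volume involving a repeated segment, hence a point. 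The leading term is handled by the previous case using $R_{\tilde\CC} = R_\CC$. The surviving one-segment correction terms are concentrated on the equator $\S^{n-2}(e_n^\perp)$, and integrating the zonal $f$ against them yields $\bar f(0)$ times a mass that, by the mixed-volume-with-segment formula \cref{eq:MixedVol_Proj}, reduces to a constant multiple of $V_i(K|e_n^\perp) \sum_j \ell_{C_j} \prod_{k \ne j} R_{C_k}(0)$. A parallel computation of $\int |\pair{e_n}{u}|\, dS_i(K,\DD;u)$ via the same formula reveals that this contribution is precisely reproduced by the additional term $\frac{n-i-1}{2\omega_{n-i-1}} W_\CC \bar f(0) |{}\cdot{}|$ in $\widehat{T}_\CC\bar f$ integrated against $S_i(K,\DD;{}\cdot{})$, completing the proof.

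The main difficulty will be the explicit one-dimensional computation in the second paragraph, identifying the integral transform induced by $\pi_{E,\CC}$ on zonal functions with $T_{R_\CC}$; this requires careful manipulation of the $\BV_0$ product rule under the substitution $u = st$, and correct tracking of the constants $\omega_{n-i-1}$, $\kappa_{n-i-1}$, and $\binom{n-1}{i}$. The combinatorial bookkeeping matching the equatorial correction with the $|{}\cdot{}|$ term, though elementary, must be executed with equal care.
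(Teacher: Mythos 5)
Your proposal follows essentially the same route as the paper: reduce the integral identity, after peeling off the vertical-segment corrections via multilinearity and \cref{eq:MixedVol_Proj}, to $K\in\K(E)$ by \cref{thm:zonalKlainDetRestr}, rewrite both sides via \cref{cor:mixed_sph_lift_gnrl_Zonal} as integrals of $\pi_{E,\CC}f$ and $\pi_{E,\DD^{[n-i-1]}}g$ against $S_i^E(K,{}\cdot{})$, and verify $\pi_{E,\CC}f=\pi_{E,\DD^{[n-i-1]}}g$ by the one-dimensional substitution $u=st$ and the semigroup property of $T_R$. The one-dimensional identity you leave as a sketch is precisely \cref{prop:commDiag} in the paper (where it is proved by transforming the measure $\nu_{Q_\CC(s,\cdot)}$ under $u=st$ and then applying $T_{Q_{\DD}(s,\cdot/s)}\circ T_{R_\CC}=T_{Q_{\DD}(s,\cdot/s)R_\CC}$), so the approach is the same; your mention of the $\BV_0$ product rule is a bit beside the point there, since what is needed is the semigroup identity itself rather than a direct product expansion of $\nu_{R_{\CC,s}}$.
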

	
	We will prove \cref{thm:dictZonalVal_cont_disk} by restricting \eqref{eq:dictZonalValContDisk_statement} to bodies $K$ which are contained in an $(i+1)$-dimensional linear subspace $E$ containing the axis $e_n$ and applying \cref{thm:zonalKlainDetRestr}. The key point in the proof is the following property of $\widehat{T}_{\CC}$.
	\begin{prop}\label{prop:commDiag}
		Let $1\leq i< n-1$ and $\CC=(C_1,\ldots,C_{n-i-1})$ be a family of convex bodies of revolution in $\R^n$, none of which is a vertical segment. If no $C_j$ contains a vertical segment in its boundary, then
		\begin{align}
			\pi_{E,\CC} = \pi_{E,\DD} \circ \widehat{T}_{\CC} \quad \text{ on } C(-1,1).
		\end{align}
	\end{prop}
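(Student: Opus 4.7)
The plan is to verify the identity pointwise on $(-1,1)$: for each $\bar f \in C(-1,1)$ and each $s \in (-1,1)$, I would show
\[
(\bar\pi_{E,\CC}\bar f)(s) = (\bar\pi_{E,\DD}(\widehat T_\CC \bar f))(s).
\]
Under the hypothesis that no $C_j$ contains a vertical segment in its boundary, $\ell_{C_j}=0$ for every $j$, hence $W_\CC=0$, so $\widehat T_\CC$ reduces to $T_{R_\CC}$. Applying \cref{lem:mixedSphProjZonalGeneral} rewrites both sides as $\frac{\omega_{n-i-1}}{n-i-1}\int_{(0,1)}(\,\cdot\,)\,d\nu_{R_{\,\cdot\,,s}}(t)$, where $R_{\DD^{[n-i-1]},s}(t) = (1-t^2)^{(n-i-1)/2}(1-s^2 t^2)^{-(n-i-1)/2}$ (using $R_\DD\equiv 1$) and $R_{\CC,s}(t) = R_{\DD^{[n-i-1]},s}(t)\cdot R_\CC(st)$.

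The key step is to exploit this factorization via the product rule from \cref{BV_0_algebraic}\cref{BV_0_algebraic:product}, yielding a decomposition of the form
\[
d\nu_{R_{\CC,s}}(t) = R_{\DD^{[n-i-1]},s}(t^-)\,d\nu_{R_\CC(s\,\cdot)}(t) + R_\CC\!\left((st)^{\pm}\right) d\nu_{R_{\DD^{[n-i-1]},s}}(t),
\]
where the one-sided limit on the second coefficient is taken from the right if $s>0$ and from the left if $s<0$, and a short calculation from the defining relation \eqref{eq:BV_0_iff} shows that $\nu_{R_\CC(s\,\cdot)}$ equals $|s|$ times the pushforward of $\nu_{R_\CC}$ under $u\mapsto u/s$. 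Paired with $\bar f(st)$, the second summand reproduces exactly the ``regular'' contribution of $(T_{R_\CC}\bar f)(st)$ in the right-hand integrand; the choice of one-sided limits in \eqref{eq:defTR} aligns, in each sign regime for $s$, with the limits produced by the product rule.

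For the remaining ``tail'' contribution $\pm st\int\bar f(v)\,d\nu_{R_\CC}(v)$ in $(T_{R_\CC}\bar f)(st)$, I would interchange the order of integration by Fubini. The resulting inner integral $\int_{[v/s,1)} t\,d\nu_{R_{\DD^{[n-i-1]},s}}(t)$ evaluates in closed form through the fundamental theorem \eqref{eq:fundthmCalcBV1}, using $R_{\DD^{[n-i-1]},s}(1^-) = 0$, to $R_{\DD^{[n-i-1]},s}(v/s)$. After the substitution $u=st$ that converts $d\nu_{R_\CC(s\,\cdot)}$ into its pushforward formula, this matches exactly the first summand of the product-rule decomposition, completing the pointwise identity. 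The case $s=0$ is immediate since $R_{\CC,0} = R_\CC(0)\cdot R_{\DD^{[n-i-1]},0}$ makes both sides proportional to $\bar f(0)$ with matching constants.

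The principal technical obstacle is the careful bookkeeping of one-sided limits and signs, since $T_R$ is defined asymmetrically around $0$ and the substitution $u=st$ reverses orientations when $s<0$. The hypothesis that no $C_j$ contains a vertical segment in its boundary is used crucially here to guarantee $|\nu_{R_\CC}|(\{0\})=0$, ruling out atoms at the origin that would otherwise obstruct both the product rule and the Fubini step.
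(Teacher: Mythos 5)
Your proposal is correct, and its overall structure matches the paper's: both verify the identity pointwise by applying \cref{lem:mixedSphProjZonalGeneral} to each side and then comparing the resulting one-dimensional integrals, with the key preparatory observation that the ``no vertical segment'' hypothesis gives $W_\CC=0$, so $\widehat{T}_\CC=T_{R_\CC}$. The route through the $\nu$-computation differs, however. The paper exploits a change of variables to recognize $\tfrac{n-i-1}{\omega_{n-i-1}}(\pi_{E,\DD}g)(s)$ as the value $T_{Q_{\DD^{[n-i-1]}}(s,\cdot/s)}g(s)$ of a $T_R$-transform at $s$, and then composes with $T_{R_\CC}$ in one stroke via the group property $T_{RQ}=T_RT_Q$ from \cref{prop:TRContContGroupProp}\cref{it:T_R_group_property}, followed by the algebraic identity $Q_\DD(s,t/s)R_\CC(t)=Q_\CC(s,t/s)$. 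You instead factorize $R_{\CC,s}=R_{\DD^{[n-i-1]},s}\cdot R_\CC(s\,\cdot)$ and apply the product rule \cref{BV_0_algebraic}\cref{BV_0_algebraic:product} to $\nu_{R_{\CC,s}}$ directly, matching the ``regular'' term against \cref{eq:defTR} and handling the ``tail'' term via Fubini, the fundamental theorem \cref{eq:fundthmCalcBV1} and $R_{\DD^{[n-i-1]},s}(1^-)=0$, together with the scaling law for $\nu_{R(s\,\cdot)}$. This is a hands-on re-derivation of the relevant instance of the group property inline; it buys self-containedness at the price of the sign and one-sided-limit bookkeeping you flag, which you do handle correctly in each sign regime for $s$. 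One small imprecision in your closing remark: the condition $\abs{\nu_{R_\CC}}(\{0\})=0$ is part of the definition of $\BV_0(-1,1)$ and holds for any body of revolution (see the construction in \cref{lem:SurfAreaMeasRev}, which always produces $R_C\in\BV_0$), so it does not rely on the ``no vertical segment'' hypothesis; as you correctly state earlier, that hypothesis is used to force $\ell_{C_j}=0$, hence $W_\CC=0$ and $\widehat{T}_\CC=T_{R_\CC}$.
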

	\begin{proof}
		We denote for $s \in (0,1)$ and $0 < t < 1/s$ and any tuple $\CC = (C_1, \dots, C_{n-i-1})$
		\begin{align*}
			Q_{\CC}(s,t) = \left(\frac{(1-t^2)_+}{1-(st)^2}\right)^{\frac{n-i-1}{2}} \prod_{j=1}^{n-i-1}R_{C_j}(st),
		\end{align*}
		where $(1-t^2)_+ = \max\{1-t^2, 0\}$ denotes the positive part.
		
		Assume $s > 0$, the claim for $s < 0$ follows analogously. For every $g \in C(-1,1)$, \eqref{eq:BV_0_iff}, \eqref{eq:BV_0_int_by_parts}, and letting $u = st$ implies that for $0<a<b<1$
		\begin{align*}
			-\int_{(a,b]} g(st) d\nu_{Q_{\CC}(s,\cdot)}(t) &= \left.\frac{g(st)}{t}Q_{\CC}(s,t)\right|_{t=a^+}^{b^+} - \int_{(a,b]}\frac{stg'(st)-g(st)}{t^2}Q_{\CC}(s,t) dt\\
			&= s \left(\left.\frac{g(u)}{u}Q_{\CC}\left(s,\frac{u}{s}\right)\right|_{u=(sa)^+}^{(sb)^+} - \int_{(sa,sb]}\frac{ug'(u)-g(u)}{u^2}Q_{\CC}\left(s,\frac{u}{s}\right) du\right)\\
			&=-s\int_{(sa,sb]}g(u) d\nu_{Q_{\CC}\left(s,\frac{(\cdot)}{s}\right)}(u).
		\end{align*}
		By approximation, the same holds true for $a=0$ and $b=1$. Note also that $Q_{\CC}(s, \frac{(\cdot)}{s}) \in \BV_0(-1,1)$. Consequently, by \eqref{eq:defTR},
		\begin{align*}
			\frac{n-i-1}{\omega_{n-i-1}}(\pi_{E,\DD} g)(s) = T_{Q_{\DD^{[n-i-1]}}(s,\frac{(\cdot)}{s})} g(s) - Q_{\DD^{[n-i-1]}}(s,1) g(s)  = T_{Q_{\DD^{[n-i-1]}}(s,\frac{(\cdot)}{s})}g(s),
		\end{align*}
		as $Q_{\DD^{[n-i-1]}}(s,1) = 0$. Observe now that, by \cref{prop:TRContContGroupProp}\cref{it:T_R_group_property} and since by assumption $\widehat{T}_{\CC} = T_{R_\CC}$, $T_{Q_{\DD}(s,\frac{(\cdot)}{s})} \circ T_{R_\CC} = T_{Q_{\DD}(s,\frac{(\cdot)}{s}) R_{\CC}}$. Hence, since
		\begin{align*}
			Q_{\DD}\left(s,\frac{t}{s}\right) R_{\CC} (t) = \left(\frac{(1-(t/s)^2)_+}{(1-t^2)}\right)^{\frac{n-i-1}{2}}R_{\CC}(t) = Q_{\CC}\left(s,\frac{t}{s}\right), \quad t \in (-1,1),
		\end{align*}
		the claim follows by reversing the steps taken above.
	\end{proof}
	
	\medskip
	
	\begin{proof}[Proof of \cref{thm:dictZonalVal_cont_disk}]
		First, note that by \cref{prop:TRContContGroupProp}\cref{it:continuous_in_D_R}, $\bar{g} = \widehat{T}_\CC \bar{f}$ and, thus, $g$ is continuous.
		
		Next, write $C_j = \widetilde{C}_j + \ell_{C_j}[0,e_n]$, $j = 1, \dots, n-i-1$, with $\ell_{C_j} \geq 0$ chosen by \cref{lem:vertical_segment_iff} such that $\widetilde{C}_j$ does not contain a vertical segment in its boundary. Then a similar argument as in the proof of \cref{lem:SurfAreaMeasRev} using the multilinearity of mixed area measures and \cref{eq:MixedVol_Proj} implies that
		\begin{align*}
			\int_{\S^{n-1}} f \, dS_i(K,\CC;{}\cdot{}) = \int_{\S^{n-1}} f \, dS_i(K,\widetilde{\CC};{}\cdot{}) + \frac{n-i-1}{(n-1)\omega_{n-i-1}}W_{\CC} \int_{\S^{n-2}(e_n^\perp)}f(u) dS_i^{e_n^\perp}(K|e_n^\perp, u).
		\end{align*}
		As $f$ is zonal,
		\begin{align*}
			\frac{1}{n-1}\int_{\S^{n-2}(e_n^\perp)}f(u) dS_i^{e_n^\perp}(K|e_n^\perp, u) = \bar{f}(0) V^{e_n^\perp}((K|e_n^\perp)^{[i]}, \DD^{[n-i-1]}),
		\end{align*}
		which, again by \cref{eq:MixedVol_Proj}, can be rewritten as
		\begin{align*}
			&\bar{f}(0) V^{e_n^\perp}((K|e_n^\perp)^{[i]}, \DD^{[n-i-1]}) = \bar{f}(0)\frac{n}{2} V(K^{[i]}, \DD^{[n-i-1]},[-e_n,e_n])\\ &\qquad = \frac{\bar{f}(0)}{2} \int_{\S^{n-1}} |\pair{e_n}{u}| dS_i(K, \DD; u).
		\end{align*}
		Consequently, as $\widehat{T}_{\CC}f = T_{\widetilde{\CC}}f + \frac{n-i-1}{2\omega_{n-i-1}} W_{\CC} f(0) |\cdot|$, it suffices to show the claim only for bodies of revolution which do not contain vertical segments in their respective boundary.
		
		Next, note that both sides of \eqref{eq:dictZonalValContDisk_statement} define continuous, translation-invariant and zonal valuations when seen as functionals in $K \in \K(\R^n)$. By \cref{thm:zonalKlainDetRestr}, it is therefore sufficient to show \eqref{eq:dictZonalValContDisk_statement} for bodies $K \in \K(\R^n)$ that are contained in some fixed $(i+1)$-dimensional subspace $E$ containing the axis $e_n$.
		
		Let $K \in \K(E)$ be arbitrary, but fixed. Then, by \cref{cor:mixed_sph_lift_gnrl_Zonal}
		\begin{align}\label{eq:prfDictZonalCont}
			\int_{\S^{n-1}} f \, dS_i(K,\CC;{}\cdot{}) = \int_{\S^{i}(E)} \pi_{E,\CC} f \, dS_i^E(K,\CC;{}\cdot{}) = \int_{\S^{i}(E)}  (\bar\pi_{E,\CC} \bar{f})(\pair{e_n}{\cdot}) \, dS_i^E(K,\CC;{}\cdot{}).
		\end{align}
		Since, by \cref{prop:commDiag}, $\bar{\pi}_{E, \CC} \bar f = \bar{\pi}_{E, \DD^{[n-i-1]}} \bar{g}$, repeating the steps in \eqref{eq:prfDictZonalCont} for $g$ and $\DD^{[n-i-1]}$ instead of $f$ and $\CC$ yields the claim.		
	\end{proof}

	\subsection{Local behavior of mixed area measures}\label{sec:fireyEst}
	
	As was pointed out in the introduction, the global behavior of area measures of convex bodies is generally not well understood. Regarding their local behavior however, several estimates and descriptions are known that are applied frequently throughout convex geometry (see, e.g, \cite{Schneider2014}*{Section~4.5}). One important result of this kind is due to Firey~\cite{Firey1970a} and concerns the area measure of spherical caps. In the following, we denote by $\mathrm{Cap}(v,t)
	= \{ u\in\S^{n-1} : \pair{u}{v} \geq t \}$
	the closed spherical cap of unit vectors that enclose an angle of at most $\theta\in (0,\frac \pi2)$, $\cos(\theta) = t$, with a given vector $v\in\S^{n-1}$.
	
	\begin{thm}[{\cite{Firey1970a}}]\label{thm:Fireys_cap_estimate}
		For $1\leq i\leq n-1$, there exists a constant $C_{n,i}>0$ depending only on $n$ and $i$ such that for all $K\in\K(\R^n)$, $v\in\S^{n-1}$, and $t \in (0,1)$,
		\begin{equation}\label{eq:Fireys_cap_estimate}
			S_i(K,\mathrm{Cap}(v,t))
			\leq C_{n,i}\frac{(1-t^2)^{\frac{n-i-1}{2}}}{t}(\mathrm{diam}\, K)^{i}.
		\end{equation}
	\end{thm}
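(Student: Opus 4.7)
My plan is to derive the bound directly from the variational definition \eqref{eq:area_meas_def}: I would estimate the surface area measure $S_{n-1}(K+\varepsilon B^n,\mathrm{Cap}(v,t))$ by a polynomial in $\varepsilon$ of degree $n-1$ and then extract its $(n-i-1)$-st Minkowski derivative at $\varepsilon=0^+$. The geometric heart of the argument is a projection-plus-dilation estimate on the reverse spherical image $\tau(K+\varepsilon B^n,\mathrm{Cap}(v,t))$.

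First, I would fix $\varepsilon>0$ and set $K_\varepsilon=K+\varepsilon B^n$. Since $K_\varepsilon$ is of class $C^{1,1}$, its Gauss map is defined almost everywhere and $S_{n-1}(K_\varepsilon,\mathrm{Cap}(v,t))=\mathcal{H}^{n-1}(\tau(K_\varepsilon,\mathrm{Cap}(v,t)))$. Parameterizing $\tau(K_\varepsilon,\mathrm{Cap}(v,t))$ as a Lipschitz graph over $v^\perp$, the $(n-1)$-dimensional area element equals $\pair{v}{n(\cdot)}^{-1}\leq 1/t$ on $\mathrm{Cap}(v,t)$. Moreover, every $x\in\tau(K_\varepsilon,\mathrm{Cap}(v,t))$ decomposes as $x=k+\varepsilon u$ with $k\in F(K,u)$ and $u\in\mathrm{Cap}(v,t)$; since the $v^\perp$-component of $u$ has norm $\sqrt{1-\pair{v}{u}^2}\leq\sqrt{1-t^2}$, it follows that $\tau(K_\varepsilon,\mathrm{Cap}(v,t))|v^\perp\subseteq K|v^\perp+\varepsilon\sqrt{1-t^2}\,B^{n-1}(v^\perp)$. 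Combining these two observations yields
\begin{equation*}
	S_{n-1}(K_\varepsilon,\mathrm{Cap}(v,t)) \leq \frac{1}{t}\,V_{n-1}\bigl(K|v^\perp + \varepsilon\sqrt{1-t^2}\,B^{n-1}(v^\perp)\bigr),
\end{equation*}
and an application of the Steiner formula in $v^\perp$ expresses the right-hand side as the polynomial $\frac{1}{t}\sum_{j=0}^{n-1}\varepsilon^{n-1-j}(1-t^2)^{(n-1-j)/2}\kappa_{n-1-j}V_j^{v^\perp}(K|v^\perp)$.

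Finally, applying $\frac{i!}{(n-1)!}\partial_\varepsilon^{n-i-1}|_{\varepsilon=0^+}$ to both sides isolates the monomial $\varepsilon^{n-i-1}$, producing
\begin{equation*}
	S_i(K,\mathrm{Cap}(v,t)) \leq \frac{\kappa_{n-i-1}}{\binom{n-1}{i}}\,V_i(K|v^\perp)\,\frac{(1-t^2)^{(n-i-1)/2}}{t}.
\end{equation*}
The estimate \eqref{eq:Fireys_cap_estimate} then follows from the monotonicity bound $V_i(K|v^\perp)\leq V_i(K)\leq c_{n,i}(\mathrm{diam}\,K)^i$, giving $C_{n,i}=c_{n,i}\kappa_{n-i-1}/\binom{n-1}{i}$. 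The main technical hurdle I anticipate is justifying the graph parameterization rigorously for the merely $C^{1,1}$ body $K_\varepsilon$, since its Gauss map is only defined almost everywhere; I expect this to be routine, either by approximating $K_\varepsilon$ from inside by $C^2_+$ bodies and invoking the weak continuity of area measures, or by applying the area formula directly to the natural Lipschitz parameterization of the ``upper'' portion of $\partial K_\varepsilon$ as a graph over $K_\varepsilon|v^\perp$.
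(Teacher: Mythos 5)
Your geometric estimates are sound up to the Steiner polynomial: the Jacobian bound $\pair{u}{v}\geq t$ on $\mathrm{Cap}(v,t)$ gives $S_{n-1}(K_\varepsilon,\mathrm{Cap}(v,t))\leq t^{-1}\mathcal{H}^{n-1}\bigl(\tau(K_\varepsilon,\mathrm{Cap}(v,t))|v^\perp\bigr)$, and the inclusion $\tau(K_\varepsilon,\mathrm{Cap}(v,t))|v^\perp\subseteq K|v^\perp+\varepsilon\sqrt{1-t^2}\,B^{n-1}(v^\perp)$ is correct. The last step, however, is a genuine gap: applying $\partial_\varepsilon^{n-i-1}|_{\varepsilon=0^+}$ to both sides of a polynomial inequality that holds for all $\varepsilon\geq 0$ does not yield an inequality between the individual coefficients. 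For instance $2\varepsilon\leq 1+\varepsilon^2$ for all $\varepsilon\geq 0$, yet the $\varepsilon^1$-coefficient on the left strictly exceeds the one on the right. Consequently your penultimate display — which is precisely the sharpened estimate in \cref{cor:Fireys_cap_estimate_improved} — does not follow from the projection-plus-Steiner argument. In the paper, that sharper bound is obtained by an entirely different mechanism, namely the transformation rule \cref{prop:dict_spherical_caps} reducing $S_i(K,\cdot)$ to $S_i(K,\DD(v^\perp);\cdot)$, from which \cref{thm:mixedFireyEst} and \cref{cor:Fireys_cap_estimate_improved} are deduced.

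What does follow from your setup, and is all that \cref{thm:Fireys_cap_estimate} requires: since every coefficient in both Steiner polynomials is non-negative, keep only the $m=i$ term on the left-hand side, divide by $\binom{n-1}{i}\varepsilon^{n-1-i}$, use $V_j^{v^\perp}(K|v^\perp)\leq c_j(\mathrm{diam}\,K)^j$, and then substitute the single value $\varepsilon=(\mathrm{diam}\,K)/\sqrt{1-t^2}$. Each summand on the right then becomes a dimensional constant times $(\mathrm{diam}\,K)^i(1-t^2)^{(n-i-1)/2}/t$, and \cref{eq:Fireys_cap_estimate} follows with an explicit $C_{n,i}$. Replacing the illegal differentiation by this choice of $\varepsilon$ is essentially Firey's own argument in~\cite{Firey1970a}; the paper does not re-prove \cref{thm:Fireys_cap_estimate}, it only cites it and then improves it by the independent valuation-theoretic route described above.
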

	
	For our purposes, we require a modification of this classical theorem for mixed area measures involving reference bodies of revolution and polar caps. Our reasoning will also yield a significant improvement of \cref{eq:Fireys_cap_estimate} that is in some sense optimal. The key observation here is that every spherical cap has an axis of symmetry; that is, we would like to apply the transformation rule of \cref{thm:dictZonalVal_cont_disk} to the characteristic function of a polar cap. This function however is discontinuous, but we can overcome this obstacle by means of the machinery that will be developed later in \cref{sec:adj}. This is the content of the following proposition.

	\begin{prop}\label{prop:dict_spherical_caps}
		Let $1\leq i< n-1$, let $\CC=(C_1,\ldots,C_{n-i-1})$ be a family of convex bodies of revolution, and let $K\in\K(\R^n)$. Then for all $t \in (0,1)$,
		\begin{equation}\label{eq:dict_spherical_caps}
			\int_{\mathrm{Cap}(\pm e_n,t)}\abs{\pair{e_n}{u}}\, S_i(K,\CC;du)
			= R_{\CC}(\pm t) \int_{\mathrm{Cap}(\pm e_n,t)}\abs{\pair{e_n}{u}}\, S_i(K,\DD;du)
		\end{equation}
	\end{prop}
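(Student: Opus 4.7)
The plan is to apply the continuous transformation rule of \cref{thm:dictZonalVal_cont_disk} to a suitable approximation of the discontinuous zonal integrand $u \mapsto \abs{\pair{e_n}{u}}\,\indf_{\mathrm{Cap}(\pm e_n,t)}(u)$, which corresponds to $\bar f_{\pm}(s)=\pm s\,\indf_{[\pm t,\pm 1]}(\pm s)$ via $s=\pair{e_n}{u}$.

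I first identify the disk-side counterpart formally. For the $+e_n$ cap, $\bar f_+(0)=0$ (since $t>0$), so the $\abs{{}\cdot{}}$-term in the definition of $\widehat{T}_{\CC}$ vanishes and $\widehat{T}_{\CC}\bar f_+ = T_{R_{\CC}}\bar f_+$. Using \cref{eq:equivFormTR} together with the fundamental theorem \cref{eq:fundthmCalcBV1}, and splitting the integration at the points $s$ and $t$, a short direct computation yields
\begin{equation*}
    T_{R_{\CC}}\bar f_+(s) \;=\; R_{\CC}(t)\, s\, \indf_{[t,1]}(s), \qquad s\in(-1,1),
\end{equation*}
up to the value of $R_{\CC}$ at the discontinuity $s=t$. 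This matches the candidate identity: on the disk side the integrand is (formally) $R_{\CC}(t)$ times the original $\CC$-side integrand.

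To turn this into a rigorous identity, I would approximate $\indf_{[t,1]}$ by continuous cut-offs $\phi_\varepsilon\in C[-1,1]$ (equal to $1$ on $[t,1]$, to $0$ on $[-1,t-\varepsilon]$, and linear in between) and set $\bar f_\varepsilon(s)=s\,\phi_\varepsilon(s)$. Then $f_\varepsilon(u)=\bar f_\varepsilon(\pair{e_n}{u})$ is zonal and continuous with $\bar f_\varepsilon(0)=0$, so \cref{thm:dictZonalVal_cont_disk} gives
\begin{equation*}
    \int_{\S^{n-1}} f_\varepsilon\, dS_i(K,\CC;{}\cdot{}) \;=\; \int_{\S^{n-1}} (\widehat{T}_{\CC}\bar f_\varepsilon)(\pair{e_n}{{}\cdot{}})\, dS_i(K,\DD;{}\cdot{}).
\end{equation*}
The explicit formulas in \cref{sec:defTRtrans} (together with \cref{prop:TRContContGroupProp}) show that $\widehat{T}_{\CC}\bar f_\varepsilon$ is uniformly bounded on $(-1,1)$ and, as $\varepsilon\to 0^+$, converges pointwise to $R_{\CC}(t)\,s\,\indf_{[t,1]}(s)$ outside the at most countable set where $R_{\CC}$ jumps. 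Since $S_i(K,\CC;{}\cdot{})$ and $S_i(K,\DD;{}\cdot{})$ are finite Radon measures on $\S^{n-1}$, dominated convergence transfers the identity to the limit, at least for those $t$ at which $R_{\CC}$ is continuous and neither pushforward charges the slice $\{\pair{e_n}{{}\cdot{}}=t\}$.

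The remaining issue is extending the identity to \emph{all} $t \in (0,1)$, across the countably many jumps of $R_{\CC}$ and possible atoms of the pushforward measures on horizontal slices. I would handle these by approximating $\mathrm{Cap}(e_n,t)$ from outside by $\mathrm{Cap}(e_n,t_k)$ with $t_k\nearrow t$ through continuity points, and passing to the limit by monotone convergence, exploiting the right-continuity of $R_{\CC}$ on $[0,1)$ built into \cref{def:defRC}. The case of $\mathrm{Cap}(-e_n,t)$ is entirely analogous, with $\bar f_-(s)=-s\,\indf_{[-1,-t]}(s)$ and symmetric approximations. The main technical obstacle is precisely this bookkeeping of left- versus right-sided limits of $R_{\CC}$ at its jumps and of the reference measures on the slices $\{\pair{e_n}{{}\cdot{}}=\pm t\}$, which must be tracked carefully to pin down the correct one-sided value $R_{\CC}(\pm t)$ in the statement.
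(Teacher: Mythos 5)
Your approach is correct and takes a genuinely different route from the paper. The paper first reduces via locality (\cref{lem:MixedAreaMeas_locally_det}) to reference bodies without vertical boundary segments, deduces from \cref{thm:dictZonalVal_cont_disk} that $\mu=T_{R_\CC}^\ast\sigma$ (where $\mu,\sigma$ are the pushforwards of $S_i(K,\CC;{}\cdot{})$ and $S_i(K,\DD;{}\cdot{})$ to $[-1,1]$), and then cites the adjoint identity \cref{lem:T_R_adjoint:I:caps}, which is established separately by a Fubini computation together with \cref{eq:fundthmCalcBV1}. You instead approximate the discontinuous weight by continuous cut-offs $\bar f_\varepsilon(s)=s\,\phi_\varepsilon(s)$, apply \cref{thm:dictZonalVal_cont_disk} to each $\bar f_\varepsilon$, and pass to the limit. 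The underlying computation (evaluating $\int_{[t,s]}x\,d\nu_{R_\CC}(x)$) is the same; the paper's version isolates it as a reusable fact about $T_R^\ast$ that is needed again in \cref{sec:adj} (notably \cref{lem:T_R_adjoint_inverse:I}), while yours is more self-contained. One correction to the execution: the secondary approximation in $t$ you propose, and the caveat about a bad set of $t$'s and atoms on slices, are unnecessary. With the cut-offs you describe one checks directly from \cref{eq:defTR} and \cref{eq:fundthmCalcBV1} that $T_{R_\CC}\bar f_\varepsilon(s)\to R_\CC(t^-)\,s\,\indf_{[t,1]}(s)$ for \emph{every} $s\in(-1,1)$ (the left-limit arises because $\int_{[t,s]}x\,d\nu_{R_\CC}(x)=R_\CC(t^-)-R_\CC(s^+)$ when the cut-off sits to the left of $t$), with a uniform bound, so dominated convergence already yields the identity for all $t\in(0,1)$ at once. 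What you flagged as the main obstacle therefore amounts only to pinning the constant to the left-limit $R_\CC(t^-)$ for the closed cap $\mathrm{Cap}(e_n,t)$; compare \cref{lem:T_R_adjoint:I:caps}, where $R(t^\times)=R(t^+)$ appears instead precisely because that lemma integrates over the half-open set $(t,a_+]$.
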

	\begin{proof}
		Since the mixed area measure $S_i(K,\CC;{}\cdot{})$ is locally determined by the convex bodies involved (see \cref{lem:MixedAreaMeas_locally_det}), we may assume that the reference bodies $C_j$ do not contain any vertical segments in their respective boundaries (otherwise, replace them with $\tilde{C}_j$). Denoting by $\mu$ and $\sigma$ the pushforward of $S_i(K,\CC;{}\cdot{})$ and $S_i(K,\DD;{}\cdot{})$ under the map $\pair{e_n}{{}\cdot{}}$, respectively, \cref{thm:dictZonalVal_cont_disk} yields that $\mu=T_{R_{\CC}}^\ast\sigma$. The statement then follows from \cref{lem:T_R_adjoint:I:caps}, which is postponed until \cref{sec:adj}.
	\end{proof}
	
	As a consequence, we obtain the following estimate. In the case where all reference bodies are identical, that is $\CC=C^{[n-i-1]}$, this gives \cref{mthm:mixedFirey}.
	
	\begin{thm}\label{thm:mixedFireyEst}
		Let $1\leq i< n-1$, let $\CC=(C_1,\ldots,C_{n-i-1})$ be a family of convex bodies of revolution, and let $K\in\K(\R^n)$. Then for all $t \in (0,1)$,
		\begin{equation}\label{eq:mixedFrireyEst}
			S(K^{[i]},\CC; \mathrm{Cap}(\pm e_n,t) )
			\leq \frac{\kappa_{n-i-1}}{\binom{n-1}{i}} \frac{R_{\CC}(\pm t) }{t} V_i(K|e_n^\perp).
		\end{equation}
	\end{thm}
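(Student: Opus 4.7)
The plan is to extract the bound directly from \cref{prop:dict_spherical_caps}, combined with the fact that the disk $\DD$ lives inside the hyperplane $e_n^\perp$. Since $|\pair{e_n}{u}|\geq t$ throughout $\mathrm{Cap}(\pm e_n, t)$, one first has the trivial lower estimate
\[
t \cdot S(K^{[i]},\CC;\mathrm{Cap}(\pm e_n,t)) \;\leq\; \int_{\mathrm{Cap}(\pm e_n,t)} |\pair{e_n}{u}|\, S_i(K,\CC;du),
\]
which \cref{prop:dict_spherical_caps} rewrites as $R_\CC(\pm t) \int_{\mathrm{Cap}(\pm e_n,t)} |\pair{e_n}{u}|\, S_i(K,\DD;du)$. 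It then remains only to upper-bound this last integral by $\frac{\kappa_{n-i-1}}{\binom{n-1}{i}} V_i(K|e_n^\perp)$.

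For this second step, I would enlarge the domain of integration to all of $\S^{n-1}$, so that $|\pair{e_n}{u}|\indf_{\mathrm{Cap}(\pm e_n,t)}(u)$ is replaced by $\max\{0,\pm\pair{e_n}{u}\}=h_{[0,\pm e_n]}(u)$. Formula~\eqref{eq:MixedVol_Integral} converts the resulting integral into $nV(K^{[i]},\DD^{[n-i-1]},[0,\pm e_n])$, which, by translation-invariance and $1$-homogeneity of mixed volumes in each entry, equals $\tfrac{n}{2}V(K^{[i]},\DD^{[n-i-1]},[-e_n,e_n])$. The segment projection formula~\eqref{eq:MixedVol_Proj} then drops this last segment at the cost of passing to a mixed volume inside $e_n^\perp$; since $\DD$ is the unit ball of $e_n^\perp$, applying the normalization~\eqref{eq:MixedVol_IntrinsicVol} inside $e_n^\perp$ identifies the outcome as
\[
V^{e_n^\perp}\!((K|e_n^\perp)^{[i]},\DD^{[n-i-1]}) \;=\; \tfrac{\kappa_{n-i-1}}{\binom{n-1}{i}}\, V_i(K|e_n^\perp).
\]

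Combining these two estimates and dividing through by $t$ yields the claim. I do not expect any substantive obstacle, since the only nontrivial input, \cref{prop:dict_spherical_caps}, is already established; the remainder amounts to unwinding classical identities for mixed volumes. The essential conceptual gain over Firey's original estimate~\eqref{eq:Fireys_cap_estimate} comes precisely from the exact transformation identity of \cref{prop:dict_spherical_caps}, which replaces the ad hoc diameter bound in~\eqref{eq:Fireys_cap_estimate} by the sharp geometric quantity $V_i(K|e_n^\perp)$ and the profile function $R_\CC$.
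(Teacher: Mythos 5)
Your proposal is correct and takes essentially the same route as the paper: bound the left-hand integral below by $t\,S_i(K,\CC;\mathrm{Cap}(\pm e_n,t))$, bound the right-hand integral above by enlarging to $\max\{0,\pm\pair{e_n}{u}\}$ and unwinding \cref{eq:MixedVol_Integral}, \cref{eq:MixedVol_Proj}, and \cref{eq:MixedVol_IntrinsicVol} to get $\frac{\kappa_{n-i-1}}{\binom{n-1}{i}}V_i(K|e_n^\perp)$, then combine via \cref{prop:dict_spherical_caps}. The only cosmetic difference is that you spell out the translation-invariance step $[0,\pm e_n]\leadsto \tfrac12[-e_n,e_n]$ that the paper leaves implicit when invoking \cref{eq:MixedVol_Proj}.
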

	\begin{proof}
		We give suitable estimates on the integral expressions in \cref{eq:dict_spherical_caps}. For the integral expression on the right-hand side, denoting $t_+=\max\{0,t\}$, we have that for every $t \in (0,1)$
		\begin{align}\label{eq:mixedFrireyEst:proof}
			\begin{split}
				&\int_{\mathrm{Cap}(e_n,t)}\abs{\pair{e_n}{u}}\, S_i(K,\DD;du)
				\leq \int_{\S^{n-1}} {\pair{ e_n}{u}}_+ \, S_i(K,\DD;du)
				= nV(K^{[i]},\DD^{n-i-1},[0, e_n]) \\
				&\qquad =  V^{e_n^\perp}((K|e_n^\perp)^{[i]},\DD^{n-i-1})
				= \frac{\kappa_{n-i-1}}{\binom{n-1}{i}} V_i(K|e_n^\perp),
			\end{split}
		\end{align}
		where we used the classical formulas \cref{eq:MixedVol_Integral}, \cref{eq:MixedVol_Proj}, and \cref{eq:MixedVol_IntrinsicVol}. For the integral expression on the left-hand side of \cref{eq:dict_spherical_caps} and $t \in (0,1)$, 
		\begin{equation*}
			\int_{\mathrm{Cap}(e_n,t)} \abs{\pair{e_n}{u}} \, S_i(K,\CC;du)
			\geq t S_i(K,\CC;\mathrm{Cap}(e_n,t)).
		\end{equation*}
		Finally, invoking \cref{prop:dict_spherical_caps} establishes \cref{eq:mixedFrireyEst} for the northern polar cap. The argument for the southern polar cap is analogous.
	\end{proof}
	
	We return to the case of the classical area measures of degree $1\leq i\leq n-1$. That is, the reference bodies are Euclidean balls, which are symmetric around any axis. Since $R_{(B^n)^{[n-i-1]}}(t) = (1-t^2)^{\frac{n-i-1}{2}}$, we obtain the following corollary.
	
	\begin{cor}\label{cor:Fireys_cap_estimate_improved}
		Let $1\leq i< n-1$ and $K\in\K(\R^n)$. Then for all $v\in\S^{n-1}$ and $t \in (0,1)$,
		\begin{equation}\label{eq:Fireys_cap_estimate_improved}
			S_i(K , \mathrm{Cap}(v,t) )
			\leq \frac{\kappa_{n-i-1}}{\binom{n-1}{i}}  \frac{(1-t^2)^{\frac{n-i-1}{2}} }{t} V_i(K|v^\perp).
		\end{equation}
	\end{cor}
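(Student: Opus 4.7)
The plan is to deduce the corollary directly from Theorem~\ref{thm:mixedFireyEst} by choosing all reference bodies to be Euclidean balls, after reducing to the case $v = e_n$ via a rotation.

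First, I would fix $v \in \S^{n-1}$ and pick any rotation $\eta \in \SO(n)$ with $\eta v = e_n$. Since the $i$-th area measure is rotation equivariant, $S_i(K, \mathrm{Cap}(v,t)) = S_i(\eta K, \mathrm{Cap}(e_n, t))$; likewise $V_i(K | v^\perp) = V_i(\eta K | e_n^\perp)$, as both intrinsic volumes and orthogonal projection commute with isometries. Therefore it is enough to prove the inequality in the case $v = e_n$.

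With $v = e_n$, the Euclidean ball $B^n$ is a body of revolution around the axis $e_n$ that contains no vertical segment in its boundary, and a short computation gives $R_{B^n}(t) = \sqrt{1-t^2}$ for $t \in (-1,1)$. Taking the tuple $\CC = (B^n)^{[n-i-1]}$, the multiplicativity from the definition $R_\CC = \prod_j R_{C_j}$ yields
\begin{equation*}
R_\CC(\pm t) = (1-t^2)^{\frac{n-i-1}{2}}.
\end{equation*}
Moreover, by the defining relation \cref{eq:MixedVol_IntrinsicVol} for intrinsic volumes, the mixed area measure $S(K^{[i]}, (B^n)^{[n-i-1]}; {}\cdot{})$ coincides with $S_i(K, {}\cdot{})$ up to the normalization already absorbed into the statement of Theorem~\ref{thm:mixedFireyEst} (which is formulated in terms of $S_i(K, \CC; {}\cdot{})$ with $\CC$ of length $n-i-1$).

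Substituting these identifications into \cref{eq:mixedFrireyEst} yields
\begin{equation*}
S_i(K, \mathrm{Cap}(e_n, t)) \leq \frac{\kappa_{n-i-1}}{\binom{n-1}{i}}\frac{(1-t^2)^{\frac{n-i-1}{2}}}{t} V_i(K | e_n^\perp)
\end{equation*}
for every $t \in (0,1)$, which is precisely the claimed inequality in the reduced case. Combining with the rotation argument from the first step gives the full statement for arbitrary $v \in \S^{n-1}$. There is no serious obstacle here; the corollary is essentially a specialization, and the only small point to check is that the polar-cap estimate in Theorem~\ref{thm:mixedFireyEst} applies equally at $+e_n$ and $-e_n$, which it does since the tuple $(B^n)^{[n-i-1]}$ is symmetric under reflection through $e_n^\perp$.
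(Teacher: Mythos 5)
Your argument is correct and matches the paper's derivation: the paper also obtains the corollary by specializing \cref{thm:mixedFireyEst} to $\CC = (B^n)^{[n-i-1]}$, computing $R_{(B^n)^{[n-i-1]}}(t) = (1-t^2)^{\frac{n-i-1}{2}}$, and appealing to the rotational symmetry of Euclidean balls to pass from $\pm e_n$ to an arbitrary axis $v$. Your small hedge about normalization is actually unnecessary: $S_i(K,{}\cdot{}) = S(K^{[i]},(B^n)^{[n-i-1]};{}\cdot{})$ holds exactly by polarizing \cref{eq:area_meas_def}.
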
	
	
	We want to point out that our result improves Firey's estimate \cref{eq:Fireys_cap_estimate} in several ways. First, we make the constant $C_{n,i}$ explicit. Second, we replace the diameter by an intrinsic volume. Third, the estimate \cref{eq:Fireys_cap_estimate_improved} is sharp up to the term $t$. More precisely, if $K=\DD(v^\perp) = B^n \cap v^\perp$, then
	\begin{equation*}
		S_i(\DD(v^\perp) , \mathrm{Cap}(v,t) )
		= \frac{\kappa_{n-i-1}}{\binom{n-1}{i}}  (1-t^2)^{\frac{n-i-1}{2}}  V_i(\DD(v^\perp)).
	\end{equation*}
	One easy way to see this is to observe that in \cref{eq:mixedFrireyEst:proof}, in the proof of \cref{thm:mixedFireyEst}, both inequalities become equality for $K=\DD$. Consequently, for every $K\in\K(\R^n)$ and $v\in\S^{n-1}$,
	\begin{equation}
		\limsup_{t\nearrow 1} \frac{S_i(K , \mathrm{Cap}(v,t) )}{(1-t^2)^{\frac{n-i-1}{2}}}
		\leq \frac{\kappa_{n-i-1}}{\binom{n-1}{i}}  V_i(K|v^\perp),
	\end{equation}
	and equality holds for $K=\DD(v^\perp)$. In this sense, for small spherical caps, estimate \cref{eq:Fireys_cap_estimate_improved} is optimal. In particular, the multiplicative constant depending on $n$ and $i$ can not be further improved.
	
	Now we turn to another local description of area measures. It is well known that the $i$-th order area measure $S_i(K,{}\cdot{})$ of a convex body $K$ is absolutely continuous with respect to the $(n-i-1)$-dimensional Hausdorff measure (see, e.g., \cite{Schneider2014}*{Theorem~4.5.5}). The $(n-i-1)$-dimensional density of $S_i(K,{}\cdot{})$ was determined by Hug in~\cite{Hug1998}*{Theorem~4.3}. 
	
	\begin{thm}[{\cite{Hug1998}}]\label{thm:MixedAreaMeas_LowerDimDensity}
		Let $1\leq i\leq n-1$. Then for every $K\in\K(\R^n)$ and $v\in\S^{n-1}$,
		\begin{equation*}
			\lim_{t \to 1} \frac{S_i(K, \mathrm{Cap}(v,t) )}{\kappa_{n-i-1}(1-t^2)^{\frac{n-i-1}{2}}}
			= \frac{1}{\binom{n-1}{i}}V_i(F(K,v)).
		\end{equation*}
	\end{thm}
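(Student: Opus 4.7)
The plan is to turn the asymptotic statement into an explicit computation by exploiting the \emph{equality}, rather than an inequality, contained in \cref{prop:dict_spherical_caps}. By rotational invariance we may assume $v = e_n$; the case $i = n-1$ is classical, following from the weak continuity of $S_{n-1}$ together with $S_{n-1}(K,\{e_n\}) = \mathcal{H}^{n-1}(F(K,e_n)) = V_{n-1}(F(K,e_n))$ and $\kappa_0 = 1$, so I will henceforth assume $1 \leq i < n-1$.

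Applying \cref{prop:dict_spherical_caps} to $K$ with the tuple $\CC = (B^n)^{[n-i-1]}$, for which $R_\CC(t) = (1-t^2)^{(n-i-1)/2}$ and $S_i(K,\CC;\cdot) = S_i(K,\cdot)$, then dividing by $(1-t^2)^{(n-i-1)/2}$ gives, for every $t \in (0,1)$,
\[
\frac{1}{(1-t^2)^{(n-i-1)/2}} \int_{\mathrm{Cap}(e_n,t)} \pair{e_n}{u}\, dS_i(K,u) = \int_{\mathrm{Cap}(e_n,t)} \pair{e_n}{u}\, dS_i(K,\DD;u).
\]
As $t \to 1^-$, the integrand on the right converges pointwise to $\indf_{\{e_n\}}$ and is dominated by $1$; since $S_i(K,\DD;\cdot)$ is a finite Borel measure, dominated convergence implies that the right-hand side tends to the point mass $S_i(K,\DD;\{e_n\})$. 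Combined with the sandwich $t\,S_i(K,\mathrm{Cap}(e_n,t)) \leq \int_{\mathrm{Cap}(e_n,t)} \pair{e_n}{u}\, dS_i(K,u) \leq S_i(K,\mathrm{Cap}(e_n,t))$, which follows from $\pair{e_n}{u} \in [t,1]$ on the cap, this forces $S_i(K,\mathrm{Cap}(e_n,t))/(1-t^2)^{(n-i-1)/2}$ to converge to $S_i(K,\DD;\{e_n\})$ as well.

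It remains to compute the point mass $S_i(K,\DD;\{e_n\}) = S(K^{[i]},\DD^{[n-i-1]};\{e_n\})$. Since $\tau(K,\{e_n\}) = F(K,e_n)$ and $\tau(\DD,\{e_n\}) = \DD$, the local determination \cref{lem:MixedAreaMeas_locally_det} allows replacing $K$ by $F(K,e_n)$; after translation (under which mixed area measures are invariant) one may assume that this face lies in $e_n^\perp$. Now all bodies in the mixed area measure are contained in the hyperplane $e_n^\perp$, so \eqref{eq:MixedAreaMeas_hyperplane} gives
\[
S(F(K,e_n)^{[i]},\DD^{[n-i-1]};\{e_n\}) = V^{e_n^\perp}(F(K,e_n)^{[i]},\DD^{[n-i-1]}),
\]
and since $\DD$ is the unit ball of $\R^{n-1} = e_n^\perp$, applying \eqref{eq:MixedVol_IntrinsicVol} inside the hyperplane evaluates this mixed volume as $\kappa_{n-i-1} V_i(F(K,e_n))/\binom{n-1}{i}$. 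Dividing by $\kappa_{n-i-1}$ finishes the proof.

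\emph{Main obstacle.} There is no serious obstacle once \cref{prop:dict_spherical_caps} is in hand: the identity it provides converts the delicate asymptotic into a point-mass computation, which is then reduced by locality and a hyperplane mixed-volume identity to an entirely algebraic statement. The only subtlety is the separate treatment of $i = n-1$, which lies outside the scope of \cref{prop:dict_spherical_caps} but follows directly from weak continuity of the surface area measure.
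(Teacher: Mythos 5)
Your proof is correct and follows essentially the same approach as the paper: apply \cref{prop:dict_spherical_caps} with $\CC=(B^n)^{[n-i-1]}$, sandwich $\pair{e_n}{u}$ between $t$ and $1$ on the cap to pass to the limit, and evaluate the resulting point mass $S_i(K,\DD(v^\perp);\{v\})$ via local determination (\cref{lem:MixedAreaMeas_locally_det}) and the hyperplane mixed-volume identity. The only cosmetic differences are that you treat $i=n-1$ separately (which the paper leaves implicit, even though \cref{prop:dict_spherical_caps} requires $i<n-1$) and invoke dominated convergence on the right-hand side in place of the paper's parallel sandwich argument.
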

	
	This local description of the $i$-th order area measure was later improved and completed by Colesanti and Hug in~\cite{Colesanti2000}*{Theorem~0.1}. As a byproduct of the transformation rule \cref{prop:dict_spherical_caps}, we obtain an alternative proof of \cref{thm:MixedAreaMeas_LowerDimDensity}.

	\begin{proof}[Proof of \cref{thm:MixedAreaMeas_LowerDimDensity}]
		Fix a convex body $K\in\K(\R^n)$ and some $v\in\S^{n-1}$, and $t \in (0,1)$. By applying \cref{prop:dict_spherical_caps} to the case where $\CC=(B^n)^{[n-i-1]}$, we have that 
		\begin{equation*}
			\int_{\mathrm{Cap}(v,t)}\abs{\pair{e_n}{u}}\, S_i(K;du)
			= (1-t^2)^{\frac{n-i-1}{2}} \int_{\mathrm{Cap}(v,t)}\abs{\pair{e_n}{u}}\, S_i(K,\DD(v^\perp);du),
		\end{equation*}
		using the fact that $R_{(B^n)^{[n-i-1]}}(t) = (1-t^2)^{\frac{n-i-1}{2}}$ and that Euclidean balls are symmetric around any axis. Note that for every Radon measure $\mu$ on $\S^{n-1}$,
		\begin{equation*}
			t \, \mu(\mathrm{Cap}(v,t))
			\leq \int_{\mathrm{Cap}(v,t)}\abs{\pair{e_n}{u}}\, \mu(du)
			\leq  \mu(\mathrm{Cap}(v,t))
		\end{equation*}
		Applying this to $S_i(K,{}\cdot{})$ and $S_i(K,\DD,{}\cdot{})$ yields the following estimates.
		\begin{align*}
			S_i(K; \mathrm{Cap}(v,t) )
			&\leq \frac{1}{t}(1-t^2)^{\frac{n-i-1}{2}} S_i(K,\DD(v^\perp); \mathrm{Cap}(v,t)).\\
			S_i(K;\mathrm{Cap}(v,t))
			&\geq t \,(1-t^2)^{\frac{n-i-1}{2}}  S_i(K,\DD(v^\perp);\mathrm{Cap}(v,t)).
		\end{align*}
		Consequently, by dividing both sides by $(1-t^2)^{\frac{n-i-1}{2}}$ and passing to the limit $t \to 1$,
		\begin{equation*}
			\lim_{t \to 1} \frac{S_i(K; \mathrm{Cap}(v,t) )}{(1-t^2)^{\frac{n-i-1}{2}}}
			= \lim_{t \to 1} S_i(K,\DD(v^\perp),\mathrm{Cap}(v,t))
			= S_i(K,\DD(v^\perp);\{v\})
		\end{equation*}
		Mixed area measures are locally determined by the convex bodies involved (see \cref{lem:MixedAreaMeas_locally_det}), so we may replace $K$ in the final expression by any other convex body with the same face in the direction of $v$. Hence, 
		\begin{equation*}
			S_i(K,\DD(v^\perp);\{v\})
			= S_i(F(K,v),\DD(v^\perp);\{v\}).
		\end{equation*}
		Recall that $S_{n-1}(C,{}\cdot{})=V_{n-1}(C)(\delta_v+\delta_{-v})$ for every convex body $C\in\K(v^\perp)$. Hence, by polarizing this identity and by \cref{eq:MixedVol_IntrinsicVol},
		\begin{equation*}
			S_i(C,\DD(v^\perp);{}\cdot{})
			= V^{v^\perp}\!(C^{[i]},\DD(v^\perp)^{[n-i-1]})(\delta_v + \delta_{-v})
			= \frac{\kappa_{n-i-1}}{\binom{n-1}{i}} V_i(C)(\delta_v + \delta_{-v}).
			\end{equation*}
		Combining the above steps, we obtain that
		\begin{align*}
			\lim_{t \to 1} \frac{S_i(K; \mathrm{Cap}(v,t) )}{(1-t^2)^{\frac{n-i-1}{2}}}
			= S_i(F(K,v),\DD(v^\perp),\{v\})
			= \frac{\kappa_{n-i-1}}{\binom{n-1}{i}}V_i(F(K,v)),
		\end{align*}
		which concludes the argument.
	\end{proof}

	\subsection[Invertability of the T R transform and function classes]{Invertability of the \texorpdfstring{$T_R$}{T R} transform and function classes}\label{sec:TRinvertability}
	
	\cref{prop:TRContContGroupProp}\cref{it:T_R_group_property} shows that for suitable $R$ the transform $T_R:C(-1,1)\to C(-1,1)$ is invertible with inverse $T_{\frac{1}{R}}$. In the following, we will discuss the situation when the codomain is constrained to $C[-1,1] \subset C(-1,1)$ and determine the exact domain making the restriction of $T_R$ bijective. 
	
	As it will be needed in our application, we restrict to an open subinterval $I=(a_-,a_+)\subseteq (-1,1)$ with $0\in I$ (that is, $-1\leq a_-<0<a_+\leq 1$). Here, \cref{ex:Tind} shows that $T_R$ cannot be onto $C[-1,1]$ anymore.
	
	For technical reasons and since full generality is not needed in our application, we will restrict ourselves to the following subclass of $\BV_0(-1,1)$.
	
	\begin{defi}\label{def:BVpFull}
		We define $\BV_+(I)$ as the class of functions $R\in \BV_0(-1,1)$ such that $R$ is strictly positive on $I$, $\nu_R$ is finite, and either $\nu_R \geq 0$ or $-\nu_R \geq 0$.
	\end{defi}
	
	Let us point out that $\nu_R \geq 0$ implies that $R$ is monotonically increasing on $(a_-,0]$ and monotonically decreasing on $[0,a_+)$. Consequently, the limits $\lim_{t \to a_\pm}R(t)$ always exist and are non-negative.
	
	\begin{defi}\label{def:fctClassSubint}
		For $R\in\BV_+(I)$, we define $\D_R$ as the space of functions $f\in C(I)$ for which the limits
		\begin{equation*}
			\lim_{t\to a_\pm} R(t)f(t)
			\qquad\text{and}\qquad
			\lim_{t\to a_\pm} \int_{(0,t]} f(t)\, d\nu_R(t)
			\qquad\text{exist}.
		\end{equation*}
	\end{defi}
	
	We show bijectivity first for the full interval $(-1,1)$.
	
	\begin{prop}\label{lem:T_R_bijective}
		Let $R\in\BV_+(-1,1)$. Then the map $T_R: \D_R \to C[-1,1]$ is a bijection.
	\end{prop}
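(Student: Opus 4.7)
My plan is to break the bijection into three stages: well-definedness, injectivity, and surjectivity, the last being the main difficulty. Well-definedness of $T_R:\D_R\to C[-1,1]$ follows from \cref{eq:defTR}: by \cref{prop:TRContContGroupProp}\cref{it:T_R_cont_to_cont}, $T_R f$ is continuous on $(-1,1)$, and the one-sided limits at $\pm 1$ exist precisely because both $R(t^\pm) f(t)$ and $\int_{(0,t]} f\, d\nu_R$ have limits by the definition of $\D_R$. For injectivity, since $R>0$ is monotone on each of $(-1,0]$ and $[0,1)$, it is bounded below on any compact subinterval, so \cref{BV_0_algebraic}\cref{BV_0_algebraic:inverse} yields $1/R\in\BV_0(-1,1)$, and \cref{prop:TRContContGroupProp}\cref{it:T_R_group_property} gives $T_{1/R}\circ T_R=\mathrm{id}$ on $C(-1,1)$, from which injectivity is immediate.

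For surjectivity, given $g\in C[-1,1]$ I would set $f:=T_{1/R}g\in C(-1,1)$, so that $T_R f=g$ automatically. It then suffices to prove that $f\in\D_R$. Writing out $T_R f=g$ via \cref{eq:defTR}, one observes that $\lim_{t\to\pm 1} R(t)f(t)$ exists iff $\lim_{t\to\pm 1}\int_{(0,t]} f\, d\nu_R$ does, so only the latter limit needs attention. A direct Fubini manipulation, using $d\nu_{1/R}=-d\nu_R/(R(u^-)R(u^+))$ from \cref{BV_0_algebraic}\cref{BV_0_algebraic:inverse} together with $\int_{[u,t]} s\, d\nu_R(s)=R(u^-)-R(t^+)$ (a consequence of \cref{eq:fundthmCalcBV1}), produces a cancellation yielding the key identity
\begin{equation*}
  \int_{(0,t]} f(s)\, d\nu_R(s) \;=\; R(t^+)\int_{(0,t]} g(u)\, d\tilde\nu(u), \qquad \tilde\nu:=-\nu_{1/R}\geq 0.
\end{equation*}

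The remaining and main obstacle is to show that $R(t^+)\int_{(0,t]} g\, d\tilde\nu$ admits a limit as $t\to 1^-$ (the case $t\to-1^+$ being symmetric). If $R(1^-)>0$, then \cref{eq:fundthmCalcBV1} applied to $1/R$ shows that $\tilde\nu$ has finite total mass on $(0,1)$, and the limit is simply $R(1^-)\int_{(0,1)} g\, d\tilde\nu$. The delicate case is $R(1^-)=0$, where $\tilde\nu((0,t])\to\infty$. Here I would prove the Tauberian-type asymptotic $R(t^+)\tilde\nu((0,t])\to 1$ by combining the identity $\int_{(0,t]} s\, d\tilde\nu=1/R(t^+)-1/R(0^+)$ with the sandwich $(t-\delta)(\tilde\nu((0,t])-\tilde\nu((0,t-\delta]))\leq \int_{(0,t]} s\, d\tilde\nu\leq \tilde\nu((0,t])$, valid for any small $\delta>0$; sending $t\to 1^-$ and then $\delta\to 0^+$ yields the claim. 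A routine uniform continuity argument, splitting $g(s)=g(t)+(g(s)-g(t))$ and using that $R(t^+)\tilde\nu((0,t-\delta])\to 0$ for each fixed $\delta$, then upgrades this to $R(t^+)\int_{(0,t]} g\, d\tilde\nu\to g(1)$. This Tauberian step is the crux of the argument; it is precisely where the hypotheses encoded in $R\in\BV_+(-1,1)$, namely strict positivity and monotonicity near the boundary, are fully used.
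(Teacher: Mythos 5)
Your argument is correct, and it reaches the same reduction as the paper—namely, by $T_{1/R}\circ T_R=\mathrm{id}$ on $C(-1,1)$, surjectivity amounts to showing that $f:=T_{1/R}g$ lies in $\D_R$ for each $g\in C[-1,1]$, and (using $g=T_Rf$) that only the limit of $\int_{(0,t]}f\,d\nu_R$ (equivalently, of $R(t)f(t)$) needs attention at $t\to\pm1$. From there, however, your technical execution is genuinely different from the paper's. The paper manipulates $R(t)f(t)$ directly: it applies the mean value theorem for integrals to $\int_{(t_0,t]}g\,d\nu_{1/R}$ and then runs a Cauchy-type $\varepsilon$-estimate on the increments $f(t)/t-f(t_0)/t_0$, split into the two cases $\lim R\ne 0$ and $\lim R=0$. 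You instead derive the clean Fubini identity $\int_{(0,t]}f\,d\nu_R=R(t^+)\int_{(0,t]}g\,d\tilde\nu$ with $\tilde\nu=-\nu_{1/R}$, and then settle the limit by the Tauberian asymptotic $R(t^+)\tilde\nu((0,t])\to 1$ when $R(1^-)=0$. Your identity exposes the cancellation between the local term $g(s)/R(s^+)$ and the cumulative term in $T_{1/R}g$ explicitly, which arguably makes the mechanism more transparent than the paper's mean-value argument; the price is that you need the monotonicity of $R$ (equivalently, a sign for $\nu_R$) to run the sandwich inequality in the Tauberian step, whereas the paper's estimate uses only boundedness.

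One small imprecision: you assert $\tilde\nu=-\nu_{1/R}\geq 0$ unconditionally. Since $d\nu_{1/R}=-d\nu_R/(R(\cdot^-)R(\cdot^+))$ and $R>0$, this holds precisely when $\nu_R\geq 0$; the definition of $\BV_+(-1,1)$ also permits $-\nu_R\geq 0$, in which case $\tilde\nu\leq 0$. This is harmless because the sign of $\tilde\nu$ is only used in the Tauberian sandwich, and that case occurs only when $R(1^-)=0$, which (given $R>0$ on $(-1,1)$) forces $R$ to be decreasing near $1$, hence $\nu_R\geq 0$ there. In the other case $-\nu_R\geq 0$ one automatically has $R(1^-)\geq R(0)>0$, and your first (finite-mass) case applies with the sign of $\tilde\nu$ playing no role. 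You should state this case distinction rather than claim non-negativity globally.
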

	\begin{proof}
		By \cref{prop:TRContContGroupProp}\cref{it:T_R_group_property}, the map $T_R: C(-1,1)\to C(-1,1)$ is a bijection and $T_R^{-1}=T_{\frac{1}{R}}$. Clearly, by \eqref{eq:defTR} and the definition of $\D_R$, $T_R$ maps the subspace $\D_R$ into $C[-1,1]$, so it only remains to show that $T_{\frac{1}{R}}$ maps $C[-1,1]$ into $\D_R$. 
		
		To this end, let $g \in C[-1,1]$ and write $f = T_{\frac{1}{R}} g$. By \cref{lem:TRreflect} in the appendix, to show $f \in \D_R$, we only need to prove existence of the limits $t \to 1$. Moreover, since $g = T_R f$, by \eqref{eq:defTR},
		\begin{align*}
			-\lim_{t\to 1} \int_{(0,t]} f(t)\, d\nu_R(t) =  \lim_{t\to 1} \frac{1}{t}\left(R(t)f(t) - g(t)\right) = \lim_{t\to 1}\{R(t)f(t)\} - g(1),
		\end{align*}
		and it suffices to consider the limit of $R(t)f(t)$.
		Then for all $0<t_0\leq t<1$,
		\begin{equation*}
			f(t)
			= \frac{g(t)}{R(t^+)} + t\int_{(0,t_0]}g \, d\nu_{\frac{1}{R}} + t\int_{(t_0,t]} g \, d\nu_{\frac{1}{R}}.
		\end{equation*}
		By the mean value theorem for integrals, whenever $0<t_0<t<1$, there exists some $t_1\in (t_0,t)$ such that
		\begin{align*}
			\int_{(t_0,t]} g\, d\nu_{\frac{1}{R}}
			= \int_{(t_0,t]} \frac{g(s)}{s} s \, d\nu_{\frac{1}{R}}(ds)
			= \frac{g(t_1)}{t_1} \int_{(t_0,t]} s \, d\nu_{\frac{1}{R}}(ds)
			= \frac{g(t_1)}{t_1} \left(\frac{1}{R(t_0^+)}-\frac{1}{R(t^+)}\right).
		\end{align*}
		
		In order to show that $f\in\D_R$, we distinguish two cases. In the case when $\lim_{t\to 1}R(t)\neq 0$, we let $\varepsilon>0$ be arbitrary and choose $t_0\in (0,1)$ such that
		\begin{equation*}
			\left| \frac{g(t)}{tR(t^+)} - \frac{g(t_0)}{t_0 R(t_0^+)}\right|<\varepsilon
			\qquad\text{and}\qquad
			\left|\frac{1}{R(t_0^+)}-\frac{1}{R(t^+)}\right|<\varepsilon
			\qquad\text{for all }t\in (t_0,1).
		\end{equation*}
		By the previous step, for every $t\in (t_0,1)$, there exists some $t_1\in (t_0,t)$ such that
		\begin{align*}
			&\frac{f(t)}{t} - \frac{f(t_0)}{t_0}
			= \frac{g(t)}{tR(t^+)} - \frac{g(t_0)}{t_0 R(t_0^+)} + \int_{(t_0,t]} g\, d\nu_{\frac{1}{R}} \\
			&\qquad = \frac{g(t)}{tR(t^+)} - \frac{g(t_0)}{t_0 R(t_0^+)} - \frac{g(t_1)}{t_1} \left(\frac{1}{R(t_0^+)}-\frac{1}{R(t^+)}\right).
		\end{align*}	
		Consequently, we obtain that
		\begin{equation*}
			\left| \frac{f(t)}{t} - \frac{f(t_0)}{t_0} \right|
			\leq \varepsilon  + \frac{\norm{g}_\infty}{t_0} \varepsilon
			\qquad\text{for all }t\in (t_0,1).
		\end{equation*}
		This shows that the limit $\lim_{t\to 1}f(t)$, and thus, the limit $\lim_{t\to 1}R(t)f(t)$, exists.
		
		We now consider the case when $\lim_{t\to 1}R(t)= 0$, we let $\varepsilon>0$ be arbitrary and choose $t_0\in (0,1)$ such that
		\begin{equation*}
			\left| \frac{g(t)}{t} - \frac{g(t_1)}{t_1} \right|	<\varepsilon
			\qquad\text{for all }t,t_1\in (t_0,1).
		\end{equation*}
		By the first step of the proof, for every $t\in (t_0,1)$, there exists some $t_1\in (t_0,1)$ such that
		\begin{align*}
			&\frac{1}{t}R(t^+)f(t)
			= \frac{g(t)}{t} + R(t^+)\int_{(0,t_0]} g \, d\nu_{\frac{1}{R}} + \frac{g(t_1)}{t_1} \left(\frac{R(t^+)}{R(t_0^+)}-1\right) \\
			&\qquad = \left( \frac{g(t)}{t} - \frac{g(t_1)}{t_1} \right) + R(t^+)\left( \frac{1}{R(t_0^+)}\frac{g(t_1)}{t_1} + \int_{(0,t_0]} g \, d\nu_{\frac{1}{R}} \right).
		\end{align*}
		By passing to the limit $t\to 1$, we obtain that $\limsup_{t\to 1} | \frac{1}{t}R(t^+)f(t)  |
		\leq \varepsilon$. Since $\varepsilon>0$ was arbitrary, this shows that $\lim_{t\to 1} R(t)f(t)=0$.
	\end{proof}

	\bigskip
	Next, we consider the case where $R$ is only positive on the open subinterval $0 \in I\subset (-1,1)$. Here, we show that $T_R$ is onto the subspace of functions which are linear outside $I$, giving an analogue statement to \cref{lem:T_R_bijective}.
	
	Note that given a function $f \in C(-1,1)$, the values of $T_Rf$ on the subinterval $I$ only depend on the values of $f$ on $I$. The argument in the proof of \cref{prop:TRContContGroupProp}\cref{it:T_R_cont_to_cont} also shows that the continuity of $T_Rf$ on $I$ is a consequence of the continuity of $f$ on $I$. Hence, the transform $T_R$ naturally descends to a map $\restr{T_R}{I}:C(I)\to C(I)$. This allows us to formally extend the transform $T_R$ to $\D_R$ as follows, writing, by abuse of notation, again $T_R$ for the extension:
	\begin{equation*}
		T_R: \D_R\to C[-1,1], \quad  T_Rf:=\begin{cases}
			\frac{(\restr{T_R}{I}f)(a_-)}{a_-}t,	&t\leq a_-,\\
			(\restr{T_R}{I}f)(t),					&a_-<t<a_+, \\
			\frac{(\restr{T_R}{I}f)(a_+)}{a_+}t,	&t\geq a_+.
		\end{cases}
	\end{equation*}
	Note that, by \cref{ex:Tind} and \cref{prop:TRContContGroupProp}\cref{it:T_R_group_property}, this extension coincides with the original definition of $T_R$ on $\D_R \cap C(-1,1) \supset C[a_{-},a_+]$. Moreover, observe that the image of $T_R$ is always contained in the image of $T_{\indf_I}$. By restricting the codomain of $T_R$ to the image of $T_{\indf_I}$, we obtain bijectivity. See \cref{app:TechnLemmas} for the detailed proof.
	
	\begin{cor}\label{lem:T_R_bijective:I}
		Let $R\in\BV_+(I)$. Then the map $T_R: \D_R \to T_{\indf[I]}(C[-1,1])$ is a bijection.
	\end{cor}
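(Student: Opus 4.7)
The plan is to reduce the statement to the bijectivity on the full interval, \cref{lem:T_R_bijective}, after identifying the image $T_{\indf[I]}(C[-1,1])$ concretely and running the same boundary-limit argument near $a_\pm$ as was done there near $\pm 1$. To begin, I would describe $T_{\indf[I]}(C[-1,1])$ explicitly: factoring $\indf_{I} = \indf_{(-1,a_+)}\cdot \indf_{(a_-,1)}$ and combining \cref{ex:Tind} with the semigroup property of \cref{prop:TRContContGroupProp}\cref{it:T_R_group_property}, one sees that for every $h \in C[-1,1]$ the function $T_{\indf[I]}h$ agrees with $h$ on $I$, is linear through the origin on $[-1,a_-]$ and on $[a_+,1]$, and is continuous on $[-1,1]$; conversely, any continuous function on $[-1,1]$ with this linearity property at the endpoints lies in $T_{\indf[I]}(C[-1,1])$. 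It then follows directly from the formal extension of $T_R$ defined just before the corollary that $T_R(\D_R) \subseteq T_{\indf[I]}(C[-1,1])$, the continuity at $a_\pm$ of the extension being precisely the first defining condition of $\D_R$.

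For injectivity, the restriction $T_R|_I : C(I)\to C(I)$ is a bijection with inverse $T_{1/R}|_I$ by \cref{prop:TRContContGroupProp}\cref{it:T_R_group_property}, since $R>0$ on $I$ by \cref{def:BVpFull} and \cref{BV_0_algebraic}\cref{BV_0_algebraic:inverse} yields $1/R \in \BV_0(-1,1)$ on compact subintervals of $I$. Hence two elements of $\D_R \subset C(I)$ with identical $T_R$-images on $[-1,1]$ must coincide already on $I$, and therefore everywhere.

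The main step---and the main obstacle---is surjectivity. Given $g \in T_{\indf[I]}(C[-1,1])$, I set $f := T_{1/R}|_I(g|_I) \in C(I)$ and need to show $f \in \D_R$, i.e., that both $\lim_{t\to a_\pm} R(t)f(t)$ and $\lim_{t\to a_\pm}\int_{(0,t]} f\, d\nu_R$ exist. By \cref{eq:defTR} applied on $I$, these two conditions are equivalent via the identity $R(t^+)f(t) - g(t) = -t\int_{(0,t]} f\, d\nu_R$, so it suffices to prove that $\lim_{t\to a_\pm} R(t^\pm)f(t)$ exists. Here I would rerun the dichotomy used in the proof of \cref{lem:T_R_bijective}, with $a_\pm$ in place of $\pm 1$: starting from the representation $f(t)/t = g(t)/(tR(t^+)) + \int_{(0,t]} g\, d\nu_{1/R}$ and applying the mean value theorem for integrals, in the case $\lim_{t\to a_\pm} R(t) \neq 0$ Cauchy's criterion for $f(t)/t$ reduces to the continuity of $g$ at $a_\pm$ (automatic since $g \in C[-1,1]$) together with the Cauchy property of $1/R$ near $a_\pm$, which follows from the finiteness of $\nu_R$ in \cref{def:BVpFull}; in the case $\lim_{t\to a_\pm} R(t) = 0$, the same identity rearranges to give $R(t^+)f(t) \to 0$ directly. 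The delicate point is ensuring that the mean-value-theorem estimates remain uniform near a boundary at which $R$ may vanish, but this was already handled in the proof of \cref{lem:T_R_bijective} and transfers verbatim; the symmetric treatment of $a_-$ versus $a_+$ is then provided by \cref{lem:TRreflect}.
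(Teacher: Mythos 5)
Your proposal is correct in substance, but takes a genuinely different route from the paper. The paper's proof of this corollary is a pure change-of-variables reduction: it introduces the dilation $z:(-1,1)\to I$, proves in \cref{lem:T_R_subinterval_dilation} that $R\in\BV_+(I)$ implies $R\circ z\in\BV_+(-1,1)$, that $T_{R\circ z}(f\circ z)=(\restr{T_R}{I}f)\circ z$, and that $f\in\D_R$ iff $f\circ z\in\D_{R\circ z}$, and then applies \cref{lem:T_R_bijective} to $R\circ z$ as a black box, concluding via a small commutative diagram together with the observation that $J_I$ is a bijection onto $T_{\indf[I]}(C[-1,1])$. You instead keep the analysis on $I$ and re-run the endpoint dichotomy of the proof of \cref{lem:T_R_bijective} directly at $a_\pm$. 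Both work, but the paper's route is more modular: the delicate mean-value-theorem estimates are confined to a single proof and the subinterval case is handled by conjugation, whereas your route duplicates the hard analysis. When you say the estimates ``transfer verbatim,'' you should at least note explicitly that the hypotheses the original argument relies on --- positivity of $R$ on the interval, finiteness of $\nu_R$, and constant sign of $\nu_R$ (hence of $\nu_{1/R}$) near the endpoint, the latter being what justifies the mean value theorem for integrals --- are precisely those guaranteed by $\BV_+(I)$. One imprecision: ``$1/R\in\BV_0(-1,1)$ on compact subintervals of $I$'' is not a statement in the paper's terminology, since $1/R$ is not even defined on all of $(-1,1)$ when $R$ vanishes outside $\cls I$; what you actually need and have is that $1/R$ is of locally bounded variation on $I$ with $\nu_{1/R}$ a well-defined signed Radon measure on $I$, so the restricted transform $T_{1/R}|_I$ makes sense, and the semigroup identity $T_R|_I\circ T_{1/R}|_I=\mathrm{id}$ holds on $C(I)$ because the integrals in \cref{eq:equivFormTR} only involve values between $0$ and $t$.
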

	
	The bijectivity of the perturbed map $\widehat{T}_R$ below is now a simple consequence.
	
	\begin{cor}\label{lem:T_RInclSegm_bijective:I}
		Let $R\in\BV_+(I)$ and $c \in \R$. Then the map $\widehat{T}_R: \D_R \to T_{\indf_I}(C[-1,1])$, defined by
		\begin{align*}
			(\widehat{T}_R f)(t) = (T_R f)(t) + c f(0) |t|, \qquad f \in \D_R, t \in (-1,1),
		\end{align*}
		is a bijection.
	\end{cor}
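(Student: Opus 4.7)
The plan is to view $\widehat{T}_R = T_R + cP$ as a rank-one perturbation of the bijection $T_R : \D_R \to T_{\indf_I}(C[-1,1])$ from \cref{lem:T_R_bijective:I}, where $P : f \mapsto f(0) \abs{{}\cdot{}}$. The structural fact that makes everything work is the evaluation formula $(T_R h)(0) = R(0) h(0)$, which is immediate from \cref{eq:equivFormTR} since the integral on $(0,0]$ is empty. Before invoking bijectivity, I would first verify that $\widehat{T}_R$ actually lands in the target space, i.e.\ that $\abs{{}\cdot{}} \in T_{\indf_I}(C[-1,1])$; this holds because $|a_\pm|/a_\pm = \pm 1$, so the absolute value function agrees on $[a_+,1]$ and $[-1,a_-]$ with the corresponding linear extensions described in \cref{ex:Tind}.

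For injectivity, suppose $\widehat{T}_R f = 0$. Evaluating at $t=0$ yields $R(0)f(0) = 0$, and since $R \in \BV_+(I)$ is strictly positive on $I \ni 0$, this forces $f(0) = 0$. Then $T_R f = -c f(0)\abs{{}\cdot{}} = 0$, so $f = 0$ by the injectivity part of \cref{lem:T_R_bijective:I}.

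For surjectivity, given $g$ in the target, I would parametrize candidate preimages as $f_a := T_R^{-1}(g - c a \abs{{}\cdot{}})$ and try to solve the fixed-point condition $a = f_a(0)$. Inverting the evaluation formula gives $(T_R^{-1} h)(0) = h(0)/R(0)$, and since $|0|=0$ the $a$-dependent term drops out entirely:
\begin{equation*}
    f_a(0) \;=\; \frac{(g - c a\abs{{}\cdot{}})(0)}{R(0)} \;=\; \frac{g(0)}{R(0)}.
\end{equation*}
Setting $a := g(0)/R(0)$ therefore gives a preimage $f_a \in \D_R$ with $\widehat{T}_R f_a = g$.

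The only genuine subtlety is recognizing that this rank-one perturbation is benign: in a generic Sherman--Morrison setting one would need a nonvanishing determinant condition to preserve bijectivity, but here the perturbation direction $\abs{{}\cdot{}}$ vanishes at the very point of evaluation, so the determining equation degenerates to a trivial identity and no obstruction arises. The rest is a direct verification using \cref{lem:T_R_bijective:I} as a black box.
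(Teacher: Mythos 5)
Your proof is correct and rests on the same key observations as the paper's: the evaluation identity $(T_R f)(0) = R(0)f(0)$, the fact that $|\cdot|$ vanishes at $0$ so the rank-one perturbation has no obstruction, and the bijectivity of $T_R$ from Corollary~\ref{lem:T_R_bijective:I}. The paper simply writes down the explicit inverse $(\widehat{T}_R)^{-1}g = T_R^{-1}g - c\,g(0)R(0)^{-2}|\cdot|$ in one line, whereas you split into injectivity and surjectivity with a fixed-point parametrization, but the underlying computation is identical.
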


	Next, we collect some properties of functions in $\D_R$, the proof is given in \cref{app:TechnLemmas}.
	
	\begin{lem}\label{lem:D_R_endpoint_limits}
		Let $R\in\BV_+(I)$, $a \in \{a_{\pm}\}$. 
		
		\begin{enumerate}[label=\upshape(\roman*)]
			\item\label{it:D_R_endpoint_limits_zero} If $\lim_{t\to a} R(t)=0$, then $\lim_{t\to a}R(t)f(t)=0$ for every $f \in \D_R$.
			\item\label{it:D_R_endpoint_limits_nzero} If $\lim_{t\to a} R(t) > 0$, then $\lim_{t\to a}f(t)$ exists for every $f \in \D_R$, that is, $f$ extends by continuity to $a$.
		\end{enumerate}
	\end{lem}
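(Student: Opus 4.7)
The proof splits along the two cases, with part (ii) being essentially trivial and part (i) being the substantive statement.

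For (ii), the definition of $\D_R$ guarantees that the limit $L_1 := \lim_{t \to a} R(t) f(t)$ exists, so if $R_a := \lim_{t \to a} R(t) > 0$ we simply divide to conclude $\lim_{t \to a} f(t) = L_1/R_a$.

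For (i), I would argue by contradiction, treating $a = a_+$; the case $a = a_-$ is analogous. Without loss of generality assume $\nu_R \geq 0$, so by \eqref{eq:fundthmCalcBV1} the function $R$ is monotonically decreasing on $[0, a_+)$. Suppose, for contradiction, that $L_1 \neq 0$, say $L_1 > 0$. Then there exists $t_0 \in (0, a_+)$ with $f(s) \geq L_1/(2R(s))$ for all $s \in [t_0, a_+)$, hence for $t_0 < t < a_+$,
\begin{equation*}
	F(t) := \int_{(0,t]} f(s)\, d\nu_R(s)\; \geq\; F(t_0) + \frac{L_1}{2}\int_{(t_0,t]} \frac{d\nu_R(s)}{R(s)}.
\end{equation*}
Since the second defining condition of $\D_R$ requires $F$ to admit a finite limit at $a_+$, reaching a contradiction reduces to proving
\begin{equation*}
	\int_{(t_0,a_+)} \frac{d\nu_R(s)}{R(s)} = +\infty.
\end{equation*}

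To establish this divergence, I would use the identity $R(t^+) = \int_{(t,a_+)} s\, d\nu_R(s)$, which follows from \eqref{eq:fundthmCalcBV1} by letting $t_2 \to a_+$ and using $\lim_{s \to a_+} R(s) = 0$. Combined with $s \leq a_+ < 1$, this yields $\nu_R((t_1, t_2]) \geq (R(t_1^+) - R(t_2^+))/a_+$ for any $t_0 \leq t_1 < t_2 < a_+$. The plan is then a dyadic partition: set $s_0 = t_0$ and $s_{k+1} := \inf\{s > s_k : R(s^+) \leq R(s_k^+)/2\}$, which gives an increasing sequence with $R(s_{k+1}^+) \leq R(s_k^+)/2$, and $s_k \to a_+$ since $R > 0$ on $I$. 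Monotonicity of $R$ gives $R(s) \leq R(s_k^+)$ on $(s_k, s_{k+1}]$, whence
\begin{equation*}
	\int_{(s_k, s_{k+1}]} \frac{d\nu_R(s)}{R(s)} \geq \frac{\nu_R((s_k, s_{k+1}])}{R(s_k^+)} \geq \frac{R(s_k^+) - R(s_{k+1}^+)}{a_+ R(s_k^+)} \geq \frac{1}{2 a_+},
\end{equation*}
and summing over $k$ yields the required divergence.

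The main (mild) obstacle is ensuring that this dyadic partition is well-defined when $R$ has jumps and that the constructed $s_{k+1}$ actually satisfies $R(s_{k+1}^+) \leq R(s_k^+)/2$; this follows from right-continuity of $t \mapsto R(t^+)$ together with monotonicity, since if $R$ jumps across the level $R(s_k^+)/2$ at the infimum, the right limit there lies below that level anyway.
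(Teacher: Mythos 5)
Your proposal is correct, and part~(ii) is exactly the paper's argument, but for part~(i) you take a genuinely different route. The paper argues directly rather than by contradiction: it fixes a continuity point $t_0$ of $R$, writes $\int_{[t_0,t)} f\,d\nu_R$ as a weighted integral of $R(s)f(s)/s$ against the non-negative measure $\frac{s}{R(s)}\,\nu_R(ds)$, and then bounds the total weight from below via $\int_{[t_0,t)}\frac{s}{R(s)}\,\nu_R(ds)\geq 1-\frac{R(t^-)}{R(t_0)}\to 1$; this yields $\liminf_{s\to a}R(s)f(s)\leq 0$, and replacing $f$ by $-f$ finishes the argument. It then transfers from $(-1,1)$ to a general subinterval $I$ via the dilation map $z$ of \cref{lem:T_R_subinterval_dilation}. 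You instead suppose $\lim R f = L_1 > 0$ and deduce divergence of $\int\frac{d\nu_R}{R}$ by a dyadic partition $\{s_k\}$ at which $R(\cdot^+)$ successively halves, using the bound $\nu_R((s_k,s_{k+1}])\geq\bigl(R(s_k^+)-R(s_{k+1}^+)\bigr)/a_+$ from \eqref{eq:fundthmCalcBV1}; this contradicts the convergence of $\int_{(0,t]}f\,d\nu_R$. Both arguments are sound and of comparable length; yours avoids any transfer to $(-1,1)$ and is a bit more elementary in its bookkeeping, while the paper's single integral estimate is slicker once the dilation machinery is in place. One small remark: your ``without loss of generality assume $\nu_R\geq 0$'' is really a forced consequence rather than a normalization — if $-\nu_R\geq 0$ then $R$ is increasing towards $a_+$ on $(0,a_+)$, so $\lim_{t\to a_+}R(t)>0$, contradicting the hypothesis of~(i). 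You should state that explicitly, as the paper does.
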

	
	For later reference, we note the following direct consequence of \cref{prop:TRContContGroupProp}\cref{it:continuous_in_D_R} and \cref{lem:T_R_subinterval_dilation}.
	\begin{lem}\label{lem:continuous_in_D_R:I}
		Let $R\in \BV_0(I)$ be of bounded variation such that the limits $\lim_{t\to a_{\pm}} R(t)$ exist. Then $C[a_{-},a_+]\subseteq \D_R$.
	\end{lem}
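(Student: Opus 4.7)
The plan is to reduce the claim to the corresponding statement on the full interval $(-1,1)$, namely \cref{prop:TRContContGroupProp}\cref{it:continuous_in_D_R}. Fix $f \in C[a_-, a_+]$. One must verify that, at each of the endpoints $a_-$ and $a_+$, the two kinds of limits defining $\D_R$ exist.

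The first limit $\lim_{t \to a_\pm} R(t) f(t)$ is handled immediately: the continuity of $f$ on the closed interval $[a_-, a_+]$ guarantees that $f(t) \to f(a_\pm)$, which, combined with the hypothesis that $\lim_{t \to a_\pm} R(t)$ exists, yields convergence of the product.

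For the more substantial limit $\lim_{t \to a_\pm} \int_{(0,t]} f \, d\nu_R$, I would invoke \cref{lem:T_R_subinterval_dilation} to transport the situation on $I$ to one on $(-1,1)$ via a suitable affine dilation (handled separately at the two endpoints, as $I$ need not be symmetric about $0$). Under this transport, $R$ passes to some $\widetilde R \in \BV_0(-1,1)$ whose endpoint limits at $\pm 1$ inherit those of $R$ at $a_\pm$, and $f$ pulls back to some $\widetilde f \in C[-1,1]$. Applying \cref{prop:TRContContGroupProp}\cref{it:continuous_in_D_R} to the pair $(\widetilde R, \widetilde f)$ yields the existence of the corresponding integral limits at $\pm 1$, which then push forward to the desired limits at $a_\pm$ via the compatibility of the dilation with the $\nu$-measure supplied by \cref{lem:T_R_subinterval_dilation}.

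I do not foresee any real obstacle: the argument is essentially a routine translation, via the subinterval dilation lemma, of the already-established unrestricted version. The only points worth being careful about are the possible asymmetry of $I$ about $0$ (resolved by treating the two endpoints separately) and the bookkeeping relating $\nu_R$ to $\nu_{\widetilde R}$ under the dilation, which is precisely what \cref{lem:T_R_subinterval_dilation} is designed to supply.
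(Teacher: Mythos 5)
Your proposal is correct and matches the paper's approach exactly: the paper does not spell out a proof but explicitly asserts this lemma as a "direct consequence of \cref{prop:TRContContGroupProp}\cref{it:continuous_in_D_R} and \cref{lem:T_R_subinterval_dilation}," which are precisely the two ingredients you combine. The only cosmetic difference is that the paper packages the rescaling as a single piecewise-linear map $z:(-1,1)\to I$ (with slopes $|a_-|$ on $(-1,0]$ and $a_+$ on $[0,1)$) rather than two separate affine dilations at the endpoints, but the effect is the same.
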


	\subsection{The general transformation rule}
	In this section, we will combine \cref{thm:dictZonalVal_cont_disk} and the estimates from \cref{sec:fireyEst} (\cref{thm:mixedFireyEst}) to deduce a general transformation rule for mixed volumes with reference bodies of revolution.
	
	To this end, we will deepen the study of the transform $\widehat{T}_{\CC}$ in order to determine its maximal image. As it turns out, the image of $\widehat{T}_{\CC}$ is closely related to the boundary structure of the reference bodies $\CC = (C_1, \dots, C_{n-i-1})$ at the poles $\pm e_n$. Indeed, by \cref{lem:geomIntuitRc}, $R_{C_j}(t) = 0$, $t \in (-1,1)$, whenever the normal cone $N(C_j, F(C_j, \pm e_n))$ at the face in direction $\pm e_n$ contains a unit vector $u \in \S^{n-1}$ with $\pair{\pm e_n}{u} = t$ in its interior. It thus follows from the definition of $\widehat{T}_{\CC}$ that, in this case, $\widehat{T}_{\CC} f$ is linear on a neighborhood of $\pm 1$ for every $f \in C(-1,1)$.
	
	On the contrary, the mixed area measure $S_i(K, \CC; \cdot)$ is supported outside the interior of the normal cone of any $C_j$, $j = 1, \dots, n-i-1$, at the poles $\pm e_n$, so that we can restrict to functions $f$ on a subinterval of $(-1,1)$, allowing also singularities at the boundary of the subinterval. Indeed, the following more general statement holds.
	
	\begin{lem}\label{lem:mixed_area_meas_support}
		Let $K_1,\ldots,K_{n-1}\in\K(\R^n)$ and $x$ be a boundary point of $K_j$, $j \in \{1, \dots, n-1\}$. Then for $n \geq 2$
		\begin{equation*}
			\spt S(K_1,\ldots,K_{n-1};{}\cdot{})
			\subseteq \cls(\S^{n-1}\setminus N(K_j,x)).
		\end{equation*}
	\end{lem}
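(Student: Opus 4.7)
The plan is to show that every $u_0 \in \S^{n-1}$ lying outside $\cls(\S^{n-1}\setminus N(K_j,x))$ is absent from $\spt S(K_1,\ldots,K_{n-1};{}\cdot{})$. By definition, such a $u_0$ admits an open neighborhood $U \subset \S^{n-1}$ with $U \subseteq N(K_j,x)$. My strategy is to use the local determination property \cref{lem:MixedAreaMeas_locally_det} to replace $K_j$ by the singleton $\{x\}$ on $U$, and then to invoke the elementary fact that any mixed area measure having a single point among its arguments is identically zero.

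The crucial intermediate step---and the only genuinely geometric part of the argument---is to prove that $F(K_j,u) = \{x\}$ for every $u \in U$. Assume for contradiction that some $y \in F(K_j,u)\setminus\{x\}$ exists. Since $x,y \in F(K_j,u)$ we have $\pair{u}{y-x} = 0$, so $y-x$ is a nonzero vector in $u^\perp$. Pick $v \in u^\perp$ with $\pair{v}{y-x} > 0$. For $\varepsilon > 0$ small, the unit vector $u_\varepsilon := (u+\varepsilon v)/\|u+\varepsilon v\|$ still belongs to the open set $U$, hence to $N(K_j,x)$, which gives $\pair{u_\varepsilon}{x} = h_{K_j}(u_\varepsilon) \geq \pair{u_\varepsilon}{y}$; on the other hand $\pair{u+\varepsilon v}{y-x} = \varepsilon \pair{v}{y-x} > 0$, which after normalization implies $\pair{u_\varepsilon}{y} > \pair{u_\varepsilon}{x}$, a contradiction. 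Consequently $\tau(K_j,U) = \bigcup_{u \in U} F(K_j,u) = \{x\} = \tau(\{x\},U)$.

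With this in hand, \cref{lem:MixedAreaMeas_locally_det} (applied by replacing $K_j$ with $\{x\}$ and keeping the other entries fixed) gives
\[
S(K_1,\ldots,K_j,\ldots,K_{n-1}; U) = S(K_1,\ldots,\{x\},\ldots,K_{n-1}; U).
\]
To finish I will show that the right-hand measure vanishes identically on $\S^{n-1}$: by translation invariance and the multilinearity of the mixed volume, $V(L, K_1, \ldots, \{x\}, \ldots, K_{n-1}) = 0$ for every $L \in \K(\R^n)$, so by \cref{eq:MixedVol_Integral} the integral of every support function against $S(K_1,\ldots,\{x\},\ldots,K_{n-1};{}\cdot{})$ is zero; density of differences of support functions in $C(\S^{n-1})$ forces the measure itself to vanish. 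Therefore $S(K_1,\ldots,K_{n-1}; U) = 0$ and $u_0 \notin \spt S(K_1,\ldots,K_{n-1}; {}\cdot{})$. The only nontrivial step is the face identification, and it is precisely where the hypothesis that $U$ is open in $\S^{n-1}$ is used.
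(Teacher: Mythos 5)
Your proof is correct and follows essentially the same two ingredients as the paper's own argument: local determination (\cref{lem:MixedAreaMeas_locally_det}) to swap $K_j$ for the singleton $\{x\}$, and the observation that a mixed area measure with a point among its arguments vanishes by $1$-homogeneity and translation invariance. In fact you are somewhat more careful than the paper at the face-identification step: you work with an explicit open neighborhood $U\subseteq N(K_j,x)$ and prove $F(K_j,u)=\{x\}$ for every $u\in U$, which is exactly what is needed so that $\tau(K_j,U)=\tau(\{x\},U)$. The paper instead applies the locality lemma directly to the closed set $\S^{n-1}\cap N(K_1,x)$; taken literally this would require $\tau(K_1,\S^{n-1}\cap N(K_1,x))=\{x\}$, which can fail at boundary directions of the normal cone (e.g.\ a facet normal emanating from a vertex of a cube), though of course the intended conclusion --- vanishing on the relative interior --- is what is actually used. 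So your version quietly tightens a small imprecision while buying nothing genuinely new in terms of method.
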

	\begin{proof}
		By symmetry, we may assume that $j=1$. Next, note that $h_{K_1}=h_{\{x\}}$ on $\S^{n-1}\cap N(K_1,x)$. Hence, as mixed area measures are locally determined (see \cref{lem:MixedAreaMeas_locally_det}), 
		\begin{equation*}
			S(K_1,\ldots,K_{n-1};\S^{n-1}\cap N(K_1,x))
			= S(\{x\},K_2,\ldots,K_{n-1};\S^{n-1}\cap N(K_1,x))
			= 0.
		\end{equation*}
		Consequently, $\spt S(K_1,K_2,\ldots,K_{n-1},{}\cdot{})\cap \mathrm{int}\, N(K_1,x) = \emptyset$, as claimed.
	\end{proof}
	
	Inspired by \cref{lem:mixed_area_meas_support}, we define
	\begin{defi}\label{defi:S_C_and_I_C}
		For a family $\CC=(C_1,\ldots,C_{k})$ of convex bodies of revolution, none of which is a segment, we define
		\begin{equation*}
			\S_{\CC} := \bigcap_{j=1}^{k} \big( \S^{n-1} \setminus   N(C_j,F(C_j, \pm e_n)) \big)
			\qquad\text{and}\qquad
			I_{\CC}:=(a_{\CC,-},a_{\CC,+}):=\{\pair{e_n}{u}:u\in \S_{\CC}\}.
		\end{equation*}
	\end{defi}
	\noindent Let us point out that $-1 \leq a_{\CC, -} < 0 < a_{\CC, +} \leq 1$, that is, $0 \in I_{\CC} \subseteq (-1,1)$. Clearly, as $\CC$ consists of bodies of revolution, $\S_{\CC}$ is invariant under rotations fixing $e_n$, that is, a spherical segment, $\S_{\CC} = \{u \in \S^{n-1}:\, \pair{e_n}{u} \in I_\CC\}$. Moreover, \cref{lem:geomIntuitRc} directly implies that $R_\CC \in \BV_+(I_\CC)$.
	
	In the view of the results from \cite{Brauner2024a}, it is now natural to consider the following subset of continuous function on the open interval $I_\CC$, restricting the behavior at the boundary points of the interval. This definition specializes \cref{def:fctClassSubint} and extends a definition from \cites{Knoerr2024c,Brauner2024a}.
	\begin{defi}
		We define $\D_{\CC}$ as the space of functions $\bar{f}\in C(I_{\CC})$ for which the limits 
		\begin{equation*}
			\lim_{t\to a_{\CC,\pm}}  R_{\CC}(t)\bar{f}(t)
			\qquad\text{and}\qquad
			\lim_{t\to a_{\CC, \pm}} \int_{(0,t]} \bar{f}(t)\, d\nu_{R_{\CC}}(t)
			\qquad\text{exist}.
		\end{equation*} 
	\end{defi}
	As is shown in \cref{sec:TRinvertability}, the transformation $\widehat{T}_{\CC}$ (\cref{def:TChat}) naturally extends to a transform $\widehat{T}_\CC: \D_{\CC} \to C[-1,1]$, which we denote again by $\widehat{T}_{\CC}$, abusing notation.
	
	We are now in the position to prove the general transformation rule.
	\begin{thm}\label{thm:dictionaryGeneral}
		Let $1\leq i< n-1$, $\bar{f}\in\D_{\CC}$, and $f=\bar{f}(\pair{e_n}{{}\cdot{}}) \in C(\S_{\CC})$. Then there exists a zonal valuation $\varphi\in\Val_i(\R^n)$ with $\spt\varphi\subseteq \cls \S_{\CC}$ such that
		\begin{equation}\label{eq:dictGeneralPrinValInt}
			\varphi(K)
			= \lim_{\varepsilon\to 0^+} \int_{\S^{n-1}\setminus U_{\CC,\varepsilon}} f(u) \, dS_i(K,\CC;u),
			\qquad K\in\K(\R^n),
		\end{equation}
		where $U_{\CC, \varepsilon} = U_{\CC, \varepsilon}^- \cup U_{\CC, \varepsilon}^+$ and
		\begin{align*}
			U_{\CC, \varepsilon}^\pm = \begin{cases}
				\emptyset, & \text{ if $a_{\CC, \pm} = \pm 1$ and $\lim_{t \to \pm 1} R_\CC(t) > 0$,}\\
				\mathrm{Cap}(\pm e_n, |a_{\CC, \pm}| - \varepsilon), & \text{ else.}
			\end{cases}
		\end{align*}
		Indeed, $\varphi$ is given by
		\begin{align*}
			\varphi(K) = \int_{\S^{n-1}} g(u) \, dS_i(K, \DD; u), \qquad K \in \K(\R^n),
		\end{align*}
		where $\bar{g}=\widehat{T}_{\CC}\bar{f}$ and $g=\bar{g}(\pair{e_n}{{}\cdot{}})\in C(\S^{n-1})$.
	\end{thm}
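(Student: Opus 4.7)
The plan is threefold: first define $\varphi$ directly via its expression as an integral against $S_i(K,\DD;{}\cdot{})$, then verify the principal value representation through an approximation argument using the continuous transformation rule \cref{thm:dictZonalVal_cont_disk}, and finally deduce the support claim from the resulting principal value form.

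Set $\bar{g} := \widehat{T}_{\CC}\bar{f}$. By \cref{lem:T_RInclSegm_bijective:I}, the extended transform maps $\D_{\CC}$ into $T_{\indf_{I_\CC}}(C[-1,1]) \subseteq C[-1,1]$, so $g := \bar{g}(\pair{e_n}{{}\cdot{}})$ is a continuous, zonal function on $\S^{n-1}$. Defining $\varphi(K) := \int_{\S^{n-1}} g\, dS_i(K,\DD;{}\cdot{})$ yields a continuous, translation-invariant, $i$-homogeneous valuation, which is zonal by a change of variables: for $\eta \in \SO(n-1)$ fixing $e_n$, the measure $S_i(\eta K, \DD; {}\cdot{})$ is the pushforward of $S_i(K,\DD;{}\cdot{})$ under $\eta$, and $g$ is $\eta$-invariant.

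For the principal value representation, I would approximate by cutoffs $\bar{\chi}_k \in C[-1,1]$ with values in $[0,1]$, equal to $1$ on an exhaustion by compact subsets of $I_\CC$ corresponding to $\S^{n-1} \setminus U_{\CC,2/k}$ and vanishing outside the slightly larger region corresponding to $\S^{n-1} \setminus U_{\CC,1/k}$. Then $\bar{f}\bar{\chi}_k \in C[-1,1]$, so \cref{thm:dictZonalVal_cont_disk} applied to $f\bar{\chi}_k$ yields
\begin{equation*}
	\int_{\S^{n-1}} f\bar{\chi}_k(\pair{e_n}{{}\cdot{}})\, dS_i(K, \CC; {}\cdot{}) = \int_{\S^{n-1}} \widehat{T}_\CC(\bar{f}\bar{\chi}_k)(\pair{e_n}{{}\cdot{}})\, dS_i(K, \DD; {}\cdot{}).
\end{equation*}
As $k \to \infty$, the right-hand side tends to $\varphi(K)$ by dominated convergence, using pointwise convergence $\widehat{T}_\CC(\bar{f}\bar{\chi}_k) \to \bar{g}$ on $(-1,1)$ (obtained by applying dominated convergence to the defining integral of $T_{R_\CC}$, since $\bar{f}$ is bounded on compact subsets of $I_\CC$ and $\nu_{R_\CC}$ is finite) together with uniform boundedness. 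For the left-hand side, the Firey-type estimate \cref{thm:mixedFireyEst} yields $S_i(K,\CC; U_{\CC,1/k}) \lesssim R_{\CC}(a_{\CC,\pm} \mp 1/k)$, and combined with the defining asymptotic condition $\bar{f} \in \D_{\CC}$ (that $R_{\CC}(t)\bar{f}(t)$ has a limit at $a_{\CC,\pm}$), the difference between $\int f\bar{\chi}_k\, dS_i(K,\CC;{}\cdot{})$ and $\int_{\S^{n-1}\setminus U_{\CC,1/k}} f\, dS_i(K,\CC;{}\cdot{})$—which is supported on a shrinking annular region near the poles—vanishes in the limit. This simultaneously establishes the existence of the principal value limit and its equality with $\varphi(K)$.

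Finally, the support claim follows cleanly from the principal value form: by \cref{lem:mixed_area_meas_support} applied to each $C_j$ at the face $F(C_j, \pm e_n)$, the measure $S_i(K,\CC;{}\cdot{})$ is supported in $\cls\S_{\CC}$, so by the local determination of mixed area measures (\cref{lem:MixedAreaMeas_locally_det}), if $h_K = h_{K'}$ on a neighborhood of $\cls\S_{\CC}$ then $S_i(K,\CC;{}\cdot{})$ and $S_i(K',\CC;{}\cdot{})$ coincide on that neighborhood, and the principal value representation implies $\varphi(K) = \varphi(K')$. The main obstacle is the limit argument for the left-hand side in the principal value step: ensuring that the tail contributions over the shrinking polar-cap annuli vanish requires combining \cref{thm:mixedFireyEst} with the precise asymptotic conditions encoded in membership in $\D_{\CC}$, while the right-hand side limit—although involving an integral transform applied to cutoffs—reduces to routine dominated convergence once the explicit formula for $T_{R_\CC}$ is invoked.
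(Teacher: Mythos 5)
The overall structure of your proof matches the paper's: define $\varphi$ via $\bar{g}=\widehat{T}_\CC\bar{f}$, approximate $\bar{f}$ by functions in $C[-1,1]$, apply the continuous transformation rule (\cref{thm:dictZonalVal_cont_disk}), and control the tail via the Firey-type estimate (\cref{thm:mixedFireyEst}). The support argument you give via \cref{lem:mixed_area_meas_support} and local determination is a direct and valid route (the paper handles the support claim only in the later proof of \cref{thm:MixedHadwigerGeneral}, via \cref{lem:support_psi}).

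However, your choice of cutoffs introduces a genuine gap. You take continuous bump functions $\bar\chi_k$ that equal $1$ on $\S^{n-1}\setminus U_{\CC,2/k}$ and vanish on $U_{\CC,1/k}$, so that $\bar f\bar\chi_k$ vanishes near $a_{\CC,\pm}$. This causes two problems. First, on the left-hand side, the discrepancy $\int f\bar\chi_k\,dS_i(K,\CC) - \int_{\S^{n-1}\setminus U_{\CC,1/k}} f\,dS_i(K,\CC)$ reduces to $\int_{U_{\CC,2/k}\setminus U_{\CC,1/k}} f(\bar\chi_k-1)\,dS_i(K,\CC)$, and to make this vanish you need $\int_{\text{annulus}}|f|\,dS_i(K,\CC)\to 0$. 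The Firey estimate bounds $S_i(K,\CC; U_{\CC,2/k})$ by roughly $R_\CC(a_{\CC,+}-2/k)$, while $\sup_{\text{annulus}}|\bar f|$ is roughly $o(1)/R_\CC(a_{\CC,+}-1/k)$; the product therefore involves the ratio $R_\CC(a_{\CC,+}-2/k)/R_\CC(a_{\CC,+}-1/k)$, which need not stay bounded. Second, on the right-hand side, the claimed pointwise convergence $\widehat T_\CC(\bar f\bar\chi_k)\to\bar g$ fails at $t=\pm 1$ when $a_{\CC,\pm}=\pm 1$ and $\lim_{t\to\pm1}R_\CC(t)>0$: then $T_{R_\CC}(\bar f\bar\chi_k)(1)\to\lim_{t\to1}\int_{(0,t]}\bar f\,d\nu_{R_\CC}$, whereas $\bar g(1)$ contains the additional term $\lim_{t\to1}R_\CC(t)\bar f(t)$, which is in general non-zero; since $S_i(K,\DD;{}\cdot{})$ can have atoms at the poles, this is not a null set. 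More generally, the dominated convergence argument you invoke silently requires absolute convergence of the improper integral $\int_{(0,a_{\CC,+})}\bar f\,d\nu_{R_\CC}$, which membership in $\D_\CC$ does not supply (only the existence of the improper limit).

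The paper avoids both issues by using the specific cutoffs $\bar f_\varepsilon = \widehat T_{\indf_{I_{\CC,\varepsilon}}}\bar f$, which truncate $\bar f$ at $a_{\CC,\pm}\mp\varepsilon$ and extend \emph{linearly} rather than by zero. On the left, the discrepancy is then $\int_{U^\pm_{\CC,\varepsilon}} f_\varepsilon\,dS_i(K,\CC)$ with $f_\varepsilon$ linear on the cap and dominated by its value at $a_{\CC,\pm}$, so combining the Firey estimate with \cref{lem:D_R_endpoint_limits}\cref{it:D_R_endpoint_limits_zero} gives $|f_\varepsilon(a_{\CC,+})|R_\CC(a_{\CC,+}-\varepsilon)\to 0$. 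On the right, the semigroup property \cref{prop:TRContContGroupProp}\cref{it:T_R_group_property} gives $\widehat T_\CC\bar f_\varepsilon=\widehat T_{\indf_{I_{\CC,\varepsilon}}}\bar g$, which converges uniformly on $[-1,1]$ since $\bar g\in C[-1,1]$, so no improper integral has to be controlled at all. You should replace your bump-function cutoffs with these; otherwise the argument does not go through.
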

	\begin{proof}
		Let $\bar{f} \in \D_\CC$ and write $\bar{g} = \widehat{T}_\CC \bar{f}$. By \cref{lem:T_RInclSegm_bijective:I}, $\bar{g} \in C[-1,1]$, so
		\begin{align*}
			\varphi(K) = \int_{\S^{n-1}} \bar{g}(\pair{e_n}{u}) dS_i(K,\DD; u), \quad K \in \K(\R^n),
		\end{align*}
		well-defines a zonal valuation in $\Val_i(\R^n)$. It remains to show that $\varphi$ satisfies relation \eqref{eq:dictGeneralPrinValInt}.

		To this end, let $\varepsilon > 0$ and define $I_{\CC, \varepsilon} = (a_{\CC, -} + \varepsilon, a_{\CC, +} - \varepsilon)$ and $\bar{f}_\varepsilon = \widehat{T}_{\indf_{I_{\CC, \varepsilon}}} \bar f$, that is, we cut $\bar{f}$ at $a_{\CC, \pm} \mp \varepsilon$ and extend it linearly to $(-1,1)$ (see \cref{ex:Tind}). If $U_{\CC, \varepsilon}^+ \neq \emptyset$, then, for $f_\varepsilon = \bar{f}_\varepsilon(\pair{e_n}{\cdot})$, writing $\S^{n-1}_+ = \{u \in \S^{n-1}:\, \pair{e_n}{u} > 0\}$ for the upper hemisphere,
		\begin{align*}
			\int_{\S^{n-1}_+} f_{\varepsilon} \, dS_i(K,\CC;{}\cdot{})  - \int_{\S^{n-1}_+\setminus U_{\CC,\varepsilon}^+} f \, dS_i(K,\CC;{}\cdot{})
			= \int_{U_{\CC,\varepsilon}^+} f_\varepsilon \, dS_i(K,\CC;{}\cdot{}),
		\end{align*}
		which tends to zero as $\varepsilon \to 0^+$. Indeed, as $\spt S_i(K,\CC;{}\cdot{}) \subseteq \S^{n-1} \setminus \mathrm{int} \, U_{\CC, 0}^+$, and the maximum of $|f_\varepsilon(t)|$ in $[a_{\CC,+}-\varepsilon, a_{\CC,+}]$ is attained at $t = a_{\CC, +}$, we can deduce from \cref{thm:mixedFireyEst} that
		\begin{align*}
			\left|\int_{U_{\CC, \varepsilon}^+} f_\varepsilon \, dS_i(K,\CC;{}\cdot{})\right| \leq |f_\varepsilon(a_{\CC, +})|S_i(K,\CC;U_{\CC, \varepsilon}^+)
			\leq C_{n,i}\frac{|f_\varepsilon(a_{\CC, +})| R_\CC(a_{\CC, +} - \varepsilon)}{a_{\CC, +} - \varepsilon} V_i(K|e_n^\perp).
		\end{align*}
		Inserting $f_\varepsilon(a_{\CC, +}) = \frac{f(a_{\CC,+} - \varepsilon)}{a_{\CC,+} - \varepsilon}a_{\CC, +}$, we see that the right-hand side tends to zero as $\varepsilon \to 0$, since by \cref{lem:D_R_endpoint_limits}\cref{it:D_R_endpoint_limits_zero}, $f(a_{\CC,+} - \varepsilon)R_\CC(a_{\CC, +} - \varepsilon) \to 0$. Here, we used that since $U_{\CC, \varepsilon}^+ \neq \emptyset$, we have $\lim_{t \to a_{\CC, +}}R_\CC(t) = 0$.
		
		If $U_{\CC, \varepsilon}^+ = \emptyset$, then both $f$ and $f_\varepsilon$ are continuous by \cref{lem:D_R_endpoint_limits}\cref{it:D_R_endpoint_limits_nzero} and $f_\varepsilon \to f$ uniformly on $[0,1]$. Consequently, repeating the argument at $- e_n$, we obtain
		\begin{align}\label{eq:prfGenDict1}
			\lim_{\varepsilon \to 0^+} \int_{\S^{n-1}} f_{\varepsilon} \, dS_i(K,\CC;{}\cdot{}) = \lim_{\varepsilon \to 0^+} \int_{\S^{n-1}\setminus U_{\CC,\varepsilon}} f \, dS_i(K,\CC;{}\cdot{}).
		\end{align}
		As $\bar{f}_\varepsilon$ and $\bar{g}_\varepsilon := \widehat{T}_\CC \bar{f}_\varepsilon$ are continuous on $[-1,1]$ (by \cref{lem:T_RInclSegm_bijective:I}), \cref{thm:dictZonalVal_cont_disk} implies for $g_\varepsilon = \bar{g}_\varepsilon(\pair{e_n}{\cdot})$ that
		\begin{align}\label{eq:prfGenDict2}
			\int_{\S^{n-1}} f_{\varepsilon} \, dS_i(K,\CC;{}\cdot{}) = \int_{\S^{n-1}} g_{\varepsilon} \, dS_i(K,\DD;{}\cdot{}), \quad K \in \K(\R^n).
		\end{align}
		Noting finally that, by \cref{prop:TRContContGroupProp}\cref{it:T_R_group_property}, $\bar{g}_\varepsilon = \widehat{T}_\CC (\widehat{T}_{\indf_{I_{\CC, \varepsilon}}} \bar{f}) = \widehat{T}_{\indf_{I_{\CC, \varepsilon}}} \bar g$ and, therefore, $\bar{g}_\varepsilon \to \bar{g}$ uniformly on $[-1,1]$, the claim follows by combining \eqref{eq:prfGenDict1} and \eqref{eq:prfGenDict2}.		
	\end{proof}
	
	\subsection{Hadwiger-type theorems}\label{sec:HadwigerTheorems}
	In this section, we prove the following Hadwiger-type characterization of zonal valuations under restrictions of the support using \cref{thm:dictionaryGeneral}, \cref{thm:zonalDiskHadwiger} and the following characterization of the support of valuations defined by
	\begin{align*}
		\psi_{i,g}(K) = \int_{\S^{n-1}}g \, dS_i(K, \DD; \cdot), \quad K \in \K(\R^n),
	\end{align*}
	whenever $g = \bar{g}(\pair{e_n}{\cdot}) \in C(\S^{n-1})$.
	\begin{lem}\label{lem:support_psi}
		Let $1\leq i\leq n-1$, let $g = \bar{g}(\pair{e_n}{\cdot})\in C(\S^{n-1})$. If $I\subseteq (-1,1)$ is an open interval with $0\in I$, then $\spt \psi_{i,g} \subseteq \cls\{u\in\S^{n-1}: \pair{e_n}{u}\in  I\}$ if and only if $\bar{g}=\widehat{T}_{\indf_I}\bar{g}$.
	\end{lem}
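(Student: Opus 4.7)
The plan is to prove the two implications separately, using a decomposition of $g$ for $(\Leftarrow)$ and a moment argument on zonal test bodies for $(\Rightarrow)$. Throughout I write $I = (a_-, a_+)$, set $A := \cls\{u \in \S^{n-1}: \pair{e_n}{u} \in I\} = \{u \in \S^{n-1}: a_- \leq \pair{e_n}{u} \leq a_+\}$, and (whenever the hypothesis is in force) let $\alpha_\pm := \bar g(a_\pm)/a_\pm$.

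For $(\Leftarrow)$, the hypothesis $\bar g = \widehat T_{\indf_I} \bar g$ is exactly the assertion that $\bar g(t) = \alpha_\pm t$ on the two tails $[a_+,1]$ and $[-1,a_-]$. I would introduce the continuous function $\ell \in C(\S^{n-1})$ defined by $\ell(u) = \alpha_+ \pair{e_n}{u}^+ - \alpha_- \pair{e_n}{u}^-$, which is continuous on $\S^{n-1}$ since both summands vanish where $\pair{e_n}{u} = 0$, and set $\tilde g := g - \ell$. Since $\ell$ agrees with $g$ on each polar cap $\{\pair{e_n}{u} \geq a_+\}$ and $\{\pair{e_n}{u} \leq a_-\}$ by construction, the function $\tilde g$ has $\spt \tilde g \subseteq A$, so local determination of mixed area measures (\cref{lem:MixedAreaMeas_locally_det}) immediately gives $\spt \psi_{i, \tilde g} \subseteq A$. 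For $\psi_{i, \ell}$, rewriting $\pair{e_n}{u}^\pm = h_{[0, \pm e_n]}(u)$ and applying \cref{eq:MixedVol_Integral,eq:MixedVol_Proj,eq:MixedVol_IntrinsicVol} yields
\[
\psi_{i, \ell}(K) = (\alpha_+ - \alpha_-)\, \frac{\kappa_{n-i-1}}{\binom{n-1}{i}}\, V_i(K | e_n^\perp),
\]
which depends on $K$ only through $h_K|_{\S^{n-2}(e_n^\perp)}$; since $0 \in I$ implies $\S^{n-2}(e_n^\perp) \subseteq A$, this forces $\spt \psi_{i, \ell} \subseteq A$, and subadditivity of the support completes this direction.

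For $(\Rightarrow)$, I would test the hypothesis against pairs of smooth zonal bodies $K, K'$ whose profile functions $R_K, R_{K'}$ agree on an open neighborhood of $[a_-, a_+]$, since for such pairs $h_K = h_{K'}$ on a neighborhood of $A$. Using \cref{cor:MixedAreaMeasRev} together with the pole atoms of $S_i(K, \DD; \cdot)$ at $\pm e_n$ (of mass $\kappa_{n-1} R_K(\pm 1^\mp)^i$), the difference $\psi_{i, g}(K) - \psi_{i, g}(K')$ decouples into top and bottom contributions that can be varied independently. Restricting to perturbations supported only in the top tail produces a signed Radon measure $\sigma$ on $(a_+, 1]$ (the restriction of $\nu_{R_K^i} - \nu_{R_{K'}^i}$ to $(a_+, 1)$ plus an atom at $1$), and a direct application of \cref{eq:fundthmCalcBV1} at the common left endpoint of the perturbation shows that every such $\sigma$ automatically satisfies $\int s \, d\sigma(s) = 0$. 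The main obstacle is then to verify that, as $K, K'$ range over admissible zonal bodies, these $\sigma$ form a weak-$\ast$ dense subset of the signed Radon measures on $(a_+, 1]$ with vanishing first moment; I would establish this density by building pairs whose profile functions have prescribed downward jumps, since each jump of $R_K$ produces an atom of $\nu_{R_K^i}$ at a chosen location and of a chosen weight, so differences realize every finite signed combination of Dirac masses obeying the first-moment constraint. Once density is in hand, the hypothesis $\int \bar g \, d\sigma = 0$ for every admissible $\sigma$ identifies $\bar g|_{[a_+, 1]}$ with an element of the annihilator of the first-moment constraint in $C[a_+, 1]$, which is the one-dimensional subspace spanned by $t \mapsto t$; continuity at $t = a_+$ then fixes $\bar g(t) = (\bar g(a_+)/a_+)\, t$ on $[a_+, 1]$, and the symmetric argument on the bottom tail yields $\bar g = \widehat T_{\indf_I} \bar g$.
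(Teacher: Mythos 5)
Your $(\Leftarrow)$ argument is correct. The decomposition $g = \tilde g + \ell$ with $\ell(u) = \alpha_+\pair{e_n}{u}^+ - \alpha_-\pair{e_n}{u}^-$ is a legitimate alternative to the paper's route, which instead observes that $\bar g$ is linear on each tail and pairs it directly against the centered signed measure $S_i(K,\DD;\cdot) - S_i(K',\DD;\cdot)$. Both hinge on the same two facts (linearity of $\bar g$ off $I$, and local determination of mixed area measures), so the difference here is one of packaging rather than substance.

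Your $(\Rightarrow)$ argument contains a genuine gap. The whole direction rests on an unverified weak-$*$ density claim: that the measures $\sigma$ on $(a_+,1]$ arising as differences of pushed-forward mixed area measures over pairs $(K,K')$ agreeing near $[a_-,a_+]$ are weak-$*$ dense in the centered finite signed measures on $[a_+,1]$. You propose to verify this by engineering profile functions with prescribed downward jumps, but this is only a sketch, and the construction is tightly constrained: the $R_K$'s must be nonnegative, monotone on $[0,1)$, agree near $[a_-,a_+]$, and come from genuine convex bodies of revolution. It is not at all obvious that the achievable family of $\sigma$'s spans weak-$*$ densely, nor how to check that its closure is exactly the annihilator of $t\mapsto t$; until that is proved, the conclusion $\bar g(t) = (\bar g(a_+)/a_+)t$ on $[a_+,1]$ does not follow. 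The paper avoids the density question entirely by testing against a single explicit one-parameter family of cones $C_s = \mathrm{conv}\bigl(\DD \cup \{\tfrac{\sqrt{1-s^2}}{s}e_n\}\bigr)$ for which $\psi_{i,g}(C_s) = \kappa_{n-1}\bigl(\bar g(\sign s) + \bar g(s)/\abs{s}\bigr)$ is known exactly from \cite{Brauner2024a}; since $h_{C_s}$ and $h_\DD$ coincide on a neighborhood of $A$ for $s\in(a_+,1]$, equating $\psi_{i,g}(C_s) = \psi_{i,g}(\DD)$ gives linearity of $\bar g$ on the tail in one line, with no duality or density argument needed.
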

	\begin{proof}
		We may write $I=(a_-,a_+)$, where $-1\leq a_-<0<a_+\leq 1$.
		
		First, assume that $\spt \psi_{i,g} \subseteq \cls\{u\in\S^{n-1}: \pair{e_n}{u}\in  I\}$ and consider
		\begin{align*}
			C_s = \mathrm{conv}\left(\DD \cup \left\{\tfrac{\sqrt{1-s^2}}{s}e_n\right\}\right), \quad s \in [-1,1]\setminus\{0\},
		\end{align*}
		that is, the cone with basis $\DD$ and apex $\frac{\sqrt{1-s^2}}{s}e_n$. In \cite{Brauner2024a}*{Lemma~2.2}, it is shown that
		\begin{align}\label{eq:prfSuppPsiConeEval}
			\psi_{i,g}(C_s)=\kappa_{n-1}\left(\bar{g}(\sign s) + \frac{\bar{g}(s)}{\abs{s}}\right).
		\end{align}
		If $s\in (a_+,1]$, then $\bar{h}_{C_{s}}(t)=\bar{h}_{\DD}(t)$ for all $t\in [-1,s] \supseteq I$, that is, $h_{C_{s}}$ and $h_{\DD}$ coincide on some open neighborhood of $\spt\psi_{i,g}$. Consequently, $\psi_{i,g}(C_{s})=\psi_{i,g}(\DD)$, and by \eqref{eq:prfSuppPsiConeEval}
		\begin{align*}
			\bar{g}(1) + \frac{\bar{g}(s)}{s} = 2 \bar{g}(1), \quad s\in (a_+,1],
		\end{align*}
		which yields $\bar{g}(s) = sg(1)$ on $(a_+, 1]$, and by continuity on $[a_+, 1]$. Repeating the argument for $[-1,a_-)$ implies that $\bar{g}$ is linear outside $I$, which is equivalent to $\bar{g}=\widehat{T}_{\indf_I}\bar{g}$, by \cref{ex:Tind}.
		
		Next, assume that $\bar{g}=\widehat{T}_{\indf_I}\bar{g}$. Let $S_+:=\{u\in\S^{n-1}: \pair{e_n}{u}\in  [-1,a_+]\}$ and take $K,K'\in \K(\R^n)$ such that $h_K$ and $h_{K'}$ coincide on some open neighborhood of $S_+$. Since mixed area measures are locally determined by the respective convex bodies (see \cref{lem:MixedAreaMeas_locally_det}), the signed measure $\nu:=S_i(K,\DD;{}\cdot{})-S_i(K',\DD;{}\cdot{})$ is supported in $\S^{n-1}\setminus S_+$. Hence, as by assumption $\bar{g}(t) = t\bar{g}(1)$ for $t \in [a_+,1]$,
		\begin{equation*}
			\psi_{i,g}(K)-\psi_{i,g}(K')
			= \int_{\S^{n-1}} g(u)\, d\nu(u) \\
			= \int_{\S^{n-1}} \pair{e_n}{u} \bar{g}(1) \, d\nu(u)
			= 0,
		\end{equation*}
		where the last equality is due to the fact that $\nu$ is centered as difference of centered measures. We conclude that $\spt\psi_{i,g}\subseteq S_+$. Repeating the argument for $S_-:=\{u\in\S^{n-1}: \pair{e_n}{u}\in  [a_-,1]\}$ finishes the proof.
	\end{proof}
	
	We are now in position to prove \cref{mthm:mixedZonalHadwiger}, which we repeat for the reader's convenience.
	
	\begin{thm}\label{thm:MixedHadwigerGeneral}
		Let $1\leq i< n-1$ and $\CC=(C_1,\ldots,C_{n-i-1})$ be a family of convex bodies of revolution, none of which is a vertical segment. Then a valuation $\varphi\in\Val_i(\R^n)$ with $\spt \varphi\subseteq \cls \S_{\CC}$ is zonal if and only if there exists a function $f=\bar{f}(\pair{e_n}{{}\cdot{}})\in C(\S_{\CC})$ with $\bar{f}\in \D_{\CC}$ such that
		\begin{equation}\label{eq:thmMixedHadGen}
			\varphi(K)
			= \lim_{\varepsilon\to 0^+} \int_{\S^{n-1}\setminus U_{\CC,\varepsilon}} f(u) \, dS_i(K,\CC;u),
			\qquad K\in\K(\R^n),
		\end{equation}
		where $U_{\CC,\varepsilon}$ is defined as in \cref{thm:dictionaryGeneral}. Moreover, $f$ is unique up to the addition of a zonal linear function restricted to $\S_{\CC}$.
	\end{thm}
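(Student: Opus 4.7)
The ``$\Leftarrow$'' direction---namely, that every valuation of the form \eqref{eq:thmMixedHadGen} with $\bar f \in \D_\CC$ is a zonal element of $\Val_i(\R^n)$ with $\spt \varphi \subseteq \cls \S_\CC$---is precisely what \cref{thm:dictionaryGeneral} asserts, so the real work lies in ``$\Rightarrow$''. The plan is: start from a zonal $\varphi \in \Val_i(\R^n)$ with $\spt \varphi \subseteq \cls \S_\CC$ and use \cref{thm:zonalDiskHadwiger} to write $\varphi = \psi_{i,g}$ for some zonal $g = \bar g(\pair{e_n}{{}\cdot{}}) \in C(\S^{n-1})$, unique up to a zonal linear summand. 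Since $\S_\CC = \{u \in \S^{n-1} : \pair{e_n}{u} \in I_\CC\}$, \cref{lem:support_psi} then yields the fixed-point identity $\bar g = \widehat{T}_{\indf_{I_\CC}}\bar g$, which in view of \cref{ex:Tind} is equivalent to $\bar g \in T_{\indf_{I_\CC}}(C[-1,1])$.

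At this point, \cref{lem:T_RInclSegm_bijective:I} supplies a bijection $\widehat{T}_{\CC} : \D_\CC \to T_{\indf_{I_\CC}}(C[-1,1])$, so I will take $\bar f \in \D_\CC$ to be the unique preimage of $\bar g$ under $\widehat{T}_\CC$ and set $f = \bar f(\pair{e_n}{{}\cdot{}}) \in C(\S_\CC)$. Applying \cref{thm:dictionaryGeneral} to this $\bar f$ then gives
\[
\varphi(K) \;=\; \int_{\S^{n-1}} g(u)\, dS_i(K,\DD;u) \;=\; \lim_{\varepsilon \to 0^+} \int_{\S^{n-1} \setminus U_{\CC,\varepsilon}} f(u)\, dS_i(K,\CC;u), \qquad K \in \K(\R^n),
\]
which is the desired representation \eqref{eq:thmMixedHadGen}.

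For the uniqueness clause, suppose $\bar f_1, \bar f_2 \in \D_\CC$ both represent $\varphi$. Writing $\bar g_j = \widehat{T}_\CC \bar f_j$, \cref{thm:dictionaryGeneral} gives $\psi_{i,g_1} = \psi_{i,g_2} = \varphi$, so the uniqueness statement in \cref{thm:zonalDiskHadwiger} forces $\bar g_1 - \bar g_2 = c\,t$ for some $c \in \R$. A short direct computation from \cref{def:TChat} and \cref{eq:equivFormTR} shows that $\widehat{T}_\CC(\alpha t) = \alpha R_\CC(0)\,t$ for any $\alpha \in \R$: the integrand $s \alpha t - t \alpha s$ in $T_{R_\CC}$ vanishes identically, and $\bar f(0) = 0$ kills the $|{}\cdot{}|$ correction. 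Combined with the injectivity of $\widehat{T}_\CC$ on $\D_\CC$, this forces $\bar f_1 - \bar f_2 = \tfrac{c}{R_\CC(0)}\,t$, i.e.\ a zonal linear function restricted to $\S_\CC$. The preceding sections carry all the analytic weight; the only conceptual point to watch is that the ``fixed point'' condition $\bar g = \widehat{T}_{\indf_{I_\CC}}\bar g$ from \cref{lem:support_psi} coincides exactly with membership in the codomain $T_{\indf_{I_\CC}}(C[-1,1])$ of \cref{lem:T_RInclSegm_bijective:I}, which is immediate from \cref{ex:Tind}, so no serious obstacle is expected.
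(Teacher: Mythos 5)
Your proposal is correct and follows essentially the same route as the paper's proof: the ``$\Leftarrow$'' direction is read off from \cref{thm:dictionaryGeneral}, the ``$\Rightarrow$'' direction combines \cref{thm:zonalDiskHadwiger}, \cref{lem:support_psi}, and the bijection in \cref{lem:T_RInclSegm_bijective:I}, and uniqueness is reduced to the uniqueness clause of \cref{thm:zonalDiskHadwiger} together with injectivity of $\widehat{T}_\CC$. Your uniqueness argument is a slight unfolding of the paper's (the paper reduces to $\varphi=0$ and merely asserts that $\widehat{T}_\CC$ maps linear functions to linear functions, whereas you spell out the computation $\widehat{T}_\CC(\alpha t) = \alpha R_\CC(0)\,t$ from \cref{eq:equivFormTR} and \cref{def:TChat}), but the substance is identical.
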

	\begin{proof}
		First, suppose that $\bar{f} \in \D_\CC$ is given. Then \cref{thm:dictionaryGeneral} shows that \eqref{eq:thmMixedHadGen} defines a zonal valuation $\varphi \in \Val_i(\R^n)$, which is equal to $\psi_{i,g}$, $g=\bar{g}(\pair{e_n}{\cdot})$ and $\bar{g} = \widehat{T}_\CC \bar{f}$. By \cref{lem:T_RInclSegm_bijective:I}, $\bar{g} = \widehat{T}_{\indf_{I_\CC}} \bar g$, so \cref{lem:support_psi} implies the claim on $\spt \varphi$.
		
		Next, suppose that $\varphi\in\Val_i(\R^n)$ with $\spt \varphi\subseteq \cls \S_{\CC}$ is zonal. By \cref{thm:zonalDiskHadwiger}, there exists $g = \bar{g}(\pair{e_n}{\cdot}) \in C(\S^{n-1})$ such that $\varphi = \psi_{i,g}$. By \cref{lem:support_psi}, $\bar g = T_{I_\CC} \bar g$, that is $\bar g \in T_{I_\CC}(C[-1,1])$. Hence, by \cref{lem:T_RInclSegm_bijective:I}, there exists $\bar f \in \D_\CC$ such that $\bar g = \widehat{T}_\CC\bar f$. \cref{thm:dictionaryGeneral} then implies that the right-hand side of \eqref{eq:thmMixedHadGen} defines a continuous valuation that coincides with $\varphi$.
		
		To show uniqueness of $f$, assume that $\varphi = 0$. Then \cref{thm:zonalDiskHadwiger} implies that $\varphi = \psi_{i,g}$ and $g$ is a zonal linear function restricted to $\S^{n-1}$. As, by \cref{lem:T_RInclSegm_bijective:I}, $\widehat{T}_\CC$ is injective and direct computation shows that it maps linear functions to linear functions, we deduce that $f$ is linear as well.
	\end{proof}

	We conclude the section by giving a geometric intuition for when the principal value integral in \cref{mthm:mixedZonalHadwiger} is actually a proper integral.
	\begin{lem}
		Let $1\leq i< n-1$ and $\CC=(C_1,\ldots,C_{n-i-1})$ be a family of convex bodies of revolution, none of which is a vertical segment. Then $U_{\CC, \varepsilon}^+ = \emptyset$, $\varepsilon > 0$, if and only if each $C_1, \dots, C_{n-i-1}$ has a flat part at $e_n$, that is, $F(C_j, e_n)$ is not a point, $1 \leq j \leq n-i-1$. The analogous statement holds for $U_{\CC, \varepsilon}^-$.
	\end{lem}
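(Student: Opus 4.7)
The plan is to unpack the two conditions defining $U_{\CC,\varepsilon}^+=\emptyset$, namely (a) $a_{\CC,+}=1$ and (b) $\lim_{t\to 1}R_\CC(t)>0$, and to read each through \cref{lem:geomIntuitRc}. The key fact is the second assertion of that lemma: for each $j$, the face $F(C_j,e_n)$ is a (possibly degenerate) horizontal disk of radius $\lim_{t\to 1}R_{C_j}(t)$.

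For the forward implication, I would assume that each $F(C_j,e_n)$ is not a single point, so each is an $(n-1)$-dimensional disk of positive radius. Multiplicativity of $R_\CC=\prod_jR_{C_j}$ combined with \cref{lem:geomIntuitRc} yields $\lim_{t\to 1}R_\CC(t)>0$, giving (b). Moreover, since each $F(C_j,e_n)$ is full-dimensional in (a horizontal translate of) $e_n^\perp$, any $u\in N(C_j,F(C_j,e_n))$ must have vanishing projection onto $e_n^\perp$—otherwise $\pair{u}{\cdot}$ would fail to be constant on this full-dimensional disk—so $N(C_j,F(C_j,e_n))=\R_+e_n$ and hence $N(C_j,F(C_j,e_n))\cap\S^{n-1}=\{e_n\}$. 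By \cref{defi:S_C_and_I_C}, only the single point $e_n$ is excised from $\S_\CC$ on the upper side, so $a_{\CC,+}=1$, giving (a). Therefore $U_{\CC,\varepsilon}^+=\emptyset$.

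For the converse, $U_{\CC,\varepsilon}^+=\emptyset$ requires in particular $\lim_{t\to 1}R_\CC(t)>0$, which, as the $R_{C_j}$ are non-negative, forces $\lim_{t\to 1}R_{C_j}(t)>0$ for each $j$; by \cref{lem:geomIntuitRc} this says precisely that $F(C_j,e_n)$ has positive radius, hence is not a point. The analogous assertion at $-e_n$ is obtained by the symmetric argument, replacing $e_n$ with $-e_n$ throughout. There is no real obstacle here: the content of the lemma is an unfolding of definitions via \cref{lem:geomIntuitRc}, with the (very mild) subtlety that condition (a) is automatic given (b) (the second conjunct in the definition of $U_{\CC,\varepsilon}^+=\emptyset$ is in fact implied by the first via the normal-cone computation above), so the geometric criterion lines up with both conjuncts simultaneously.
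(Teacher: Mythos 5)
Your proof is correct and takes essentially the same approach as the paper's: unpack the definition of $U_{\CC,\varepsilon}^+$ into the conjunction of $a_{\CC,+}=1$ and $\lim_{t\to 1}R_\CC(t)>0$, and read both conditions through \cref{lem:geomIntuitRc}. You are in fact a bit more explicit than the paper, which only addresses the second conjunct and tacitly relies on the redundancy you observe; note, though, that your closing parenthetical has \emph{first} and \emph{second} swapped: it is $\lim_{t\to 1}R_\CC(t)>0$ that forces $a_{\CC,+}=1$ (the converse fails, e.g.\ for $C_j=B^n$), which is precisely what your normal-cone computation establishes.
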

	\begin{proof}
		By definition, $U_{\CC, \varepsilon}^+ = \emptyset$ if and only if $a_+ = 1$ and $\lim_{t \to 1}R_\CC(t) > 0$. However, \cref{lem:geomIntuitRc} shows that $\lim_{t \to 1}R_\CC(t) > 0$ if and only if all $F(C_j, e_n)$ are non-degenerate disks for all $j=1, \dots, n-i-1$.
	\end{proof}
	
	\section{Christoffel--Minkowski problems}\label{sec:ChristMinkProb}
	In this section, we prove \cref{mthm:Christoffel_Minkowski_zonal}. To this end, we first prove it for the disk area measures (\cref{mthm:ZonalMinkChristDisk}) in \cref{sec:MCprobDisk} and then apply \cref{mthm:dictionary} to deduce \cref{mthm:Christoffel_Minkowski_zonal} in \cref{sec:MCprobTransfer}. For the last step, we need to consider the adjoint transform of $\widehat{T}_\CC$, see \cref{sec:adj}.	
	
	\subsection{The zonal Christoffel--Minkowski problem for the disk}\label{sec:MCprobDisk}
	Before considering \cref{mthm:ZonalMinkChristDisk}, we introduce some definitions and notation related to zonal measures. Let $1\leq i\leq n-1$, and $E \in \Gr_{i+1}(\R^n)$ with $e_n \in E$, and let $f \in C(\S^{i}(E))$. Define, abusing notation, the function $\bar{f} : [-1,1] \to \mathbb{R}$ by
	\begin{align*}
	\bar{f}(t) =  \int_{\S^{i}(E) \cap H_t} f(w) \, \sigma_t(dw), \qquad t \in (-1,1), \qquad \text{ and } \qquad \bar{f}(\pm 1) = f(\pm e_n),
	\end{align*}
	where $H_t = \{x \in \S^n : \pair{e_n}{x} = t\}$, and $\sigma_t$ denotes the Lebesgue probability measure on $\S^{i}(E) \cap H_t$; that is, the unique probability measure on the $(i-1)$-dimensional sphere $\S^{i}(E) \cap H_t$ that is invariant under rotations fixing the axis $e_n$.
	
	\begin{prop}\label{zonaldisintegration}
		Let $1\leq i\leq n-1$, $E \in \Gr_{i+1}(\R^n)$ and let $\mu$ be a zonal measure on $\S^{i}(E)$. Denote by $\bar{\mu}$ the pushforward of $\mu$ under the map $u \mapsto \langle e_n, u \rangle$. Then
		\begin{equation}\label{eq:zonaldisintegration}
			\int_{\S^{i}(E)} f(u) \, \mu(du) = \int_{[-1,1]} \bar{f}(t) \, \bar{\mu}(dt).
		\end{equation}		
	\end{prop}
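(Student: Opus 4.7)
The plan is to prove \cref{zonaldisintegration} by a standard invariance/averaging argument combined with the change-of-variables formula for pushforward measures. Let $G \subset \mathrm{SO}(E)$ denote the subgroup of rotations of $E$ that fix $e_n$; this is well-defined (and isomorphic to $\mathrm{SO}(i)$) because $e_n \in E$. Since $\mu$ is by assumption zonal, $\mu$ is invariant under the $G$-action on $\S^{i}(E)$. Equip $G$ with its normalized Haar measure $d\eta$.

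The first step is to exploit the $G$-invariance: for each fixed $\eta \in G$,
\begin{equation*}
\int_{\S^{i}(E)} f(u) \, \mu(du) = \int_{\S^{i}(E)} f(\eta u) \, \mu(du).
\end{equation*}
Integrating this identity over $\eta \in G$ against $d\eta$ and applying Fubini's theorem (which is justified by the continuity of $f$ and the compactness of $G$ and $\S^{i}(E)$) yields
\begin{equation*}
\int_{\S^{i}(E)} f(u) \, \mu(du) = \int_{\S^{i}(E)} \biggl( \int_{G} f(\eta u) \, d\eta \biggr) \mu(du).
\end{equation*}

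The second step is to identify the inner average as $\bar{f}(\pair{e_n}{u})$. For $u \in \S^{i}(E)$ with $t := \pair{e_n}{u} \in (-1,1)$, the $G$-orbit of $u$ is precisely the sphere $\S^{i}(E) \cap H_t$, and the pushforward of $d\eta$ under the orbit map $\eta \mapsto \eta u$ is a $G$-invariant probability measure on this orbit. By the uniqueness of the rotation-invariant probability measure on $\S^{i}(E) \cap H_t$, this pushforward equals $\sigma_t$, so the inner integral equals $\bar{f}(t)$. In the degenerate cases $u = \pm e_n$, the orbit is the single point $\{\pm e_n\}$ and the inner integral trivially equals $f(\pm e_n) = \bar{f}(\pm 1)$, consistent with the definition. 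Hence
\begin{equation*}
\int_{\S^{i}(E)} f(u) \, \mu(du) = \int_{\S^{i}(E)} \bar{f}(\pair{e_n}{u}) \, \mu(du).
\end{equation*}

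The third and final step is to apply the definition of the pushforward measure $\bar{\mu} = (\pair{e_n}{\cdot})_\ast \mu$. One must check that $\bar{f}$ is Borel measurable and bounded on $[-1,1]$ (actually continuous, by dominated convergence and the observation at $t = \pm 1$), so that
\begin{equation*}
\int_{\S^{i}(E)} \bar{f}(\pair{e_n}{u}) \, \mu(du) = \int_{[-1,1]} \bar{f}(t) \, \bar{\mu}(dt),
\end{equation*}
which gives \eqref{eq:zonaldisintegration}. The only subtlety worth noting is the continuity of $\bar{f}$ near $t = \pm 1$, where the $(i-1)$-sphere $\S^{i}(E) \cap H_t$ degenerates to a point; this is a routine check using uniform continuity of $f$ and the vanishing radius of these spheres, and is the closest the argument comes to having an obstacle.
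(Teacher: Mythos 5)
Your proof is correct and follows essentially the same averaging-over-rotations argument as the paper's. The only cosmetic difference is that you identify the $G$-average $\int_G f(\eta u)\,d\eta$ directly with $\bar{f}(\pair{e_n}{u})$ via uniqueness of the invariant probability measure on each orbit, whereas the paper first reduces to $E=\R^n$, shows that the averaged function $\breve{f}$ satisfies $\bar{\breve{f}}=\bar{f}$, and then separately verifies the identity for zonal functions; both variants hinge on the same $\SO(n-1)$-invariance of $\sigma_t$ and the pushforward change of variables.
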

	\begin{proof}
		Without loss of generality, we may assume that $E = \R^n$. Since $\mu$ is a zonal measure, we have
		\begin{equation}
			\int_{\S^{n-1}} f(u) \, \mu(du) = \int_{\S^{n-1}} f(\vartheta u) \, \mu(du), \qquad \vartheta \in \SO(n-1).
		\end{equation}
		Hence, averaging over all elements of $\SO(n-1)$ yields
		\begin{equation}
			\int_{\S^{n-1}} f(u) \, \mu(du) = \int_{\S^{n-1}} \breve{f}(u) \, \mu(du),
		\end{equation}
		where
		\[
		\breve{f}(u) := \int_{\SO(n-1)} f(\vartheta u)\, d\vartheta, \qquad u \in \S^{n-1}.
		\]
		Note that $\breve{f}$ is a zonal function. If we show that $\bar{\breve{f}} = \bar{f}$, then we reduce the problem to verifying (\ref{eq:zonaldisintegration}) for zonal functions. To this end, for fixed $t \in (-1,1)$ and any $\vartheta \in \SO(n-1)$, we have
		\begin{equation}\label{eq1:zonal_step}
			\int_{\S^{n-1} \cap H_t} f(\vartheta u)\, \sigma_t(du) = \int_{\S^{n-1} \cap H_t} f(u)\, \sigma_t(du),
		\end{equation}
		since $\sigma_t$ is $\SO(n-1)$-invariant. Clearly, (\ref{eq1:zonal_step}) and Fubini's theorem yield $\bar{\breve{f}} = \bar{f}$.
		
		Now, to verify the statement for zonal functions, assume that $f$ is zonal, and observe that $f(u) = f(v)$ for all $v \in \S^{n-1} \cap H_{\pair{e_n}{u}}$, and so $f(u) = \bar{f}(\pair{e_n}{u})$, since $\bar{f}(\pair{e_n}{u})$ is precisely the average of all these values. Therefore,
		\begin{equation}
			\int_{\S^{n-1}} f(u) \, \mu(du) = \int_{\S^{n-1}} \bar{f}(\pair{e_n}{u}) \, \mu(du) = \int_{[-1,1]} \bar{f}(t) \, \bar{\mu}(dt),
		\end{equation}
		where the last equality follows from the fact that $\bar{\mu}$ is the pushforward of $\mu$ under the map $u \mapsto \pair{e_n}{u}$, and by the change of variables formula.
	\end{proof}

	We now define a measure on the sphere, given a measure on the interval $[-1,1]$, in such a way that it serves as the inverse operation to taking the pushforward.
	
	\begin{defi}
		Let $1\leq i\leq n-1$, and let $\nu$ be a measure on $[-1,1]$, and let $E$ be a subspace of $\R^n$ with $e_n \in E$. Define the \emph{zonal pullback} measure $\nu_E$ on $\S^{i}(E)$ by
		\[
		\int_{\S^{i}(E)} f(u) \, \nu_E(du) = \int_{[-1,1]} \bar{f}(t) \, \nu(dt),
		\]
		for every function $f \in C(\S^{i}(E))$.
	\end{defi}
	
	Note that the definition of pullback depends on the choice of measure we assign to each fiber of the map $u \mapsto \langle e_n, u \rangle$. Here, we chose the Lebesgue probability measure $\sigma_t$ on each fiber when $t \in (-1,1)$, and we assign a Dirac measure of mass $1$ to each of the poles $e_n$ and $-e_n$. This guarantees that $\nu_E$ is a zonal measure on $\S^{i}(E)$ and $\overline{\nu_E} = \nu$.
	
	Now let $\mu$ be a zonal measure on $\S^{n-1}$, and define $\mu_E := (\bar{\mu})_E$ (i.e., the pullback of $\bar{\mu}$ to $\S^{i}(E)$). Of course, $\mu_{\R^n} = \mu$ by Proposition~\ref{zonaldisintegration}. Furthermore, $\overline{\mu_E} = \bar{\mu}$. 
	
	The following  corollary is an immediate consequence of Proposition~\ref{zonaldisintegration}.
	\begin{cor}
		Let $1\leq i\leq n-1$. For every zonal function $f \in C(\S^{n-1})$, and any subspace $E \in \Gr_{i+1}(\R^n)$ with $e_n \in E$, we have
		\begin{equation}\label{eq:muvsmuE}
			\int_{\S^{n-1}} f(u) \, \mu(du) = \int_{\S^i(E)} f(u) \, \mu_E(du).
		\end{equation}
	\end{cor}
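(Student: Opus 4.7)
The plan is to derive this as a direct consequence of \cref{zonaldisintegration} applied twice, once in the ambient space $\R^n$ and once in the subspace $E$, together with the defining property of the zonal pullback.

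First I would observe that although the bar notation $\bar{f}$ a priori depends on which sphere is used to define the fiberwise average, the assumption that $f$ is zonal on $\S^{n-1}$ means that $f$ is constant on every fiber $\S^{n-1}\cap H_t$, and hence also constant on the subfibers $\S^i(E)\cap H_t \subseteq \S^{n-1}\cap H_t$. Consequently, the two possible definitions of $\bar{f}$ (as an average over $\S^{n-1}\cap H_t$ or as an average over $\S^i(E)\cap H_t$) agree and coincide with the common value of $f$ on the fiber. This unambiguously defines a single function $\bar{f}\in C[-1,1]$.

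Next, I would apply \cref{zonaldisintegration} in the ambient space $\R^n$ (i.e., in $\S^{n-1} = \S^{n-1}(\R^n)$), which is permitted since $\mu$ is zonal, to obtain
\begin{equation*}
\int_{\S^{n-1}} f(u)\, \mu(du) = \int_{[-1,1]} \bar{f}(t)\, \bar{\mu}(dt).
\end{equation*}
Analogously, since the zonal pullback $\mu_E = (\bar{\mu})_E$ is zonal on $\S^i(E)$ by construction, \cref{zonaldisintegration} applied inside $E$ gives
\begin{equation*}
\int_{\S^i(E)} f(u)\, \mu_E(du) = \int_{[-1,1]} \bar{f}(t)\, \overline{\mu_E}(dt).
\end{equation*}

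Finally, I would invoke the defining identity of the pullback, namely $\overline{\mu_E} = \bar{\mu}$, which was recorded in the paragraph immediately preceding the corollary. Comparing the two displays then yields the desired equality. There is no real obstacle here; the only point requiring mild care is the observation in the first paragraph that $\bar{f}$ is well defined independently of the sphere used, which is where the zonality hypothesis on $f$ enters.
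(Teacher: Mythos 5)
Your proof is correct and matches the paper's intended argument: the corollary is stated there as "an immediate consequence" of \cref{zonaldisintegration}, and your two-step application of that proposition (once in $\R^n$, once in $E$, linked by $\overline{\mu_E}=\bar{\mu}$) is exactly that deduction. The preliminary observation that zonality of $f$ makes $\bar{f}$ independent of the ambient sphere is the right thing to flag, as it reconciles the paper's two uses of the bar notation.
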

	
	Next, we turn to Christoffel--Minkowski problems for bodies of revolution and the disk as reference body. Here, we will also make use of the following Kubota-type formula from \cite{Hug2024}*{Thm.~3.2} (see also \cite{Brauner2024a}): For $1\leq i\leq n-2$, $K\in\K(\R^n)$, and $f\in C(\S^{n-1})$,
	\begin{align}\label{eq:zonal_Kubota_polarized}
		\int_{\Gr_{i+1}(\R^n,e_n)} \!\int_{\S^{i}(E)} \! f(u)\, dS^E_{i}(K|E,u) \, dE
		= \frac{\kappa_{i}}{\kappa_{n-1}}\! \int_{\S^{n-1}} \! f(u)\, dS_{i}(K,\DD;u).
	\end{align}
	Here, we denote by $\Gr_k(\R^n, e_n) \subset \Gr_k(\R^n)$ the Grassmannian of all subspaces $E \in \Gr_k(\R^n)$ that contain the axis $e_n$.
	
	We are now in position to prove \cref{mthm:ZonalMinkChristDisk}, stated again for the reader's convenience.
	\begin{thm}\label{thm:ZonalMinkChristDisk}
		Let $1 \leq i < n-1$ and $\mu$ be a non-negative, zonal Borel measure on $\S^{n-1}$. Then the following are equivalent:
		\begin{enumerate}[label=\upshape(\alph*)]
			\item\label{it:MinkChristDiskEx} There exists a body of revolution $K \in \K(\R^n)$, which is not a segment and such that $\mu = S_i(K, \DD; {{}\cdot{}})$.
			\item\label{it:MinkChristDiskCond} $\mu$ is centered and not concentrated on $\S^{n-2}(e_n^\perp)$.
		\end{enumerate}
		In this case, the body $K$ is unique up to a translations by a multiple of $e_n$.
	\end{thm}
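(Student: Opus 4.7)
The plan is to reduce the problem to the classical Minkowski existence theorem in an $(i+1)$-dimensional subspace by means of the zonal Kubota formula \cref{eq:zonal_Kubota_polarized}. For necessity (a)$\Rightarrow$(b), every mixed area measure is centered, so $\mu$ is centered. For non-concentration on the equator, recall from the proof of \cref{thm:mixedFireyEst} the identity
\begin{equation*}
\int_{\S^{n-1}}\abs{\pair{e_n}{u}}\,dS_i(K,\DD;u)=2\,\frac{\kappa_{n-i-1}}{\binom{n-1}{i}}V_i(K|e_n^\perp).
\end{equation*}
If $K$ is a body of revolution that is not a segment, then $K|e_n^\perp$ is an $(n-1)$-dimensional disk of positive radius, so this quantity is strictly positive, forcing $\mu$ to assign positive mass to $\S^{n-1}\setminus \S^{n-2}(e_n^\perp)$.

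For sufficiency (b)$\Rightarrow$(a), I fix an arbitrary $E_0\in\Gr_{i+1}(\R^n,e_n)$ and introduce the rescaled zonal pullback $\tilde{\mu}:=\frac{\kappa_i}{\kappa_{n-1}}\mu_{E_0}$ on $\S^i(E_0)$. By \cref{zonaldisintegration} and \cref{eq:muvsmuE}, $\tilde{\mu}$ is centered; being zonal on $\S^i(E_0)$, the only great subsphere it could possibly be concentrated on is the equator $\S^i(E_0)\cap e_n^\perp$, which is excluded by hypothesis~(b). By the classical Minkowski existence theorem applied in the $(i+1)$-dimensional space $E_0$, there exists a convex body $K_0\in\K(E_0)$ of full dimension with $S_i^{E_0}(K_0,{{}\cdot{}})=\tilde{\mu}$. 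Since $\tilde{\mu}$ is invariant under $\SO(i)$-rotations about $\R e_n$, Alexandrov's uniqueness theorem combined with a horizontal translation realizes $K_0$ as a body of revolution about $\R e_n$; revolving $K_0$ about $e_n$ then produces a body of revolution $K\in\K(\R^n)$ that is not a segment and satisfies $K|E_0=K_0$.

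To verify $S_i(K,\DD;{{}\cdot{}})=\mu$, I test both sides against an arbitrary zonal $f\in C(\S^{n-1})$. The Kubota formula \cref{eq:zonal_Kubota_polarized} presents $\int_{\S^{n-1}}f\,dS_i(K,\DD;{{}\cdot{}})$ as the Haar (probability) average over $E\in\Gr_{i+1}(\R^n,e_n)$ of $\int_{\S^i(E)}f\,dS_i^E(K|E,{{}\cdot{}})$. Picking $\vartheta_E\in\SO(n-1)$ with $\vartheta_E E_0=E$ and using that $K|E=\vartheta_E K_0$ (from the rotational symmetry of $K$) while $f\circ\vartheta_E=f$ (from the zonality of $f$), this inner integral is constant in $E$ and equals $\int_{\S^i(E_0)}f\,d\tilde{\mu}=\frac{\kappa_i}{\kappa_{n-1}}\int_{\S^{n-1}}f\,d\mu$ by \cref{eq:muvsmuE}; the constants cancel upon substitution. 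Uniqueness up to translation along $\R e_n$ follows by the same scheme: two solutions $K,K'$ have $E_0$-projections that solve the same Minkowski problem, which by Alexandrov differ by a translation in $E_0$, and this translation must lie along $\R e_n$ to preserve the revolution axis.

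The principal technical obstacle is bookkeeping the normalizing constants: the factor $\kappa_i/\kappa_{n-1}$ in \cref{eq:zonal_Kubota_polarized} must be absorbed into the definition of $\tilde{\mu}$, which is legitimate only when $\Gr_{i+1}(\R^n,e_n)$ carries the Haar probability measure. A subsidiary point is the symmetrization step: invariance of $\tilde{\mu}$ under $\SO(i)$ combined with Alexandrov uniqueness forces $\vartheta K_0=K_0+x_\vartheta$ for every $\vartheta\in\SO(i)$, and a short centroid computation shows that horizontally translating $K_0$ by the negative of the horizontal part of its centroid kills all the $x_\vartheta$, realizing $K_0$ as a genuine body of revolution about $\R e_n$.
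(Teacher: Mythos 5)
Your necessity argument is correct, and is actually the argument the paper uses in the more general \cref{thm:Christoffel_Minkowski_zonal_Sufficient}; the paper's own proof of this particular theorem instead reasons via Kubota and Minkowski in the subspace, but both work. The sufficiency and uniqueness arguments follow the paper's structure closely (pull back to a subspace, solve Minkowski there, symmetrize, revolve, apply Kubota). However, there is a genuine gap in the sufficiency direction.

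You assert that a zonal measure on $\S^i(E_0)$ can only be concentrated on the equator among great subspheres. This is false: the measure $\tilde\mu = c(\delta_{e_n} + \delta_{-e_n})$ is zonal and centered, and it is concentrated on $\{e_n,-e_n\}$, which for $i=1$ \emph{is} a great $0$-sphere of $\S^1(E_0)$, and for $i\ge 2$ is contained in every great subsphere passing through the poles. Such a $\tilde\mu$ arises precisely when the original $\mu$ is concentrated on $\{e_n,-e_n\}$; this case satisfies hypothesis~(b) (centered, and clearly not concentrated on the equator), but the classical Minkowski existence theorem does \emph{not} apply, because $\tilde\mu$ fails the non-concentration hypothesis. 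Consequently there is no full-dimensional $K_0\in\K(E_0)$, and your construction breaks. The paper deals with this exceptional case separately: if $\mu$ is supported on $\mathrm{span}\{e_n\}$, one takes $K$ to be an explicit dilate of the disk $\DD$ (a lower-dimensional body of revolution, but not a segment), and only then, in the complementary case, does one argue that $\mu_E$ is not concentrated on \emph{any} great subsphere. You need to split off this degenerate case before invoking Minkowski's theorem; the remainder of your argument, including the cocycle/centroid symmetrization, the Kubota transfer with the $\kappa_i/\kappa_{n-1}$ bookkeeping, and the uniqueness, is sound.
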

	
	\begin{proof}
		First, note that \cref{it:MinkChristDiskEx} clearly implies that $\mu$ is centered. Moreover, from \eqref{eq:zonal_Kubota_polarized}, we see that the measure $S_i(K, \DD; \cdot)$ may be supported on $\S^{n-2}(e_n^\perp)$ only if $S_i^E(K|E, \cdot)$ is supported on $\S^{i-1}(E \cap e_n^\perp)$ for all $E \in \Gr_{i+1}(\R^n, e_n)$. However, as $K|E$ is a body of revolution with axis $e_n$, by Minkowski's theorem, this is impossible for $i > 1$. For $i=1$, this implies that $K|(e_n \vee u) \subset u^\perp$ for all $u \in \S^{n-2}(e_n^\perp)$, which implies that $K$ must be a segment -- a contradiction. Hence, \cref{it:MinkChristDiskEx} implies \cref{it:MinkChristDiskCond}.
		
		Now assume that \cref{it:MinkChristDiskCond} holds, and let $E \in \Gr_{i+1}(\R^n, e_n)$ be arbitrary. If $\mu$ is supported on $\mathrm{span}\{e_n\}$, then choosing 
		$$K = \left( \frac{\mu(\S^{n-1})}{2\kappa_{n-1}} \right)^{\frac{1}{i}} \DD$$		
		satisfies \cref{it:MinkChristDiskEx}, and we are done. Otherwise, by zonality, neither $\mu$ nor $\mu_E=(\bar{\mu})_E$ is concentrated on any great subsphere. Furthermore, by \cref{it:MinkChristDiskCond} and \eqref{eq:muvsmuE}, the measure $\mu_E$ is centered. Therefore, by the solution to the Minkowski problem in the subspace $E$, there exists a full-dimensional convex body of revolution $K_E \in \K(E)$ such that $\mu_E = S_i^E(K_E, \cdot)$.
		
		Note that $\mu_E = \tau_\ast \mu_F$ whenever $E, F \in \Gr_{i+1}(\R^n, e_n)$ and $\tau \in \SO(n-1)$ satisfies $\tau F = E$. Consequently, we have $K_E = \tau K_F$, and so we define a convex body of revolution $K \in \K(\R^n)$ by setting $K|E = K_E$ for all $E \in \Gr_{i+1}(\R^n, e_n)$. Using \eqref{eq:zonal_Kubota_polarized} and \eqref{eq:muvsmuE}, we obtain
		\begin{align*}
			\int_{\S^{n-1}} f(u) \, d\mu(u) &= \int_{\S^{i}(E)} f(v) \, d\mu_E(v) = \int_{\S^{i}(E)} f(v) \, dS_i^E(K|E, v) \\
			&= \int_{\Gr_{i+1}(\R^n,e_n)} \int_{\S^{i}(E)} f(u)\, dS^E_{i}(K|E,u) \, dE = \frac{\kappa_{i}}{\kappa_{n-1}} \int_{\S^{n-1}} f(u)\, dS_{i}(K,\DD;u),
		\end{align*}
		for every zonal function $f \in C(\S^{n-1})$. Scaling $K$ appropriately, we have shown \cref{it:MinkChristDiskEx}.
		
		For the uniqueness of $K$, suppose that bodies of revolution $K, L \in \K(\R^n)$ are given such that $\mu = S_i(K, \DD; {{}\cdot{}}) =S_i(L, \DD;{{}\cdot{}})$. Then, arguing as in the previous step of the proof,
		\begin{align*}
			\int_{\S^{i}(E)} f(v) \, dS_i^E(K|E, v) = \frac{\kappa_{i}}{\kappa_{n-1}} \int_{\S^{n-1}} f(u)\, d\mu(u) = \int_{\S^{i}(E)} f(v) \, dS_i^E(L|E, v),
		\end{align*}
		for all zonal $f \in C(\S^{n-1})$ and $E \in \Gr_{i+1}(\R^n, e_n)$. As $f$ was arbitrary, we conclude that $S_i^E(K|E, \cdot) = S_i^E(L|E, \cdot)$, and the uniqueness in the classical Minkowski problem (in $E$) implies that $K|E$ and $L|E$ are translates of each other. Since both bodies are bodies of revolution, we conclude $K|E = L|E + c e_n$ for some $c \in \R$, and, thus, again by symmetry, $K = L + c e_n$, concluding the proof.
	\end{proof}
	
	\subsection[The adjoint transform of T R]{The adjoint transform of \texorpdfstring{$T_R$}{T R}}\label{sec:adj}
	Next, we study the adjoint transform of $T_R$, which is needed to transfer the result for the disk area measure to general reference bodies of revolution. As a first step, we determine the adjoint of $T_R$ as a map from $C[a_-,a_+] \to C[-1,1]$. Here and throughout, we work on the interval $(a_-,a_+) \subset (-1,1)$, assuming that $-1 \leq a_- < 0 < a_+ \leq 1$. We denote by $\mathcal{M}[a,b]$ the space of signed finite Borel measures on the interval $[a,b]$. Moreover, in the following we denote
	\begin{equation*}
		R(t^\times)=\lim_{\eta\searrow 1}R(\eta t)=\begin{cases}
			R(t^+)=\lim_{s\to t^+} R(s),	&t\geq 0,\\
			R(t^-)=\lim_{s\to t^-} R(s),	&t\leq 0.
		\end{cases}
	\end{equation*}
	
	\begin{lem}\label{lem:T_R_adjoint:I}
		Let $R\in\BV_+(a_-,a_+)$ such that $R$ vanishes outside $[a_-,a_+]$. Then the map $T_R: C[a_-,a_+]\to C[-1,1]$ is a bounded operator and its adjoint $T_R^{\ast} : \mathcal{M}[-1,1]\to \mathcal{M}[a_-,a_+]$ is given by
		\begin{equation*}
			T_{\! R}^\ast \sigma(dt)
			= R(t^\times) \sigma(dt) + \bigg(\int_{[t,\sign t]} \! s\, \sigma(ds)\bigg) \nu_R(dt).
		\end{equation*}
		Moreover, if $\sigma$ is concentrated on $\{0\}$, then so is $T_R^\ast \sigma$.
	\end{lem}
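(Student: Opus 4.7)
The plan is to apply Fubini's theorem to the explicit expression for $T_Rf$. I first verify that $T_R$ is well-defined as a bounded operator $C[a_-,a_+]\to C[-1,1]$: by \cref{lem:continuous_in_D_R:I}, every $f\in C[a_-,a_+]$ lies in $\D_R$, so the restriction $T_R|_I f$ extends continuously to $[a_-,a_+]$, and the prescribed linear extension to $[-1,a_-]$ and $[a_+,1]$ then produces a function in $C[-1,1]$. Boundedness follows directly from \eqref{eq:equivFormTR} together with the finiteness of $\nu_R$ built into the class $\BV_+$.

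For the adjoint formula, the central observation is that under the vanishing hypothesis on $R$, both $R(\cdot^\times)$ and $\nu_R$ are supported in $[a_-,a_+]$---any jumps of $R$ at $a_\pm$ translate into atoms of $\nu_R$ at those points. Consequently, for $t \in (0,1]$ the extended map admits the uniform representation
\begin{align*}
	(T_Rf)(t) = R(t^\times)\, f(t) + t\int_{(0,t]} f(s)\, d\nu_R(s),
\end{align*}
where $f$ is evaluated at $a_+$ for $t>a_+$ and the integration is automatically truncated by the support of $\nu_R$; the analogous formula holds on $[-1,0)$ with integration over $[t,0)$. Splitting $\sigma$ as $\sigma|_{[-1,0)} + \sigma(\{0\})\delta_0 + \sigma|_{(0,1]}$, substituting this into $\int_{[-1,1]} (T_Rf)\, d\sigma$, and applying Fubini to the resulting double integral swaps the order of integration and produces the factor $\int_{[t,\sign t]} s\, \sigma(ds)$ against $\nu_R(dt)$. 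Collecting the three pieces yields the claimed expression for $T_R^\ast\sigma$.

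The concentrated case is immediate: if $\sigma=c\delta_0$, then $R(t^\times)\sigma(dt) = cR(0)\delta_0$, while the second summand vanishes---for $t \neq 0$ the interval $[t,\sign t]$ avoids $0$, and at $t=0$ the requirement $|\nu_R|(\{0\})=0$ from the definition of $\BV_0(-1,1)$ kills any remaining contribution. The principal subtlety throughout lies in the endpoint behavior at $a_\pm$: the formula with $R(t^\times)$ (the one-sided limit directed towards $0$) together with the atomic part of $\nu_R$ at $a_\pm$ is exactly the mechanism by which mass of $\sigma$ lying in $[-1,a_-)\cup(a_+,1]$---which only interacts with the linearly extended part of $T_Rf$---is absorbed into boundary atoms of the adjoint measure on $[a_-,a_+]$.
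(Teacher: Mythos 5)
Your proof is correct and follows essentially the same route as the paper's: verify boundedness from \eqref{eq:equivFormTR} and the finiteness of $\nu_R$, split the domain of $\sigma$ around $\{0\}$, and apply Fubini to $\int T_R f\, d\sigma$ using the representation $T_Rf(t)=R(t^\times)f(t)+t\int_{(0,t]}f\,d\nu_R$ (which the paper tacitly extends past $a_\pm$ and justifies via \cref{ex:Tind}). The endpoint bookkeeping you flag — jumps of $R$ at $a_\pm$ becoming atoms of $\nu_R$ that absorb the mass of $\sigma$ outside $[a_-,a_+]$ — is precisely what makes the Fubini computation close, and your treatment of the $\sigma=c\delta_0$ case via $|\nu_R|(\{0\})=0$ matches the paper's.
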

	\begin{proof}
		First, let $f \in C[a_-,a_+]$. Then, as $R \in \BV_+(a_+,a_+)$, $|\nu_R|((a_-,a_+)) < \infty$ and by \cref{eq:equivFormTR}
		\begin{align*}
			|T_R f(t)| \leq |R(0)f(t)| + \int_{(0,t]} |sf(t) - tf(s)| d|\nu_R|(s) \leq \left(|R(0)| + 2|\nu_R|((a_-,a_+))\right) \sup_{[a_-,a_+]}|f|,
		\end{align*}
		whenever $t \in (a_-,a_+)$. For $t \geq a_+$ and $t\leq a_-$, \cref{ex:Tind} shows that $T_R f(t)$ is linear and, thus, can also be bounded in terms of $\sup_{s \in [a_-,a_+]}|f(s)|$. Consequently, $T_R:C[a_-,a_+] \to C[-1,1]$ is a bounded operator and has a well-defined adjoint $T_R^\ast$.
		
		Next, take some $f\in C[a_-,a_+]$ and $\sigma\in \mathcal{M}[-1,1]$. Then
		\begin{align*}
			\int_{[a_-,a_+]} f(t)\, T_{\! R}^\ast\sigma(dt)
			= \int_{[-1,1]} T_{\! R} f(t)\, \sigma(dt).
		\end{align*}
		We split the interval $[-1,1]=[-1,0)\cup [0,1]$. By Fubini's theorem, we have that
		\begin{align*}
			&\int_{[0,1]} T_{\! R} f(t)\, \sigma(dt)
			= \int_{[0,1]} R(t^+)f(t)\, \sigma(dt) + \int_{[0,1]} t \int_{[0,t]} f(s)\, \nu_R(ds)\, \sigma(dt) \\
			&\qquad = \int_{[0,1]} R(t^+)f(t)\, \sigma(dt) + \int_{[0,1]} \int_{[s,1]} t\, \sigma(dt)\, f(s)\, \nu_R(ds),
		\end{align*}
		and, since $R$ and $\nu_R$ vanish outside $[a_-,a_+]$, the desired formula holds on $[0,1]$. The argument for $[-1,0)$ is analogous.
		
		Finally, recall that $\nu_R$ is a signed Radon measure on $(-1,1)$ and $|\nu_R|(\{0\})=0$, and thus, $T_R^\ast\delta_0=R(0)\delta_0$. This immediately yields the ``moreover'' part of the statement.
	\end{proof}
	
	The adjoint $T_R^\ast$ now allows us to relate in particular (partial) integrals over linear functions. This was needed when determining the local behavior of the mixed area measures in \cref{sec:fireyEst} and will be needed for describing the behavior at the boundary points of the interval.
	
	\begin{lem}\label{lem:T_R_adjoint:I:caps}
		Let $R\in\BV_+(a_-,a_+)$ such that $R$ vanishes outside $[a_-,a_+]$, let $\sigma\in \mathcal{M}[-1,1]$ and denote $\mu=T_R^\ast \sigma\in\mathcal{M}[a_-,a_+]$. Then for all $t\in [a_-,a_+]$,
		\begin{equation*}
			\int_{(t,a_{\sign t}]} s\, \mu(ds)
			= R(t^\times) \int_{(t,\sign t]}s\, \sigma(ds).
		\end{equation*}
		Moreover,
		\begin{equation*}
			\lim_{t\nearrow a_+} \frac{\mu((t,a_+])}{R(t)}
			= \int_{[a_+,1]} \frac{s}{a_+}\, \sigma(ds)
			\qquad\text{and}\qquad
			\lim_{t\searrow a_-} \frac{\mu([a_-,t))}{R(t)}
			= \int_{[-1,a_-]} \frac{s}{a_-}\, \sigma(ds).
		\end{equation*}
	\end{lem}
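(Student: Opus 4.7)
The plan is to derive the first identity directly from the explicit description of $\mu = T_R^\ast \sigma$ provided by \cref{lem:T_R_adjoint:I}, and to obtain the limits by sandwiching $\mu((t,a_+])$ (resp.\ $\mu([a_-,t))$) between integrals of the linear function $s \mapsto s$ against $\mu$.

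For $t \in [0,a_+]$ I would substitute the formula $\mu(ds) = R(s^+)\sigma(ds) + F(s)\nu_R(ds)$ with $F(s) = \int_{[s,1]} r\,\sigma(dr)$ into $\int_{(t,a_+]} s\,\mu(ds)$ and apply Fubini to the $\nu_R$-summand to obtain $\int_{[t,1]} r\,\sigma(dr)\int_{(t,\min(r,a_+)]} s\,\nu_R(ds)$. By \eqref{eq:fundthmCalcBV1} the inner integral equals $R(t^+) - R(\min(r,a_+)^+)$, and splitting the $r$-integration at $a_+$ and using that $R$ vanishes outside $[a_-,a_+]$ (so $R(a_+^+) = 0$), the $\nu_R$-summand collapses to
\begin{equation*}
R(t^+)\int_{(t,1]} r\,\sigma(dr) \;-\; \int_{(t,a_+]} r\,R(r^+)\,\sigma(dr).
\end{equation*}
The second term cancels the $R\sigma$-summand exactly, leaving $R(t^+)\int_{(t,1]} s\,\sigma(ds)$. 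For $t \in [a_-,0]$ the argument is symmetric, provided the expression $\int_{[t,\sign t]}$ in \cref{lem:T_R_adjoint:I} is read as an oriented integral (so for $t<0$ it equals $-\int_{[-1,t]}$, as one may verify on the atomic test measure $\sigma = \delta_{-1}$); the analogous Fubini cancellation then yields the claim.

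For the limit as $t \nearrow a_+$, use the trivial sandwich $t\,\mu((t,a_+]) \le \int_{(t,a_+]} s\,\mu(ds) \le a_+\,\mu((t,a_+])$, divide by $R(t) > 0$, and substitute the first identity to obtain
\begin{equation*}
\frac{R(t^+)}{a_+\,R(t)}\int_{(t,1]} s\,\sigma(ds) \;\le\; \frac{\mu((t,a_+])}{R(t)} \;\le\; \frac{R(t^+)}{t\,R(t)}\int_{(t,1]} s\,\sigma(ds).
\end{equation*}
Since $\nu_R \ge 0$ forces $R$ to be decreasing on $[0,a_+)$, we have $R(t^+) \le R(t)$, with equality at all but the (at most countable) jump points of $R$; hence $R(t^+)/R(t) \to 1$ along continuity points of $R$, and, combined with $\int_{(t,1]} s\,\sigma(ds) \to \int_{[a_+,1]} s\,\sigma(ds)$ by monotone convergence, both envelopes converge to $\tfrac{1}{a_+}\int_{[a_+,1]} s\,\sigma(ds)$. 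The case $t \searrow a_-$ is analogous.

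The main obstacle is the careful sign bookkeeping in the negative-$t$ case: the oriented-interval convention for $\int_{[t,\sign t]}$ flips the sign relative to $\int_{[-1,t]}$, and this interacts with the one-sided limits $R(t^\pm)$ at the jumps of $R$, so these must be threaded consistently through the Fubini cancellation. Once the conventions are fixed, the rest is a direct application of \cref{lem:T_R_adjoint:I} and \eqref{eq:fundthmCalcBV1}.
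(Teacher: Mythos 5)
Your derivation of the first identity follows essentially the same route as the paper: substitute the explicit formula for $\mu$ from \cref{lem:T_R_adjoint:I}, apply Fubini to the $\nu_R$-summand, evaluate the inner integral via \cref{eq:fundthmCalcBV1}, split the $\sigma$-integration at $a_+$, and use $R(a_+^+)=0$ together with the cancellation against the $R\sigma$-summand. Your reading of $\int_{[t,\sign t]}$ at negative $t$ as an oriented integral is also the intended one.

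The sandwich argument for the ``moreover'' part, however, has a genuine gap: the inequalities $t\,\mu((t,a_+]) \le \int_{(t,a_+]} s\,\mu(ds) \le a_+\,\mu((t,a_+])$ hold only for non-negative $\mu$, but the lemma is stated for arbitrary signed $\sigma\in\mathcal{M}[-1,1]$ and is invoked in that generality (in the necessity direction of \cref{lem:T_R_adjoint_inverse:I}), so $\mu=T_R^\ast\sigma$ may be signed. For a signed $\mu$ the sandwich fails outright: a negative point mass at some $s_0\in(t,a_+)$ gives $\mu((t,a_+])=-1$ and $\int_{(t,a_+]}s\,\mu(ds)=-s_0$, and $-t\le -s_0\le -a_+$ is false. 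The paper's proof instead compares $\mu((t,a_+])$ with $\frac{1}{a_+}\int_{(t,a_+]}s\,\mu(ds)$ and sends the difference to zero; to make that rigorous for signed $\mu$ one bounds the difference by $\frac{a_+-t}{a_+}\cdot\frac{\abs{\mu}((t,a_+])}{R(t)}$ and then controls $\abs{\mu}((t,a_+])/R(t)$ uniformly near $a_+$ from the explicit formula for $\mu$, using for instance $\nu_R((t,a_+])\le R(t^+)/t$. Your proposal would need such a total-variation estimate to cover the signed case; as written it only establishes the limits under the extra hypothesis $\sigma\ge 0$.
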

	\begin{proof}
		By \cref{lem:T_R_adjoint:I} and Fubini's theorem, for all $t\in [0,a_+]$,
		\begin{align*}
			&\int_{(t,a_{+}]} s\, \mu(ds)
			= \int_{(t,a_+]} s R(s^+)\, \sigma(ds) + \int_{(t,a_+]} \int_{[s,1]} x\, \sigma(dx)\, s\, \nu_R(ds) \\
			&\qquad = \int_{(t,1]} s R(s^+)\, \sigma(ds) + \int_{(t,1]} \int_{(t,x]} s\, \nu_R(ds)\, x\, \sigma(dx).
		\end{align*}
		Here, we used that $R$ and $\nu_R$ vanish on $(a_+, 1]$. Next, recall that, by \cref{eq:fundthmCalcBV1},
		\begin{equation*}
			\int_{(t,x]} s\, \nu_R(ds)
			= R(t^+)-R(x^+).
		\end{equation*}
		Plugging this into the expression above yields the first part of the statement for $t\in [0,a_+]$. The argument for $t\in [a_-,0]$ is analogous.
		
		For the ``moreover'' part of the lemma, observe that
		\begin{equation*}
			\lim_{t\nearrow a_+} \frac{\mu((t,a_+])}{R(t)}
			= \lim_{t\nearrow a_+} \frac{1}{R(t)} \int_{(t,a_+]} \frac{s}{a_+}\, \mu(ds)
			= \lim_{t\nearrow a_+} \int_{(t,1]} \frac{s}{a_+}\, \sigma(ds)
			= \int_{[a_+,1]} \frac{s}{a_+}\, \sigma(ds).
		\end{equation*}
		The argument for the limit where $t\searrow a_-$ is analogous.	
	\end{proof}
	
	In the proof of \cref{mthm:Christoffel_Minkowski_zonal}, we want to transfer the solution for the disk as reference body to arbitrary reference bodies of revolution. As was indicated in the introduction, this is equivalent to determine which (non-negative) measures $\mu$ lie in the image of the adjoint map $T_R^\ast$. The next proposition provides a complete description of this image.
	
	\begin{prop}\label{lem:T_R_adjoint_inverse:I}
		Let $R\in \BV_+(a_-,a_+)$ such that $R$ vanishes outside $[a_-,a_+]$, and $\mu\in\mathcal{M}[a_-,a_+]$. Then $\mu=T_R^\ast \sigma$ for some $\sigma\in\mathcal{M}[-1,1]$ if and only if
		\begin{equation*}
			\frac{1}{R(t^\times)} \mu(dt) + \bigg(\int_{[t,a_{\sign t}]} \!s\, \mu(ds)\bigg) \nu_{\frac 1R}(dt)
		\end{equation*}
		defines a finite signed Radon measure on $(a_-,a_+)$, and the limits
		\begin{equation*}
			\lim_{t\nearrow  a_+} \frac{\mu((t,a_+])}{R(t)}
			\quad\text{and}\quad
			\lim_{t\searrow a_-} \frac{\mu([a_-,t))}{R(t)}
			\qquad\text{exist and are finite}.
		\end{equation*}
		In this case, a suitable $\sigma_0\in\mathcal{M}[-1,1]$ in the preimage of $\mu$ is given by
		\begin{align}\label{eq:lem:T_R_adjoint_inverse:I}
			\begin{split}
				\sigma_0(dt)
				&= \indf_{(a_-,a_+)}(t) \left(\frac{1}{R(t^\times)} \mu(dt) + \bigg(\int_{[t,a_{\sign t}]} \!s\, \mu(ds)\bigg) \nu_{\frac 1R}(dt)\right) \\
				&\qquad  + \bigg(\lim_{s\nearrow  a_+} \frac{\mu((s,a_+])}{R(s)}\bigg) \delta_{a_+}(dt) + \bigg(\lim_{s\searrow a_-} \frac{\mu([a_-,s))}{R(s)}\bigg) \delta_{a_-}(dt),
			\end{split}
		\end{align}
		and every $\sigma$ satisfying $\mu = T_R^\ast \sigma$ is equal to $\sigma_0$ on $(a_-,a_+)$.
	\end{prop}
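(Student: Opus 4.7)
I will prove both directions by inverting the decomposition of $T_R^\ast$ supplied by \cref{lem:T_R_adjoint:I}; the forward argument will at the same time give the uniqueness on $(a_-,a_+)$. The algebraic heart of the argument is the identity
\begin{equation*}
\nu_{1/R}(dt) \;=\; -\frac{\nu_R(dt)}{R(t^-)\,R(t^+)},
\end{equation*}
provided by \cref{BV_0_algebraic}\cref{BV_0_algebraic:inverse}, together with the jump relation $t\,\nu_R(\{t\}) = R(t^-) - R(t^+)$ that is immediate from \eqref{eq:fundthmCalcBV1}.

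For the direction $(\Rightarrow)$, suppose $\mu = T_R^\ast\sigma$. \cref{lem:T_R_adjoint:I} gives $\mu(dt) = R(t^\times)\sigma(dt) + F(t)\nu_R(dt)$ with $F(t):=\int_{[t,\sign t]}s\,\sigma(ds)$. On $(a_-,a_+)$, where $R(t^\times)>0$, I solve for $\sigma$; the identity above turns $\nu_R(dt)/R(t^\times)$ into $-R(t^\mp)\,\nu_{1/R}(dt)$, where $R(t^\mp)$ denotes $R(t^-)$ for $t>0$ and $R(t^+)$ for $t<0$. It then remains to show that $F(t)R(t^\mp) = H(t)$, where $H(t) := \int_{[t,a_{\sign t}]} s\,\mu(ds)$. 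This is the main algebraic step: I split $F(t)$ into its atom $t\,\sigma(\{t\})$ and the remainder, apply \cref{lem:T_R_adjoint:I:caps} to rewrite the non-atomic piece in terms of $\mu$, and match atoms via $\mu(\{t\}) = R(t^\times)\sigma(\{t\}) + F(t)\nu_R(\{t\})$ combined with the jump formula; the $\sigma(\{t\})$ terms cancel and the identity follows. This forces $\sigma|_{(a_-,a_+)}$ to coincide with the interior part of $\sigma_0$, proving uniqueness on $(a_-,a_+)$ and the finite-Radon-measure condition; the two boundary limits are given directly by the \emph{moreover} part of \cref{lem:T_R_adjoint:I:caps}.

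For $(\Leftarrow)$, define $\sigma_0$ by \eqref{eq:lem:T_R_adjoint_inverse:I}; the hypotheses guarantee $\sigma_0\in\mathcal{M}[-1,1]$. I compute $T_R^\ast\sigma_0$ via \cref{lem:T_R_adjoint:I} and show it equals $\mu$. The first summand $R(t^\times)\sigma_0(dt)$ reduces on $(a_-,a_+)$ to $\mu(dt) - \frac{H(t)}{R(t^\mp)}\nu_R(dt)$ by the same identity. For the second summand $F_0(t):=\int_{[t,\sign t]}s\,\sigma_0(ds)$, I split $\sigma_0$ into its interior density and its boundary Diracs, apply Fubini, and use the $1/R$-analog of \eqref{eq:fundthmCalcBV1}, namely $\int_{[t,x]} s\,\nu_{1/R}(ds) = \tfrac{1}{R(t^-)} - \tfrac{1}{R(x^+)}$; the interior contribution then telescopes to $H(t)/R(t^\mp)$ up to a residue proportional to $\mu(\{a_{\sign t}\})/R(a_{\sign t}^\mp)$, which is cancelled exactly by the Dirac coefficient $\lim_{s\to a_{\sign t}}\mu((s,a_{\sign t}])/R(s)$. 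Hence the two $\nu_R$-terms cancel and $T_R^\ast\sigma_0 = \mu$ on $(a_-,a_+)$. Outside $[a_-,a_+]$ both sides vanish since $R(t^\times)=0$ and $\nu_R,\mu$ are supported in $[a_-,a_+]$; at the atoms $\{a_\pm\}$ the relation $R(a_\pm^\times)=0$ kills the Dirac contribution to the first summand and the atoms of $\nu_R$ (if any) match those of $\mu$ by the same atom-balance identity.

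The hardest part will be the bookkeeping of coincident atoms of $\sigma,\mu$, and $\nu_R$: the key identities $F(t)R(t^\mp) = H(t)$ and $F_0(t) = H(t)/R(t^\mp)$ are measure-theoretic, not pointwise, and they hold only because all the atomic contributions conspire via the jump formula. A delicate subcase is when $R(a_\pm^\mp) = 0$, in which the heuristic coefficient $\mu(\{a_\pm\})/R(a_\pm^\mp)$ is of the form $0/0$; here the hypothesized existence of the limits forces $\mu(\{a_\pm\}) = 0$ and the Dirac coefficient must be read off from the limit expression in \eqref{eq:lem:T_R_adjoint_inverse:I} rather than from a pointwise ratio.
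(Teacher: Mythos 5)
Your forward direction is sound and is actually a cleaner route than the paper's: you solve the identity $\mu = R(t^\times)\sigma + F(t)\,\nu_R$ directly for $\sigma$ on $(a_-,a_+)$ using $\nu_{1/R} = -\nu_R/(R(\cdot^-)R(\cdot^+))$ and atom-matching, whereas the paper integrates $T_{1/R}g$ against $\mu$ for compactly supported test functions $g$ and reads off the same formula. Both then quote \cref{lem:T_R_adjoint:I:caps} for the boundary coefficients. So far so good, and the identity $F(t)R(t^\mp) = H(t)$ does check out once one uses the jump formula $t\,\nu_R(\{t\}) = R(t^-)-R(t^+)$.

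The backward direction, however, has a genuine gap in exactly the delicate subcase you flag, and your proposed repair does not close it. When $\lim_{s\to a_+}R(s)=0$, the individual measures $\frac{1}{R(s^+)}\mu(ds)$ and $H(s)\,\nu_{1/R}(ds)$ can each have \emph{infinite} mass near $a_+$ --- only their sum $\rho$ is finite by hypothesis. Your computation of $F_0(t)=\int_{[t,a_+)}s\,\rho(ds)$ splits $\rho$ into these two pieces and applies Fubini to the $\nu_{1/R}$ piece; that step is not justified, and in fact gives a wrong answer. A concrete check: take $R(s)=1-s$ and $\mu(ds)=ds$ near $s=1$, so $L=1$. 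Then $\rho(ds)=-\tfrac{1}{2s}\,ds$ is perfectly finite near $1$ and a direct computation gives $\int_{[t,1)}s\,\rho(ds)=-(1-t)/2$, hence $F_0(t)=\tfrac{1+t}{2}=H(t)/R(t^-)$ as required; but the split version $\int_{[t,1)}\tfrac{s}{1-s}\,ds + \int_{[t,1)}s\,H(s)\,\nu_{1/R}(ds)$ is $\infty-\infty$. The correct procedure --- which is what the paper implicitly does through the truncation $T_{\indf_{(-1,a)}}T_{1/R}g$ and the limit $a\nearrow a_+$ --- is to compute $\int_{[t,a]}s\,\rho(ds)$ for $a<a_+$ first (where both pieces are finite and Fubini applies), obtain $\int_{[t,a]}s\,\rho(ds) = H(t)/R(t^-) - \tfrac{1}{R(a^+)}\int_{(a,a_+]}x\,\mu(dx)$, and then let $a\nearrow a_+$, using that $\tfrac{1}{R(a^+)}\int_{(a,a_+]}x\,\mu(dx) \to a_+ L$. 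So the residue you describe is $-a_+L$, not ``proportional to $\mu(\{a_+\})/R(a_+^-)$'' (which is $0/0$ in this subcase), and reinterpreting the Dirac coefficient alone does not fix the invalid Fubini. The argument is salvageable, but as written it is incomplete exactly where $R$ degenerates at the endpoint.
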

	\begin{proof}
		First, we take some function $g\in C[a_-,a_+]$ and some $a_-<a'<0<a<a_+$. Then $\frac 1R \in \BV_+(a',a)$ and, by \cref{lem:T_R_bijective:I} and \cref{lem:continuous_in_D_R:I}, $T_{\frac 1R}g\in C[a',a]$, and thus,
		\begin{align*}
			\int_{[0,a]} T_{\frac 1R}g \,d\mu
			= \int_{[0,a]} \left[\frac{g(t)}{R(t^+)} + t\int_{(0,t]}g(s) \, \nu_{\frac 1R}(ds)\right]  \mu(dt).
		\end{align*}
		By Fubini's theorem and since $\nu_{ \frac 1R}(\{0\}) = 0$, we have that
		\begin{align*}
			&\int_{[0,a]} t\int_{(0,t]}g(s) \, \nu_{\frac 1R}(ds) \, \mu(dt)
			= \int_{[0,a]} g(t) \int_{[t,a]} s\, \mu(ds) \, \nu_{\frac 1R}(dt) \\
			&\qquad = \int_{[0,a]} g(t) \int_{[t,a_+]} s\, \mu(ds) \, \nu_{\frac 1R}(dt) - \int_{(a,a_+]} s\, \mu(ds) \int_{[0,a]} g(t)  \, \nu_{\frac 1R}(dt).
		\end{align*}
		Combining these equations, we obtain
		\begin{align}\label{eq:proof:lem:T_R_adjoint_inverse:I}
			\begin{split}
				\int_{[0,a]} T_{\frac 1R}g \, d\mu 
				&= \int_{[0,a]} g(t) \bigg[ \frac{1}{R(t^+)}\mu(dt) + \int_{[t,a_+]}s\, \mu(ds)\, \nu_{\frac 1R}(dt) \bigg] \\
				&\qquad - \int_{(a,a_+]} \! s\, \mu(ds) \int_{[0,a]} g(t)  \, \nu_{ \frac 1R}(dt).
			\end{split}
		\end{align}
		
		Suppose now that $\mu=T_R^\ast \sigma$ for some $\sigma\in\mathcal{M}[-1,1]$. Take some function $g\in C_c(a',a)$ with $a_-<a'<0<a<a_+$. Then, $T_{\frac{1}{R}}g \in C[a_-,a_+]$ and for $t > a$
		\begin{align*}
			T_{\frac{1}{R}}g(t) = t \int_{(0,a]}g(s) d\nu_{ \frac 1R}(s),
		\end{align*}
		and similar for $t<a'$, that is, $T_{\frac{1}{R}}g$ is linear outside $[a',a]$. Consequently,
		\begin{align*}
			\int_{[0,a]} T_{\frac 1R}g \, d\mu + \int_{(a,a_+]} \! s\, \mu(ds) \int_{[0,a]} g(t)  \, \nu_{ \frac 1R}(dt) = \int_{[0,a]} T_{\frac 1R}g \, d\mu + \int_{(a,a_+]}T_{\frac{1}{R}}g\, d\mu,
		\end{align*}
		so repeating the argument for $[a',0]$, we obtain by \cref{eq:proof:lem:T_R_adjoint_inverse:I},
		\begin{align*}
			\int_{[a_-,a_+]}T_{\frac{1}{R}}g\, d\mu = \int_{[a',a]} g(t) \bigg[ \frac{1}{R(t^\times)}\mu(dt) + \int_{[t,a_{\sign t }]}s\, \mu(ds)\, \nu_{\frac 1R}(dt) \bigg],
		\end{align*}
		and
		\begin{align*}
			\int_{[a_-,a_+]}T_{\frac{1}{R}}g\, d\mu = \int_{[-1,1]} T_R T_{\frac{1}{R}}g d\sigma = \int_{[-1,1]} g d\sigma.
		\end{align*}
		Hence, letting $a \nearrow a_+$ and $a' \searrow a_-$, we obtain that
		\begin{equation}\label{eq:proofTradjInvIntNec}
			\indf_{(a_-,a_+)}(t)\sigma(dt)
			= \indf_{(a_-,a_+)}(t) \left(\frac{1}{R(t^\times)} \mu(dt) + \bigg(\int_{[t,a_{\sign t}]} \!s\, \mu(ds)\bigg) \nu_{\frac 1R}(dt)\right).
		\end{equation}
		In particular, the expression on the right hand side is a finite signed Radon measure on $(a_-,a_+)$. In combination with \cref{lem:T_R_adjoint:I:caps}, this shows that the conditions on $\mu$ in the statement of the lemma are necessary. Moreover, \cref{lem:T_R_adjoint:I:caps} and \cref{eq:proofTradjInvIntNec} show that \cref{eq:lem:T_R_adjoint_inverse:I} defines a suitable preimage $\sigma$.
		
		Conversely, suppose now that $\mu$ satisfies the conditions stated in the lemma and define $\sigma$ to be the right hand side of \cref{eq:lem:T_R_adjoint_inverse:I}.		
		In order to show that $\mu=T_R^\ast \sigma$, we need to show that
		\begin{align*}
			\int_{[a_-,a_+]} f d\mu = \int_{[-1,1]} T_R f d\sigma = \int_{[a_-,a_+]} T_R f d\sigma
		\end{align*}
		holds for all $f \in C[a_-,a_+]$. To this end, fix $f \in C[a_-,a_+]$ and write $g = T_R f$. Then, by \cref{lem:T_R_bijective:I} and \cref{lem:continuous_in_D_R:I}, $g \in C[a_-,a_+]$ and $f = T_{\frac{1}{R}}g$. Hence, we need to show that
		\begin{align*}
			\int_{[a_-,a_+]} T_{\frac{1}{R}}g d\mu = \int_{[a_-,a_+]} g d\sigma.
		\end{align*}
		As by \cref{ex:Tind}, $T_{\indf_{(-1,a)}}T_{\frac{1}{R}}g$ converges uniformly to $T_{\frac{1}{R}}g$ on $[0,a_+]$ as $a \nearrow a_+$, \eqref{eq:proof:lem:T_R_adjoint_inverse:I} implies that
		\begin{align*}
			\int_{[0,a_+]} T_{\frac{1}{R}}g d\mu &= \lim_{a \nearrow a_+}\int_{[0,a_+]}T_{\indf_{(-1,a)}}T_{\frac{1}{R}}g d\mu\\
			&=\lim_{a \nearrow a_+} \left(\int_{[0,a]}T_{\frac{1}{R}}g d\mu + \int_{(a,a_+]}s d\mu(s) \frac{T_{\frac{1}{R}}g(a)}{a} \right)\\
			&=\int_{[0,a_+)} g(t) d\sigma + \lim_{a \nearrow a_+}\int_{(a,a_+]}s d\mu(s) \left( -\int_{[0,a]} g(t)  \, \nu_{ \frac 1R}(dt) + \frac{T_{\frac{1}{R}}g(a)}{a}\right).
		\end{align*}
		Plugging in the definition of $T_{\frac{1}{R}}g$ inside the brackets,
		\begin{align*}
			-\int_{[0,a]} g(t)  \, \nu_{ \frac 1R}(dt) + \frac{T_{\frac{1}{R}}g(a)}{a} = \frac{g(a)}{a R(a^+)},
		\end{align*}
		and using the definition of $\sigma$ in \eqref{eq:lem:T_R_adjoint_inverse:I} then yields
		\begin{align*}
			\int_{[0,a_+]} T_{\frac{1}{R}}g d\mu &= \int_{[0,a_+)} g(t) d\sigma + \lim_{a \nearrow a_+}\frac{\int_{(a,a_+]}s d\mu(s)}{a R(a^+)} g(a)\\
			&=\int_{[0,a_+)} g(t) d\sigma + \left(\lim_{a \nearrow a_+}\frac{\mu((a,a_+])}{R(a^+)}\right) g(a_+) = \int_{[0,a_+]} g(t) d\sigma.
		\end{align*}
		Repeating the argument for $[a_-,0]$ then yields the claim $\mu = T_R^\ast \sigma$.
	\end{proof}
	
	For the operator $\widehat{T}_R f = T_R f + c f(0) |\cdot|$ a similar statement follows easily, only altering the condition at $t=0$. Indeed, we can write the adjoint $\widehat{T}_{R}^\ast$ in terms of the adjoint $T_R^\ast$.
	\begin{lem}\label{lem:T_R_vertSeg_adjoint:I}
		Let $R\in\BV_+(a_-,a_+)$ such that $R$ vanishes outside $[a_-,a_+]$. Then the map $\widehat{T}_R: C[a_-,a_+]\to C[-1,1]$ is a bounded operator and its adjoint $\widehat{T}_R^{\ast} : \mathcal{M}[-1,1]\to \mathcal{M}[a_-,a_+]$ is given by
		\begin{equation*}
			\widehat{T}_{\! R}^\ast \sigma 
			= T_R^\ast \sigma+ c \int_{[-1,1]} |s| d\sigma(s) \delta_0.
		\end{equation*}
		Moreover, if $\sigma$ is concentrated on $\{0\}$, then so is $\widehat{T}_R^\ast \sigma$.
	\end{lem}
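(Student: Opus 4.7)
The plan is to reduce \cref{lem:T_R_vertSeg_adjoint:I} to the already established \cref{lem:T_R_adjoint:I} by exploiting the additive decomposition $\widehat{T}_R f = T_R f + c f(0)\lvert {}\cdot{} \rvert$. First I would check boundedness: the map $T_R : C[a_-,a_+] \to C[-1,1]$ is bounded by \cref{lem:T_R_adjoint:I}, and the map $f \mapsto c f(0)\lvert {}\cdot{}\rvert$ is clearly bounded with norm at most $\lvert c \rvert$, so their sum is a bounded operator.

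Next I would compute the adjoint by direct pairing. For $f \in C[a_-,a_+]$ and $\sigma \in \mathcal{M}[-1,1]$,
\begin{align*}
    \int_{[-1,1]} \widehat{T}_R f \, d\sigma
    &= \int_{[-1,1]} T_R f \, d\sigma + c f(0) \int_{[-1,1]} \lvert s \rvert \, d\sigma(s) \\
    &= \int_{[a_-,a_+]} f \, d(T_R^\ast \sigma) + \int_{[a_-,a_+]} f(t) \, d\Bigl( c \int_{[-1,1]} \lvert s \rvert \, d\sigma(s)\, \delta_0 \Bigr)(t),
\end{align*}
where the first integral is rewritten via the definition of $T_R^\ast$ from \cref{lem:T_R_adjoint:I}, and the second uses that integration against $\delta_0$ evaluates $f$ at $0$. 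Since this identity holds for every $f \in C[a_-,a_+]$, the uniqueness of adjoints (i.e., Riesz representation) gives the claimed formula for $\widehat{T}_R^\ast \sigma$.

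For the concluding statement, suppose $\sigma$ is concentrated on $\{0\}$, say $\sigma = \alpha \delta_0$. Then $\int_{[-1,1]} \lvert s \rvert \, d\sigma(s) = 0$, so the additional Dirac term vanishes and $\widehat{T}_R^\ast \sigma = T_R^\ast \sigma$, which is concentrated on $\{0\}$ by the ``moreover'' clause of \cref{lem:T_R_adjoint:I}. I do not anticipate any real obstacle here; the whole argument is essentially an application of linearity of the adjoint together with the previously established formula for $T_R^\ast$.
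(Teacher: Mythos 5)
Your argument is correct and is essentially the same as the paper's: decompose $\widehat{T}_R f = T_R f + c f(0)\lvert{}\cdot{}\rvert$, pair against $\sigma$, and invoke \cref{lem:T_R_adjoint:I}. The paper states this more tersely but there is no substantive difference.
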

	\begin{proof}
		It follows directly from \cref{lem:T_R_adjoint:I} that $\widehat{T}_R$ is a bounded operator. To determine its adjoint, note that
		\begin{align*}
			\int_{[a_-,a_+]} f d(\widehat{T}_R^\ast \sigma) = \int_{[-1,1]} (T_R f)(s) + c f(0) |s| d\sigma(s),
		\end{align*}
		which immediately yields the remaining claims.
	\end{proof}

	Similarly, an analogous statement to \cref{lem:T_R_adjoint:I:caps} is derived. For the full interval, it reads
	\begin{lem}\label{lem:T_Rhat_adjoint:I:AbsVal}
		Let $R\in\BV_+(a_-,a_+)$ such that $R$ vanishes outside $[a_-,a_+]$, let $\sigma\in \mathcal{M}[-1,1]$ and denote $\mu=\widehat{T}_R^\ast \sigma\in\mathcal{M}[a_-,a_+]$. Then 
		\begin{align}
			\int_{[a_-,a_+]} |\cdot | d \mu = R(0) \int_{[-1,1]} |\cdot| d\sigma.
		\end{align}
	\end{lem}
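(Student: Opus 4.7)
The plan is to reduce the identity to a computation of $T_R$ applied to the test function $|\cdot|$, exploiting the fact that the extra Dirac at $0$ coming from $\widehat{T}_R^\ast$ will not contribute. More precisely, by \cref{lem:T_R_vertSeg_adjoint:I}, we have
\begin{equation*}
    \widehat{T}_R^\ast \sigma = T_R^\ast \sigma + c\Big(\int_{[-1,1]} |s|\, d\sigma(s)\Big) \delta_0,
\end{equation*}
and since $|t|$ vanishes at $t=0$, integrating $|t|$ against $\widehat{T}_R^\ast \sigma$ is the same as integrating against $T_R^\ast \sigma$. By the adjointness property of $T_R$ (\cref{lem:T_R_adjoint:I}), the claim then reduces to showing that $T_R(|\cdot|)(s) = R(0)|s|$ for every $s \in [-1,1]$.

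The key step will be to verify this identity for $s \in (0, a_+)$ by direct computation using the defining formula \cref{eq:defTR}: noting that $|x|=x$ on the domain of integration, one has $T_R(|\cdot|)(s) = s R(s^+) + s \int_{(0,s]} x\, d\nu_R(x)$. Applying the fundamental theorem of calculus \cref{eq:fundthmCalcBV1}, the inner integral evaluates to $R(0^+) - R(s^+) = R(0) - R(s^+)$, where the last equality uses continuity of $R$ at $0$ (\cref{lem:BV0diffCont}). The two terms cancel, leaving $T_R(|\cdot|)(s) = sR(0) = R(0)|s|$. The symmetric computation on $s \in (a_-, 0)$ yields the same formula, and continuity through $s=0$ is automatic since both sides vanish there.

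For $s \in [-1, a_-] \cup [a_+, 1]$, the extended definition of $T_R$ prescribes linear interpolation from the endpoint values of $T_R$ on $(a_-,a_+)$, so $T_R(|\cdot|)(s) = \frac{T_R(|\cdot|)(a_\pm)}{a_\pm}s = R(0)s$ for $s \geq a_+$ and $T_R(|\cdot|)(s) = R(0)(-s) = R(0)|s|$ for $s \leq a_-$, matching the interior formula. Combining these cases gives $T_R(|\cdot|) = R(0)|\cdot|$ identically on $[-1,1]$, and integrating against $\sigma$ yields the claim. I do not expect any real obstacle here: the computation is short and relies only on lemmas already established.
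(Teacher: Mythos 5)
Your proposal is correct and takes essentially the same approach as the paper: reduce via \cref{lem:T_R_vertSeg_adjoint:I} using that $|\cdot|$ vanishes at $0$, then show $T_R(|\cdot|) = R(0)|\cdot|$. The only (cosmetic) difference is that you compute $T_R(|\cdot|)$ from \cref{eq:defTR} together with \cref{eq:fundthmCalcBV1} and continuity at $0$, whereas the paper simply observes from \cref{eq:equivFormTR} that the integrand $s|t|-t|s|$ vanishes identically on the region of integration, which avoids the extra steps.
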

	\begin{proof}
		By \cref{lem:T_R_vertSeg_adjoint:I}, using that the absolute value vanishes at $t=0$,
		\begin{align*}
			\int_{[a_-,a_+]} |\cdot | d \mu = \int_{[a_-,a_+]} |\cdot| d\bigg(T_R^\ast \sigma+ c \int_{[-1,1]} |s| d\sigma(s) \delta_0\bigg) = \int_{[-1,1]} T_R( |\cdot|) d\sigma.
		\end{align*}
		Noting that by \cref{eq:equivFormTR}, $T_R(|\cdot|) = R(0) |\cdot|$ then yields the claim.
	\end{proof}

	The image of $\widehat{T}_R^\ast$ inside $\mathcal{M}[a_-,a_+]$ can now be deduced from \cref{lem:T_R_adjoint_inverse:I} and \cref{lem:T_Rhat_adjoint:I:AbsVal}, yielding merely a shift by a multiple of $\delta_0$.
	\begin{lem}\label{lem:T_R_adjoint_inverse:I:vertSegm}
		Let $R\in \BV_+(a_-,a_+)$ such that $R$ vanishes outside $[a_-,a_+]$, $c \in \R$, and $\mu\in\mathcal{M}[a_-,a_+]$. Then $\mu=\widehat{T}_R^\ast \sigma$ for some $\sigma\in\mathcal{M}[-1,1]$ if and only if
		\begin{align*}
			\mu - \left(\frac{c}{R(0)} \int_{[a_-,a_+]} |\cdot| d\mu\right) \delta_0 = T_R^\ast \sigma,
		\end{align*}
		that is, if and only if
		\begin{equation*}
			\frac{1}{R(t^\times)} \mu(dt) + \bigg(\int_{[t,a_{\sign t}]} \!s\, \mu(ds)\bigg) \nu_{\frac 1R}(dt)
		\end{equation*}
		defines a finite signed Radon measure on $(a_-,a_+)$, and the limits
		\begin{equation*}
			\lim_{t\nearrow  a_+} \frac{\mu((t,a_+])}{R(t)}
			\quad\text{and}\quad
			\lim_{t\searrow a_-} \frac{\mu([a_-,t))}{R(t)}
			\qquad\text{exist}.
		\end{equation*}
	\end{lem}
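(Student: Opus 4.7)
The plan is to reduce this statement to \cref{lem:T_R_adjoint_inverse:I} by exploiting the simple algebraic relationship between $\widehat{T}_R^\ast$ and $T_R^\ast$ established in \cref{lem:T_R_vertSeg_adjoint:I}, together with the mass-preservation identity in \cref{lem:T_Rhat_adjoint:I:AbsVal}.

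First, I would verify the equivalence of the two displayed formulations. If $\mu = \widehat{T}_R^\ast \sigma$, then \cref{lem:T_R_vertSeg_adjoint:I} gives $\mu = T_R^\ast \sigma + c\bigl(\int_{[-1,1]}|s|\, d\sigma(s)\bigr)\delta_0$, while \cref{lem:T_Rhat_adjoint:I:AbsVal} expresses the scalar $\int_{[-1,1]}|s|\, d\sigma(s)$ as $\frac{1}{R(0)}\int_{[a_-,a_+]}|t|\, d\mu(t)$, which yields precisely $\mu - \frac{c}{R(0)}\int|\cdot|\,d\mu\,\delta_0 = T_R^\ast \sigma$. Conversely, assuming this identity and applying the functional $\int|\cdot|\,d({}\cdot{})$ to both sides, together with the direct computation $T_R(|\cdot|) = R(0)|\cdot|$ (which follows immediately from \cref{eq:equivFormTR}, as the integrand $sf(t) - tf(s)$ vanishes for $f = |\cdot|$), gives $R(0)\int|s|\,d\sigma(s) = \int|\cdot|\,d\mu$, and substituting back into \cref{lem:T_R_vertSeg_adjoint:I} recovers $\widehat{T}_R^\ast \sigma = \mu$.

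Next, I would apply \cref{lem:T_R_adjoint_inverse:I} to the auxiliary measure $\mu' := \mu - \frac{c}{R(0)}\int|\cdot|\,d\mu\,\delta_0$. The key observation is that the stated conditions on $\mu$ in \cref{lem:T_R_adjoint_inverse:I} are invariant under the addition of a multiple of $\delta_0$. Indeed, the two limits $\lim_{t\nearrow a_+}\mu((t,a_+])/R(t)$ and $\lim_{t\searrow a_-}\mu([a_-,t))/R(t)$ only probe the behavior of $\mu$ near $a_\pm$, where a point mass at $0$ contributes nothing. For the Radon measure condition, adding $C\delta_0$ to $\mu$ changes the first summand by $\frac{C}{R(0)}\delta_0$, which is already a finite signed Radon measure, while the second summand is unaffected: the inner integral $\int_{[t,a_{\sign t}]} s\, \delta_0(ds)$ is identically zero because the integrand $s$ vanishes at the atom.

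The main obstacle is really just careful bookkeeping — no new ideas are required beyond combining the three preceding lemmas. I would close the argument by noting that, given the equivalence of conditions on $\mu$ and $\mu'$, \cref{lem:T_R_adjoint_inverse:I} produces a $\sigma$ with $T_R^\ast \sigma = \mu'$ under the stated conditions, and the first part of the proof then upgrades this to $\widehat{T}_R^\ast \sigma = \mu$.
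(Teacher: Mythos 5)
Your proposal is correct and follows essentially the same route as the paper's proof: establish the equivalence of the two displayed formulations via \cref{lem:T_R_vertSeg_adjoint:I} and \cref{lem:T_Rhat_adjoint:I:AbsVal}, then apply \cref{lem:T_R_adjoint_inverse:I} to the shifted measure $\mu' = \mu - \frac{c}{R(0)}\bigl(\int|\cdot|\,d\mu\bigr)\delta_0$, observing that the conditions there are insensitive to adding a point mass at the origin. The paper compresses the final observation into a single remark ("the conditions remain essentially unchanged when adding mass at $t=0$"), whereas you carry out the verification explicitly — the first summand acquires the Radon measure $\frac{C}{R(0)}\delta_0$, the second is unchanged because $\int s\,\delta_0(ds)=0$, and the two endpoint limits only probe neighborhoods of $a_\pm$ — which is a welcome spelling-out of what the paper takes as obvious.
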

	\begin{proof}
		By \cref{lem:T_R_vertSeg_adjoint:I} and \cref{lem:T_Rhat_adjoint:I:AbsVal}, $\mu = \widehat{T}_R^\ast \sigma$, if and only if
		\begin{align*}
			\mu = T_R^\ast \sigma + \left(c \int_{[-1,1]}|\cdot| d\sigma\right) \delta_0 = T_R^\ast \sigma + \left(\frac{c}{R(0)} \int_{[a_-,a_+]} |\cdot | d \mu\right) \delta_0.
		\end{align*}
		\cref{lem:T_R_adjoint_inverse:I} thus yields the claim, noting that the conditions remain essentially unchanged when adding mass at $t=0$ to $\mu$.
	\end{proof}

	\subsection{Transfer to general reference bodies}\label{sec:MCprobTransfer}

	In this section, we prove \cref{mthm:Christoffel_Minkowski_zonal} in the following more general version.
	
	\begin{thm}\label{thm:Christoffel_Minkowski_zonal}
		Let $1\leq i < n-1$ and $\CC=(C_1,\ldots,C_{n-i-1})$ be a family of convex bodies of revolution, none of which is a segment. Suppose that $\mu$ is a non-negative, centered, zonal Borel measure on $\S^{n-1}$. Then there exists a body of revolution $K\in\K(\R^n)$ that is not a segment with $\mu=S_i(K,\CC;{}\cdot{})$ if and only if
		\begin{enumerate}[label=\upshape(\roman*)]
			\item\label{thm:Christoffel_Minkowski_zonal:support}
			$\mu$ is supported in $\{u\in\S^{n-1}: a_{\CC,-} \leq \pair{e_n}{u} \leq a_{\CC,+} \}$,
			\item\label{thm:Christoffel_Minkowski_zonal:concentr}
			$\mu$ is not concentrated on $\S^{n-2}(e_n^\perp)$,
			\item\label{thm:Christoffel_Minkowski_zonal:measPos}
			the signed Radon measure on	$(a_{\CC,-},a_{\CC,+})$ given by
			\begin{equation}\label{eq:thmChrMinkMeasPos}
				\frac{1}{R_\CC(t)} \bar{\mu}(dt) + \left(\int_{[t,\sign t]} \!s\, \bar{\mu}(ds)\right) \nu_{\frac{1}{R_\CC}}(dt),
			\end{equation}
			is non-negative and finite,
			\item\label{thm:Christoffel_Minkowski_zonal:LimEx}
			the limits $\displaystyle \lim_{t \nearrow a_{\CC,+}} \frac{\mu(\mathrm{Cap}( e_n,t))}{R_\CC(t)}$ and $\displaystyle \lim_{t \searrow a_{\CC,-}} \frac{\mu(\mathrm{Cap}(- e_n,-t))}{R_\CC(t)}$ exist and are finite, and
			\item \label{thm:Christoffel_Minkowski_zonal:AtEquat}
			$\displaystyle R_\CC(0)\mu(\S^{n-2}(e_n^\perp)) - \frac{n-i-1}{\omega_{n-i-1}}W_{\CC} \int_{\S^{n-1}} \max\{\pair{e_n}{u},0\} \, \mu(du) \geq 0$.
		\end{enumerate}
	\end{thm}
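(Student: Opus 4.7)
The strategy is to reduce to the disk case, \cref{thm:ZonalMinkChristDisk}, via the duality provided by \cref{thm:dictZonalVal_cont_disk} combined with the analysis of the adjoint $\widehat{T}_\CC^\ast$ carried out in \cref{sec:adj}. Write $\bar\mu = (\pair{e_n}{{}\cdot{}})_\ast \mu$; by \cref{lem:geomIntuitRc} and \cref{defi:S_C_and_I_C}, the function $R_\CC$ belongs to $\BV_+(I_\CC)$ and vanishes outside $[a_{\CC,-},a_{\CC,+}]$, so the machinery of \cref{sec:adj} applies. For the necessity, suppose $\mu = S_i(K,\CC;{}\cdot{})$ for a body of revolution $K$ which is not a segment, and let $\sigma\in\mathcal{M}[-1,1]$ denote the pushforward of $S_i(K,\DD;{}\cdot{})$. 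Condition~\cref{thm:Christoffel_Minkowski_zonal:support} is immediate from \cref{lem:mixed_area_meas_support} together with \cref{defi:S_C_and_I_C}. Testing \cref{thm:dictZonalVal_cont_disk} against continuous zonal functions yields $\bar\mu = \widehat{T}_\CC^\ast \sigma$, so \cref{lem:T_R_adjoint_inverse:I:vertSegm} produces~\cref{thm:Christoffel_Minkowski_zonal:measPos} and~\cref{thm:Christoffel_Minkowski_zonal:LimEx}. For~\cref{thm:Christoffel_Minkowski_zonal:AtEquat}, combining \cref{lem:T_R_adjoint:I} (which gives $T_{R_\CC}^\ast\sigma(\{0\}) = R_\CC(0)\sigma(\{0\})$ since $\nu_{R_\CC}(\{0\})=0$), \cref{lem:T_R_vertSeg_adjoint:I}, and \cref{lem:T_Rhat_adjoint:I:AbsVal} yields
\begin{equation*}
R_\CC(0)^2\,\sigma(\{0\}) = R_\CC(0)\,\bar\mu(\{0\}) - \tfrac{n-i-1}{2\omega_{n-i-1}} W_\CC \int_{[-1,1]} |s|\,d\bar\mu(s),
\end{equation*}
which is non-negative since $\sigma \geq 0$; rewriting $\int |s|\,d\bar\mu = 2\int\max\{\pair{e_n}{u},0\}\,d\mu$ via centeredness of $\mu$ delivers~\cref{thm:Christoffel_Minkowski_zonal:AtEquat}. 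Finally, since $K$ is not a segment, \cref{thm:ZonalMinkChristDisk} forces $\sigma$ to not be concentrated on $\{0\}$, and \cref{lem:T_Rhat_adjoint:I:AbsVal} transfers this to~\cref{thm:Christoffel_Minkowski_zonal:concentr}.

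For the sufficiency, conditions~\cref{thm:Christoffel_Minkowski_zonal:support},~\cref{thm:Christoffel_Minkowski_zonal:measPos}, and~\cref{thm:Christoffel_Minkowski_zonal:LimEx} allow us to invoke \cref{lem:T_R_adjoint_inverse:I:vertSegm} to obtain some $\sigma \in \mathcal{M}[-1,1]$ with $\bar\mu = \widehat{T}_\CC^\ast \sigma$, given explicitly by the formula of \cref{lem:T_R_adjoint_inverse:I} (applied after the mass-at-zero correction built into \cref{lem:T_R_adjoint_inverse:I:vertSegm}). Non-negativity of $\sigma$ decomposes as follows: the continuous part on $I_\CC \setminus \{0\}$ is non-negative by~\cref{thm:Christoffel_Minkowski_zonal:measPos}; the boundary Dirac masses at $a_{\CC,\pm}$ are non-negative by~\cref{thm:Christoffel_Minkowski_zonal:LimEx}; and the mass at $0$ is non-negative precisely by~\cref{thm:Christoffel_Minkowski_zonal:AtEquat}, read as a sufficient condition via the displayed identity above. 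Testing the duality against $f(t)=t$ and using the trivial computation $T_{R_\CC}(\mathrm{id})=R_\CC(0)\,\mathrm{id}$ gives $\int t\,d\bar\mu = R_\CC(0)\int t\,d\sigma$, so centeredness of $\mu$ transfers to $\sigma$; and~\cref{thm:Christoffel_Minkowski_zonal:concentr} transfers to \emph{$\sigma$ not concentrated on $\{0\}$} by \cref{lem:T_Rhat_adjoint:I:AbsVal}.

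Let $\widetilde\mu$ denote the zonal pullback of $\sigma$ to $\S^{n-1}$ as introduced in \cref{sec:MCprobDisk}. Then $\widetilde\mu$ is centered, non-negative, zonal, and not concentrated on $\S^{n-2}(e_n^\perp)$, so \cref{thm:ZonalMinkChristDisk} produces a body of revolution $K$, not a segment, with $\widetilde\mu = S_i(K,\DD;{}\cdot{})$. For every zonal $f = \bar{f}(\pair{e_n}{{}\cdot{}}) \in C(\S^{n-1})$, \cref{thm:dictZonalVal_cont_disk} combined with the constructed duality gives
\begin{equation*}
\int_{\S^{n-1}} f\,dS_i(K,\CC;{}\cdot{}) = \int_{[-1,1]} \widehat{T}_\CC\bar f\,d\sigma = \int_{[-1,1]} \bar f\,d\bar\mu = \int_{\S^{n-1}} f\,d\mu,
\end{equation*}
and since both $S_i(K,\CC;{}\cdot{})$ and $\mu$ are zonal, equality on all zonal continuous test functions forces $S_i(K,\CC;{}\cdot{}) = \mu$. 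The main subtlety is~\cref{thm:Christoffel_Minkowski_zonal:AtEquat}, which is invisible when no $C_j$ contains a vertical segment but becomes essential otherwise: it arises exactly from the linear correction $c\,\bar{f}(0)\abs{{}\cdot{}}$ in \cref{def:TChat}, which \cref{lem:T_R_adjoint_inverse:I:vertSegm} disentangles from the underlying $T_{R_\CC}^\ast$ analysis of \cref{lem:T_R_adjoint_inverse:I}.
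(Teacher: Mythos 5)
Your proof is correct and its overall architecture coincides with the paper's: necessity and sufficiency are both routed through the pushforward duality $\bar\mu = \widehat{T}_\CC^\ast\sigma$ provided by \cref{thm:dictZonalVal_cont_disk} and the analysis of \cref{sec:adj}, with the zonal disk theorem \cref{thm:ZonalMinkChristDisk} supplying the existence step. Two points in the necessity half differ in a genuine, and arguably cleaner, way from the paper's \cref{thm:Christoffel_Minkowski_zonal_Sufficient}. For condition~\cref{thm:Christoffel_Minkowski_zonal:AtEquat}, you assemble the identity $R_\CC(0)^2\sigma(\{0\}) = R_\CC(0)\bar\mu(\{0\}) - \tfrac{n-i-1}{2\omega_{n-i-1}}W_\CC\int|s|\,d\bar\mu$ from \cref{lem:T_R_adjoint:I}, \cref{lem:T_R_vertSeg_adjoint:I}, and \cref{lem:T_Rhat_adjoint:I:AbsVal}, so that \cref{thm:Christoffel_Minkowski_zonal:AtEquat} is exactly the statement $\sigma(\{0\})\geq 0$; the paper instead computes $W_{(K^{[i]},\CC)}$ by polarizing \cref{eq:MixedAreaMeasRev} and drops a manifestly non-negative term. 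For condition~\cref{thm:Christoffel_Minkowski_zonal:concentr}, you deduce that $\sigma$ is not concentrated at $\{0\}$ directly from \cref{thm:ZonalMinkChristDisk} (since $K$ is not a segment) and transfer this to $\mu$ via \cref{lem:T_Rhat_adjoint:I:AbsVal}, whereas the paper argues geometrically that $V(K^{[i]},\CC,[-e_n,e_n])>0$. Your version makes both conditions fall out uniformly from the $\widehat{T}_\CC$-adjoint calculus, which has the pedagogical advantage of making explicit that \cref{thm:Christoffel_Minkowski_zonal:AtEquat} is precisely the non-negativity of $\sigma$ at the equator; the paper's computations are more elementary and self-contained. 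The sufficiency half, including the decomposition of $\sigma$ into interior, endpoint Dirac, and equatorial parts and the verification that centeredness of $\mu$ transfers to $\sigma$ via $T_{R_\CC}(\mathrm{id})=R_\CC(0)\,\mathrm{id}$, matches the paper's \cref{thm:Christoffel_Minkowski_zonal_Necessary} in substance.
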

	We prove the necessity and the sufficiency of the conditions in \cref{thm:Christoffel_Minkowski_zonal} separately. However, most of the work has been already done in previous sections.
	
	\begin{thm}\label{thm:Christoffel_Minkowski_zonal_Sufficient}
		In the situation of \cref{thm:Christoffel_Minkowski_zonal}, the conditions \cref{thm:Christoffel_Minkowski_zonal:support} to \cref{thm:Christoffel_Minkowski_zonal:AtEquat} are necessary in order to have $\mu=S_i(K,\CC;{}\cdot{})$.
	\end{thm}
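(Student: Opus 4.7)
The plan is to combine the transformation rule of \cref{thm:dictionaryGeneral} with the adjoint analysis from \cref{sec:adj}. Setting $\sigma := S_i(K, \DD; {}\cdot{})$ and passing to pushforwards on $[-1,1]$, \cref{thm:dictionaryGeneral} yields $\bar{\mu} = \widehat{T}_{R_{\CC}}^{\ast} \bar{\sigma}$ as finite Borel measures on $[a_{\CC,-}, a_{\CC,+}]$. Since $K$ is not a segment, \cref{thm:ZonalMinkChristDisk} gives that $\bar{\sigma}$ is a non-negative, finite, centered measure not concentrated at $0$. Conditions \cref{thm:Christoffel_Minkowski_zonal:measPos,thm:Christoffel_Minkowski_zonal:LimEx,thm:Christoffel_Minkowski_zonal:AtEquat} will then be extracted from the description of $\widehat{T}_{R_{\CC}}^{\ast} \bar{\sigma}$ in \cref{sec:adj}, while conditions \cref{thm:Christoffel_Minkowski_zonal:support,thm:Christoffel_Minkowski_zonal:concentr} require short independent arguments. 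Note that $R_{\CC} \in \BV_+(I_{\CC})$ vanishes outside $[a_{\CC,-}, a_{\CC,+}]$ by \cref{cor:RCinBVPlus} and \cref{defi:S_C_and_I_C}, so the lemmas of \cref{sec:adj} indeed apply.

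For \cref{thm:Christoffel_Minkowski_zonal:support}, I would choose $x_j$ in the relative interior of $F(C_j, \pm e_n)$ so that $N(C_j, x_j) = N(C_j, F(C_j, \pm e_n))$; then \cref{lem:mixed_area_meas_support}, intersected over $j$ and both poles, gives $\spt \mu \subseteq \cls \S_{\CC}$. For \cref{thm:Christoffel_Minkowski_zonal:concentr}, assume for contradiction that $\mu$ is concentrated on $\S^{n-2}(e_n^\perp)$, so that $\int \abs{\pair{e_n}{u}}\, d\mu = 0$. Pairing with $h_{[-e_n,e_n]} = \abs{\pair{e_n}{{}\cdot{}}}$ and combining \cref{eq:MixedVol_Integral} with \cref{eq:MixedVol_Proj},
\[
	0 = \int_{\S^{n-1}} \abs{\pair{e_n}{u}}\, d\mu = 2\, V^{e_n^\perp}\!\bigl((K|e_n^\perp)^{[i]}, C_1|e_n^\perp, \ldots, C_{n-i-1}|e_n^\perp\bigr).
\]
Each $C_j|e_n^\perp$ is a disk of positive radius $R_{C_j}(0) > 0$ by \cref{lem:geomIntuitRc}, so monotonicity of mixed volumes and \cref{eq:MixedVol_IntrinsicVol} force $V_i(K|e_n^\perp) = 0$; as $K|e_n^\perp$ is itself a round disk, $K$ must then be a vertical segment, contradicting the hypothesis.

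Applying \cref{lem:T_R_adjoint_inverse:I:vertSegm} to $\bar{\mu} = \widehat{T}_{R_{\CC}}^{\ast} \bar{\sigma}$ directly gives that the signed measure in \cref{thm:Christoffel_Minkowski_zonal:measPos} is a finite signed Radon measure on $(a_{\CC,-}, a_{\CC,+})$ and that the limits in \cref{thm:Christoffel_Minkowski_zonal:LimEx} exist; their finiteness follows from the ``moreover'' part of \cref{lem:T_R_adjoint:I:caps}, as $\bar\sigma$ is finite. For the non-negativity in \cref{thm:Christoffel_Minkowski_zonal:measPos}, I would plug into the explicit preimage formula \cref{eq:lem:T_R_adjoint_inverse:I}: writing $c = \frac{n-i-1}{2\omega_{n-i-1}}W_{\CC}$, the measure in \cref{thm:Christoffel_Minkowski_zonal:measPos} coincides with $\bar{\sigma}$ restricted to $(a_{\CC,-}, a_{\CC,+}) \setminus \{0\}$, hence is $\geq 0$ there. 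At $\{0\}$, the bound $\bar{\sigma}(\{0\}) \geq 0$ reads $R_{\CC}(0) \bar{\mu}(\{0\}) - c \int \abs{{}\cdot{}}\, d\bar{\mu} \geq 0$; applying centeredness of $\mu$ (so that $\int \abs{{}\cdot{}}\, d\bar{\mu} = 2 \int \max\{\pair{e_n}{u}, 0\}\, d\mu$) turns this into condition \cref{thm:Christoffel_Minkowski_zonal:AtEquat}, and simultaneously gives non-negativity of the measure in \cref{thm:Christoffel_Minkowski_zonal:measPos} also at $\{0\}$.

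The main obstacle will be the careful bookkeeping of the extra $\delta_0$-contribution that $\widehat{T}_{R_{\CC}}^{\ast}$ adds over $T_{R_{\CC}}^{\ast}$ (coming from the $\abs{{}\cdot{}}$-term in $\widehat{T}_{R_{\CC}}$ encoding vertical segments in the boundaries of the $C_j$), which couples the value of the measure in \cref{thm:Christoffel_Minkowski_zonal:measPos} at $0$ with the inequality \cref{thm:Christoffel_Minkowski_zonal:AtEquat}; disentangling these via \cref{lem:T_R_vertSeg_adjoint:I,lem:T_Rhat_adjoint:I:AbsVal}, and switching between $\abs{{}\cdot{}}$ and $\max\{\pair{e_n}{u}, 0\}$ via centeredness, is where the technical care is needed.
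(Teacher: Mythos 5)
Your proof is correct and follows the paper's overall strategy --- establish $\bar\mu = \widehat{T}_{\CC}^{\ast}\bar\sigma$ with $\bar\sigma = (\pair{e_n}{\cdot})_\ast S_i(K,\DD;\cdot)$ via the transformation rule (the paper uses \cref{thm:dictZonalVal_cont_disk}, which \cref{thm:dictionaryGeneral} subsumes) and read off conditions \cref{thm:Christoffel_Minkowski_zonal:measPos}--\cref{thm:Christoffel_Minkowski_zonal:LimEx} from the adjoint lemmas in \cref{sec:adj} --- but deviates at two points. For \cref{thm:Christoffel_Minkowski_zonal:concentr}, the paper invokes the segment criterion for positivity of mixed volumes (\cite{Schneider2014}*{Thm.~5.1.8}) applied to $V(K^{[i]},\CC,[-e_n,e_n])$; you instead project to $e_n^\perp$ and use monotonicity; both are equally valid and of comparable difficulty. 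The more substantial divergence concerns \cref{thm:Christoffel_Minkowski_zonal:AtEquat}: the paper computes $\mu(\S^{n-1}\cap e_n^\perp) = W_{(K^{[i]},\CC)}$ and $\int\max\{0,\pair{e_n}{u}\}\,d\mu$ explicitly via \cref{cor:MixedAreaMeasRev}, \cref{eq:MixedVol_Integral}, \cref{eq:MixedVol_Proj} and then expands the polarized $W_{(K^{[i]},\CC)}$ to isolate the non-negative term $i\ell_K R_K(0)^{i-1}R_\CC(0)$, whereas you extract \cref{thm:Christoffel_Minkowski_zonal:AtEquat} from $\bar\sigma(\{0\})\geq 0$ via the preimage formula \cref{eq:lem:T_R_adjoint_inverse:I} and \cref{lem:T_Rhat_adjoint:I:AbsVal}. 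Your route keeps all of \cref{thm:Christoffel_Minkowski_zonal:measPos}--\cref{thm:Christoffel_Minkowski_zonal:AtEquat} inside the adjoint picture and reveals that \cref{thm:Christoffel_Minkowski_zonal:AtEquat} is exactly the non-negativity of $\bar\sigma$ at the origin --- a pleasant structural point the paper's direct computation does not make visible. One minor imprecision in your last paragraph: the measure in \cref{thm:Christoffel_Minkowski_zonal:measPos} at $\{0\}$ equals $R_\CC(0)^{-1}\bar\mu(\{0\})$, whose non-negativity already follows from $\mu\geq 0$; it is $\bar\sigma(\{0\})\geq 0$, not this weaker statement, that is equivalent to \cref{thm:Christoffel_Minkowski_zonal:AtEquat}, so "simultaneously gives non-negativity of \cref{thm:Christoffel_Minkowski_zonal:measPos} at $\{0\}$" slightly conflates the two --- but the argument remains correct.
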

	\begin{proof}
		Suppose that $\mu=S_i(K,\CC;{}\cdot{})$, where $K \in \K(\R^n)$ is either a full-dimensional body of revolution or $K=\DD$. Then \cref{lem:mixed_area_meas_support} directly implies condition \cref{thm:Christoffel_Minkowski_zonal:support}. To see \cref{thm:Christoffel_Minkowski_zonal:concentr}, note that by \cite{Schneider2014}*{Thm.~5.1.8}
		\begin{align*}
			\int_{\S^{n-1}} \abs{\pair{e_n}{u}} \, d\mu(u)
			= n V(K^{[i]},\CC,[-e_n,e_n]) > 0,
		\end{align*}
		as we can find linearly independent segments in $K, \dots, K, C_1, \dots, C_{n-i-1}$ spanning $e_n^\perp$. Consequently, as $\abs{\pair{e_n}{\cdot}}$ vanishes only on $\S^{n-2}(e_n^\perp)$, $\mu$ cannot be concentrated on $\S^{n-2}(e_n^\perp)$.
		
		Next, note that by \cref{thm:dictZonalVal_cont_disk},
		\begin{align*}
			\int_{[a_{\CC,-},a_{\CC,+}]}\bar f d\bar{\mu} = \int_{\S^{n-1}} f \, dS_i(K,\CC;{}\cdot{})	= \int_{\S^{n-1}} (\widehat{T}_\CC \bar{f})(\pair{e_n}{\cdot}) \, dS_i(K,\DD;{}\cdot{}),
		\end{align*}
		for all zonal $f \in C(\S^{n-1})$, that is, $\bar{\mu} = \widehat{T}_\CC^\ast \sigma$, where $\sigma = (\pair{e_n}{\cdot})_\ast S_i(K,\DD,{}\cdot{}) \in \mathcal{M}[-1,1]$. Hence, \cref{lem:T_R_adjoint_inverse:I:vertSegm} and \cref{lem:T_R_adjoint_inverse:I} imply \cref{thm:Christoffel_Minkowski_zonal:LimEx}. Moreover, by \cref{lem:T_R_adjoint_inverse:I},
		\begin{align}\label{eq:prfNecCondChri}
				\indf_{(a_{\CC,-},a_{\CC,+})}(t)\sigma(dt) = \frac{1}{R_\CC(t)} \bar{\mu}(dt) + \bigg(\int_{[t,a_{\sign(t)}]} \!s\, \bar{\mu}(ds)\bigg) \nu_{\frac{ 1}{R_\CC}}(dt).
		\end{align}
		Since $\sigma$ is non-negative and finite, this yields condition \cref{mthm:Christoffel_Minkowski_zonal:measPos}.
		
		For condition \cref{thm:Christoffel_Minkowski_zonal:AtEquat}, note that by \cref{eq:MixedVol_Integral} and \cref{eq:MixedVol_Proj},
		\begin{equation*}
			\int_{\S^{n-1}} \!\! \max\{0,\pair{e_n}{u}\}\, \mu(du)
			= nV(K^{[i]},\CC,[0,e_n])
			= V^{e_n^\perp}\!((K|e_n^\perp)^{[i]},\CC|e_n^\perp)
			= \kappa_{n-1} R_K(0)^{i}R_{\CC}(0).
		\end{equation*}
		Moreover, by \cref{eq:defWC} and \cref{eq:MixedAreaMeasRev},
		\begin{align*}
			&\mu(\S^{n-1}\cap e_n^\perp)
			= S_i(K,\CC;\S^{n-1}\cap e_n^\perp)
			= W_{(K^{[i]},\CC)} \\
			&\qquad = \frac{\omega_{n-1}}{n-1} \bigg( \frac{n-i-1}{\omega_{n-i-1}}R_K(0)^{i}W_{\CC} + i \ell_KR_K(0)^{i-1}R_{\CC}(0) \bigg) \\
			&\qquad \geq \frac{\omega_{n-1}}{n-1} \frac{n-i-1}{\omega_{n-i-1}} R_K(0)^{i}W_{\CC}
			= \frac{n-i-1}{\omega_{n-i-1}} \frac{W_{\CC}}{R_{\CC}(0)}\int_{\S^{n-1}} \max\{0,\pair{e_n}{u}\}\, \mu(du).
		\end{align*}
		Multiplying both sides with $R_{\CC}(0)$ yields condition \cref{thm:Christoffel_Minkowski_zonal:AtEquat}.
	\end{proof}
	
	\begin{thm}\label{thm:Christoffel_Minkowski_zonal_Necessary}
		In the situation of \cref{thm:Christoffel_Minkowski_zonal}, the conditions \cref{thm:Christoffel_Minkowski_zonal:support} to \cref{thm:Christoffel_Minkowski_zonal:AtEquat} are sufficient for the existence of $K \in \K(\R^n)$ with $\mu=S_i(K,\CC;{}\cdot{})$.
	\end{thm}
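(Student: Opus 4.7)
The plan is to run the argument of Theorem~\ref{thm:ZonalMinkChristDisk} backwards through the transformation rule of Theorem~\ref{thm:dictZonalVal_cont_disk}: we will build an explicit candidate $\sigma \in \mathcal{M}[-1,1]$ with $\bar{\mu} = \widehat{T}_\CC^\ast \sigma$, lift it to a zonal measure on $\S^{n-1}$ satisfying the hypotheses of Theorem~\ref{thm:ZonalMinkChristDisk}, and then transfer the resulting body back to~$\CC$ via the dictionary.

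Concretely, set $c = \tfrac{n-i-1}{2\omega_{n-i-1}} W_{\CC}$ and $\bar{\mu}_1 := \bar{\mu} - \tfrac{c}{R_\CC(0)}\bigl(\int_{[-1,1]}|t|\,d\bar{\mu}(t)\bigr)\delta_0$, so that condition~\ref{thm:Christoffel_Minkowski_zonal:AtEquat} is exactly the statement $\bar{\mu}_1(\{0\})\geq 0$, and by \ref{thm:Christoffel_Minkowski_zonal:support} the measure $\bar{\mu}_1$ is still non-negative and supported in $[a_{\CC,-},a_{\CC,+}]$. I will then define
\begin{align*}
\sigma(dt) &= \mathbbm{1}_{(a_{\CC,-},a_{\CC,+})}(t)\!\left(\frac{1}{R_\CC(t^\times)}\bar{\mu}_1(dt) + \left(\int_{[t,a_{\sign t}]}\!s\,d\bar{\mu}_1(s)\right)\!\nu_{1/R_\CC}(dt)\right)\\
&\qquad + c_+\,\delta_{a_{\CC,+}} + c_-\,\delta_{a_{\CC,-}},
\end{align*}
with $c_\pm$ the limits furnished by \ref{thm:Christoffel_Minkowski_zonal:LimEx}. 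Conditions \ref{thm:Christoffel_Minkowski_zonal:measPos}, \ref{thm:Christoffel_Minkowski_zonal:LimEx}, and \ref{thm:Christoffel_Minkowski_zonal:AtEquat} together ensure that $\sigma$ is a non-negative finite Borel measure on $[-1,1]$ (the $\{0\}$-atom equals $\bar{\mu}_1(\{0\})/R_\CC(0)\geq 0$, and the remainder of the restriction to $(a_{\CC,-},a_{\CC,+})$ agrees with the measure in~\ref{thm:Christoffel_Minkowski_zonal:measPos}). Proposition~\ref{lem:T_R_adjoint_inverse:I} combined with Lemma~\ref{lem:T_R_vertSeg_adjoint:I} then yields $\bar{\mu} = \widehat{T}_\CC^\ast \sigma$.

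Next, I will check that $\sigma$ has vanishing first moment. Direct computation from \eqref{eq:equivFormTR} gives $\widehat{T}_\CC \mathrm{id}=R_\CC(0)\,\mathrm{id}$, so testing the identity $\bar{\mu} = \widehat{T}_\CC^\ast\sigma$ against $\mathrm{id}$ and using the centeredness of $\mu$ yields $R_\CC(0)\int s\,d\sigma(s) = \int t\,d\bar{\mu}(t) = 0$. Let $\widetilde{\sigma}$ be the zonal pullback of $\sigma$ to $\S^{n-1}$; then $\widetilde{\sigma}$ is non-negative, zonal, centered, and not concentrated on $\S^{n-1}\cap e_n^\perp$: indeed, if $\sigma$ were a multiple of $\delta_0$, then Lemma~\ref{lem:T_R_vertSeg_adjoint:I} would force $\bar{\mu} = \widehat{T}_\CC^\ast\sigma$ to be a multiple of $\delta_0$, contradicting~\ref{thm:Christoffel_Minkowski_zonal:concentr}. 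Theorem~\ref{thm:ZonalMinkChristDisk} therefore supplies a body of revolution $K\in\K(\R^n)$, not a segment, with $\widetilde{\sigma} = S_i(K,\DD;{{}\cdot{}})$.

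Finally, for any zonal $f\in C(\S^{n-1})$ we have $\bar{f}\in C[-1,1]$, so Theorem~\ref{thm:dictZonalVal_cont_disk} combined with the definition of the zonal pullback yields
\begin{equation*}
\int_{\S^{n-1}} f\, dS_i(K,\CC;{{}\cdot{}}) = \int_{\S^{n-1}} (\widehat{T}_\CC\bar{f})(\pair{e_n}{{}\cdot{}})\, dS_i(K,\DD;{{}\cdot{}}) = \int \widehat{T}_\CC \bar{f}\, d\sigma = \int \bar{f}\, d\bar{\mu} = \int_{\S^{n-1}}\! f\, d\mu.
\end{equation*}
Since both $\mu$ and $S_i(K,\CC;{{}\cdot{}})$ are zonal, agreement on zonal continuous test functions forces equality, completing the proof. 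The only subtlety I anticipate is the bookkeeping at $t=0$: the ``vertical segment'' contribution governed by $W_\CC$ must be isolated from the Radon-type conditions on $(a_{\CC,-},a_{\CC,+})$, which is precisely the role of the passage from $\bar{\mu}$ to $\bar{\mu}_1$ and the reason condition~\ref{thm:Christoffel_Minkowski_zonal:AtEquat} appears as a separate scalar inequality rather than being absorbed into~\ref{thm:Christoffel_Minkowski_zonal:measPos}.
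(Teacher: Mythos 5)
Your proposal is correct and follows essentially the same route as the paper: produce $\sigma \in \mathcal{M}[-1,1]$ with $\bar{\mu}=\widehat{T}_\CC^\ast\sigma$, verify it is non-negative, finite, centered, and not concentrated at $\{0\}$, invoke Theorem~\ref{thm:ZonalMinkChristDisk}, and transfer back via Theorem~\ref{thm:dictZonalVal_cont_disk}. The only cosmetic differences are that you instantiate $\sigma$ explicitly from the formula of Proposition~\ref{lem:T_R_adjoint_inverse:I} rather than just invoking existence via Lemma~\ref{lem:T_R_adjoint_inverse:I:vertSegm}, and you deduce centeredness by testing against $\mathrm{id}$ using $\widehat{T}_\CC\,\mathrm{id}=R_\CC(0)\,\mathrm{id}$ rather than citing Lemma~\ref{lem:T_R_adjoint:I:caps}; both steps are equivalent.
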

	\begin{proof}
		Let $\mu \in \mathcal{M}(\S^{n-1})$ be a non-negative, centered, zonal Borel measure on $\S^{n-1}$ satisfying conditions \cref{thm:Christoffel_Minkowski_zonal:support} to \cref{thm:Christoffel_Minkowski_zonal:AtEquat} in \cref{thm:Christoffel_Minkowski_zonal}. Then $\bar{\mu} = (\pair{e_n}{\cdot})_\ast \mu$ is a non-negative Borel measure concentrated on $[a_{\CC,-},a_{\CC,+}]$, that is, $\bar{\mu} \in \mathcal{M}[-a_{\CC,-},a_{\CC,+}]$. The conditions \cref{thm:Christoffel_Minkowski_zonal:measPos}, \cref{thm:Christoffel_Minkowski_zonal:LimEx} and \cref{thm:Christoffel_Minkowski_zonal:AtEquat} assert that the conditions of \cref{lem:T_R_adjoint_inverse:I:vertSegm} are satisfied so that there exists $\sigma \in \mathcal{M}[-1,1]$ such that $\bar{\mu} = \widehat{T}_\CC^\ast \sigma$. Moreover, \cref{thm:Christoffel_Minkowski_zonal:measPos} and \cref{thm:Christoffel_Minkowski_zonal:AtEquat} imply that we can choose $\sigma \geq 0$.
		
		By \cref{lem:T_R_adjoint:I:caps} and since $\mu$ is assumed to be centered,
		\begin{align*}
			0 = \int_{[-1,1]} s d\bar{\mu}(s) = R(0)\int_{[-1,1]} s d\sigma(s),
		\end{align*}
		that is, $\sigma$ is centered as well. Next, assume that $\sigma$ is concentrated on $\{0\}$. Then, by \cref{lem:T_R_adjoint:I}, $\bar{\mu}$ is concentrated on $\{0\}$, and thus $\mu$ is concentrated on $\S^{n-2}(e_n^\perp)$. This contradicts \cref{thm:Christoffel_Minkowski_zonal:concentr}, hence, $\sigma$ is not concentrated on $\{0\}$.
		
		We conclude that the zonal measure $\widetilde{\sigma}$ on $\S^{n-1}$ uniquely determined by $(\pair{e_n}{\cdot})_\ast \widetilde{\sigma} = \sigma$ satisfies the conditions of \cref{thm:ZonalMinkChristDisk}, that is, there exists a body of revolution $K \in \K(\R^n)$ such that $\widetilde{\sigma} = S_i(K, \DD, \cdot)$, and $K$ is not a segment.
		
		Finally, let $f=\bar{f}(\pair{e_n}{\cdot}) \in C(\S^{n-1})$ and write $g = \bar{g}(\pair{e_n}{\cdot}) \in C(\S^{n-1})$ with $\bar g = \widehat{T}_\CC \bar{f}$. Then, as $\bar \mu = \widehat{T}_\CC^\ast \sigma$ and by \cref{thm:dictZonalVal_cont_disk},
		\begin{align*}
			\int_{\S^{n-1}} f d\mu = \int_{[a_{\CC,-},a_{\CC,+}]} \bar f d\bar{\mu} = \int_{[-1,1]} \bar{g} d\sigma = \int_{\S^{n-1}} g\,dS_i(K, \DD; \cdot) = \int_{\S^{n-1}} f\,dS_i(K, \CC; \cdot).
		\end{align*}
		As $f$ was arbitrary, we deduce that $\mu = S_i(K, \CC, \cdot)$, concluding the proof.
	\end{proof}

	\begin{rem}[Uniqueness]\label{rem:ChristoffelMinkowski_unique}
		We briefly discuss the uniqueness of the body of revolution $K\in\K(\R^n)$ in the mixed Christoffel--Minkowski problem. Depending on the family of reference bodies $\CC$, there is a dichotomy:
		If $\S_\CC=\S^{n-1}$, then the solution $K$ is again unique up to a vertical translation; if $\S_\CC \subsetneq \S^{n-1}$, then the solution $K$ is highly non-unique.
		
		In the case where $\S_\CC=\S^{n-1}$, it was shown in the proof of \cref{thm:Christoffel_Minkowski_zonal} that $\mu=\hat{T}_\CC^\ast \sigma$, where we denote $ \bar{\mu} = (\pair{e_n}{{}\cdot{}})_\ast S(K^{[i]},\CC;{}\cdot{})$ and $\sigma= (\pair{e_n}{{}\cdot{}})_\ast S_i(K,\DD;{}\cdot{})$. Due to \cref{lem:T_R_adjoint:I:caps,lem:T_R_adjoint_inverse:I}, the measure $\sigma$ is completely determined by $\bar{\mu}$. From the uniqueness statement in \cref{thm:ZonalMinkChristDisk}, it then follows that $K$ is unique up to a vertical translation.
		
		In the case where $\S_{\CC} \subsetneq \S^{n-1}$, the measure $\mu =S(K^{[i]},\CC;{}\cdot{})$ vanishes outside of $\S_{\CC}$ (see \cref{lem:mixed_area_meas_support}). Due to the locality of mixed area measures (see \cref{lem:MixedAreaMeas_locally_det}), any body of revolution $\tilde{K}$ with $\tau(K,\S_\CC)=\tau(\tilde{K},\S_{\CC})$ will yield $\mu=S(\tilde{K}^{[i]},\CC;{}\cdot{})$. By adding and removing caps to and from $K$, for instance, one can easily construct a large family of such bodies $\tilde{K}$.
	\end{rem}
	
	\begin{rem}[Full-dimensionality]\label{rem:ChristoffelMinkowski_fulldim}
		In the statement of \cref{thm:Christoffel_Minkowski_zonal}, we consider a body of revolution $K\in(\R^n)$ which is \emph{not} a (possibly degenerate) vertical segment; that is, $K$ is either full-dimensional or a non-degenerate disk in $e_n^\perp$. In here, we briefly discuss when $K$ can be assumed to be full-dimensional.
		
		By the discussion in \cref{rem:ChristoffelMinkowski_unique}, if the family of reference bodies $\CC$ is such that $\S_\CC\subsetneq \S^{n-1}$, then we may always choose $K$ to be full-dimensional. 
		
		If $\S_\CC=\S^{n-1}$, then the body $K$ is unique up to a vertical translation, by \cref{rem:ChristoffelMinkowski_unique}.
		In that case, the restriction of $(\pair{e_n}{{}\cdot{}})_\ast S_i(K,\DD;{}\cdot{})$ to the subinterval $(-1,1)$ is given by  \cref{eq:thmChrMinkMeasPos}. The mixed area measure $S_i(K,\DD;{}\cdot{})$ is concentrated on the poles $\pm e_n$ precisely when $K$ is a disk. Thus, $K$ is full-dimensional if and only if the measure in \cref{eq:thmChrMinkMeasPos} is non-zero.
	\end{rem}

	\appendix
	
	\section{Technical proofs}\label{app:TechnLemmas} 
	In this appendix, we give the proofs of some technical results from \cref{sec:BV0,sec:defTRtrans,sec:TRinvertability}. We start with basic properties of the functions in $\BV_0(-1,1)$.
	\begin{proof}[Proof of \cref{lem:BV0diffCont}]
		Observe that by \cref{eq:fundthmCalcBV1}, it holds for $t > 0$ that
		\begin{align*}
			|R(t^+) - R((-t)^+)| \leq \int_{(-t,t]}|s|d\abs{\nu_R}(s) \leq t \abs{\nu_R}([-t,t]).
		\end{align*}
		As $\abs{\nu_R}$ is a Radon measure, $\abs{\nu_R}([-t,t]) \to \abs{\nu_R}(\{0\}) = 0$, as $t\to 0^+$, that is, $R(0^+) = R(0^-)$. Similarly,
		\begin{align*}
			\left|\frac{R(t^+) - R(0^+)}{t}\right| + \left|\frac{R(0^+) - R((-t)^+)}{t}\right| \leq \frac{1}{t} \int_{(0,t] \cup (-t,0]} \abs{s}\,d\abs{\nu_R}(s) \leq \abs{\nu_R}([-t,t])
		\end{align*}
		yields that both difference quotients on the left-hand side converge to zero, that is, $R$ is differentiable at $t=0$ with $R'(0) =0$.
	\end{proof}
	\begin{proof}[Proof of \cref{BV_0_algebraic}]
		\ref{BV_0_algebraic:linear_comb} and \ref{BV_0_algebraic:unit} are clear from the definition. For \ref{BV_0_algebraic:product}, first note that as
		\begin{align*}
			(R(t)-R(0))(Q(t) - Q(0)) = R(t)Q(t) - R(0)Q(t) - Q(0)R(t) + R(0)Q(0),
		\end{align*}
		by \ref{BV_0_algebraic:linear_comb} and \ref{BV_0_algebraic:unit}, we can assume that $R(0) = 0 = Q(0)$. Now, restricting first to integrals over the part $(0,1)$, \eqref{eq:fundthmCalcBV1} and Fubini's theorem imply for every $\phi \in C^1_c(-1,1)$
		\begin{align*}
			\int_{(0,1)} \phi'(t)R(t)Q(t)\, dt
			= -\int_{(0,1)}  \phi'(t) Q(t) \int_{(0,t]}s \, d\nu_R(s)\, \, dt
			= -\int_{(0,1)} \int_{[s,1)} \phi'(t)Q(t)\, dt \, s \, d\nu_R(s).
		\end{align*}
		Note here, that we can replace $R(t)$ by $R(t^+)$ in the first integral without changing its value.	Applying \eqref{eq:BV_0_int_by_parts} to the inner integral, we obtain, as $\spt \phi \subset (-1,1)$ is compact,
		\begin{align*}
			\int_{[s,1)} \phi'(t) Q(t) \, dt
			= -\phi(s)Q(s^-) + \int_{[s,1)} \phi(t) t\, d\nu_Q(t),
		\end{align*}
		which yields in total
		\begin{align*}
			\int_{(0,1)} \phi'(t)R(t)Q(t)\, dt
			= \int_{(0,1)} \phi(s) Q(s^-) s \, d\nu_R(s) - \int_{(0,1)} \int_{[s,1)} \phi(t) t\, d\nu_Q(t)\, s \, d\nu_R(s).
		\end{align*}
		Exchanging again the order of integration in the last integral, we obtain, by \eqref{eq:fundthmCalcBV1} again,
		\begin{align*}
			\int_{(0,1)} \int_{[s,1)} \phi(t) t\, d\nu_Q(t)\, s \, d\nu_R(s)
			= \int_{(0,1)} \int_{(0,t]}\, s \, d\nu_R(s)\, \phi(t) t\, d\nu_Q(t)
			= -\int_{(0,1)} \phi(t) t R(t^+)\, d\nu_Q(t),
		\end{align*}
		yielding the claim for $(0,1)$. A similar argument for $(-1,0]$ then yields the first equality of \ref{BV_0_algebraic:product}. The second equality follows directly by exchanging $R$ and $Q$.
		
		For \ref{BV_0_algebraic:inverse}, note that if $R$ is of locally bounded variation and $\inf_I \abs{R} > 0$ for every compact $I \subset (-1,1)$ then $1/R$ is also of locally bounded variation (and semi-continuous). By \ref{BV_0_algebraic:unit} and a similar argument as in the proof of \ref{BV_0_algebraic:product} for $\mu_R$ instead of $\nu_R$,
		\begin{align*}
			0 = d\mu_{R\cdot\frac{1}{R}}(t) = \frac{1}{R(t^-)} d\mu_R(t) + R(t^+)d\mu_{\frac 1R}(t),
		\end{align*}
		that is, $d\mu_{\frac 1R}(t) = \frac{-1}{R(t^-)R(t^+)}d\mu_R(t)$. Consequently, $d\nu_{\frac 1R}(t) = -\frac{1}{t}d\mu_{\frac 1R}(t) = \frac{-1}{R(t^-)R(t^+)}d\nu_R(t)$ is a signed Radon measure, that is, $1/R \in \BV_0(-1,1)$, concluding the proof.
	\end{proof}
	
	Next, we continue with statements on well-definedness and basic properties of the transform $T_R$. The proof of the following lemma is a direct computation.
	\begin{lem}\label{lem:TRreflect}
		Let $R \in \BV_0(-1,1)$ and denote by $\bar R(t) = R(-t)$, $t \in (-1,1)$, the reflected function. Then $\bar R \in \BV_0(-1,1)$, $d\nu_{\bar R}(t) = d\nu_{R}(-t)$ and $T_{\bar R}f(t) = (T_R \bar f)(-t)$, $t \in (-1,1)$.
	\end{lem}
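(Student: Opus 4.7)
My plan is to treat the three claims in turn, all by change of variables. The underlying observation is that reflection $t \mapsto -t$ is a $C^1$ diffeomorphism of $(-1,1)$ that fixes $0$, so the defining relations of $\BV_0(-1,1)$ and of $T_R$ should transform in a trivial way.

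First, to see $\bar R \in \BV_0(-1,1)$ and identify $\nu_{\bar R}$, I would start from the defining identity \eqref{eq:BV_0_iff} for $R$. Given $\phi \in C^1_c(-1,1)$, apply \eqref{eq:BV_0_iff} to the test function $\tilde \phi(s) = \phi(-s)$, which is also in $C^1_c(-1,1)$, and has derivative $\tilde\phi'(s) = -\phi'(-s)$. Using the change of variables $s \mapsto -s$ in the Lebesgue integral on the left and in the $\nu_R$-integral on the right (together with $\abs{\nu_R}(\{0\})=0$), this yields
\begin{equation*}
\int_{(-1,1)} \phi'(s) \bar R(s)\, ds = \int_{(-1,1)} \phi(s)\, s\, d\nu(s),
\end{equation*}
where $\nu$ is the pushforward of $\nu_R$ under $s \mapsto -s$. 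Since $\nu$ is a signed Radon measure on $(-1,1)$ with $\abs{\nu}(\{0\}) = 0$, this shows $\bar R \in \BV_0(-1,1)$ with $\nu_{\bar R} = \nu$, which is exactly the asserted formula $d\nu_{\bar R}(t) = d\nu_R(-t)$.

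Next, for the transformation of $T_R$, I would plug into formula \eqref{eq:equivFormTR}. Fix $t \in (-1,1)$; by symmetry it suffices to treat $t > 0$ (the case $t < 0$ being identical with the convention $\int_{(0,t]} = -\int_{[t,0)}$ spelled out after \eqref{eq:equivFormTR}, and $t=0$ being trivial since both sides reduce to $R(0)f(0)$). Using $\bar R(0) = R(0)$ and the change of variable $s \mapsto -s$ in the $\nu_{\bar R}$-integral together with $d\nu_{\bar R}(s) = d\nu_R(-s)$, I compute
\begin{equation*}
T_{\bar R} f(t) = R(0) f(t) - \int_{(0,t]}\bigl(sf(t) - tf(s)\bigr)\, d\nu_{\bar R}(s) = R(0)f(t) + \int_{[-t,0)}\bigl(s f(t) + t f(-s)\bigr)\, d\nu_R(s).
\end{equation*}
On the other hand, since $-t < 0$, applying the sign convention in \eqref{eq:equivFormTR} and noting $\bar f(-t) = f(t)$, $\bar f(s) = f(-s)$, one obtains
\begin{equation*}
(T_R \bar f)(-t) = R(0) f(t) + \int_{[-t,0)}\bigl(s f(t) + t f(-s)\bigr)\, d\nu_R(s),
\end{equation*}
which matches the previous display.

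There is no real obstacle here; the only thing to be careful about is the sign convention $\int_{(0,t]} = -\int_{[t,0)}$ for $t<0$ and a consistent use of pushforward notation $d\nu_{\bar R}(s) = d\nu_R(-s)$, meaning $\int g(s)\, d\nu_{\bar R}(s) = \int g(-s)\, d\nu_R(s)$. Once these conventions are fixed, the proof reduces to two change-of-variable computations.
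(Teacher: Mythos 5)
Your proof is correct and follows exactly the ``direct computation'' route that the paper itself indicates (the paper omits the proof with the remark that it is a direct computation). Both the identification of $\nu_{\bar R}$ via the defining relation~\eqref{eq:BV_0_iff} with reflected test functions, and the verification of $T_{\bar R}f(t) = (T_R\bar f)(-t)$ by substituting $s\mapsto -s$ in~\eqref{eq:equivFormTR} with due care for the sign convention $\int_{(0,t]}=-\int_{[t,0)}$ when $t<0$, are carried out correctly.
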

	
	\begin{proof}[Proof of \cref{prop:TRContContGroupProp}]
		For \cref{it:T_R_cont_to_cont}, let $0\leq  t' < t < 1$. By \eqref{eq:equivFormTR}, we obtain
		\begin{align*}
			T_Rf(t)-T_Rf(t')
			&= R(0)(f(t)-f(t')) - \int_{(t',t]} (sf(t)-tf(s) )\, d\nu_R(s) \\
			&\qquad - \int_{(0,t']} (s(f(t)-f(t'))-(t-t')f(s))\, d\nu_R(s).
		\end{align*}
		As $R$ is bounded, $\nu_R$ is a Radon measure and $f$ is continuous on $(-1,1)$ (whence $f$ and $s \mapsto sf(t)-tf(s)$ are bounded locally), the right-hand side becomes arbitrarily small whenever $|t-t'|$ is small, showing continuity of $T_R f$ on $[0,1)$. A similar argument shows continuity on $(-1,0]$, so $T_Rf(0^-) = R(0)f(0) = T_Rf(0^+)$ yields the claim.
		
		Next, the second part of \cref{it:T_R_group_property} is an immediate consequence of \cref{eq:T_R_group_property}, \cref{BV_0_algebraic}\ref{BV_0_algebraic:inverse}, and the fact that $T_1 = \mathrm{id}$. To show \eqref{eq:T_R_group_property}, let $f\in C(-1,1)$ and note first that, by \eqref{eq:fundthmCalcBV1},
		\begin{align}\label{eq:prfTRgroup_firstid}
			sT_Qf(t) - tT_Qf(s) =& Q(0)(sf(t) - tf(s)) - \int_{(0,s]} sxf(t) - stf(x) -(txf(s)-tsf(x)) d\nu_Q(x)\nonumber\\&
			- s\int_{(s,t]} xf(t) - tf(x) d\nu_Q(x)\nonumber\\
			=& Q(x^+)(sf(t) - tf(s)) - s\int_{(s,t]} xf(t) - tf(x) d\nu_Q(x)
		\end{align}
		for all $0 < s \leq t < 1$. Moreover, Fubini's theorem and \eqref{eq:fundthmCalcBV1} imply for $t \in (0,1]$ that
		\begin{align}\label{eq:prfTRgroup_secid}
			\int_{(0,t]}s\int_{(s,t]} xf(t) - tf(x) d\nu_Q(x) d\nu_R(s) &= \int_{(0,t]} (xf(t) - tf(x)) \int_{(0,x)} s d\nu_R(s) d\nu_Q(x)\nonumber\\
			&=-\int_{(0,t]} (xf(t) - tf(x)) (R(x^-) - R(0))d\nu_Q(x).
		\end{align}
		Combining \eqref{eq:prfTRgroup_firstid} and \eqref{eq:prfTRgroup_secid} with the definition of $T_R$, then yields for all $t\in [0,1)$,
		\begin{align*}
			T_R (T_Q f)(t) &= R(0) T_Qf(t) - \int_{(0,t]} (sT_Qf(t) - t T_Q f(s)) d\nu_R(s)\\
			&=R(0) Q(0)f(t) - \int_{(0,t]} (xf(t) - tf(x)) \left(Q(x^+) d\nu_R(x) + R(x^-)d\nu_Q(x)\right).
		\end{align*}
		Thus, by \cref{BV_0_algebraic}\ref{BV_0_algebraic:product}, $T_R(T_Q f)(t)=T_{RQ}f(t)$ for all $t \in [0,1)$. By \cref{lem:TRreflect}, the claim also follows for $t \in (-1,0)$.
		For \cref{it:continuous_in_D_R}, finally, let $f\in C[-1,1]$. Then clearly, the limits $\lim_{t\to\pm 1} R(t)f(t)$ exist. Moreover, for any given $\varepsilon>0$, there exists $t_0\in (0,1)$ such that $\abs{f(t)-f(1) t }<\varepsilon$ for all $t\in (t_0,1)$. Due to the fact that $R$ is of bounded variation, the signed measure $\nu_R$ is finite, so for all $t\in (t_0,1)$, 
		\begin{align*}
			\bigg| \int_{(t_0,t]} f\, d\nu_R - f(1)(R(t_0^+)-R(t^+))  \bigg|
			= \bigg| \int_{(t_0,t]} (f(t)-f(1)t) \, \nu_R(dt) \bigg|
			\leq \abs{\nu_R}(-1,1) \, \varepsilon.
		\end{align*}
		This argument shows that whenever $(t_n)_n$ is a sequence in $(-1,1)$ that tends to $1$, then $\int_{(0,t_n]}f\, d\nu_R + f(1)R(t_n^+)$ yields a Cauchy sequence. By our assumption on $R$, this implies that the limit $\lim_{t\to 1} \int_{(0,t]} f\, d\nu_R$ exists. The argument for the corresponding limit at $t=-1$ is completely analogous, concluding the proof.		
	\end{proof}
	
	In order to transfer the results for $T_R$ on the full interval $(-1,1)$ to a subinterval $I = (a_-,a_+) \subset (-1,1)$ with $-1 \leq a_- < 0 < a_+ \leq 1$, we define an auxiliary map
	\begin{equation*}
		z:(-1,1)\to I: z(t):=\begin{cases}
			\abs{a_-}t,	&t\leq 0,\\
			a_+ t,		&t\geq 0.
		\end{cases}
	\end{equation*}
	Given a function $f$ on $I$, the composition $f\circ z$ is a dilated copy of $f$ that is now defined on $(1,1)$.
	This allows us to express the restricted map $\restr{T_R}{I}$ in terms of $T_R$ as follows.
	
	\begin{lem}\label{lem:T_R_subinterval_dilation}
		If $R\in \BV_+(I)$, then $R\circ z\in \BV_+(-1,1)$ and for every $f\in C(I)$, we have that
		$T_{R\circ z}(f\circ z)= (\restr{T_R}{I}f)\circ z$. Moreover, $f\in\D_R$ if and only if $f\circ z\in \D_{R\circ z}$.
	\end{lem}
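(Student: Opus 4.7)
The map $z$ acts as dilation by $a_+$ on $[0,1)$ and as dilation by $|a_-|$ on $(-1,0]$, and the defining integral of $T_R$ in~\cref{eq:equivFormTR} splits along the same decomposition. My plan is to verify each claim by reducing, on each half of $(-1,1)$, to a linear change of variables for $\nu_R$ and $T_R$, and then gluing at the origin using the continuity of $R$ there.

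\emph{Step 1: $R\circ z\in\BV_+(-1,1)$.} For $\phi\in C^1_c(-1,1)$, I would split $\int_{(-1,1)}\phi'(s)R(z(s))\,ds$ at $s=0$ and substitute $x=z(s)$ on each half. Applying~\cref{eq:BV_0_int_by_parts} to $R$ on each resulting subinterval of $I$, with endpoints approaching $0$, produces boundary contributions $\pm R(0)\phi(0)$ whose cancellation requires continuity of $R$ at $0$ (\cref{lem:BV0diffCont}). Reverting to the variable $s$ then verifies~\cref{eq:BV_0_iff} for $R\circ z$, with $\nu_{R\circ z}$ equal on $(0,1)$ to the pushforward of $a_+\nu_R|_{(0,a_+)}$ under $x\mapsto x/a_+$, and analogously on $(-1,0)$. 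In particular $|\nu_{R\circ z}|(\{0\})=0$. Strict positivity of $R\circ z$ and the finiteness and sign-definiteness of $\nu_{R\circ z}$ are then inherited directly from $R\in\BV_+(I)$ since $z$ is a bijection $(-1,1)\to I$.

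\emph{Step 2: the identity $T_{R\circ z}(f\circ z)=(\restr{T_R}{I}f)\circ z$.} Both sides equal $R(0)f(0)$ at $t=0$, so I would treat $t>0$ (the case $t<0$ being symmetric after writing $\int_{(0,t]}$ as $-\int_{[t,0)}$ and substituting $x=|a_-|s$). Starting from~\cref{eq:equivFormTR} and substituting $x=a_+s=z(s)$ in the integral over $(0,t]$, the scaling factor $a_+$ from Step~1 combines with the integrand as
\begin{equation*}
a_+\bigl(s(f\circ z)(t)-t(f\circ z)(s)\bigr)=a_+\bigl(\tfrac{x}{a_+}f(z(t))-tf(x)\bigr)=xf(z(t))-z(t)f(x),
\end{equation*}
and the domain transforms from $(0,t]$ to $(0,z(t)]$. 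Thus the integral equals the one defining $(T_Rf)(z(t))$, and the claim follows.

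\emph{Step 3: the equivalence $f\in\D_R\Leftrightarrow f\circ z\in\D_{R\circ z}$.} Since $z(t)\to a_\pm$ as $t\to\pm 1$, the identity $(R\circ z)(t)(f\circ z)(t)=R(z(t))f(z(t))$ immediately equates the first pair of limits in~\cref{def:fctClassSubint}. The same change of variables as in Step~2 yields $\int_{(0,t]}(f\circ z)(s)\,d\nu_{R\circ z}(s)=a_+\int_{(0,z(t)]}f(x)\,d\nu_R(x)$ for $t\in(0,1)$, and an analogous identity on $(-1,0)$; letting $t\to\pm 1$ then matches the second pair of limits. The only mildly technical point in the whole argument is the cancellation of the boundary terms at $s=0$ in Step~1, which is precisely the role of \cref{lem:BV0diffCont}.
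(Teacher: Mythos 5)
Your proof is correct and follows essentially the same route as the paper's: change of variables under the piecewise-linear dilation $z$ to identify $\nu_{R\circ z}$, then translate the transform $T_R$ and the class $\D_R$ through that substitution. The paper performs the change of variables globally and applies~\cref{eq:BV_0_iff} directly to $\phi\circ z^{-1}$, which is only Lipschitz rather than $C^1$; your explicit split at $s=0$ with~\cref{eq:BV_0_int_by_parts} on each half and cancellation of the boundary terms via~\cref{lem:BV0diffCont} makes that step airtight, at the cost of a slightly longer argument. Otherwise Steps 2 and 3 are exactly the paper's computations.
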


	With these definition in place, it is clear that the operator $\restr{T_R}{I}$ maps the space $\D_R$ into the space $C[a_-,a_+]$. We can extend every $C[a_-,a_+]$ function to a $C[-1,1]$ function by means of the operator $J_I: C[a_-,a_+]\to C[-1,1]$, defined as
	\begin{equation*}
		J_If(t):=
		\begin{cases}
			\frac{f(a_-)}{a_-}t,	&t\leq a_-,\\
			f(t),					&a_-<t<a_+, \\
			\frac{f(a_+)}{a_+}t,	&t\geq a_+.
		\end{cases}
	\end{equation*}

	\begin{proof}[Proof of \cref{lem:T_R_subinterval_dilation}]
		Clearly, $R \circ z$ is again a function of bounded variation. Moreover, a short calculation substituting $u=z(s)$ yields 
		\begin{align*}
			\int_{(-1,1)} \phi'(s) (R \circ z)(s) ds = \int_{(a_{-},a_+)}(\phi \circ z^{-1})'(u)R(u) du = \int_{(a_-,a_+)} (\phi \circ z^{-1})(u) u d\nu_R(u)
		\end{align*}
		for every $\phi \in C^1_c(-1,1)$. Consequently, $R \circ z \in \BV_0(-1,1)$ with
		\begin{align}\label{eq:prfTransfNuRz}
			\nu_{R \circ z} = \left(|a_{-}| \indf_{(-1,0)} + a_+ \indf_{(0,1)}\right)(z^{-1})_\ast \nu_R,
		\end{align}
		which directly implies that $R \circ z \in \BV_+(-1,1)$ as well. Next, \eqref{eq:equivFormTR} implies that for $t > 0$
		\begin{align*}
			T_{R \circ z}(f\circ z)(t) &= (R \circ z)(0) (f\circ z)(t) - \int_{(0,t]} s (f\circ z)(t) - t (f\circ z)(s) d\nu_{R \circ z}(s) \\
			&=R(0)f(z(t)) - \int_{(0,z(t)]} u f(z(t)) - z(t) f(u) d\nu_{R}(u) = (T_R f)(z(t)).
		\end{align*}
		A similar argument for $t < 0$ then yields the second claim. The last claim, finally, follows directly from \eqref{eq:prfTransfNuRz} and the definition of $\D_R$.
	\end{proof}
	
	The proof of \cref{lem:T_R_bijective:I} is now a direct application of \cref{lem:T_R_bijective} and \cref{lem:T_R_subinterval_dilation}.
	\begin{proof}[Proof of \cref{lem:T_R_bijective:I}]
		First, observe that the map $J_I : C[a_-,a_+]\to T_{\indf[I]}(C[-1,1])$ is bijective. Indeed, if $\tilde{g}\in C[a_-,a_+]$, then $\restr{(J_I\tilde{g})}{\cls I}=\tilde{g}$, and conversely, if $g\in T_{\indf[I]}(C[-1,1])$, then $J_I(\restr{g}{\cls I})=T_{\indf[I]}g=g$. Hence, it suffices to show that the map $\restr{T_R}I: \D_R\to C[a_-,a_+]$ is bijective.
		
		\begin{figure*}[h]
			\begin{minipage}{0.29\textwidth}
				\centering
				\begin{tikzcd}
					\D_R \arrow[d,"z^t"',"\cong"] \arrow[r,"\restr{T_R}{I}"] & C[a_-,a_+] \arrow[d,"z^t"',"\cong"] \\
					\D_{R\circ z} \arrow[r,"T_{R\circ z}","\cong"'] & C[-1,1]
				\end{tikzcd}
			\end{minipage}
			\begin{minipage}{0.7\textwidth}
				To this end, note that by \cref{lem:T_R_subinterval_dilation}, the map $z^t:f\mapsto f\circ z$ provides a linear isomorphism from $\D_R$ to $\D_{R\circ z}$ and from $C[a_-,a_+]$ to $C[-1,1]$, respectively, and the diagram on the left hand side commutes. By \cref{lem:T_R_bijective}, the transform $T_{R\circ z}$ is a bijection between $\D_{R\circ z}$ and $C[-1,1]$.
			\end{minipage}
		\end{figure*}
		
		\noindent Thus, the map $\restr{T_R}I: \D_R\to C[a_-,a_+]$ is bijective, which concludes the argument.
	\end{proof}
	
	We further need the following description of the behavior at the endpoints of the interval.
	\begin{proof}[Proof of \cref{lem:D_R_endpoint_limits}]
		We show claim~\cref{it:D_R_endpoint_limits_zero} first for $I = (-1,1)$. By \cref{lem:TRreflect}, we can assume without loss of generality that $a = 1$. Moreover, if $-\nu_R \geq 0$, $R$ is positive and monotonically increasing on $(0,1)$ and so $\lim_{t \to 1}R(t)$ can never be zero. Hence, the assumptions imply that we must have $\nu_R \geq 0$ in \cref{it:D_R_endpoint_limits_zero}.
		
		For \cref{it:D_R_endpoint_limits_zero}, let $0 < t_0 < t < 1$, where we pick $t_0$ to be a continuity point of $R$, that is, $R(t_0^-) = R(t_0^+) = R(t_0)$. Since by assumption $R$ is positive and $\nu_R \geq 0$ on $[0,1)$, we can estimate
		\begin{equation*}
			\int_{[t_0,t)} f\, d\nu_{R}
			= \int_{[t_0,t)} \frac{R(s)f(s)}{s} \frac{s}{R(s)}\, \nu_{R}(ds)
			\geq \frac{1}{t_0}\, \inf_{s \in [t_0,t)} \left\{R(s)f(s)\right\} \, \int_{[t_0,t)}\frac{s}{R(s)}\, \nu_{R}(ds).
		\end{equation*}
		Again, as $\nu_R$ is non-negative and, thus, $R$ is decreasing on $[0,1)$, by \eqref{eq:fundthmCalcBV1},		
		\begin{align*}
			\int_{[t_0,t)}\frac{s}{R(s)}\, \nu_{R}(ds) \geq \frac{1}{R(t_0)} \int_{[t_0,t)}s\, \nu_{R}(ds) = \frac{R(t_0^-) - R(t^-)}{R(t_0)} \geq 1 - \frac{R(t^-)}{R(t_0)},
		\end{align*}
		where, for fixed $t_0$, the last term tends to one as $t$ tends to zero, using that $\lim_{t\to 1} R(t) = 0$. Consequently, we obtain for $t < 1$ large enough
		\begin{align*}
			\inf_{s \in [t_0,1)} \left\{R(s)f(s)\right\} \leq \inf_{s \in [t_0,t)} \left\{R(s)f(s)\right\} \leq 2t_0 \int_{[t_0,t)} f\, d\nu_{R}.
		\end{align*}
		As $f \in \D_R$ and, thus, $|\int_{[0,1)} f\, d\nu_{R}| < \infty$, this implies that
		\begin{align*}
			\liminf_{s\to 1}\left\{R(s)f(s)\right\} \leq 2 \liminf_{s\to 1} \int_{[s,1)} f\, d\nu_{R} = 0.
		\end{align*}
		Since actually the limit of the left-hand side exists, we conclude that $\lim_{s \to 1} R(s)f(s) \leq 0$. Noting that with $f \in \D_R$, we also have $-f \in \D_R$, we can repeat the argument to obtain $\lim_{s \to 1} R(s)f(s) = 0$, concluding the proof of \cref{it:D_R_endpoint_limits_zero} for $I = (-1,1)$.
		
		For general $I \subset (-1,1)$, take $f\in\D_R$. By \cref{lem:T_R_subinterval_dilation}, we have that $R\circ z\in\BV_+(-1,1)$ and $f\circ z\in\D_{R\circ z}$. 
		Since $\lim_{t\to \mathrm{sgn} (a)}(R\circ z)(t)=\lim_{t\to a}R(t)=0$, by the first part,
		\begin{equation*}
			\lim_{t\to a} R(t)f(t)
			= \lim_{t\to \mathrm{sgn}(a) } (R\circ z)(t)(f\circ z)(t)
			= 0,
		\end{equation*}
		which concludes the argument.
		
		For \cref{it:D_R_endpoint_limits_nzero}, finally, we note that as $f \in \D_R$, by definition,
		\begin{align*}
			\lim\limits_{t \to a} R(t) f(t)
		\end{align*}
		exists. As $R(t) > 0$ and $\lim_{t \to a}R(t) > 0$, however, this directly implies that
		\begin{align*}
			\lim\limits_{t \to a} f(t) = \frac{\lim\limits_{t \to a} R(t) f(t)}{\lim\limits_{t \to a} R(t)}
		\end{align*}
		exists, concluding the proof.
	\end{proof}
	
	\begin{proof}[Proof of \cref{lem:T_RInclSegm_bijective:I}]
		First note that $\widehat{T}_R$ clearly is well-defined. Moreover, since by \cref{eq:equivFormTR},
		\begin{align*}
			(T_R f)(0) = R(0) f(0),
		\end{align*}
		by \cref{lem:T_R_bijective:I}, the inverse of $\widehat{T}_R$ is given by
		\begin{align*}
			(\widehat{T}_R)^{-1}g = T_R^{-1}\left(g - c\frac{g(0)}{R(0)} |\cdot|\right) = T_R^{-1}g - c\frac{g(0)}{R(0)^2}|\cdot|.
		\end{align*}
		Here, we used that, by assumption, $R$ is strictly positive on $I$ and that $T_{\indf_I}|\cdot| = |\cdot|$, that is, $|\cdot| \in T_{\indf_I}(C[-1,1])$.
	\end{proof}

	\section*{Acknowledgments}
	
	The first-named author was supported by the Deutsche Forschungsgesellschaft (DFG), project \href{https://gepris.dfg.de/gepris/projekt/520350299}{520350299}. The second-named author was supported by the Austrian Science Fund (FWF), project \href{https://doi.org/10.55776/P34446}{doi:10.55776/P34446}.
	The third-named author was supported by the Austrian Science Fund (FWF), project \href{https://doi.org/10.55776/ESP236}{doi:10.55776/ESP236}, ESPRIT program.

	\begingroup
	\let\itshape\upshape
	
	\bibliographystyle{abbrv}
	\bibliography{references}{}
	
	\endgroup
	
\end{document}